\setlist[enumerate]{leftmargin=*}
\setlist[itemize]{leftmargin=*}
\newcommand\ringring[1]{%
  {
   \mathop{\kern0pt #1}\limits^{
     \vbox to-1.85ex{
       \kern-2ex 
       \hbox to 0pt{\hss\normalfont\kern.1em \r{}\kern-.45em \r{}\hss}%
       \vss 
     }
   }
  }
}
\DeclareFontFamily{U}{matha}{}
\DeclareFontShape{U}{matha}{m}{n}{
  <-5.5>    matha5
  <5.5-6.5> matha6 
  <6.5-7.5> matha7
  <7.5-8.5> matha8
  <8.5-9.5> matha9
  <9.5-11>  matha10
  <11->     matha12
}{}
\DeclareSymbolFont{matha}{U}{matha}{m}{n}
\DeclareFontFamily{U}{mathx}{\hyphenchar\font45}
\DeclareFontShape{U}{mathx}{m}{n}{<-> mathx10}{}
\DeclareSymbolFont{mathx}{U}{mathx}{m}{n}
\DeclareMathDelimiter{\ldbrack}{4}{matha}{"76}{mathx}{"30}
\DeclareMathDelimiter{\rdbrack}{5}{matha}{"77}{mathx}{"38}
\newcommand{\smallo}{\mathfrak{o}}
\newcommand{\dist}{\mathop{\mathrm{dist}}}
\newcommand{\Mthr}{\varsigma_{\mathrm{mult}}}
\newcommand{\Wthr}{\varsigma_{\mathrm{wave}}}
\renewcommand{\mho}{\mathring{\Lambda}}
\newcommand{\mhoI}{\ringring{\Lambda}}
\newcommand{\inj}{\mathrm{inj}}
\newcommand{\lie}{\mathfrak}
\newcommand{\RR}{\mathbb{R}}
\newcommand{\CC}{\mathbb{C}}
\newcommand{\ZZ}{\mathbb{Z}}
\newcommand{\NN}{\mathbb{N}}
\newcommand{\dil}{\mathfrak{D}}
\newcommand{\ltr}{\mathfrak{l}}
\newcommand{\dltr}{\bar{\mathfrak{l}}}
\newcommand{\opL}{\mathcal{L}}
\newcommand{\Sz}{\mathcal{S}}
\newcommand{\Four}{\mathcal{F}}
\newcommand{\Rnon}{\RR^+_0}
\newcommand{\Rpos}{\RR^+}
\newcommand{\Npos}{\NN^+}
\newcommand{\chr}{\mathbf{1}}
\newcommand{\Den}{\mathfrak{d}}
\DeclareMathOperator{\supp}{supp}
\DeclareMathOperator{\Div}{div}
\DeclareMathOperator{\tr}{tr}
\DeclareMathOperator{\sgn}{sgn}
\DeclareMathOperator{\Ker}{ker}
\DeclareMathOperator{\rk}{rank}
\newcommand{\tc}{\,:\,}
\newcommand{\defeq}{\mathrel{:=}}
\newcommand{\Ham}{\mathcal{H}}
\newcommand{\HamA}{\mathcal{A}}
\newcommand{\DHam}{\mathcal{D}_\Ham}
\newcommand{\cQ}{\mathcal{Q}}
\newcommand{\cA}{\mathcal{A}}
\newcommand{\cB}{\mathcal{B}}
\newcommand{\bx}{\boldsymbol{x}}
\newcommand{\bxi}{\boldsymbol{\xi}}
\newcommand{\by}{\boldsymbol{y}}
\renewcommand{\Re}{\mathop{\mathfrak{Re}}}
\renewcommand{\Im}{\mathop{\mathfrak{Im}}}
\newtheorem{thm}{Theorem}[section]
\newtheorem{prp}[thm]{Proposition}
\newtheorem{lem}[thm]{Lemma}
\newtheorem{cor}[thm]{Corollary}
\theoremstyle{definition}
\newtheorem{rem}[thm]{Remark}
\numberwithin{equation}{section}
\title[Wave equation on M\'etivier groups]{An FIO-based approach to $L^p$-bounds \\ for the wave equation on $2$-step Carnot groups:\\ the case of M\'etivier groups}
\author{Alessio Martini}
\address[A. Martini]{Dipartimento di Scienze Matematiche \\ Politecnico di Torino \\ Corso Duca degli Abruzzi 24 \\ 10129 Torino \\ Italy}
\email{alessio.martini@polito.it}
\author{Detlef M\"uller}
\address[D. M\"uller]{Mathematisches Seminar \\ C.-A.-Universit\"at zu Kiel \\ Heinrich-Hecht-Platz 6 \\ 24118 Kiel \\ Germany}
\email{mueller@math.uni-kiel.de}
\thanks{The first-named author gratefully acknowledges the financial support of Compagnia di San Paolo; he is a member of Gruppo Nazionale per l'Analisi Matematica, la Probabilit\`a e le loro Applicazioni (GNAMPA) of Istituto Nazionale di Alta Matematica (INdAM). The second-named author equally acknowledges the support by the Deutsche Forschungsgemeinschaft under DFG-Grant MU 761/12-1}
\subjclass[2020]{35L05, 35S30, 42B15, 43A22, 58J60}
\keywords{Wave equation, sub-Laplacian, Carnot group, sub-Riemannian manifold, Fourier integral operator, geodesic flow}
\begin{document}
\begin{abstract}
Let $\opL$ be a homogeneous left-invariant sub-Laplacian on a $2$-step Carnot group. We devise a new geometric approach to sharp fixed-time $L^p$-bounds with loss of derivatives for the wave equation driven by $\opL$, based on microlocal analysis and highlighting the role of the underlying sub-Riemannian geodesic flow.
A major challenge here stems from the fact that, differently from the Riemannian setting, the conjugate locus of a point on a sub-Riemannian manifold may cluster at the point itself, thus making it indispensable to deal with caustics even when studying small-time wave propagation.

Our analysis of the wave propagator on a $2$-step Carnot group allows us to essentially reduce microlocally to two conic regions in frequency space: an \emph{anti-FIO region}, which seems not amenable to FIO techniques, and an \emph{FIO region}. For the latter, we construct a parametrix by means of FIOs with complex phase, by suitably adapting a construction from the elliptic setting due to Laptev, Safarov and Vassiliev, which remains valid beyond caustics. A substantial problem arising here is that, after a natural decomposition and subsequent scalings, one effectively needs to deal with the long-time behaviour and according control of $L^1$-norms of the corresponding contributions to the wave propagator, a new phenomenon that is specific to sub-elliptic settings.

For the class of M\'etivier groups, we show how our approach, in combination with a variation of the key method of Seeger, Sogge and Stein for proving Lebesgue-space estimates for FIOs, yields $L^p$-bounds for the wave equation, which are sharp up to the endpoint regularity. In particular, we extend previously known results for distinguished sub-Laplacians on groups of Heisenberg type, by means of a more general and robust approach. The study of the wave equation on wider classes of $2$-step Carnot groups via this approach will pose further challenges that we plan to address in subsequent works.
\end{abstract}

\maketitle

\tableofcontents

\section{Introduction}

\subsection{Statement of the results}

Let $G$ be a stratified Lie group. In other words, $G$ is a connected, simply connected nilpotent Lie group, whose Lie algebra $\lie{g}$ decomposes as a direct sum $\lie{g}_1 \oplus \lie{g}_2 \oplus \dots \oplus \lie{g}_k$ of nonzero subspaces, called layers, such that $[\lie{g}_1,\lie{g}_j] = \lie{g}_{j+1}$ for $j=1,\dots,k-1$ and $[\lie{g}_1,\lie{g}_k] = \{0\}$; the positive integer $k$ is known as the step of $G$. We denote by $d_j$ the dimension of the $j$th layer $\lie{g}_j$, and let $d = d_1+d_2+\dots+d_k$ and $Q = d_1 +2d_2+\dots+kd_k$ be the topological and homogeneous dimensions of $G$.

Let $X_1,\dots,X_{d_1}$ be a system of left-invariant vector fields on $G$ that form a basis of the first layer $\lie{g}_1$, and let
\begin{equation}\label{eq:opL}
\opL = -\sum_{j=1}^{d_1} X_j^2
\end{equation}
be the corresponding homogeneous sub-Laplacian on $G$. Notice that $\opL$ is essentially selfadjoint and positive on $L^2(G)$, where Lebesgue spaces on $G$ are defined with respect to a (left and right) Haar measure.

When $k=1$, the group $G$ is isomorphic to $\RR^d$ and $\opL$ is the usual (positive definite) Laplace operator. For $k>1$, instead, the group $G$ is not abelian and the sub-Laplacian $\opL$ is not an elliptic operator, as the $X_j$ do not span the whole of $\lie{g}$; nevertheless, by H\"ormander's classical result \cite{Ho_hypo}, $\opL$ is hypoelliptic and satisfies subelliptic estimates.

The choice of a sub-Laplacian $\opL$ induces a left-invariant sub-Riemannian structure on $G$ (see, e.g., \cite{ABB,Mo}), where the horizontal distribution is spanned at every point by the vector fields $X_1,\dots,X_{d_1}$ and equipped with the metric that makes those vector fields orthonormal. With this sub-Riemannian structure, $G$ is also known as a Carnot group, and the homogeneous dimension $Q$ is the Hausdorff dimension of the underlying metric space.

Much as in \cite{Fol}, let us introduce, for $p \in [1,\infty]$ and $s \in \RR$, a scale of $L^p$-Sobolev spaces $L^p_s(G)$ on $G$ adapted to the sub-Laplacian $\opL$, where
\[
\|f\|_{L^p_s(G)} \defeq \|(1+\opL)^{s/2} f\|_{L^p(G)}.
\]
In this work, we discuss the validity of fixed-time $L^p$-estimates with loss of derivatives of the form
\begin{equation}\label{eq:fixedtimewave}
\|u(t,\cdot)\|_{L^p(G)} \lesssim_{p,s,t} \| u(0,\cdot) \|_{L^p_s(G)} + \| \partial_t u(0,\cdot) \|_{L^p_{s-1}(G)}
\end{equation}
for solutions $u : \RR \times G \to \CC$ of the wave equation
\[
\partial_t^2 u(t,\bx) = -\opL u(t,\bx)
\]
driven by $\opL$. Here $A \lesssim B$ denotes that the inequality $A \leq C B$ holds for some implicit constant $C \in (0,\infty)$, and we shall also write $A \simeq B$ for the conjunction of $A \lesssim B$ and $B \lesssim A$; subscripted variants such as $A \lesssim_{p,s,t} B$ indicate that the implicit constant may depend on the parameters $p,s,t$. 

\smallskip

Estimates of the form \eqref{eq:fixedtimewave} are analogous to the classical Miyachi--Peral estimates \cite{Mi,Pe} for the wave equation driven by the Laplacian in Euclidean space. As in the Euclidean case, a key question here is the sharp range of exponents $p \in [1,\infty]$ and $s \geq 0$ for which such estimates hold true. Notice that, due to energy conservation, when $p=2$ one can take any $s \geq 0$, i.e., there is no loss of derivatives then.

At the other endpoint $p=1$ (equivalently, $p=\infty$), such estimates can essentially be reduced (up to $\varepsilon$-losses in $s$) to spectrally localised $L^1$-estimates for the half-wave propagator of the form
\begin{equation}\label{eq:sploc_wave_est_t}
\|\exp(it\sqrt{\opL}) \, \chi(|t|\sqrt{\opL}/\lambda)\|_{1 \to 1} \lesssim_{s,T} \lambda^s \quad \text{ for } \lambda \gg 1, \ |t| \leq T \in \Rpos,
\end{equation}
where $\chi \in C^\infty_c(\Rpos)$ is a smooth cutoff, and $\Rpos \defeq (0,\infty)$. Notice that, by a scaling argument, for a homogeneous sub-Laplacian $\opL$ the left-hand side of \eqref{eq:sploc_wave_est_t} is independent of $t \in \RR$, so in this case the uniformity in $t$ is automatic.

In the Euclidean case, the half-wave propagator for the Laplacian on $\RR^d$ can be explicitly written as an oscillatory integral operator via the Fourier transform, and the analogue of \eqref{eq:sploc_wave_est_t} holds true whenever $s \geq (d-1)/2$ \cite{Be,Mi,Pe,Sj}. The same result holds true for any elliptic Laplacian on a compact $d$-manifold; indeed, in that case it is possible to construct a parametrix for the half-wave equation in terms of certain Fourier integral operators (FIOs), for which sharp $L^p$-bounds are proved in \cite{SSS}. However, there are a number of obstructions in extending these approaches to the case of the wave equation driven by a nonelliptic sub-Laplacian.

\smallskip

It is worth mentioning that homogeneous left-invariant sub-Laplacians \eqref{eq:opL} on Carnot groups serve as ``local models'' for more general sub-Laplacians on sub-Riemannian manifolds, much in the same way as the Euclidean Laplacian is a model for second-order elliptic operators (see, e.g., \cite{Bel,NRS,RS}). Thus, Carnot groups are a natural setting where to study estimates for sub-Riemannian wave equations.

Interestingly enough, while for elliptic Laplacians there is just one local model in each dimension $d$ (namely, the Laplacian on $\RR^d$), for the class of nonelliptic sub-Laplacians one may have infinitely many non-isomorphic models for a given dimension $d$, even when the step $k$ and the dimensions $d_j$ of the layers are fixed. Take for instance the case of Heisenberg groups, i.e., $2$-step stratified groups $G$ with $d_2=1$ such that the skew-symmetric bilinear map $[\cdot,\cdot] : \lie{g}_1 \times \lie{g}_1 \to \lie{g}_2$ given by the Lie bracket is nondegenerate. Any Heisenberg group of dimension $d \geq 5$, beside a distinguished isotropic Carnot structure (where the vector fields $X_j$ form a symplectic basis for $[\cdot,\cdot]$ on $\lie{g}_1$), admits uncountably many other pairwise non-isometric structures, often referred to as ``nonisotropic Heisenberg groups'' in the literature (see, e.g., \cite[Section 13.2]{ABB}).

As mentioned, in the study of wave equation estimates such as \eqref{eq:sploc_wave_est_t} on Carnot groups, the difficulty of the problem lies in determining the \emph{sharp} range of parameters for which these estimates hold true.
Indeed, the fact that \eqref{eq:sploc_wave_est_t} holds true whenever $s$ is sufficiently large can be readily seen as a direct consequence of spectral multiplier estimates of the form
\begin{equation}\label{eq:spmult_cs}
\sup_{r>0} \|F(r\opL)\|_{1 \to 1} \lesssim_{s,K} \|F\|_{L^\infty_s(\RR)} \qquad\text{whenever } \supp F \subseteq K \Subset \Rpos,
\end{equation}
where $L^q_s(\RR)$ denotes the $L^q$-Sobolev space on $\RR$ of order $s \geq 0$, and $A \Subset B$ indicates that the closure of $A$ is compact and contained in $B$.
For a homogeneous sub-Laplacian $\opL$ on a Carnot group, from the classical Christ--Mauceri--Meda Theorem \cite{C,MaMe} we know that \eqref{eq:spmult_cs}, thus also \eqref{eq:sploc_wave_est_t}, holds true for any $s > Q/2$. In particular, if we define the optimal thresholds
\[
\Mthr(\opL) \defeq \inf \{ s \geq 0 \tc \eqref{eq:spmult_cs} \text{ holds} \}, 
\quad
\Wthr(\opL) \defeq \inf \{ s \geq 0 \tc \eqref{eq:sploc_wave_est_t} \text{ holds} \}
\]
for the multiplier and wave equation estimates, then
\[
\Wthr(\opL) \leq \Mthr(\opL) \leq Q/2.
\]

Notice that $\Mthr(\opL) = d/2 = Q/2$ for the Euclidean Laplacian. For nonabelian Carnot groups, instead, $Q>d$, and the homogeneous dimension $Q$ is arguably the natural dimensional parameter associated to the underlying geometric structure; nevertheless, somewhat surprisingly, $Q/2$ need not be the optimal multiplier threshold $\Mthr(\opL)$ for a nonelliptic sub-Laplacian $\opL$. This discovery was first made in \cite{Heb,MSt_mult} in the case of Heisenberg and related groups, where $\Mthr(\opL) = d/2$.
We now know that $d/2 \leq \Mthr(\opL)<Q/2$ on any $2$-step Carnot group \cite{MM}, and the lower bound $\Mthr(\opL) \geq d/2$ actually holds in arbitrary step \cite{MMNG}.
The equality $\Mthr(\opL)=d/2$ has also been proved for several classes of $2$-step Carnot groups and sub-Laplacians beyond the Heisenberg case \cite{M_HR,MM_newclasses}, as well as in particular examples beyond the Carnot group setting, including in step higher than $2$ (see, e.g., \cite{CCM,CKS,DM,MP} and references therein).
Nevertheless, the determination of the multiplier threshold $\Mthr(\opL)$ remains an open problem for an arbitrary nonelliptic sub-Laplacian $\opL$ on a Carnot group (or a sub-Riemannian manifold), even in the $2$-step case.

For the wave equation threshold $\Wthr(\opL)$, even less is known. For an arbitrary sub-Laplacian $\opL$, we know that $\Wthr(\opL) \geq (d-1)/2$ \cite{MMNG}; moreover, a subordination argument \cite{Mu_ICM} shows that $\Mthr(\opL) \leq \Wthr(\opL) + 1/2$, at least on Carnot groups. In light of this, and by comparison with the Euclidean case, one might conjecture that $\Wthr(\opL) = (d-1)/2$, at least in the cases where $\Mthr(\opL) = d/2$.

To the best of our knowledge, this conjecture has so far been confirmed only in the case where $G$ is a $2$-step group $G$ of Heisenberg type, also known as an H-type group \cite{KR}, and $\opL$ is the distinguished sub-Laplacian thereon.
In this case, from \cite{MSe,MSt} it follows that \eqref{eq:sploc_wave_est_t} indeed holds true for any $s \geq (d-1)/2$.
The class of groups of Heisenberg type includes the Heisenberg groups, as well as variants with $d_2>1$, equipped with a distinguished isotropic Carnot structure (see Section \ref{ss:metivier} below for details); in particular, the nonisotropic Heisenberg groups are not covered by the results of \cite{MSe,MSt}. 

The proof in \cite{MSt}, which only discusses Heisenberg groups and the non-endpoint range $s>(d-1)/2$, is fundamentally based on the representation theory of those groups. The endpoint result is proved in \cite{MSe} for arbitrary H-type groups, by exploiting the Mehler-type formulas for the Schr\"odinger propagators available in that context. In any case, both of the approaches in \cite{MSt} and \cite{MSe} make strong use of symmetry properties of the given sub-Laplacian, and the same is indeed true of most of the sharp spectral multiplier theorems for nonelliptic sub-Laplacians known to date.
The lack of robustness of the existing approaches stands in striking contrast to what is available in the elliptic case, where microlocal analysis techniques have been successfully used to prove sharp multiplier and wave equation estimates in great generality, at least on compact manifolds \cite{SeS,SSS}.

\smallskip

The main objective of this work is to present a substantially different approach to proving (essentially) sharp wave propagator estimates of the form \eqref{eq:sploc_wave_est_t} for sub-Laplacians, which is less reliant on underlying symmetries.
In the same spirit as the works \cite{SeS,SSS} for elliptic Laplacians on manifolds, here we shall construct an approximate FIO representation for the wave equation driven by a sub-Laplacian, which is accurate enough to obtain sharp $L^1$-estimates.

More precisely, a comparison between spectral and frequency localisations allows us to essentially reduce  microlocally to two conic regions in frequency space, an \emph{anti-FIO region} (which seems not amenable to FIO techniques), and an \emph{FIO region}. 
At least for the sub-Laplacians considered in our main theorem, the contribution by the anti-FIO region to the wave propagator can be controlled in a rather easy way, by effectively taking advantage of the limited spatial propagation of the corresponding geodesics. Instead, it is the contribution by the FIO region that will be studied by means of FIO techniques.
Our construction of an FIO parametrix for the latter region is based on an adaptation of the method of \cite{LSV,SV}, where a \emph{global} FIO representation for wave propagators associated to elliptic operators is obtained through the use of a complex phase function, naturally defined in terms of the geodesic flow.

The fact that we are working in a more complex, sub-Riemannian setting imposes quite a number of novel challenges, which do not come up in the elliptic case, and it has indeed been unclear for a long time if an FIO approach might be feasible in sub-Riemannian settings at all. For instance, by means of suitable microlocalisations and subsequent scalings, one is eventually led to proving uniform $L^1$-estimates for certain classes of FIOs with time parameters that may become arbitrarily large, which presents one of these new challenges.

\smallskip

We shall exemplify our approach for the case of a homogeneous left-invariant sub-Laplacian $\opL$ on a \emph{M\'etivier group} $G$, i.e., a $2$-step stratified Lie group for which the skew-symmetric form $\mu [\cdot,\cdot] : \lie{g}_1 \times \lie{g}_1 \to \RR$ is nondegenerate for all $\mu \in \lie{g}_2^* \setminus \{0\}$ \cite{Met}. This class of groups and sub-Laplacians properly extends that considered in \cite{MSe,MSt}, and includes, among others, the nonisotropic Heisenberg groups; we refer to \cite{MSe_maximal,Nie} for further interesting examples of M\'etivier groups with $d_2>1$, demonstrating the greater complexity of this class compared to that of H-type groups.

Due to the robust nature of the employed FIO techniques, we are convinced that the basic approach developed here has the potential for extensions to wider classes of sub-Riemannian manifolds, such as sub-Laplacians on more general classes of Carnot groups, or even sub-Laplacians which do not have any particular symmetry properties at all (such as small, local perturbations of the sub-Laplacian on the Heisenberg group, or sub-Laplacians on arbitrary compact Heisenberg manifolds).
 
\smallskip
 
For $r \in \RR$, we denote by $S^r(\RR^n)$ the standard \emph{symbol class of order $r$}, i.e., the Fr\'echet space of the smooth functions $f : \RR^n \to \CC$ such that
\[
\sup_{\zeta \in \RR^n} (1+|\zeta|)^{|\alpha|-r} |\partial_\zeta^\alpha f(\zeta)| < \infty  \qquad\text{for all } \alpha \in \NN^n.
\]
What we shall prove here is the following result, which extends that for groups of Heisenberg type in \cite{MSe}, except for the endpoint regularity $s = (d-1)/2$.

\begin{thm}\label{thm:main}
Let $G$ be a M\'etivier group of topological dimension $d$, and $\opL$ a homogeneous sub-Laplacian thereon. Let $p \in [1,\infty]$ and $s > (d-1)|1/p-1/2|$.
\begin{enumerate}[label=(\roman*)]
\item\label{en:main_splocest} For all $\chi \in C^\infty_c(\Rpos)$ and all $\lambda \geq 1$,
\[
\sup_{t \in \RR} \| \exp(it\sqrt{\opL}) \chi(|t|\sqrt{\opL}/\lambda) \|_{p \to p} \lesssim_{\chi,p,s} \lambda^s.
\]
In particular, $\Wthr(\opL) = (d-1)/2$.
\item\label{en:main_symbol} Let $\cA$ and $\cB$ be bounded subsets of $S^{-s}(\RR)$ and $S^{1-s}(\RR)$.
Then the operators
\[
a(t\sqrt{\opL}) \exp(it\sqrt{\opL}), \qquad  a(t\sqrt{\opL}) \cos(t\sqrt{\opL}), \qquad b(t\sqrt{\opL}) \frac{\sin(t\sqrt{\opL})}{t\sqrt{\opL}}
\]
are $L^p$-bounded uniformly in $a \in \cA$, $b \in \cB$ and $t \in \RR$.
\item\label{en:main_cauchy} The solution $u = u(t,\bx)$ of the initial value problem
\[
\begin{cases}
\partial_t^2 u = -\opL u \\
u|_{t=0} = f \\
\partial_t u|_{t=0} = g
\end{cases}
\]
for the wave equation associated with the sub-Laplacian $\opL$ on $G$ satisfies
\[
\|u(t,\cdot)\|_{p} \lesssim_{s,p} \|(1+t^2\opL)^{s/2} f\|_{p} + \|t(1+t^2\opL)^{(s-1)/2} g\|_{p}
\]
for all $t \in \RR$. In particular, if the initial data satisfy
\[
f \in L^p_s(G), \qquad g \in L^p_{s-1}(G),
\]
then the solution $u(t,\cdot)$ is in $L^p(G)$ for all $t \in \RR$.
\end{enumerate}
\end{thm}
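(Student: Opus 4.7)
The main work is to establish part (i), from which parts (ii) and (iii) follow by fairly standard reductions. By scaling homogeneity of $\opL$, the operator norm in (i) does not depend on $t$, so uniformity in $t$ is automatic and it suffices to treat one value such as $t=1$; combining the endpoint case $p=1$ with energy conservation at $p=2$ (which is loss-free) and duality at $p=\infty$ yields the full range $p \in [1,\infty]$ by Riesz--Thorin interpolation. For part (ii), we would dyadically decompose any $a \in S^{-s}(\RR)$ as $\sum_{k \geq 0} a_k$ with $a_k$ supported on $\{|\tau| \simeq 2^k\}$, rewrite each $a_k(t\sqrt{\opL}) \exp(it\sqrt{\opL})$ as an instance of the operator in (i) at $\lambda = 2^k$ scaled by $\simeq 2^{-ks}$, and sum the resulting bounds geometrically in $k$ for any fixed $s' \in ((d-1)|1/p-1/2|, s)$; the two other symbol combinations are handled analogously. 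Part (iii) then reduces to (ii) via d'Alembert's formula and the choice of symbols $a(\tau) = (1+\tau^2)^{-s/2} \in S^{-s}(\RR)$ and $b(\tau) = (1+\tau^2)^{(1-s)/2} \in S^{1-s}(\RR)$.

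The core of the proof is (i) at $p=1$. The plan is to split $\chi(\sqrt{\opL}/\lambda) \exp(i\sqrt{\opL})$ microlocally according to the frequency variables dual to the horizontal and vertical layers of $\lie{g}$, isolating an \emph{anti-FIO region}, where the sub-Riemannian bicharacteristics spiral tightly around the initial point, from an \emph{FIO region}, where the flow is well approximated by its Riemannian analogue. On the anti-FIO region, the limited spatial propagation confines the kernel to an explicit neighbourhood of the identity, and combining this support control with a Christ--Mauceri--Meda type multiplier bound adapted to that propagation set yields the required $L^1$-bound at the sharp level $\lambda^{(d-1)/2}$ rather than the trivial $\lambda^{Q/2}$.

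On the FIO region we construct an approximate parametrix of Laptev--Safarov--Vassiliev form, namely a complex-phase FIO whose phase function $\Phi$ is obtained by complexifying the generating function of the sub-Riemannian Hamiltonian flow of the principal symbol of $\sqrt{\opL}$, and whose amplitude is determined by solving transport equations along that flow. The decisive feature of the complex phase is that $\Im\Phi \geq 0$, with equality only on the canonical relation of the true propagator, so the parametrix remains globally well-defined through caustics; this matters because, in contrast to the Riemannian situation, the sub-Riemannian conjugate locus can cluster at the identity, and a real-phase construction must fail even for arbitrarily small times. The $L^1$-bound for this FIO would then be obtained by a Seeger--Sogge--Stein-type dyadic decomposition of the frequency variable into anisotropic plates, combined with transverse non-stationary phase estimates. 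The main obstacle I anticipate, and where most of the technical work is bound to concentrate, is that after the natural anisotropic rescaling induced by the homogeneity of $\opL$ the effective time parameter of the rescaled FIO diverges as $\lambda \to \infty$, so rather than invoking short-time FIO calculus one must establish long-time uniform $L^1$-control of the building blocks through the full range of times where caustics may develop; the Métivier hypothesis, i.e.\ the nondegeneracy of $\mu[\cdot,\cdot]$ for every $\mu \neq 0$, should enter here to guarantee enough transversality of the bicharacteristic flow to make this long-time analysis feasible.
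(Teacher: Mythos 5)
Your reduction of (ii) and (iii) to (i), the use of homogeneity to fix $t=\pm 1$, the interpolation between $p=1$ and $p=2$, and the overall $p=1$ strategy (finite propagation speed localisation, anti-FIO/FIO dichotomy, LSV-type complex-phase parametrix, SSS dyadic decomposition with long-time control, and the role of the M\'etivier hypothesis) all coincide with the paper's route at the level of structure. However, there is a gap in your treatment of the FIO region. You propose to run the complex-phase LSV construction across that entire region, but the imaginary part $\Im\phi = \frac{1}{4}\langle |J_\mu|(x-x^t),\, x-x^t\rangle$ degenerates as $\mu\to 0$, so the Gaussian decay in $x$ which the approach depends on (for instance in the proofs of Propositions \ref{prp:wf}, \ref{prp:sdd_0} and \ref{prp:sdd}) is lost near the cone $\{\mu=0\}$. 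The paper therefore splits off an \emph{elliptic} subregion $\{|\mu|\lesssim 2^{-\gamma}|\xi|\}$, where $\opL$ behaves microlocally like an elliptic operator, and handles it by a \emph{real}-phase FIO parametrix (borrowed from \cite{MMNG}) together with the classical SSS argument for short time; only the complementary \emph{non-elliptic} subregion $\{|\xi|\lesssim|\mu|\lll\lambda^2\}$ is treated via the complex-phase parametrix and, after the automorphic rescaling that makes $|\xi|\simeq|\mu|$, exhibits the long-time difficulties you anticipate. Without this split, your construction of the parametrix would not be uniform over the full FIO region.

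A smaller inaccuracy: for the anti-FIO region you invoke a ``Christ--Mauceri--Meda type multiplier bound,'' but the paper's argument (Proposition \ref{prp:fbi}) is instead a Cauchy--Schwarz estimate combined with a first-layer weighted Plancherel-type $L^2$ bound in the spirit of \cite{Heb,MSt_mult,MP}. The point is that under the frequency localisation $|\mu|\approx\lambda^2$, $|\xi|\lessapprox\lambda$, the geodesics propagate only $\lessapprox 1/\lambda$ in the first-layer variables, and combining this confinement with a weighted $L^2$ bound yields the threshold $\lambda^{d_2+\smallo(\vec\epsilon)}$; the inequality $d_2 \leq (d-1)/2$, valid for M\'etivier groups, then gives the desired exponent. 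This is not the Christ--Mauceri--Meda kernel estimate, which would only yield the nonsharp exponent $Q/2$ in this region.
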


By \cite[Theorem 1.1(ii)]{MMNG}, the threshold $(d-1)|1/p-1/2|$ in Theorem \ref{thm:main} is sharp, in the sense that it cannot be replaced by any smaller quantity. An interesting open problem is whether, at least when $p \in (1,\infty)$, the estimates of Theorem \ref{thm:main} hold at the endpoint $s = (d-1)|1/p-1/2|$. In the particular case where $G$ is an H-type group with its distinguished sub-Laplacian, such an endpoint result is proved in \cite{MSe}. Nevertheless, none of the approaches in \cite{MSe,MSt} would directly extend from H-type groups to arbitrary M\'etivier groups (compare Remark \ref{rem:notMS} below).

\smallskip

Much as in \cite[Section 1.4]{MSe}, the wave equation estimates of Theorem \ref{thm:main} imply, by subordination,
a spectral multiplier theorem for sub-Laplacians on M\'etivier groups, which extends the results of \cite{Heb,MSt_mult}, in that it covers a larger class of multipliers. In what follows, we denote by $\widehat{F}$ the Fourier transform of a function $F : \RR \to \CC$, given by $\widehat F(\tau) \defeq \int_\RR F(t) \,e^{-i \tau t} \,dt$ whenever $F$ is integrable.

\begin{cor}\label{cor:MH}
Let $G$ be a M\'etivier group of topological dimension $d$, and $\opL$ a homogeneous sub-Laplacian thereon. Let $\chi \in C^\infty_c(\Rpos)$ be nonzero. Let $F : \RR \to \CC$ be a Borel function. Let $s>(d-1)/2$.
\begin{enumerate}[label=(\roman*)]
\item If $\supp F \subseteq K$ for some $K \Subset \Rpos$, then
\begin{equation}\label{en:MH_cptsupp}
\sup_{t>0} \|F(t\sqrt{\opL})\|_{1\to 1} \lesssim_{s,K} \int_\RR |\widehat{F}(\tau)| \, (1+|\tau|)^s \,d\tau.
\end{equation}
\item\label{en:MH_mult} If
\begin{equation}\label{eq:smoothness_mult}
 \sup_{t>0} \int_{\RR} |\widehat{F(t\cdot) \chi}(\tau)| \, (1+|\tau|)^{s} \,d\tau  < \infty,
\end{equation}
then $F(\sqrt{\opL})$ is of weak type $(1,1)$ and bounded on $L^p(G)$ for all $p \in (1,\infty)$.
\end{enumerate}
\end{cor}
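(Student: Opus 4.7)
The plan is to deduce this corollary from Theorem \ref{thm:main}(i) via the standard subordination procedure exemplified in \cite[Section 1.4]{MSe}; no genuinely new ideas are needed beyond the wave equation estimate itself.

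For part (i), I would assume $\supp F \subseteq K \Subset \Rpos$ and pick a cutoff $\chi_0 \in C_c^\infty(\Rpos)$ with $\chi_0 \equiv 1$ on $K$. Since $F$ vanishes on $(-\infty,0]$, we have $F(t\sqrt{\opL}) = 0$ for $t \leq 0$, so we may restrict to $t > 0$. Using $F = \chi_0 F$ together with Fourier inversion,
\[
F(t\sqrt{\opL}) = \frac{1}{2\pi}\int_\RR \widehat F(\tau) \, e^{i\tau t\sqrt{\opL}} \chi_0(t\sqrt{\opL}) \,d\tau.
\]
For $|\tau| \geq 1$, I would apply Theorem \ref{thm:main}(i) with time $\tau t$ and parameter $\lambda = |\tau|$: since $t > 0$, one has $|\tau t|\sqrt{\opL}/|\tau| = t\sqrt{\opL}$, so $\chi_0$ plays the role of the cutoff, yielding
\[
\|e^{i\tau t\sqrt{\opL}} \chi_0(t\sqrt{\opL})\|_{1\to 1} \lesssim_{s,K} |\tau|^s.
\]
For $|\tau| \leq 1$, the operator in question equals $H_\tau(t\sqrt{\opL})$ with $H_\tau(\xi) = e^{i\tau\xi}\chi_0(\xi)$ a uniformly compactly supported smooth function, so the Christ--Mauceri--Meda theorem gives a uniform bound in this regime. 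Integrating in $\tau$ then yields \eqref{en:MH_cptsupp}.

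For part (ii), I would combine part (i) with a standard Littlewood--Paley decomposition and a Calder\'on--Zygmund argument. Pick $\chi_1 \in C_c^\infty(\Rpos)$ with $\sum_{j \in \ZZ} \chi_1(2^{-j}\xi) = 1$ for $\xi > 0$, and split $F$ into dyadic pieces $F_j(\xi) = F(\xi) \chi_1(2^{-j}\xi)$. Setting $\tilde F_j(\eta) = F(2^j \eta) \chi_1(\eta)$, one has $F_j(\sqrt{\opL}) = \tilde F_j(2^{-j}\sqrt{\opL})$; since $\tilde F_j$ is supported in the fixed compact set $\supp \chi_1$, part (i) yields
\[
\|F_j(\sqrt{\opL})\|_{1\to 1} \lesssim_{s} \int_\RR |\widehat{\tilde F_j}(\tau)| (1+|\tau|)^s \,d\tau,
\]
a quantity bounded uniformly in $j$ under hypothesis \eqref{eq:smoothness_mult}. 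To upgrade these uniform $L^1 \to L^1$ bounds to weak-type $(1,1)$ for the sum $F(\sqrt{\opL}) = \sum_j F_j(\sqrt{\opL})$, one invokes the finite propagation speed of $e^{it\sqrt{\opL}}$ with respect to the Carnot--Carath\'eodory distance: by truncating $\widehat{\tilde F_j}$ dyadically in $\tau$ and using the Fourier representation, one produces convolution kernels for $F_j(\sqrt{\opL})$ supported in Carnot--Carath\'eodory balls of controlled radius, with uniform $L^1$-norms. Combined with the $L^2$-boundedness from Plancherel, this verifies the H\"ormander-type integral condition on the convolution kernels and yields weak-type $(1,1)$; interpolation with $L^2$ and duality then give $L^p$-boundedness for all $p \in (1,\infty)$.

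The substantive new content of the corollary lies entirely in Theorem \ref{thm:main}(i); the subordination argument just sketched is formal and well documented in the literature (compare \cite[Section 1.4]{MSe} and the foundational works \cite{C,MaMe}). The only step requiring some care is the Calder\'on--Zygmund argument based on finite propagation speed, but this is by now entirely standard for homogeneous sub-Laplacians on Carnot groups and poses no genuine obstacle.
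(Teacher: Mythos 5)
Your proposal is correct and follows essentially the same subordination route the paper invokes by citing \cite[Section 1.4]{MSe}: Fourier inversion combined with Theorem~\ref{thm:main}\ref{en:main_splocest} (exploiting the scale-invariance of that estimate via the $\sup_{t\in\RR}$ and the factor $\lambda$ appearing only inside the cutoff) for part (i), and a Littlewood--Paley decomposition plus a Calder\'on--Zygmund argument driven by finite propagation speed for part (ii). The only place your sketch is slightly thinner than what a full write-up would need is the Hörmander-condition step: uniform $L^1$-bounds and support control alone do not sum, so one must also record the quantitative power decay $\int_{|\tau|>R}|\widehat{F(t\cdot)\chi}(\tau)|\,d\tau \lesssim R^{-s}$ provided by the weight $(1+|\tau|)^s$, together with a low-frequency difference estimate; but both are standard and your outline points in the right direction.
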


\begin{rem}
By the Cauchy--Schwarz inequality, the right-hand side of \eqref{en:MH_cptsupp} is controlled by the $L^2$-Sobolev norm $\|F\|_{L^2_\sigma(\RR)}$ of order $\sigma > s+1/2$; thus, the condition \eqref{eq:smoothness_mult} follows from the scale-invariant smoothness condition of Mihlin--H\"ormander type
\begin{equation}\label{eq:mh}
\sup_{t > 0} \| F(t \cdot) \chi \|_{L^2_\sigma(\RR)} < \infty,
\end{equation}
for any $\sigma > s+1/2$. In particular, Corollary \ref{cor:MH}\ref{en:MH_mult} applies whenever \eqref{eq:mh} is satisfied for some $\sigma > d/2$, so we recover the multiplier theorem of \cite{Heb,MSt_mult} in the case where $G$ is a M\'etivier group. On the other hand, for any $s \geq 0$, there exist compactly supported functions $F$ such that $\int_{\RR} |\widehat{F}(\tau)|\, (1+|\tau|)^s \,d\tau < \infty$ but $\|F\|_{L^2_\sigma(\RR)} = \infty$ for all $\sigma > s$, thus Corollary \ref{cor:MH} indeed covers a larger class of multipliers.
\end{rem}

As mentioned, it would be of great interest to investigate whether the results above hold true for larger classes of sub-Laplacians on sub-Riemannian manifolds. As a matter of fact, some parts of the proof of Theorem \ref{thm:main} work in greater generality than that of M\'etivier groups. In order not to affect the clarity of the presentation, in this paper we shall limit the discussion to the setting of $2$-step Carnot groups $G$ and homogeneous left-invariant sub-Laplacians thereon. Nevertheless, we shall not restrict ourselves to the M\'etivier group case immediately, but we shall introduce the appropriate restrictions on $G$ at each stage of the proof.

\subsection{Some heuristics}\label{ss:intro_heuristics}
Let us first discuss some heuristics about the validity of a spectrally localised estimate of the form \eqref{eq:sploc_wave_est_t}.

In the case where $\opL$ is the Laplace operator on $\RR^d$, due to homogeneity and translation-invariance, one can reduce to $t=1$ and work with convolution kernels; further, by means of the Fourier transform one can explicitly write
\begin{equation}\label{eq:euclidean_FIO}
\exp(i\sqrt{\opL}) \chi(\sqrt{\opL}/\lambda) \delta_0(x) = (2\pi)^{-d} \int_{\RR^d} e^{i(x\cdot\xi + |\xi|)} \chi(|\xi|/\lambda) \,d\xi.
\end{equation}
The key argument of \cite{SSS} can then be used to obtain a sharp $L^1$-estimate, in terms of powers of $\lambda$, for the kernel $\exp(i\sqrt{\opL}) \chi(\sqrt{\opL}/\lambda) \delta_0$: namely, the integral in the right-hand side of \eqref{eq:euclidean_FIO}, where we already have the frequency localisation $|\xi|\simeq \lambda$, can be decomposed into regions of angular aperture $\lambda^{-1/2}$ according to the direction of $\xi$ (see Figure \ref{fig:SSSfreq}); in each region the phase function $x \cdot \xi + |\xi|$ can effectively be linearised (indicating that this portion of the wave propagator acts as a translation in the direction of $\xi$ on the space side), thus showing that the corresponding integral gives a unit contribution to the $L^1$-norm; the total $L^1$-norm is therefore proportional to the number of pieces, i.e., to $\lambda^{(d-1)/2}$.

\begin{figure}
\centering
\begin{subcaptionblock}{.4\textwidth}
\centering\begin{tikzpicture}
		\useasboundingbox (-2.3,-2.3) rectangle (2.3,2.3);

		\fill [green,even odd rule] (0,0) circle (2cm) circle (1cm);

		\fill [red] (40:1) arc (40:50:1) -- (50:2) arc (50:40:2) -- cycle;
		\fill [blue] (100:1) arc (100:110:1) -- (110:2) arc (110:100:2) -- cycle;

\foreach \t in {0,10,...,350}
   { \draw (\t:1) -- (\t:2); }

		\draw [gray,->] (-2.3,0) -- (2.3,0) node [black,below] {$\xi_1$};
		\draw [gray,->] (0,-2.3) -- (0,2.3) node [black,left] {$\xi_2$};

\end{tikzpicture}
\caption{frequency picture} \label{fig:SSSfreq}
\end{subcaptionblock}%
\begin{subcaptionblock}{.4\textwidth}
\centering\begin{tikzpicture}
		\useasboundingbox (-2.3,-2.3) rectangle (2.3,2.3);
		
		\fill [green!30!white,even odd rule] (0,0) circle (1.6cm) circle (1.4cm);

		\draw[->,thick,red,decoration={snake, amplitude=.8},decorate] (45:.35) -- (45:1.3);
    \fill [red!20!white,rotate around={45:(0,0)}] (-.1,-.25) -- (.1,-.25) -- (.1,.25) -- (-.1,.25) -- cycle;
    \fill [red,rotate around={45:(0,0)}] (1.4,-.25) -- (1.6,-.25) -- (1.6,.25) -- (1.4,.25) -- cycle;

		\draw[->,thick,blue,decoration={snake, amplitude=.8},decorate] (105:.35) -- (105:1.3);
    \fill [blue!20!white,rotate around={105:(0,0)}] (-.1,-.25) -- (.1,-.25) -- (.1,.25) -- (-.1,.25) -- cycle;
    \fill [blue,rotate around={105:(0,0)}] (1.4,-.25) -- (1.6,-.25) -- (1.6,.25) -- (1.4,.25) -- cycle;

		\draw (0,0) circle (1.5cm);

		\draw [gray,->] (-2.3,0) -- (2.3,0) node [black,below] {$x_1$}; 
		\draw [gray,->] (0,-2.3) -- (0,2.3) node [black,left] {$x_2$};

\end{tikzpicture}
\caption{space picture} \label{fig:SSSspace}
\end{subcaptionblock}

\caption{Frequency decomposition in $\lambda \times \lambda^{1/2}$ boxes of the Euclidean wave propagator at time $t=1$ according to the key method of \cite{SSS}; after the application of the wave propagator, the dual boxes in the space picture form a $1/\lambda$-neighbourhood of the unit sphere.}
\label{fig:SSS}

\end{figure}

We point out that the phase function $x\cdot \xi + |\xi|$ in \eqref{eq:euclidean_FIO} carries relevant geometric information. In particular, the $\xi$-gradient of the phase function vanishes where $x = - \xi/|\xi|$, corresponding to the fact that the singular support of the wave propagator kernel $\exp(i \sqrt{\opL}) \delta_0$ at time $1$ is the unit sphere $S = \{ x \tc |x| = 1\}$; this agrees with the idea that waves propagate along geodesics (in this case, straight lines emanating from the origin). Moreover, the aforementioned decomposition argument from \cite{SSS} applied to \eqref{eq:euclidean_FIO} can be used to show that the function $\exp(i\sqrt{\opL}) \chi(\sqrt{\opL}/\lambda) \delta_0$ is essentially concentrated in a $1/\lambda$-neighbourhood of $S$ (see Figure \ref{fig:SSSspace}); this is also intuitively expected from the fact that $S$ is the singular support of the wave propagator kernel $\exp(i \sqrt{\opL}) \delta_0$, while the spectral localisation kernel $\chi(\sqrt{\opL}/\lambda) \delta_0$ effectively acts as a convolution by a bump function at scale $1/\lambda$.
As a matter of fact, if one assumed that $\exp(i\sqrt{\opL}) \chi(\sqrt{\opL}/\lambda) \delta_0$ were actually supported in such $1/\lambda$-neighbourhood, then by the Cauchy--Schwarz inequality and the unitarity of the half-wave propagator one would immediately obtain that
\begin{multline}\label{eq:sharp_euclidean_euristics}
\| \exp(i \sqrt{\opL}) \chi(\sqrt{\opL}/\lambda) \delta_0 \|_{1} 
\lesssim \lambda^{-1/2} \| \exp(i \sqrt{\opL}) \chi(\sqrt{\opL}/\lambda) \delta_0 \|_{2} \\
= \lambda^{-1/2} \| \chi(\sqrt{\opL}/\lambda) \delta_0 \|_{2} \simeq \lambda^{(d-1)/2},
\end{multline}
i.e., this heuristic argument yields the expected power growth in $\lambda$.

\smallskip

When $\opL$ is a more general elliptic Laplacian on a $d$-manifold $M$, a similar approach is possible by means of the theory of Fourier integral operators. Indeed, one can represent, locally, for small times, and up to smoothing terms, the half-wave propagator $\exp(-it\sqrt{\opL})$ as a Fourier integral operator (FIO) of the form
\begin{equation}\label{eq:elliptic_FIO}
\Psi_t f(x) = \iint e^{i\phi(t,x,y,\xi)} \, q(t,x,y,\xi) \, f(y) \,dy \,d\xi
\end{equation}
for a suitable phase function $\phi$ and amplitude $q$; the method of \cite{SSS} can then be used to obtain $L^1$-estimates for such an operator $\Psi_t$. At least when $M$ is compact, these local estimates can be combined to obtain sharp local-in-time $L^1$-estimates with loss of derivatives for the wave propagator.

Again, the phase function $\phi$ in the FIO representation \eqref{eq:elliptic_FIO} carries relevant geometric information. Namely, let $(y,\xi) \mapsto (x^t(y,\xi),\xi^t(y,\xi))$ be the homogeneous canonical transformation of $T^* M \setminus \{0\}$ at time $t \in \RR$ generated by the square root $\Ham = \Ham(x,\xi)$ of the principal symbol of $\opL$, i.e., satisfying the following Hamilton equations:
\[
\dot x^t = \nabla_{\xi} \Ham(x^t,\xi^t), \qquad \dot \xi^t = - \nabla_{x} \Ham(x^t,\xi^t), \qquad x^0 = y, \qquad \xi^0 = \xi;
\]
notice that $t \mapsto x^t(y,\xi)$ is the speed-one geodesic starting at $y$ with initial vector $\nabla_\xi \Ham(y,\xi)$ for the Riemannian geometric structure on $M$ associated with $\opL$.
Then, the phase function $\phi$ parametrizes the Lagrangian manifold associated with this canonical transformation, in the sense that
\begin{equation}\label{eq:phase_param1}
\partial_{\xi} \phi(t,x,y,\xi) =0 \iff x = x^t(y,\xi)
\end{equation}
and moreover
\begin{equation}\label{eq:phase_param2}
\partial_{\xi} \phi(t,x,y,\xi) =0 \ \Longrightarrow\  \begin{cases}
\nabla_x \phi(t,x,y,\xi) = \xi^t(y,\xi), \\
\det \partial_\xi \nabla_x \phi(t,x,y,\xi) \neq 0.
\end{cases}
\end{equation}

In most constructions of the FIO representation above (see, e.g., \cite{Ho_spec,Sh,S,Tr}), the phase function $\phi$ is actually chosen so that it solves the \emph{eikonal equation}
\begin{equation}\label{eq:eikonal}
\partial_t\phi(t,x,y,\xi) + \Ham(x,\nabla_x \phi(t,x,y,\xi)) = 0.
\end{equation}
On the other hand, as pointed out in \cite{LSV,SV}, the requirement that $\phi$ satisfies the eikonal equation, or more generally the requirement that $\phi$ is \emph{real-valued}, forces the construction of the FIO representation \eqref{eq:elliptic_FIO} to be local; in particular, the presence of \emph{caustics} (i.e., conjugate points along geodesics, where $\rk \partial_\xi x^t < d-1$) turns out to be an obstruction for a global FIO representation of the wave propagator.

As mentioned, for an elliptic Laplacian $\opL$ on a compact manifold, this locality constraint is not an obstruction to obtaining sharp local-in-time estimates for the wave propagator. Crucially, under these ellipticity and compactness assumptions we have a \emph{positive injectivity radius} $t_{\inj}(\opL)$ for the Riemannian metric associated with $\opL$, and conjugate points do not appear for $|t| < t_{\mathrm{inj}}(\opL)$. Indeed, for $|t| < t_{\inj}(\opL)$ and any $y \in M$, the ``geodesic sphere'' $\{ x^t(y,\xi) \tc \xi \in T^*_y M \setminus \{0\}\}$ is diffeomorphic to a standard sphere. Thus, at a geometric level the (local) situation is fully analogous to that in Euclidean space, and in this respect it is not entirely surprising that the approach used for the Euclidean wave equation can be adapted to that driven by an elliptic Laplacian on a compact manifold.

\smallskip

Things change considerably when we weaken the ellipticity assumption, and consider the wave equation driven by a sub-Laplacian $\opL$ on a sub-Riemannian manifold $M$. In this case, the principal symbol of $\opL$ may be a degenerate (positive semidefinite) quadratic form on $T^* M$, so the set $\DHam$ where the symbol does not vanish, i.e., where its square root $\Ham$ is smooth, is a proper conical subset of $T^* M \setminus \{0\}$. 
Crucially, 
even if we restrict to the Hamiltonian flow generated by $\Ham$ on $\DHam$, there is no longer a ``positive injectivity radius'', and there may exist conjugate points (where $\rk \partial_\xi x^t(y,\xi) < d-1$) to a given point $y \in M$ for arbitrarily small $t$ (cf.\ \cite[Theorem 12.16]{ABB}). Moreover, the geodesic spheres $\{ x^t(y,\xi) \tc \xi \in T^*_y M \cap \DHam \}$ centred at $y$ need no longer be diffeomorphic to standard spheres, even for small $t$, and may be adjacent to their centre (see Figure \ref{fig:wavefronts}).

These observations reveal, on the one hand, some of the many obstructions in extending the classical approach summarised above to the case of sub-Riemannian wave equations; on the other hand, they indicate that the simple ``geometric heuristics'' discussed above for the small-time wave propagator are not directly applicable in the sub-Riemannian case, where a more intricate analysis is to be expected. To further appreciate this, notice that the heuristic computation \eqref{eq:sharp_euclidean_euristics} for the Euclidean Laplacian crucially depends on the fact that $\| \chi(\sqrt{\opL}/\lambda) \delta_0 \|_{2} \simeq \lambda^{d/2}$. For a sub-Laplacian on a Carnot group, however, scaling considerations would give $\| \chi(\sqrt{\opL}/\lambda) \delta_0 \|_{2} \simeq \lambda^{Q/2}$. As the homogeneous dimension $Q$ may be much larger than the topological dimension $d$ for nonelliptic $\opL$, a direct analogue of \eqref{eq:sharp_euclidean_euristics} would not yield the desired growth $\lambda^{(d-1)/2}$ in any case.

\begin{figure}

\includegraphics{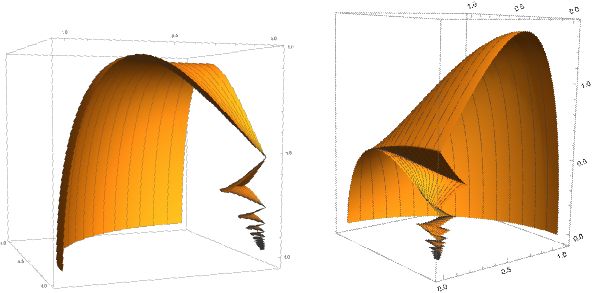}

\caption{Sections of geodesic spheres on isotropic (left) and nonisotropic (right) Heisenberg groups; in either case, the ``zig-zags'' around the vertical axis converge to the centre of the sphere.}
\label{fig:wavefronts}

\end{figure}

\smallskip

As just mentioned, a serious obstruction in running the classical arguments in the sub-Riemannian case is the existence of conjugate points for arbitrarily small times. From this point of view, the ``global FIO approach'' proposed in \cite{LSV,SV} becomes of great interest: while the construction in \cite{LSV,SV} is presented with elliptic operators $\opL$ in mind, one of its main features is the possibility of obtaining a global (in time and space) FIO representation for the wave propagator, in terms of oscillatory integrals of the form
\[
\Xi_t f(x) = \iint e^{i\phi(t,x,y,\xi)} \, q(t,y,\xi) \, f(y) \, \Den_\phi(t,x,y,\xi) \,dy \,d\xi.
\]
Here the phase function $\phi$ may be complex-valued (with $\Im \phi \geq 0$) and need not satisfy the eikonal equation \eqref{eq:eikonal} off the critical set $\{ \partial_\xi \phi = 0 \}$, but still parametrizes the Lagrangian manifold associated with the canonical transformation $(y,\xi) \mapsto (x^t(y,\xi),\xi^t(y,\xi))$ in the sense of \eqref{eq:phase_param1}-\eqref{eq:phase_param2}; moreover, the density $\Den_\phi$ is chosen as a suitable branch of the square root of the determinant of the mixed hessian matrix $\partial_\xi \nabla_x \phi$ of $\phi$. The presence of a nonzero imaginary part $\Im \phi$ is a crucial feature of this construction, in that it allows the representation to make sense globally, irrespective of the existence of caustics.

One of the main contributions of the present work is to show that the complex-phase FIO approach of \cite{LSV,SV} can be adapted to the sub-Riemannian setting, at least in the case where $\opL$ is a sub-Laplacian on a $2$-step group $G$; in addition, we manage to show that the corresponding FIO representation for the wave propagator can be effectively used to prove sharp fixed-time $L^1$-estimates for the wave propagator, at least when $G$ is M\'etivier. A number of features of the nonelliptic and noncompact setting at hand must be taken into account, thus making the analysis much more involved than that used in the compact elliptic case. As we shall see, among other things, a substantially more complicated ``anisotropic'' decomposition of the oscillatory integral is needed here; nevertheless, interestingly enough, each of the resulting pieces, after a suitable scaling, becomes amenable to (a variation of) the approach of \cite{SSS}, and the estimates thus obtained can be summed to obtained the desired sharp $L^1$-estimate for the wave propagator.

We point out that microlocal analysis and Fourier integral operator techniques have already been used in the study of the wave equation driven by a sub-Laplacian, especially for the analysis of propagation of singularities (see, e.g., \cite{CHT,GHK,Las,Le_prop,Le,Mel,Nach} and references therein); this also includes the use of a complex-phase oscillatory integral construction of a particular ``gaussian beam'' solution \cite{Le}. Our study, however, does not focus on particular solutions, and information on propagation of singularities in itself is not enough for our purposes, as we are looking for global, sharp $L^1$-estimates for the whole (spectrally localised) wave propagator.

\subsection{Sketch of the proof}\label{ss:sketch}
For the reader's convenience, we now summarize the main steps of the proof, including pointers to the corresponding sections of the paper. This allows us to introduce some of the notation that will be used throughout.

Recall that a $2$-step group $G$ can be identified to its Lie algebra $\lie{g} = \lie{g}_1 \oplus \lie{g}_2$ via the exponential map; correspondingly, we shall use the notation $\bx = (x,u) \in \lie{g}_1 \oplus \lie{g}_2$ for a point of $G$. In these exponential coordinates, Lebesgue measure is a Haar measure, and the group law is given by
\begin{equation}\label{eq:group_law}
(x,u) \cdot_G (x',u') = (x+x',u+u'+[x,x']/2).
\end{equation}
The sub-Laplacian $\opL$ on $G$ is invariant under left translations and is also homogeneous with respect to the automorphic dilations
\begin{equation}\label{eq:dilations}
\dil_r(x,u) = (rx,r^2 u).
\end{equation}
Similar properties are enjoyed by the sub-Riemannian structure on $G$, and in particular the sub-Riemannian distance of $\bx =(x,u) \in G$ from the origin satisfies
\[
\dist(0,\bx) \simeq \max\{ |x|,|u|^{1/2}\}.
\]

Our aim is to prove, when $G$ is a M\'etivier group, the estimate
\begin{equation}\label{eq:sketch_target}
\| \exp(\pm i\sqrt{\opL}) \chi(\sqrt{\opL}/\lambda) \|_{1 \to 1} \lessapprox \lambda^{(d-1)/2}, \qquad \lambda \gg 1,
\end{equation}
for some $\chi \in C^\infty_c(\Rpos)$. Here $A \lessapprox B$ stands for $A \lesssim_\varepsilon \lambda^\varepsilon B$ for all $\varepsilon > 0$. Thanks to left-invariance, this is the same as proving an $L^1$-estimate for the convolution kernel,
\[
\| \exp(\pm i\sqrt{\opL}) \chi(\sqrt{\opL}/\lambda) \delta_0 \|_{1} \lessapprox \lambda^{(d-1)/2}, \qquad \lambda \gg 1.
\]
As $\chi(\sqrt{\opL}/\lambda) \delta_0$ is essentially supported (up to negligible Schwartz tails) in a sub-Riemannian ball centred at the origin of radius $\simeq 1/\lambda$, we can use the finite propagation speed property for the wave equation to deduce that, after applying the half-wave propagator at time $\pm 1$, we obtain a function which has fast decay outside a sub-Riemannian ball of radius $\simeq 1+1/\lambda \simeq 1$; thus, as we show in Section \ref{s:spatialloc}, we are reduced to proving that
\[
\| \chr_{\overline{B}(0,2)} \exp(\pm i\sqrt{\opL}) \chi(\sqrt{\opL}/\lambda) \delta_0 \|_{1} \lessapprox \lambda^{(d-1)/2}, \qquad \lambda \gg 1,
\]
where $\overline{B}(\bx,r)$ denotes a closed sub-Riemannian ball of centre $\bx$ and radius $r$, and $\chr_S$ is the characteristic function of a set $S$.

As we aim at using microlocal analysis techniques to prove the above estimate, it becomes important to relate the spectral localisation given by $\chi(\sqrt{\opL}/\lambda)$ to a frequency localisation. As seen in \eqref{eq:euclidean_FIO}, when $\opL$ is the Laplacian on $\RR^d$, this spectral localisation corresponds to the frequency localisation $|\xi| \simeq \lambda$. For a sub-Laplacian $\opL$ on a $2$-step group, however, things are more complicated: indeed, if we write $\bxi = (\xi,\mu)$ for the frequency variable dual to $\bx = (x,u)$, then the spectral localisation $\sqrt{\opL} \simeq \lambda$ cannot simply correspond to $|\bxi| \simeq \lambda$, as the two conditions do not scale the same way under the automorphic dilations \eqref{eq:dilations}.
A refinement of these scaling considerations, combined with vanishing moment properties, allows us in Section \ref{s:spectrumvsfrequency} to show that, up to a negligible remainder, under the spectral localisation $\sqrt{\opL} \simeq \lambda$ we can restrict to the frequency localisation
\[
\max\{ |\xi|, |\mu|^{1/2} \} \approx \lambda
\]
(see Figure \ref{fig:freq_loc}), where $A \approx B$ stands for $A \lessapprox B$ and $B \lessapprox A$.

\smallskip

Let $\langle \cdot,\cdot\rangle$ be the inner product on $\lie{g}_1$ determined by the sub-Laplacian $\opL$, i.e., such that the vector fields $X_j$ in \eqref{eq:opL} form an orthonormal frame for the horizontal distribution. If $J_\mu$ is the skew-symmetric endomorphism of $\lie{g}_1$ defined by
\begin{equation}\label{eq:Jmu}
\langle J_\mu x, x' \rangle = \mu[x,x'] \qquad \text{for all } x,x' \in \lie{g}_1
\end{equation}
for any $\mu \in \lie{g}_2^*$, then the principal symbol of the sub-Laplacian $\opL$ can be written as $\Ham(\bx,\bxi)^2$, where
\[
\Ham(\bx,\bxi) = |\xi + J_\mu x/2|
\]
(recall that $\bxi = (\xi,\mu)$ denotes the frequency variable dual to $\bx = (x,u)$).
Notice that $J_\mu$ corresponds to the matrix of Poisson brackets of the symbols of the vector fields $X_j$ of \eqref{eq:opL}, which also appears in \cite[Corollary 1.4]{Mu} in reference to local solvability problems. Moreover, $G$ is M\'etivier if and only if $J_\mu$ is nondegenerate for all $\mu \neq 0$.

By looking at the Hamiltionian flow generated by $\Ham$ (see Section \ref{s:flow}), one notices that unit-length sub-Riemannian geodesics on a M\'etivier group starting at the origin, with initial covector $\bxi$ satisfying 
\begin{equation}\label{eq:freq_antiFIO}
|\mu| \approx \lambda^2, \qquad |\xi| \lessapprox \lambda,
\end{equation}
remain in a Euclidean ball of radius $\lessapprox 1/\lambda$ centred at the origin (see Figure \ref{fig:geo_A}). This suggests that, for the frequency region \eqref{eq:freq_antiFIO}, one could take advantage of a reduced propagation, thus obtaining the desired $L^1$-estimate via a simple Cauchy--Schwarz argument, much as in \eqref{eq:sharp_euclidean_euristics}. As discussed in Section \ref{s:antiFIO}, this idea can be made precise by means of a variant of the ``first-layer weighted Plancherel-type estimates'' used in \cite{Heb,MSt_mult} and many other works on sharp multiplier theorems on $2$-step groups. Thus, for the frequency region \eqref{eq:freq_antiFIO}, which we shall call the \emph{anti-FIO region}, since it does not seem amenable to FIO techniques, there is also no need to use FIO techniques to achieve the desired estimate.

\smallskip

We thus remain with the frequency region
\begin{equation}\label{eq:freq_FIO}
|\xi| \approx \lambda, \qquad |\mu| \lll \lambda^2,
\end{equation}
where $A \lll B$ means $A \lesssim \lambda^{-\varepsilon} B$ for some $\varepsilon > 0$; we shall call \eqref{eq:freq_FIO} the \emph{FIO region}, as it will be tackled via FIO techniques. We shall actually split this region further.

\smallskip

In the part where $|\mu| \ll |\xi|$, the sub-Laplacian $\opL$ microlocally behaves like an elliptic operator in a neighbourhood of the origin. Thus, this frequency region, called the \emph{elliptic region}, can be tackled by adapting standard techniques for elliptic operators.
A similar observation was already used in \cite{MMNG} to prove necessary conditions for spectral multiplier and wave equation bounds for sub-Laplacians; in Section \ref{s:elliptic} we shall make use of the ``microlocal parametrix'' from \cite{MMNG} and the key method of \cite{SSS} to prove the desired estimate for the wave propagator in the elliptic region.

\smallskip

We are finally reduced to the \emph{nonelliptic region}
\[
|\xi| \approx \lambda, \qquad |\xi| \lesssim |\mu| \lll \lambda^2.
\]
It is convenient here to split this region dyadically according to the value of $|\mu|/|\xi|$, i.e., set $|\mu|/|\xi| \simeq 2^\ell$ for some $\ell \in \ZZ$ with $1 \lesssim 2^\ell \lll \lambda$. We now scale automorphically by $2^\ell$ each of the pieces; this allows us to achieve, after scaling, a frequency localisation where $|\xi| \simeq |\mu|$, at the cost of moving from time $\pm 1$ to time $t = \pm 2^\ell$ in the wave propagator. Namely, after scaling, we are reduced to the region
\begin{equation}\label{eq:freq_nonelliptic_scaled}
|\xi| \simeq |\mu| \approx \lambda/|t|, \qquad 1 \lesssim |t| \lll \lambda.
\end{equation}
We point out that, as the number of dyadic pieces grows logarithmically in $\lambda$, for our purposes it will be enough to obtain uniform estimates for each single piece, because we allow for $\lambda^\varepsilon$-losses in the final estimate. The advantage of the new frequency localisation \eqref{eq:freq_nonelliptic_scaled} after scaling is that we effectively need to work with a single ``isotropic'' frequency scale $\lambda/|t| \ggg 1$, much as in the Euclidean and elliptic cases. On the other hand, here we must work with the wave propagator at arbitrarily large time $|t|$, and correspondingly, with a spatial localisation to a sub-Riemannian ball of radius $\simeq |t|$, which instead brings considerable new challenges.

This is where the approach of \cite{LSV,SV} to obtain a global FIO representation for the wave propagator comes into play. Indeed, by an adaptation of that approach, in Section \ref{s:LSVparametrix} we obtain an approximate FIO representation for the wave propagators $\cos(t\sqrt{\opL})$ and $\sqrt{\opL} \sin(t\sqrt{\opL})$, by using the phase function
\begin{equation}\label{eq:phase_intro}
\phi_*(t,\bx,\by,\bxi) = (\bx - \bx_*^t) \cdot \bxi_*^t + \frac{i}{4} \langle |J_\mu| (x-x_*^t),x-x_*^t \rangle.
\end{equation}
Here $t \mapsto (\bx_*^t(\by,\bxi),\bxi_*^t(\by,\bxi))$ denotes the integral curve starting at $(\by,\bxi)$ relative to the Hamiltonian flow generated by the square root $\Ham$ of the symbol of $\opL$, and as usual we write $\bx_*^t = (x_*^t,u_*^t)$ and $\bxi_*^t = (\xi_*^t,\mu_*^t)$. Moreover $|J_\mu| \defeq (-J_\mu^2)^{1/2}$ for all $\mu \in \lie{g}_2^*$, where $J_\mu$ is the skew-symmetric endomorphism of $\lie{g}_1$ defined in \eqref{eq:Jmu}.

\smallskip

The particular form of the imaginary part $\Im \phi_*$ of the phase function in \eqref{eq:phase_intro} is somewhat different from what is suggested in \cite{CLV,LSV,SV} for the wave equation driven by elliptic operators. As a matter of fact, those works construct a global FIO parametrix for the wave propagator up to smoothing terms; however, this information per se is not enough to achieve uniform $L^1$-estimates in our case, given that we are working with an arbitrarily large time parameter. Indeed, the construction of the amplitude of the FIO parametrix goes through an iterative solution of certain ``transport equations'', where each iteration involves an integration in the time variable from $0$ to $t$, thus leading to a potential blow-up in $|t|$. However, our particular choice of the imaginary part in \eqref{eq:phase_intro} leads to substantial simplifications in the construction of the amplitude: namely, in Section \ref{s:opLambda_2step} we manage to obtain explicit and relatively simple expressions for the coefficients appearing in the iterative solution of the transport equations, which are valid on any $2$-step group. In turn, these expressions show that there is no blow-up in $|t|$, at least in the case of M\'etivier groups considered here.

Moreover, the fact that the imaginary part $\Im \phi_*$ in \eqref{eq:phase_intro} at time $t=0$ depends only on the first-layer spatial variables $x,y$ and the second-layer frequency variable $\mu$ significantly helps to keep control on $L^1$-norms when we introduce the complex phase in Section \ref{ss:introcomplex}.

\smallskip

Thanks to the FIO parametrix construction, we are finally reduced to proving $L^1$-estimates for oscillatory integrals of the form
\[
I_m(t,\bx) = \int e^{i\phi(t,\bx,\bxi)} \, \chi(|\xi|/2^m) \, \chi(|\mu|/2^m) \, \Den_\phi(t,\bx,\bxi) \,d\bxi,
\]
where $\phi \defeq \phi_*|_{\by = 0}$
and $\Den_\phi \defeq \sqrt{\det \partial_{\bxi} \nabla_{\bx} \phi}$, under the frequency localisation
\[
2^m \approx \lambda/|t|, \qquad 1 \lesssim |t| \lll \lambda.
\]
Notice that we can take $\by = 0$ as we are working with convolution kernels, thanks to translation-invariance. One can actually show that $|\Den_\phi| \simeq |t|^{1/2}$ here, and in particular the mixed hessian of $\phi$ is never degenerate. Now, an adaptation of the key method of \cite{SSS}, taking also the imaginary part of $\phi$ into account, shows that
\[
\|\chr_K \, I_m(t,\cdot)\|_{1} \lesssim_{K,T} 2^{m(d-1)/2} \qquad \text{for all } K \Subset G, \ |t|\leq T;
\]
this is enough for our purposes as long as $|t|$ remains bounded. However, we also need to consider large values of $|t|$, and the $t$-dependence of expressions such as $\bx^t \defeq \bx_*^t|_{\by = 0}$ and $\bxi^t \defeq \bxi_*^t|_{\by = 0}$ appearing in $\phi$ makes it possible for the above estimate again to blow up for large $|t|$; specifically, the method of \cite{SSS} is based on repeated integration by parts in $\bxi$, and $\bxi$-differentiation of $\bx^t$ and $\bxi^t$ produces powers of $|t|$. On the other hand, as we show in Section \ref{s:sss}, a further decomposition and suitable non-linear changes of variables in $\mu$ (see Figure \ref{fig:freq_mushear}) allow us to take advantage of the ``almost-periodicity'' in $t$ of the aforementioned expressions (cf.\ Remark \ref{rem:notMS} below), and eventually reduce to ``normalised'' oscillatory integrals, to which again an adaptation of the method of \cite{SSS} can be applied. The final estimate actually entails some growth in $|t|$, namely, we obtain
\[
\| \chr_{\overline{B}(0,4|t|)} I_m(t,\cdot)\|_1 \lessapprox |t|^{d_2-1/2} 2^{m(d-1)/2},
\]
but the inequality $d_2 \leq (d-1)/2$, which is true on M\'etivier groups, and the frequency localisation $2^m \approx \lambda/|t|$ shows that this bound is still good enough to obtain the desired estimate \eqref{eq:sketch_target} for the wave propagator.

\subsection{Notation}\label{ss:notation}
We write $\chr_{S}$ for the characteristic function of a set $S$. We denote by $\Rnon$ and $\Rpos$ the sets $[0,\infty)$ and $(0,\infty)$ of nonnegative and positive real numbers respectively. We write $\NN$ for the set of natural numbers (including $0$) and $\Npos$ for the set $\NN \setminus \{0\}$ of positive integers. Furthermore, we write $\dot \RR^n$ for $\RR^n \setminus \{0\}$.

If $A$ and $B$ are two nonnegative numbers, we write $A \lesssim B$ to indicate that there exists a constant $C \in (0,\infty)$ such that $A \leq C B$. We also write $A \simeq B$ to denote the conjunction of $A \lesssim B$ and $B \lesssim A$. Subscripted variants such as $\lesssim_p$ and $\simeq_p$ are used to indicate that the implicit constants may depend on a parameter $p$.

Unless otherwise specified, elements of $\RR^n$ or $\CC^n$ are thought of as column vectors.
For a scalar-valued function $f = f(x)$ with variable $x \in \RR^n$, we write $\nabla_x f$ for the column vector of partial derivatives $(\partial_{x_j} f)_j$, and $\partial_x f = (\nabla_x f)^T$ for the corresponding row vector. Correspondingly, if $v = v(x)$ is a (column) vector-valued function, then $\partial_x v$ is a matrix, whose columns are the partial derivatives $\partial_{x_j} v$.
The hessian matrix of a scalar-valued function $f = f(x)$ is therefore denoted by $\partial_x \nabla_x f$.
We shall also write $\langle v,w\rangle$ for $\sum_{j} v_j \overline{w_j}$ and $v \cdot w$ for $\sum_j v_j w_j$; clearly $\langle v,w \rangle = v \cdot w$ whenever $w$ is real. Due to our choice of coordinates on $G$ in Section \ref{ss:fcalculus} below, the notation $\langle \cdot,\cdot \rangle$ is consistent with that for the inner product associated with the sub-Laplacian introduced in Section \ref{ss:sketch}.

\section{Preliminaries}

\subsection{A dyadic partition of unity}\label{ss:dyadicpartition}
Throughout this work, we shall frequently make use of dyadic decompositions implemented through a partition of unity.
To this purpose, we fix an even nonnegative function $\chi_1 \in C^\infty_c(\dot\RR)$ such that $\supp \chi_1|_{\Rpos} \subseteq [1/2,2]$ and
\[
\sum_{k \in \ZZ} \chi_1(2^k \cdot) = 1 \quad\text{on } \Rpos.
\]
We also set
\[
\chi_0 \defeq \sum_{k > 0} \chi_1(2^k \cdot);
\]
notice that $\chi_0 \in C^\infty_c(\RR)$ and $\supp \chi_0 \subseteq [-1,1]$, and moreover
\begin{equation}\label{eq:dyadicpartition}
\chi_0 + \sum_{k \in \NN} \chi_1(2^{-k} \cdot) = 1 \quad\text{on } \RR.
\end{equation}
Further, we choose $\tilde\chi_1 \in C^\infty_c(0,\infty)$ such that $\tilde\chi_1 \chi_1 = \chi_1$.

\subsection{Functional calculus for the sub-Laplacian}\label{ss:fcalculus}
We assume from now on that $G$ is a $2$-step stratified group.
As in Section \ref{ss:sketch}, via the exponential map we identify $G$ with its Lie algebra $\lie{g} = \lie{g}_1 \oplus \lie{g}_2$.
Let $\opL$ be a homogeneous sub-Laplacian on $G$, as in \eqref{eq:opL}, and let $\langle \cdot,\cdot \rangle$ the corresponding inner product on $\lie{g}_1$, i.e., the one for which the vector fields in \eqref{eq:opL} form an orthonormal frame for the horizontal distribution.

\smallskip

We choose linear coordinates on $\lie{g}_1$ and $\lie{g}_2$, so that these spaces are identified with $\RR^{d_1}$ and $\RR^{d_2}$ respectively; the coordinates on $\lie{g}_1$ are chosen to be orthonormal with respect to the inner product. As a consequence, $G$ is identified with $\RR^{d_1}_x \times \RR^{d_2}_u$, with group law \eqref{eq:group_law},
and the Lebesgue measure $dx \, du$ is the Haar measure on $G$. We also define the automorphic dilations $\dil_r$ on $G$ as in \eqref{eq:dilations}, and we denote by $d = d_1+d_2$ and $Q = d_1 +2d_2$ the topological and homogeneous dimensions.

We denote by $\Sz(G)$ the Fr\'echet space of Schwartz functions on $G$ and by $\Sz'(G)$ the dual space of tempered distributions.
For a left-invariant linear operator $T$ acting on functions on $G$, we denote by $k_{T}$ its convolution kernel, i.e.,
\[
T \phi = \phi * k_T.
\]
The Schwartz Kernel Theorem ensures the existence of $k_T$ as a tempered distribution whenever $T : \Sz(G) \to \Sz'(G)$ is bounded; in this case, $T$ also extends to compactly supported distributions and we can write $k_T = T \delta_0$. This applies, e.g., when $T = F(\sqrt{\opL})$ for some bounded Borel function $F : \Rnon \to \CC$.

\smallskip

Let $\Sz(\Rnon)$ and $\Sz_e(\Rnon)$ be the Fr\'echet spaces of the restrictions to $\Rnon$ of Schwartz and even Schwartz functions on $\RR$ respectively. The following result, known as Hulanicki's Theorem \cite{H}, shows that $k_{F(\sqrt{\opL})}$ is an actual function on $G$ under suitable assumptions on $F$.

\begin{prp}\label{prp:hulanicki}
The linear mapping $F \mapsto k_{F(\sqrt{\opL})}$ is continuous
\begin{enumerate}[label=(\roman*)]
\item\label{en:hulanicki_even} from $\Sz_e(\Rnon)$ to $\Sz(G)$, and
\item\label{en:hulanicki_l1} from $\Sz(\Rnon)$ to $L^1(G)$.
\end{enumerate}
\end{prp}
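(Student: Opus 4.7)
The cornerstone is the classical fact that the heat kernel $h_s \defeq k_{e^{-s\opL}}$ lies in $\Sz(G)$ for every $s > 0$, with $\|h_s\|_1 = 1$ and the homogeneity $h_s(\bx) = s^{-Q/2} h_1(\dil_{s^{-1/2}} \bx)$; as a consequence one has Schwartz pointwise bounds $|h_s(\bx)| \lesssim_N s^{-Q/2}(1 + \rho(\bx)/\sqrt{s})^{-N}$ for every $N \in \NN$, where $\rho$ is a smooth homogeneous norm on $G$. This follows from H\"ormander's hypoellipticity applied to $\partial_s + \opL$ together with subelliptic regularity estimates, and is the only ``hard analysis'' input; the remainder of the argument is essentially a functional-calculus reduction.

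For part \ref{en:hulanicki_even}, I would use the characterisation of $\Sz(G) = \Sz(\RR^d)$ by the weighted seminorms $\|(1+\rho)^N (1+\opL)^k f\|_2$ for all $N,k \in \NN$, whose equivalence with the standard Schwartz seminorms is a consequence of subelliptic regularity and Euclidean Sobolev embedding. The $(1+\opL)^k$-part is automatic: $(1+\opL)^k k_{F(\sqrt{\opL})} = k_{F_k(\sqrt{\opL})}$ with $F_k(\lambda) \defeq (1+\lambda^2)^k F(\lambda) \in \Sz_e(\Rnon)$ depending continuously on $F$. The substantive step is the weight estimate, where I would write $F(\lambda) = \tilde F(\lambda^2)$ with $\tilde F \in \Sz(\Rnon)$ and represent
\begin{equation*}
 k_{F(\sqrt{\opL})} = \int_0^\infty g_{\tilde F}(s)\, h_s\, ds,
\end{equation*}
with $g_{\tilde F}$ obtained from $\tilde F$ by an inverse Laplace-type transform; the weighted bound then follows by combining the pointwise Schwartz estimates on $h_s$ with integrability of $g_{\tilde F}$ against suitable powers of $s$.

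For part \ref{en:hulanicki_l1}, the even case is immediate from \ref{en:hulanicki_even} and the continuous inclusion $\Sz(G) \hookrightarrow L^1(G)$. For general $F \in \Sz(\Rnon)$, extend $F$ to a Schwartz function on $\RR$ and split it into even and odd parts. Writing the odd part as $F_o(\lambda) = \lambda H(\lambda^2)$ with $H \in \Sz(\Rnon)$, I would invoke the Poisson subordination identity
\begin{equation*}
 e^{-t\sqrt{\opL}} = \frac{t}{2\sqrt{\pi}} \int_0^\infty s^{-3/2} e^{-t^2/(4s)}\, e^{-s\opL}\,ds,
\end{equation*}
giving $\|k_{e^{-t\sqrt{\opL}}}\|_1 \leq 1$ uniformly in $t > 0$, and then represent $F_o(\sqrt{\opL})$ as a superposition of Poisson operators via an inverse Laplace transform in the $\sqrt{\opL}$-variable. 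The delicate technical point, common to both parts, is producing these Laplace-type representations with densities whose ($s$-weighted) $L^1$-norms are continuously controlled by Schwartz seminorms of $F$; this requires carefully balancing the Schwartz decay of $F$ at infinity against its behaviour at $0$, and constitutes the technical heart of the argument.
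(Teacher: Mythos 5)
Your plan takes a genuinely different route from the paper's, and both halves of it rest on an integral representation that does not exist. In part (i) you want to write $\tilde F(s) = \int_0^\infty g_{\tilde F}(t)\,e^{-ts}\,dt$, and in part (ii) you want $F_o(\lambda) = \int_0^\infty g(t)\,e^{-t\lambda}\,dt$, in both cases with the density in $L^1(\Rpos)$ controlled by Schwartz seminorms of $F$. But a Laplace transform of an $L^1(\Rpos)$ density extends to a bounded analytic function on the half-plane $\{\Re z > 0\}$, and a generic element of $\Sz(\Rnon)$ — any nonzero compactly supported smooth function, or even $\tilde F(s)=e^{-s^2}$ — admits no such analytic extension (e.g.\ $e^{-z^2}$ is unbounded on vertical lines). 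Hence neither of the two representations you invoke exists. You flag this as "the delicate technical point\ldots which constitutes the technical heart of the argument," but it is not a matter of sharpening estimates: as written, the subordination step cannot be carried out, and some structurally different device (a contour integral, a dyadic cut-off, etc.) is needed.

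The paper's proof is much lighter and avoids this entirely. Part (i) is literally Hulanicki's theorem \cite{H} — continuity of $H\mapsto k_{H(\opL)}$ from $\Sz(\RR)$ to $\Sz(G)$ — followed by the substitution $F(\zeta)=H(\zeta^2)$; no re-derivation from heat kernel estimates is attempted. For part (ii), after the same even/odd split $F(\zeta)=F_e(\zeta)+\zeta F_o(\zeta)$ with $F_e,F_o\in\Sz_e(\RR)$, the odd part is decomposed dyadically using the partition of unity of Section 2.1, yielding $F = F_{-1} + \sum_{k\ge 0} 2^{-k} F_k(2^k\cdot)$ where each $F_k$ is a cut-off rescaled copy of $\zeta F_o(\zeta)$ supported in a fixed annulus, so $\{F_k\}_{k\ge -1}$ is bounded in $\Sz_e(\Rnon)$. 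Applying (i) to each piece, together with the automorphic-scaling invariance $\|k_{F_k(2^k\sqrt{\opL})}\|_1 = \|k_{F_k(\sqrt{\opL})}\|_1$ and the summability of the factors $2^{-k}$, gives the $L^1$ continuity with no Laplace or Poisson representation at all. You should adopt that mechanism (or an equivalent workaround) for the odd part; the Poisson subordination step must be dropped.
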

\begin{proof}
The result in \cite{H} states the boundedness of the linear mapping $H \mapsto k_{H(\opL)}$ from $\Sz(\RR)$ to $\Sz(G)$. The change of variables $F(\zeta) = H(\zeta^2)$ immediately gives the boundedness result in part \ref{en:hulanicki_even}.

As for part \ref{en:hulanicki_l1}, let us write $F \in \Sz(\Rnon)$ as $F(\zeta) = F_e(\zeta) + \zeta F_o(\zeta)$, where $F_e,F_o \in \Sz_e(\RR)$. Consequently, by \eqref{eq:dyadicpartition}, we can decompose
\[
F(\zeta) 
= F_{-1}(\zeta) + \sum_{k=0}^\infty 2^{-k} F_k(2^k \zeta), \qquad \zeta > 0,
\]
where $F_{-1}(\zeta) = F_e(\zeta) + \zeta F_o(\zeta) \chi_0(1/\zeta)$ and $F_k(\zeta) = \zeta F_o(2^{-k} \zeta) \chi_1(1/\zeta)$ for $k \in \NN$. It is easily seen that $\{F_k\}_{k \geq -1}$ is a bounded subset of $\Sz_e(\Rnon)$, with bounds only depending on the Schwartz bounds of $F$.
As a consequence, by part \ref{en:hulanicki_even} there exists a continuous seminorm $\varrho$ on $\Sz(\Rnon)$ such that
\[
\sup_{k \geq -1} \|k_{F_k(\sqrt{\opL})}\|_{1} \lesssim \varrho(F),
\]
and therefore
\[
\|k_{F_{\sqrt{\opL}}}\|_1 \leq \|k_{F_{-1}(\sqrt{\opL})}\|_1 + \sum_{k=0}^\infty 2^{-k} \|k_{F_{k}(2^k \sqrt{\opL})}\|_1 \lesssim \varrho(F),
\]
where we used the fact that, by automorphic scaling, $\|k_{F_{k}(2^k \sqrt{\opL})}\|_1 = \|k_{F_{k}(\sqrt{\opL})}\|_1$.
\end{proof}

\begin{rem}
Notice that the boundedness in part \ref{en:hulanicki_even} does not hold if $\Sz_e(\Rnon)$ is replaced by $\Sz(\Rnon)$, as shown for example by the Poisson kernels, corresponding to $F(\zeta) = e^{-\zeta}$. It is well known that Hulanicki's result holds more generally for left-invariant sub-Laplacians on Lie groups with polynomial growth (see, e.g., \cite{MRT}), and that actually similar results hold in even greater generality (with integral kernels replacing convolution kernels) for operators $\opL$ satisfying Gaussian-type heat kernel bounds on doubling manifolds (see, e.g., \cite[Theorem 6.1(iii)]{M_Kohn} or \cite{KP}).
\end{rem}

Another basic property that we shall frequently use is \emph{finite propagation speed} for the wave equation driven by $\opL$ (see, e.g., \cite{CM,Mel,Si}, which apply in greater generality than Carnot groups). Let $\overline{B}(0,r)$ denotes the closed sub-Riemannian ball on $G$ centred at the identity and of radius $r$.

\begin{prp}\label{prp:fps}
For all $t \in \RR$,
\begin{equation}\label{eq:fps}
\supp k_{\cos(t \sqrt{\opL})} \subseteq \overline{B}(0,|t|), \qquad t \in \RR,
\end{equation}
More generally, if $F : \RR \to \CC$ is an even function whose Fourier transform $\widehat{F}$ is supported in $[-r,r]$, then
\begin{equation}\label{eq:fps2}
\supp k_{F(\sqrt{\opL})} \subseteq \overline{B}(0,r).
\end{equation}
\end{prp}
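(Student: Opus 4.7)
My plan is to deduce \eqref{eq:fps2} from \eqref{eq:fps} via Fourier inversion, and to regard \eqref{eq:fps} itself as the classical finite-propagation-speed property for the wave equation, established by an energy estimate carried out in the sub-Riemannian setting (following \cite{CM,Mel,Si}).

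Granting \eqref{eq:fps}, I would first derive \eqref{eq:fps2}. Since $F$ is even and $\widehat F$ is continuous and compactly supported in $[-r,r]$, Fourier inversion yields
\[
F(\zeta) = \frac{1}{\pi}\int_0^r \widehat F(\tau) \cos(\tau\zeta)\,d\tau \qquad (\zeta \in \Rnon).
\]
For $F \in \Sz_e(\Rnon)$, the spectral theorem lifts this to an identity of bounded operators on $L^2(G)$; pairing $F(\sqrt{\opL})\phi$ with a test function for $\phi \in \Sz(G)$ and applying Fubini gives the corresponding convolution-kernel identity
\[
k_{F(\sqrt{\opL})} = \frac{1}{\pi}\int_0^r \widehat F(\tau)\, k_{\cos(\tau\sqrt{\opL})}\,d\tau
\]
in $\Sz'(G)$. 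Since each $k_{\cos(\tau\sqrt{\opL})}$ is supported in $\overline{B}(0,\tau) \subseteq \overline{B}(0,r)$ by \eqref{eq:fps}, so is $k_{F(\sqrt{\opL})}$. A general even $F$ with $\widehat F$ supported in $[-r,r]$ is automatically smooth and tempered; I would approximate it by $F_\epsilon \in \Sz_e(\Rnon)$ obtained by mollifying $\widehat F$ with a bump supported in $[-\epsilon,\epsilon]$ and inverse-Fouriering, and pass to the limit in $\Sz'(G)$ using Proposition \ref{prp:hulanicki} together with the boundedness of the spectral calculus on $L^2(G)$.

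For \eqref{eq:fps} itself, the plan is the standard energy argument. Given $f \in C_c^\infty(G)$ supported in $\overline{B}(\bx_0,R)$, the function $u(t,\cdot) \defeq \cos(t\sqrt{\opL})f$ solves $\partial_t^2 u + \opL u = 0$ with data $(f,0)$; for the local energy outside the light cone,
\[
E(t) \defeq \frac{1}{2}\int_{\dist(\bx,\bx_0) > R+|t|} \Bigl(|\partial_t u|^2 + \textstyle\sum_{j=1}^{d_1}|X_j u|^2\Bigr)(t,\bx)\,d\bx,
\]
differentiation in $t$, integration by parts against the wave equation, and Cauchy--Schwarz applied to the flux term on $\{\dist(\cdot,\bx_0) = R+|t|\}$ produce $E'(t) \leq 0$ for $t>0$ (and similarly $E'(t)\geq 0$ for $t<0$). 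Since $E(0)=0$, $u(t,\cdot)$ vanishes outside $\overline{B}(\bx_0, R+|t|)$. Approximating $\delta_0$ in $\Sz'(G)$ by $C_c^\infty$ functions supported in $\overline{B}(0,\epsilon)$ and letting $\epsilon \to 0$ yields $\supp k_{\cos(t\sqrt{\opL})} \subseteq \overline{B}(0,|t|)$.

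The main obstacle — and the only place where the sub-Riemannian structure genuinely enters — is the eikonal-type bound $\sum_{j=1}^{d_1} |X_j \dist(\cdot,\bx_0)|^2 \leq 1$ almost everywhere, which replaces the Riemannian identity $|\nabla \dist(\cdot,\bx_0)|=1$ and guarantees that the flux term in the energy identity has the correct sign. This bound is standard in the Carnot group setting (see \cite{ABB,Mo}). Given that the whole energy argument is classical in this context, I would write the proof of the proposition quite briefly, citing \cite{CM,Mel,Si} for \eqref{eq:fps} and detailing only the Fourier-inversion reduction \eqref{eq:fps}$\Rightarrow$\eqref{eq:fps2}.
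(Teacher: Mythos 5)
Your proposal is correct and follows essentially the same route as the paper, which states this proposition without proof, relying on the references \cite{CM,Mel,Si} for finite propagation speed; both the Fourier-inversion reduction from \eqref{eq:fps} to \eqref{eq:fps2} and the energy argument you sketch are the standard ingredients behind that citation. The one small thing worth being careful about is the regularity hypothesis on $F$: as stated, ``$\widehat F$ supported in $[-r,r]$'' should be read as a Paley--Wiener condition (so $F$ is smooth but possibly unbounded), and your mollification/approximation step together with the $L^2$ spectral calculus is exactly what is needed to make the kernel identity rigorous in that generality.
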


We shall also make use of the following \emph{vanishing moment property}.

\begin{prp}
The following hold.
\begin{enumerate}[label=(\roman*)]
\item\label{en:vm_poly} For every polynomial $p$ on $G$, there exists $N \in \NN$ such that $\opL^N p = 0$.
\item\label{en:vm_moment} For all polynomials $p$ and all $\chi \in C^\infty_c(\Rpos)$,
\begin{equation}\label{eq:vanishingmoments}
\int_G p \, k_{\chi(\sqrt{\opL})} = 0 .
\end{equation}
\end{enumerate}
\end{prp}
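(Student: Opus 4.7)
The plan is to deduce (ii) from (i) by a factorisation plus an integration-by-parts argument, so the bulk of the work lies in (i), which in turn reduces to a homogeneity/weight count.

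\textbf{For part (i).} Assign to each first-layer coordinate $x_j$ the weight $1$ and to each second-layer coordinate $u_k$ the weight $2$, so that any monomial $x^\alpha u^\beta$ is $\dil_r$-homogeneous of weight $|\alpha| + 2|\beta|$ in the sense that $(x^\alpha u^\beta) \circ \dil_r = r^{|\alpha|+2|\beta|}\, x^\alpha u^\beta$. A direct computation using the standard formula for left-invariant vector fields on a $2$-step Carnot group shows that each $X_j$ is homogeneous of degree $-1$, in the sense that $X_j(f\circ\dil_r) = r\,(X_j f)\circ\dil_r$ for all $f \in C^\infty(G)$; consequently $\opL$ is homogeneous of degree $-2$ and, for any monomial, $\opL^N(x^\alpha u^\beta)$ is a polynomial that is $\dil_r$-homogeneous of weight $|\alpha| + 2|\beta| - 2N$. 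Since all coordinates carry positive weight, a polynomial of strictly negative weight must vanish identically; thus, given $p$, it suffices to choose $N$ strictly larger than half the maximum of $|\alpha| + 2|\beta|$ over the monomials appearing in $p$.

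\textbf{For part (ii).} Let $p$ be a polynomial and $N$ be as in (i). Since $\chi \in C^\infty_c(\Rpos)$ is supported away from the origin, we may factor $\chi(\zeta) = \zeta^{2N}\psi(\zeta)$ with $\psi(\zeta) := \chi(\zeta)/\zeta^{2N}$, and $\psi$ again belongs to $C^\infty_c(\Rpos)$; extending $\psi$ by reflection to an even Schwartz function on $\RR$, Proposition~\ref{prp:hulanicki}\ref{en:hulanicki_even} yields that $k_\psi := k_{\psi(\sqrt{\opL})} \in \Sz(G)$. By the functional calculus and the left-invariance of $\opL$ we have $\chi(\sqrt{\opL}) = \opL^N \psi(\sqrt{\opL})$, and hence $k_{\chi(\sqrt{\opL})} = \opL^N k_\psi$. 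Finally, since $G$ is unimodular, each $X_j$ is formally skew-adjoint with respect to Haar measure, so $\opL$ is formally self-adjoint; as $k_\psi$ together with all its derivatives is rapidly decreasing while $p$ is polynomially bounded, $N$ integrations by parts are legitimate and yield
\[
\int_G p \, k_{\chi(\sqrt{\opL})} = \int_G p \cdot \opL^N k_\psi = \int_G (\opL^N p) \cdot k_\psi = 0.
\]

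The argument is essentially routine, and I do not anticipate a serious obstacle. The only subtlety worth flagging is that the factorisation $\chi(\zeta) = \zeta^{2N}\psi(\zeta)$ crucially exploits $\supp\chi \subseteq \Rpos$ (and not merely $\supp\chi \subseteq \Rnon$), and correspondingly Hulanicki's theorem must be invoked in the Schwartz form after extending $\psi$ evenly; the $L^1$-form in Proposition~\ref{prp:hulanicki}\ref{en:hulanicki_l1} would not suffice to justify the integration by parts against the unbounded polynomial $p$.
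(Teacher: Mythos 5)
Your proposal is correct and follows essentially the same route as the paper: part (i) via the homogeneity bookkeeping (each application of $\opL$ drops the $\dil_r$-weight by $2$, and no nonzero polynomial has negative weight), and part (ii) via the factorisation $\chi(\zeta)=\zeta^{2N}\psi(\zeta)$ with $\psi\in C^\infty_c(\Rpos)$, identifying $k_{\chi(\sqrt{\opL})}=\opL^N k_{\psi(\sqrt{\opL})}$ and integrating by parts $N$ times against the Schwartz kernel $k_{\psi(\sqrt{\opL})}$. The paper only states the factorisation and the integration by parts in one line; you spell out the details (even extension of $\psi$ to invoke Proposition~\ref{prp:hulanicki}\ref{en:hulanicki_even}, formal self-adjointness via divergence-free $X_j$, rapid decay vs. polynomial growth), but this is elaboration, not a different argument.
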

\begin{proof}
Part \ref{en:vm_poly} is easily seen by observing that $\opL$ is a differential operator with polynomial coefficients, thus $\opL$ maps polynomials into polynomials; moreover, by homogeneity, an application of $\opL$ decreases the $\dil_r$-homogeneity degree by $2$. 

As for part \ref{en:vm_moment}, if $N \in \NN$ is taken as in part \ref{en:vm_poly}, then we can write $k_{\chi(\sqrt{\opL})} = \opL^N k_{\chi_N(\sqrt{\opL})}$ for a suitable $\chi_N \in C^\infty_c(\Rpos)$ and use repeated integration by parts.
\end{proof}

\subsection{H-type and M\'etivier groups}\label{ss:metivier}
For all $\mu \in \lie{g}^*_2$, let $J_\mu : \lie{g}_1 \to \lie{g}_1$ be the skew-symmetric linear map defined by \eqref{eq:Jmu}, where $\langle \cdot, \cdot \rangle$ is the inner product associated with the sub-Laplacian $\opL$.

Recall that the group $G$ is called a \emph{M\'etivier group} if, for any $\mu \neq 0$, the skew-symmetric bilinear form $\mu[\cdot,\cdot]$ on $\lie{g}_1$ is non-degenerate; this is equivalent to the fact that $J_{\mu}$ is non-degenerate for any $\mu \neq 0$, i.e., to the fact that
\begin{equation}\label{eq:metivier}
|J_\mu x| \simeq |\mu||x| 
\end{equation}
for all $\mu \in \lie{g}_2^*$ and $x \in \lie{g}_1$. If the stronger condition
\[
|J_\mu x| = |\mu| |x|
\]
holds, then $G$ is called a \emph{Heisenberg-type group}; this is equivalent to saying that
\begin{equation}\label{eq:Htype}
-J_\mu^2 = |\mu|^2 I
\end{equation}
for all $\mu \in \lie{g}_2^*$.

\begin{prp}\label{prp:modJmu_analytic}
Let 
\[
\Omega \defeq \{ \mu \in \lie{g}_2^* \tc \rk J_\mu \text{ is maximal} \}.
\]
Let moreover $P_0^\mu$ denote the orthogonal projection onto $\Ker J_\mu$.
Then:
\begin{enumerate}[label=(\roman*)]
\item $\Omega$ is a homogeneous Zariski-open subset of $\lie{g}_2^* \setminus \{0\}$.
\item $\mu \mapsto P_0^\mu$ is a $0$-homogeneous rational function on $\lie{g}_2^*$, real-analytic on $\Omega$.
\item $\mu \mapsto |J_\mu| \defeq (-J_\mu^2)^{1/2}$ is a $1$-homogeneous Lipschitz-continuous function on $\lie{g}_2^*$, which is real-analytic on $\Omega$.
\end{enumerate}
\end{prp}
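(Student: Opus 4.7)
For part (i), the plan is to observe that $\mu \mapsto J_\mu$ is a linear map from $\lie{g}_2^*$ to the space of real skew-symmetric endomorphisms of $\lie{g}_1$, so that, in any chosen basis of $\lie{g}_1$, the matrix entries of $J_\mu$ are linear in $\mu$ and therefore all $k \times k$ minors of $J_\mu$ are polynomials in $\mu$. Setting $r_{\max} \defeq \max_\nu \rk J_\nu$, the condition $\rk J_\mu = r_{\max}$ is equivalent to the non-vanishing of at least one $r_{\max} \times r_{\max}$ minor, which exhibits $\Omega$ as the complement in $\lie{g}_2^*$ of a proper algebraic subvariety, and hence as a Zariski-open subset. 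Homogeneity is immediate from $J_{r\mu} = r J_\mu$, which preserves the rank for $r \neq 0$; moreover, $0 \notin \Omega$ since $r_{\max} \geq 2$ (the condition $[\lie{g}_1,\lie{g}_1] = \lie{g}_2 \neq \{0\}$ forces $J_\mu \neq 0$ for some $\mu$, and any nonzero real skew-symmetric matrix has even rank at least $2$).

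For part (ii), on $\Omega$ the kernel dimension $d_1 - r_{\max}$ is constant, and the characteristic polynomial of the positive semidefinite matrix $-J_\mu^2$ factors as
\[
\det(\lambda I + J_\mu^2) = \lambda^{d_1 - r_{\max}} \, q_\mu(\lambda),
\]
where the coefficients of $q_\mu$ in the variable $\lambda$ are polynomials in $\mu$ and $q_\mu(0) \neq 0$ on $\Omega$. Setting $r_\mu(\lambda) \defeq q_\mu(\lambda)/q_\mu(0)$, one has $r_\mu(0) = 1$ while $r_\mu$ vanishes at every nonzero eigenvalue of $-J_\mu^2$, so by the spectral theorem $r_\mu(-J_\mu^2)$ is the orthogonal projection onto $\Ker(-J_\mu^2) = \Ker J_\mu$. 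This gives the rational-function formula $P_0^\mu = q_\mu(-J_\mu^2)/q_\mu(0)$, real-analytic on $\Omega$; the $0$-homogeneity then follows directly from $\Ker J_{r\mu} = \Ker J_\mu$ for $r \neq 0$.

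For part (iii), the $1$-homogeneity is immediate from $-J_{r\mu}^2 = r^2(-J_\mu^2)$. For real-analyticity on $\Omega$, the plan is to introduce $B(\mu) \defeq -J_\mu^2 + P_0^\mu$: on $\Omega$, this matrix equals $-J_\mu^2$ on $(\Ker J_\mu)^\perp$ and the identity on $\Ker J_\mu$, hence is positive definite, and it depends real-analytically on $\mu$ by part (ii); the square root on the open set of positive definite matrices is real-analytic (e.g.\ via holomorphic functional calculus), so $|J_\mu| = B(\mu)^{1/2} - P_0^\mu$ is real-analytic on $\Omega$. For the global Lipschitz continuity on $\lie{g}_2^*$, the plan is to reduce to a matrix absolute-value inequality: since $J_\mu$ is real skew-symmetric, the operator $iJ_\mu$ is Hermitian on $\CC^{d_1}$ with $|iJ_\mu| = |J_\mu|$, and the classical bound
\[
\||A| - |B|\|_{\mathrm{HS}} \leq \sqrt{2}\,\|A - B\|_{\mathrm{HS}}
\]
for Hermitian matrices $A,B$ (provable via the decomposition $A = A_+ - A_-$, $|A| = A_+ + A_-$ and the non-negativity of $\tr(A_+ B_-)$) together with the linearity of $\mu \mapsto J_\mu$ delivers the estimate in Frobenius norm, and hence in any matrix norm by finite-dimensionality. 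The main obstacle is precisely this global Lipschitz bound: a direct Powers--Størmer-type inequality $\|A^{1/2} - B^{1/2}\|_{\mathrm{op}} \lesssim \|A - B\|_{\mathrm{op}}^{1/2}$ would only yield H\"older continuity of exponent $1/2$, and upgrading to Lipschitz requires both the self-adjoint character of $iJ_\mu$ and the fact that $\mu \mapsto J_\mu$ is \emph{linear} rather than merely continuous.
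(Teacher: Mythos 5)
Your proposal is correct, and for part (iii) it takes a genuinely different (and arguably cleaner) route than the paper. On parts (i) and (ii) the approaches are substantially parallel: you characterize $\Omega$ via the non-vanishing of some maximal minor of $J_\mu$, whereas the paper works with the coefficient $p_{d_1-r}(\mu)$ of the characteristic polynomial of $B_\mu = -J_\mu^2$; both exhibit $\Omega$ as Zariski-open, and the paper's choice has the small convenience that the same polynomial reappears in the rational formula for $P_0^\mu$, but your argument for (ii) recovers essentially the identical formula $P_0^\mu = q_\mu(-J_\mu^2)/q_\mu(0)$. The real divergence is in (iii). For analyticity, the paper fixes $\mu_0 \in \Omega$, encloses the nonzero spectrum of $B_\mu$ in a fixed rectangle $\Gamma_{m,M}$ uniformly for $\mu$ near $\mu_0$, and represents $|J_\mu|$ by a Cauchy contour integral; you instead observe that $B(\mu) \defeq -J_\mu^2 + P_0^\mu$ is positive definite on $\Omega$ and depends real-analytically on $\mu$ by (ii), then use real-analyticity of the square root on the positive-definite cone and the identity $|J_\mu| = B(\mu)^{1/2} - P_0^\mu$. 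Your device neatly converts the semi-definite square root into a definite one in a single global formula, avoiding the choice of contour and the need to track continuity of eigenvalues, at the price of relying on (ii). For the global Lipschitz bound the paper simply cites Bhatia; you give a self-contained proof via $|iJ_\mu| = |J_\mu|$ and the Hermitian inequality $\||A|-|B|\|_{\mathrm{HS}} \leq C\|A-B\|_{\mathrm{HS}}$. One small remark: the argument you sketch, expanding $\||A|-|B|\|_{\mathrm{HS}}^2 - \|A-B\|_{\mathrm{HS}}^2 = -4\tr(A_+B_-) - 4\tr(A_-B_+) \leq 0$, actually yields the sharp constant $C=1$ for Hermitian $A,B$, so the $\sqrt{2}$ you quote (which is the general, non-self-adjoint constant) is weaker than what your own argument delivers — harmless, but worth noting. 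You also make explicit the small point that $0 \notin \Omega$ (via $r_{\max}\geq 2$), which the paper leaves implicit.
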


\begin{rem}
If $G$ is a M\'etivier group, then $\Omega=\lie{g}_2^*\setminus \{0\}$. This is a crucial difference with the homogeneous Zariski-open set $\lie{g}_{2,r}^*$ considered, e.g., in \cite[Lemma 4]{MM_newclasses}, related to the analyticity of all eigenvalues and eigenprojections of $J_\mu$, which in general is smaller than $\Omega$. The fact that we do not need to consider separate eigenvalues, but we can work with $J_\mu$ (and $|J_\mu|$) as a whole, is one of the reasons why our approach extends to non-H-type M\'etivier groups (see Remark \ref{rem:notMS} below).
\end{rem}

\begin{proof}
As $J_\mu$ is a skew-symmetric endomorphism of $\lie{g}_1$ depending linearly on $\mu$, $B_\mu \defeq -J_\mu^2$ is a nonnegative symmetric endomorphism of $\lie{g}_1$ and a degree-$2$ homogeneous polynomial in $\mu$. We can therefore write the characteristic polynomial of $B_\mu$ as
\[
\det(B_\mu - \lambda I) = \sum_{j=0}^{d_1} (-1)^j p_j(\mu) \lambda^j,
\]
where each $p_j$ is a $2(d_1-j)$-homogeneous polynomial; moreover, if
\[
b_1^\mu \geq \dots \geq b_{d_1}^\mu \geq 0
\]
are the eigenvalues of $B_\mu$ (repeated according to their multiplicities), then
\[
\det(B_\mu - \lambda I)=\prod_{k=1}^{d_1}( b_k^\mu-\lambda),
\]
and so 
\[
p_j(\mu) = \sum_{\substack{A \subseteq \{1,\dots,d_1\} \\ |A| = d_1-j}} \prod_{k \in A} b_k^\mu.
\]
Let $r \defeq \max \{ \rk J_\mu \tc \mu \in \lie{g}_2^* \}$. Then clearly $p_j \equiv 0$ for $j < d_1-r$, and $p_{d_1-r}(\mu)=\prod_{k=1}^{r} b_k^\mu$, so 
\[
\Omega = \{ \mu \in \lie{g}_2^* \tc \rk J_\mu = r \} = \{ \mu \in \lie{g}_2^* \tc p_{d_1-r}(\mu) \neq 0 \},
\]
whence it follows that $\Omega$ is a homogeneous Zariski-open subset of $\lie{g}_2^* \setminus \{0\}$.

Moreover,
\[
q^\mu(\lambda) \defeq \sum_{j=0}^{r} (-1)^j p_{j+d_1-r}(\mu) \lambda^j  = (-\lambda)^{-(d_1-r)} \det(B_\mu-\lambda I) 
\]
is a degree-$r$ polynomial in $\lambda$, whose coefficients are polynomials in $\mu$, with the property that
\[
q^\mu(b_k^\mu) = 0 \ \text{for } k=1,\dots,r, \qquad q^\mu(0) = p_{d_1-r}(\mu),
\]
whence, for all $\mu \in \Omega$,
\[
q^\mu(B_\mu) = p_{d_1-r}(\mu) P_0^\mu,
\]
as $P_0^\mu$ is the eigenprojection of $B_\mu$ relative to the eigenvalue $0$; this shows that $P_0^\mu = q^\mu(B_\mu)/p_{d_1-r}(\mu)$ is a $0$-homogeneous rational function of $\mu$ on $\Omega$.

Clearly $|J_\mu| = \sqrt{B_\mu}$ is a $1$-homogeneous function of $\mu$, and its Lipschitz continuity is proved, e.g., in \cite[Corollary 5.3]{Bhatia_abs}.
Now, it is well known (see, e.g., \cite[Lemma 4]{MM_newclasses}) that the eigenvalues $b_k^\mu$ are continuous functions of $\mu \in \lie{g}_2^*$. In particular, for any fixed $\mu_0 \in \Omega$, there exists an open neighbourhood $U \subseteq \Omega$ of $\mu_0$ such that $M \defeq \sup_{\mu \in U} b_1^\mu < \infty$ and $m \defeq \inf_{\mu \in U} b_r^\mu > 0$. In particular, if $\Gamma_{m,M} = \{ z \in \CC \tc m/2 \leq \Re z \leq 2M, |\Im z| \leq 1 \}$, then all the nonzero eigenvalues of $B_\mu$ are contained in $\Gamma_{m,M}$ for all $\mu \in U$, and we can write $|J_\mu|$ via the contour integral
\[
|J_\mu| = \sqrt{B_\mu} = \frac{1}{2\pi i} \int_{\partial \Gamma_{m,M}} z^{1/2} (z-B_\mu)^{-1} \,dz,
\]
where $z \mapsto z^{1/2}$ is the standard branch of the complex square root on $\{ z \in \CC \tc \Im z > 0\}$ (cf.\ \cite[Section II.1.4]{Kato}); this representation shows that $\mu \mapsto |J_\mu|$ is real-analytic on $U$, and therefore (by taking an arbitrary $\mu_0$) on the whole of $\Omega$.
\end{proof}

\section{Spatial localisation via finite propagation speed}\label{s:spatialloc}

Our main objective in this and the following sections will be the proof, through a series of reductions, of the key estimate \eqref{eq:sketch_target} with $\chi=\chi_1$, that is,
\begin{equation}\label{eq:target_first}
\| \exp(\pm i\sqrt{\opL}) \chi_1(\sqrt{\opL}/\lambda) \delta_0 \|_{1} \lessapprox \lambda^{(d-1)/2}
\end{equation}
for all $\lambda \geq 1$. We shall see in Section \ref{s:mainproofs} how Theorem \ref{thm:main} follows from \eqref{eq:target_first}.

We start by proving the following localisation estimate. The proof, which exploits the finite propagation speed property stated in Proposition \ref{prp:fps}, is analogous to that of \cite[Proposition 8.8]{MSe}; we include the details as similar ideas will be used again in later sections.

\begin{prp}\label{prp:spatial_loc}
For all $\lambda \geq 1$,
\[
\exp(\pm i\sqrt{\opL}) k_{\chi_1(\sqrt{\opL}/\lambda)} = \chr_{\overline{B}(0,2)} \exp(\pm i\sqrt{\opL})  k_{\chi_1(\sqrt{\opL}/\lambda)} + R_\lambda,
\]
where
\[
\|R_\lambda\|_1 \lesssim_N \lambda^{-N}
\]
for all $N \in \NN$.
\end{prp}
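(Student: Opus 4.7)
The plan is to reduce the statement to an $L^1$-bound on a single convolution kernel, and then decompose that kernel into a piece whose Fourier transform on the time side has compact support (controlled via finite propagation speed, Proposition \ref{prp:fps}) and a negligible Schwartz remainder (controlled via Hulanicki's theorem, Proposition \ref{prp:hulanicki}).

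By left-invariance, $\exp(\pm i\sqrt{\opL}) k_{\chi_1(\sqrt{\opL}/\lambda)} = k_{F(\sqrt{\opL})}$, where $F(\zeta) \defeq e^{\pm i\zeta}\chi_1(\zeta/\lambda)$ for $\zeta \in \Rnon$. Since $\chi_1$ is even and vanishes in a neighbourhood of $0$ (and $\lambda \geq 1$), the formula $F^e(\zeta) \defeq e^{\pm i|\zeta|}\chi_1(\zeta/\lambda)$ defines a smooth, even, compactly supported extension of $F$ to the whole of $\RR$, and $k_{F(\sqrt{\opL})} = k_{F^e(\sqrt{\opL})}$ since $\sqrt{\opL}$ is nonnegative. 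Hence it suffices to prove that $(1-\chr_{\overline{B}(0,2)}) k_{F^e(\sqrt{\opL})}$ has $L^1$-norm $O_N(\lambda^{-N})$ for every $N$.

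To this end I would study $\widehat{F^e}$. Splitting the defining integral at $\zeta = 0$ and substituting $\zeta = \lambda y$ expresses $\widehat{F^e}(\tau)$ as a sum of two oscillatory integrals of the form $\lambda \int_{\Rpos} e^{i\lambda y (\pm 1 - \tau)} \chi_1(y) \,dy$. Repeated integration by parts in $y$ then yields, together with analogous bounds on $\tau$-derivatives,
\[
|\widehat{F^e}(\tau)| \lesssim_N \lambda^{1-N}\bigl(|1-\tau|^{-N} + |1+\tau|^{-N}\bigr), \qquad N \in \NN,
\]
whenever $\min\{|\tau-1|,|\tau+1|\} \geq 1/2$. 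I then pick an even $\eta \in C^\infty_c(\RR)$ with $\eta \equiv 1$ on $[-3/2, 3/2]$ and $\supp \eta \subseteq [-2,2]$, and split
\[
F^e = F^e_c + F^e_r, \qquad \widehat{F^e_c} \defeq \widehat{F^e}\,\eta,
\]
so that $F^e_c$ and $F^e_r$ are again smooth and even, and $\supp \widehat{F^e_c} \subseteq [-2, 2]$. Proposition \ref{prp:fps} then gives $\supp k_{F^e_c(\sqrt{\opL})} \subseteq \overline{B}(0,2)$, whence
\[
R_\lambda = (1 - \chr_{\overline{B}(0,2)}) k_{F^e(\sqrt{\opL})} = (1 - \chr_{\overline{B}(0,2)}) k_{F^e_r(\sqrt{\opL})}.
\]

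Finally, $\widehat{F^e_r} = \widehat{F^e}(1-\eta)$ vanishes on $[-3/2,3/2]$, so the pointwise bound above, combined with Fourier inversion and further integrations by parts in $\tau$ to produce arbitrary polynomial decay in $\zeta$, shows that every Schwartz seminorm of $F^e_r$ is $O_N(\lambda^{-N})$. Hulanicki's theorem (Proposition \ref{prp:hulanicki}\ref{en:hulanicki_l1}) then yields $\|k_{F^e_r(\sqrt{\opL})}\|_1 \lesssim_N \lambda^{-N}$, and therefore $\|R_\lambda\|_1 \lesssim_N \lambda^{-N}$, as required. No serious obstacle is expected; the only technical point is the routine bookkeeping needed to upgrade the $\tau$-pointwise Fourier estimate into the full Schwartz-seminorm bounds with the desired $\lambda^{-N}$ gain, which in turn is controlled entirely by the smoothness and localisation of $\chi_1$.
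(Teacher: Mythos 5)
Your proof is correct. It follows the same high-level template as the paper's — a Fourier cutoff on the $\tau$-side, finite propagation speed for the compactly-Fourier-supported piece, and Hulanicki for the rapidly decaying remainder — but the decomposition you perform is genuinely different. The paper first splits $\exp(\pm i\zeta)$ into $\cos\zeta \pm i\zeta\cdot\zeta^{-1}\sin\zeta$, so as to pair each piece with an even spectral cutoff ($\chi_1$ or $\chi_2 = |\cdot|^{-1}\chi_1$), and then decomposes $\chi_r(\cdot/\lambda)$ by truncating $\widehat{\chi_r}$ at the $\lambda$-\emph{dependent} scale $\lambda^{\delta}$; the gain in $\lambda$ there comes from the rapid decay of $\widehat{\chi_r}$ over the region $|\tau|\gtrsim\lambda^{\delta}$. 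You instead bundle the whole propagator-times-cutoff into a single even function $F^e(\zeta) = e^{\pm i|\zeta|}\chi_1(\zeta/\lambda)$ (smooth and even because $\chi_1$ vanishes near $0$ and $\lambda\geq 1$), derive the explicit stationary-phase bound $|\widehat{F^e}(\tau)| \lesssim_N \lambda^{1-N}\bigl(|1-\tau|^{-N}+|1+\tau|^{-N}\bigr)$ where the $\lambda$-gain is already built in, and hence can cut $\widehat{F^e}$ at a \emph{fixed} scale. Your version is a bit cleaner here — no $\cos/\sin$ split, no auxiliary parameter $\delta$ — while the paper's version sets up the decomposition $\chi_r = \chi_r^{0,\delta,\lambda} + \chi_r^{\infty,\delta,\lambda}$, which it reuses verbatim in Proposition \ref{prp:sp_loc_removal}, where having $\delta$ as a tunable parameter matters. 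One small point worth making explicit: since you invoke Proposition \ref{prp:hulanicki} for $F^e_r$, you should note either that $F^e_r$ is even (which it is, since $\widehat{F^e}$ and $\eta$ are both even), so that part \ref{en:hulanicki_even} applies and gives even more; or that the restriction map $\Sz(\RR)\to\Sz(\Rnon)$ is continuous, so part \ref{en:hulanicki_l1} applies to $F^e_r|_{\Rnon}$. Either way the conclusion holds.
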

\begin{proof}
Let us write
\begin{equation}\label{eq:exp_cos_sin}
\exp(\pm i \sqrt{\opL}) \chi_1(\sqrt{\opL}/\lambda) = \cos(\sqrt{\opL}) \chi_1(\sqrt{\opL}/\lambda) \pm i \lambda^{-1} \sqrt{\opL} \sin(\sqrt{\opL}) \chi_2(\sqrt{\opL}/\lambda),
\end{equation}
where $\chi_2(s) \defeq |s|^{-1} \chi_1(s)$. Notice that $\chi_2$ is also an even nonnegative function in $C^\infty_c(\dot\RR)$ with $\supp \chi_2|_{\Rpos} \subseteq [1/2,2]$.

Now, for $r=1,2$ and any $\delta > 0$,
\begin{equation}\label{eq:fourier_support_dec}
\chi_r(\sqrt{\opL}/\lambda) = \frac{1}{2\pi} \int_{\RR} \widehat{\chi_r}(\tau) \,\cos(\tau\sqrt{\opL}/\lambda) \,d\tau
= \chi_r^{0,\delta,\lambda}(\sqrt{\opL}/\lambda) + \chi_r^{\infty,\delta,\lambda}(\sqrt{\opL}/\lambda),
\end{equation}
where $\chi_r^{0,\delta,\lambda} \in \Sz_e(\RR^+)$ is given by
\[
\chi_r^{0,\delta,\lambda}(s) = \frac{1}{2\pi} \int_{\RR} \chi_0(2\tau/\lambda^{\delta}) \, \widehat{\chi_r}(\tau) \,\cos(\tau s) \,d\tau
\]
and satisfies $\supp\widehat{\chi_r^{0,\delta,\lambda}} \subseteq [-\lambda^{\delta},\lambda^{\delta}]$. By finite propagation speed \eqref{eq:fps2}, we deduce that 
\begin{equation}\label{eq:support_1de}
\supp k_{\chi_r^{0,\delta,\lambda}(\sqrt{\opL}/\lambda)} \subseteq \overline{B}(0,\lambda^{\delta-1})
\end{equation}
and
\begin{multline*}
\supp (\cos(\sqrt{\opL}) k_{\chi_1^{0,\delta,\lambda}(\sqrt{\opL}/\lambda)}) \cup \supp (\sqrt{\opL} \sin(\sqrt{\opL}) k_{\chi_2^{0,\delta,\lambda}(\sqrt{\opL}/\lambda)}) \\
\subseteq  \overline{B}(0,1+\lambda^{\delta-1}) \subseteq  \overline{B}(0,2),
\end{multline*}
if $\lambda \geq 1$ and $\delta \leq 1$. So
\[\begin{split}
R_\lambda 
&= \chr_{G \setminus \overline{B}(0,2)} k_{\exp(\pm i \sqrt{\opL}) \chi_1(\sqrt{\opL}/\lambda)} \\
&= \chr_{G \setminus \overline{B}(0,2)} k_{\cos(\sqrt{\opL}) \chi_1^{\infty,\delta,\lambda}(\sqrt{\opL}/\lambda)} \pm i \lambda^{-1} \chr_{G \setminus \overline{B}(0,2)} k_{\sqrt{\opL} \sin(\sqrt{\opL}) \chi_2^{\infty,\delta,\lambda}(\sqrt{\opL}/\lambda)}.
\end{split}\]

On the other hand, as
\[
\widehat{\chi_{r}^{\infty,\delta,\lambda}}(\tau) = (1-\chi_0(2\tau/\lambda^{\delta})) \, \widehat{\chi_r}(\tau),
\]
for $r=1,2$, and $1-\chi_0$ vanishes identically on $[-1/4,1/4]$, it is easily checked that
\begin{equation}\label{eq:schwartz_bound_error}
\varrho(\chi_{r}^{\infty,\delta,\lambda}) \lesssim_{\varrho,N,\delta} \lambda^{-N}
\end{equation}
for any continuous seminorm $\varrho$ on $\Sz_e(\Rnon)$ and all $\lambda \geq 1$ and $N \in \NN$, thus also
\[
\varrho(\cos(\cdot) \chi_{1}^{\infty,\delta,\lambda}(\cdot/\lambda)), \varrho(\cdot \sin(\cdot) \chi_{2}^{\infty,\delta,\lambda}(\cdot/\lambda)) \lesssim_{\varrho,N,\delta} \lambda^{-N},
\]
and therefore, by Proposition \ref{prp:hulanicki},
\[
\|R_\lambda\|_1 
\leq \|k_{\cos(\sqrt{\opL}) \chi_{1}^{\infty,\delta,\lambda}(\sqrt{\opL}/\lambda) } \|_1 + \|k_{\sqrt{\opL} \sin(\sqrt{\opL}) \chi_{2}^{\infty,\delta,\lambda}(\sqrt{\opL}/\lambda) } \|_1 
\lesssim_{N,\delta} \lambda^{-N},
\]
as desired.
\end{proof}

Thanks to Proposition \ref{prp:spatial_loc}, the proof of \eqref{eq:target_first} reduces to that of
\begin{equation}\label{sploc_scloc_wave_est}
\|\chr_{\overline{B}(0,2)} \exp(\pm i \sqrt{\opL}) k_{\chi_1(\sqrt{\opL}/\lambda)} \|_{1} \lessapprox \lambda^{(d-1)/2}
\end{equation}
for all $\lambda \geq 1$.

\section{Spectral versus frequency localisation}\label{s:spectrumvsfrequency}

Recall that $G$ is identified with $\RR^{d_1}_x \times \RR^{d_2}_u$. We write $D_x$ and $D_u$ for the vectors $(-i\partial_{x_j})_{j=1}^{d_1}$ and $(-i\partial_{u_j})_{j=1}^{d_2}$ of partial derivatives in $x$ and $u$. Thus, by \eqref{eq:dyadicpartition}, we can write
\begin{equation}\label{eq:freq_dec}
k_{\chi_1(\sqrt{\opL}/\lambda)}
= \sum_{j \in \ZZ} \sum_{k \in \ZZ} \chi_1(\sqrt{\opL}/\lambda) \, \chi_1(2^{-j}|D_x|) \, \chi_1(2^{-k}|D_u|) \delta_0. 
\end{equation}
As in Section \ref{ss:sketch}, we shall use the symbols $\xi$ and $\mu$ to denote the frequency variables dual to $x$ and $u$; with this notation, the operators $|D_x|$ and $|D_u|$ correspond, via the Euclidean Fourier transform, to the multiplication operators by $|\xi|$ and $|\mu|$. Furthermore, we shall use the notation $\bx = (x,u)$ and $\bxi = (\xi,\mu)$.

We now want to show that a number of summands in the above sum \eqref{eq:freq_dec} are ``negligible'' to the purpose of proving the bound \eqref{sploc_scloc_wave_est}.

\begin{prp}\label{prp:spectrum_frequency}
Let $\epsilon_1,\epsilon_2,\epsilon_3,\epsilon_4>0$, and set $\vec\epsilon = (\epsilon_1,\epsilon_2,\epsilon_3,\epsilon_4)$. Then, for all $\lambda \geq 1$,
\begin{equation}\label{eq:spectrum_frequency_dec}
k_{\chi_1(\sqrt{\opL}/\lambda)} = A_{\vec\epsilon,\lambda} + B_{\vec\epsilon,\lambda} + R_{\vec\epsilon,\lambda},
\end{equation}
where
\begin{equation}\label{eq:regionsAB}
\begin{aligned}
A_{\vec\epsilon,\lambda} &\defeq \sum_{j \tc 2^j < \lambda^{1+\epsilon_2}} \sum_{k \tc \lambda^{2-\epsilon_3-\epsilon_4} \leq 2^k < \lambda^{2+\epsilon_1}} \chi_1(\sqrt{\opL}/\lambda) \, \chi_1(2^{-j}|D_x|) \, \chi_1(2^{-k}|D_u|) \delta_0, \\
B_{\vec\epsilon,\lambda} &\defeq \sum_{j \tc \lambda^{1-\epsilon_3} \leq 2^j < \lambda^{1+\epsilon_2}} \sum_{k \tc 2^k < \lambda^{2-\epsilon_3-\epsilon_4}} \chi_1(\sqrt{\opL}/\lambda) \, \chi_1(2^{-j}|D_x|) \, \chi_1(2^{-k}|D_u|) \delta_0,
\end{aligned}
\end{equation}
and
\begin{equation}\label{eq:frequency_remainder}
\|\exp(\pm i \sqrt{\opL}) R_{\vec\epsilon,\lambda}\|_{1} \lesssim_{\vec\epsilon,N} \lambda^{-N}
\end{equation}
for all $N \in \NN$.
\end{prp}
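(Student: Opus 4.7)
My plan is to identify the remainder $R_{\vec\epsilon,\lambda}$ as a Euclidean Fourier-multiplier truncation of $\Phi_\lambda \defeq k_{\chi_1(\sqrt{\opL}/\lambda)}$ supported on three ``bad'' frequency regions, to bound its weighted $L^2$ norms via Hulanicki's theorem and the vanishing moment property \eqref{eq:vanishingmoments}, and to convert these into the $L^1$ bound \eqref{eq:frequency_remainder} on the wave-propagated kernel via a dyadic spatial decomposition combined with finite propagation speed. Inspecting the index ranges that are omitted from $A_{\vec\epsilon,\lambda}+B_{\vec\epsilon,\lambda}$ in \eqref{eq:regionsAB}, the decomposition \eqref{eq:spectrum_frequency_dec} holds with $R_{\vec\epsilon,\lambda} = R_\lambda(D_x,D_u)\Phi_\lambda$, where the smooth Euclidean multiplier symbol $R_\lambda$ is supported on the union of (I)~$\{|\xi|\gtrsim\lambda^{1+\epsilon_2}\}$, (II)~$\{|\xi|\lesssim\lambda^{1+\epsilon_2},\ |\mu|\gtrsim\lambda^{2+\epsilon_1}\}$, and (III)~$\{|\xi|\lesssim\lambda^{1-\epsilon_3},\ |\mu|\lesssim\lambda^{2-\epsilon_3-\epsilon_4}\}$, with Mikhlin-type derivative estimates $|\partial_\xi^{\alpha_1}\partial_\mu^{\alpha_2}R_\lambda|\lesssim_\alpha (1+|\xi|)^{-|\alpha_1|}(1+|\mu|)^{-|\alpha_2|}$ inherited from the dyadic partition \eqref{eq:dyadicpartition}.

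By Proposition~\ref{prp:hulanicki}, $\Phi\defeq k_{\chi_1(\sqrt{\opL})}$ is Schwartz, and homogeneity of $\opL$ under \eqref{eq:dilations} yields $\Phi_\lambda = \lambda^Q \Phi\circ\dil_\lambda$, so that $\widehat{\Phi_\lambda}(\xi,\mu) = \widehat{\Phi}(\xi/\lambda,\mu/\lambda^2)$ with $\widehat{\Phi}\in\Sz(\RR^d)$; applying \eqref{eq:vanishingmoments} with $p(\bx)=\bx^\alpha$ further shows that every partial derivative of $\widehat{\Phi}$ at the origin vanishes. On region~(I), Schwartz decay of $\widehat{\Phi}$ in the first slot gives $|\widehat{\Phi}(\xi/\lambda,\mu/\lambda^2)|\lesssim_N\lambda^{-N\epsilon_2}$; on~(II) the same argument in the second slot gives $\lesssim_N\lambda^{-N\epsilon_1}$; on~(III), the infinite-order Taylor vanishing of $\widehat{\Phi}$ together with $|\xi|/\lambda\leq\lambda^{-\epsilon_3}$ and $|\mu|/\lambda^2\leq\lambda^{-\epsilon_3-\epsilon_4}$ gives $|\widehat{\Phi}(\xi/\lambda,\mu/\lambda^2)|\lesssim_N\lambda^{-N\epsilon_3}$. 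The gain of $1/\lambda$ (resp.\ $1/\lambda^2$) per $\xi$- (resp.\ $\mu$-) differentiation of $\widehat{\Phi_\lambda}$, combined with Leibniz and the derivative estimates on $R_\lambda$, then yields $\|\partial_\bxi^\alpha(R_\lambda\widehat{\Phi_\lambda})\|_{L^2(\RR^d)}\lesssim_{\vec\epsilon,\alpha,M}\lambda^{-M}$ for every multi-index $\alpha$ and every $M\in\NN$; Plancherel on $G\cong\RR^d$ then gives, for all $N,M\in\NN$,
\[
\|(1+\dist(0,\cdot))^N\,R_{\vec\epsilon,\lambda}\|_{L^2(G)}\lesssim_{\vec\epsilon,N,M}\lambda^{-M}.
\]

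For the final step I fix a dyadic radial partition $1=\eta_0+\sum_{n\geq 1}\eta_n$ on $G$, with $\eta_0$ supported in $\overline{B}(0,4)$ and $\eta_n$ in the sub-Riemannian shell $\{2^n\leq\dist(0,\bx)\leq 2^{n+2}\}$. By finite propagation speed (Proposition~\ref{prp:fps}) we have $\eta_n\exp(\pm i\sqrt{\opL})R_{\vec\epsilon,\lambda}=\eta_n\exp(\pm i\sqrt{\opL})(\tilde\eta_n R_{\vec\epsilon,\lambda})$ for a suitable fattening $\tilde\eta_n$ of $\eta_n$; Cauchy--Schwarz against the volume bound $|\supp\eta_n|\lesssim 2^{nQ}$, together with the $L^2$-unitarity of $\exp(\pm i\sqrt{\opL})$, then gives
\[
\|\eta_n\exp(\pm i\sqrt{\opL})R_{\vec\epsilon,\lambda}\|_1\lesssim 2^{nQ/2}\|\tilde\eta_n R_{\vec\epsilon,\lambda}\|_2\lesssim_{N',M}2^{-n(N'-Q/2)}\lambda^{-M}
\]
by the weighted $L^2$-bound above. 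Choosing $N'>Q/2$ and summing over $n$ yields \eqref{eq:frequency_remainder}. The main technical subtlety is precisely in this last step: $\exp(\pm i\sqrt{\opL})$ is not $L^1$-bounded, so one cannot simply dominate the $L^1$-norm of the wave-propagated kernel by an $L^1$-norm of $R_{\vec\epsilon,\lambda}$; the shell-by-shell finite-propagation-speed argument, combined with the weighted-$L^2$ control derived in the previous paragraph, is what replaces that failed naive bound.
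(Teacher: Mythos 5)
Your plan has a genuine gap at the very first step, and it propagates through the rest of the argument.

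You claim that $R_{\vec\epsilon,\lambda} = R_\lambda(D_x,D_u)\,\Phi_\lambda$, i.e.\ that the remainder is a Euclidean Fourier multiplier applied to the spectral kernel $\Phi_\lambda = k_{\chi_1(\sqrt{\opL}/\lambda)}$. But by the definition in \eqref{eq:regionsAB} each summand is
\[
\chi_1(\sqrt{\opL}/\lambda)\,\chi_1(2^{-j}|D_x|)\,\chi_1(2^{-k}|D_u|)\,\delta_0,
\]
with the spectral multiplier applied \emph{last}. Since $\chi_1(\sqrt{\opL}/\lambda)$ is a left-invariant operator acting by right \emph{group} convolution with $S_\lambda$, and $\chi_1(2^{-j}|D_x|)\,\chi_1(2^{-k}|D_u|)\,\delta_0 = F^1_j\otimes F^2_k$, each summand equals $(F^1_j\otimes F^2_k)*_G S_\lambda$, a group convolution. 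The operator $\chi_1(\sqrt{\opL}/\lambda)$ does \emph{not} commute with the Euclidean multiplier $\chi_1(2^{-j}|D_x|)$ on the nonabelian group $G$: writing out the group law, $(f*_G g)(x,u)=\int f(x',u')\,g(x-x',u-u'-[x',x]/2)\,dx'\,du'$ carries a twist $[x',x]/2$ that is absent from Euclidean convolution. Hence $\widehat{R_{\vec\epsilon,\lambda}}(\xi,\mu) \neq R_\lambda(\xi,\mu)\,\widehat{\Phi_\lambda}(\xi,\mu)$, and your Plancherel step computes the wrong quantity: you obtain sharp decay for the Euclidean-multiplier truncation of $\Phi_\lambda$, which is a different distribution from $R_{\vec\epsilon,\lambda}$.

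Your frequency-side observations about $\widehat{\Phi_\lambda}(\xi,\mu)=\widehat\Phi(\xi/\lambda,\mu/\lambda^2)$, the Schwartz decay of $\widehat\Phi$, and the infinite-order vanishing of $\widehat\Phi$ at the origin (from \eqref{eq:vanishingmoments}) are all correct, but they do not bear on the object $R_{\vec\epsilon,\lambda}$ you need to estimate. The paper instead works entirely in physical space: it re-sums the dyadic cutoffs in one variable (e.g.\ $\sum_j F^1_j = \delta_0$), rescales by $\dil_\lambda$, Taylor-expands the appropriate factor of the group convolution integrand (with the twist $[x,x']/2$ explicitly carried along, as in \eqref{eq:2nd_canc_resc}--\eqref{eq:3rd_canc_resc}), and exploits vanishing moments of whichever of $F^1$, $F^2$, or $S$ is being integrated against. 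This manoeuvre substitutes for your Plancherel/Euclidean-Fourier step in a way that respects the group structure.

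A secondary issue: your final step invokes finite propagation speed for $\exp(\pm i\sqrt{\opL})$ directly, but Proposition~\ref{prp:fps} only applies to even functions of $\sqrt{\opL}$ whose (Euclidean) Fourier transform has compact support; $e^{\pm i\tau}$ is neither. The paper sidesteps this by observing that $\tilde\chi_1(\sqrt{\opL}/\lambda)R_{\vec\epsilon,\lambda}=R_{\vec\epsilon,\lambda}$, so $\|\exp(\pm i\sqrt{\opL})R_{\vec\epsilon,\lambda}\|_1$ is controlled by $\|\exp(\pm i\sqrt{\opL})\tilde\chi_1(\sqrt{\opL}/\lambda)\|_{1\to 1}\,\|R_{\vec\epsilon,\lambda}\|_1$, where the first factor is at most a fixed power of $\lambda$ by Hulanicki's theorem (Proposition~\ref{prp:hulanicki}), and one proves $\|R_{\vec\epsilon,\lambda}\|_1\lesssim_N\lambda^{-N}$ directly.
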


\begin{figure}

\begin{tikzpicture}[scale=1.2]

	\fill[fill=yellow!20] (0,2.8) -- (0,3.6) -- (3.6,3.6) -- (3.6,2.8); 
	\fill[fill=orange!20] (2.3,0) -- (2.3,2.8) -- (3.6,2.8) -- (3.6,0); 
	\fill[fill=green!20] (0,0) -- (0,2.3) -- (1.7,2.3) -- (1.7,0); 
	\fill[fill=blue!20] (1.7,0) node[below left] {$\lambda^{1-\epsilon_3}$} -- (2.3,0) node[below right] {$\lambda^{1+\epsilon_2}$} -- (2.3,2.2) -- (1.7,2.2); 
	\fill[fill=red!20] (0,2.2) node[below left] {$\lambda^{2-\epsilon_3-\epsilon_4}$} -- (0,2.8) node[above left] {$\lambda^{2+\epsilon_1}$} -- (2.3,2.8) -- (2.3,2.2);

  \draw[thick,->] (-0.2,0) -- (3.7,0) node[right] {$|\xi|$};
  \draw[thick,->] (0,-0.2) -- (0,3.7) node[above] {$|\mu|$};

  \draw (1.7,.1) -- ++(0,-.2) ;
  \draw (2,.1) -- ++(0,-.2) node[below] {$\lambda$} ;
	\draw (2.3,.1) -- ++(0,-.2) ;
  \draw (0.1,2.5) -- ++(-.2,0) node[left] {$\lambda^2$} ;
  \draw (0.1,2.8) -- ++(-.2,0) ;
  \draw (0.1,2.2) -- ++(-.2,0) ;

  \node at (1.1,2.5) {$A$};
  \node at (2,1.1) {$B$};
	\node at (1.9,3.2) {$R^{(1)}$};
	\node at (3,1.6) {$R^{(2)}$};
	\node at (.9,1.1) {$R^{(3)}_{}$};

\end{tikzpicture}

\caption{Frequency localisation according to Proposition \ref{prp:spectrum_frequency}.}
\label{fig:freq_loc}

\end{figure}
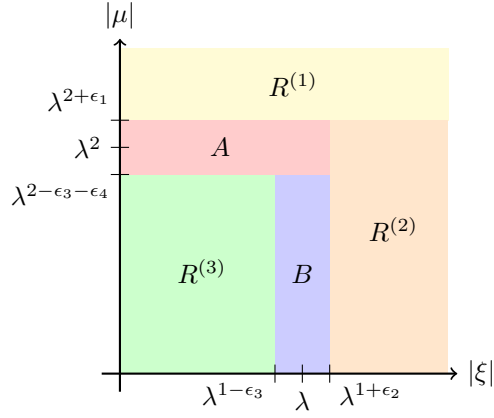

\begin{proof}
From \eqref{eq:spectrum_frequency_dec} and \eqref{eq:freq_dec} it is clear that
\[
R_{\vec\epsilon,\lambda} = R_{\vec\epsilon,\lambda}^{(1)} + R_{\vec\epsilon,\lambda}^{(2)} + R_{\vec\epsilon,\lambda}^{(3)},
\]
where
\begin{equation}\label{eq:Rremainders}
\begin{aligned}
R_{\vec\epsilon,\lambda}^{(1)} &\defeq \sum_{j \in \ZZ} \sum_{k \tc 2^k \geq \lambda^{2+\epsilon_1}} \chi_1(\sqrt{\opL}/\lambda) \, \chi_1(2^{-j}|D_x|) \, \chi_1(2^{-k}|D_u|) \delta_0, \\
R_{\vec\epsilon,\lambda}^{(2)} &\defeq \sum_{j \tc 2^j \geq \lambda^{1+\epsilon_2}} \sum_{k \tc 2^k < \lambda^{2+\epsilon_1}} \chi_1(\sqrt{\opL}/\lambda) \, \chi_1(2^{-j}|D_x|) \, \chi_1(2^{-k}|D_u|) \delta_0, \\
R_{\vec\epsilon,\lambda}^{(3)} &\defeq \sum_{j \tc 2^j < \lambda^{1-\epsilon_3}} \sum_{k \tc 2^k < \lambda^{2-\epsilon_3-\epsilon_4}} \chi_1(\sqrt{\opL}/\lambda) \, \chi_1(2^{-j}|D_x|) \, \chi_1(2^{-k}|D_u|) \delta_0 .
\end{aligned}
\end{equation}
In addition, as $\chi_1 \tilde\chi_1 = \chi_1$,
\[
\tilde\chi_1(\sqrt{\opL}/\lambda) R_{\vec\epsilon,\lambda} = R_{\vec\epsilon,\lambda}.
\]
Thus, in order to prove the estimate \eqref{eq:frequency_remainder}, it is enough to prove that
\begin{equation}\label{eq:frequency_remainder_l1}
\| R_{\vec\epsilon,\lambda}^{(\ell)} \|_1 \lesssim_{\vec\epsilon,N} \lambda^{-N}
\end{equation}
for all $N \in \NN$, $\lambda \geq 1$, and $\ell = 1,2,3$.
Indeed, by combining the estimates \eqref{eq:frequency_remainder_l1}, we would obtain
\[
\| R_{\vec\epsilon,\lambda} \|_1 \lesssim_{\vec\epsilon,N} \lambda^{-N};
\]
as moreover
\[
\|\exp(\pm i \sqrt{\opL}) \, \tilde\chi_1(\sqrt{\opL}/\lambda)\|_{1 \to 1} \lesssim \lambda^{N_0}
\]
for some $N_0 \in \NN$ by Proposition \eqref{prp:hulanicki}, we could then conclude that
\[
\| \exp(\pm i \sqrt{\opL}) R_{\vec\epsilon,\lambda} \|_1 \leq \|\exp(\pm i \sqrt{\opL}) \tilde\chi_1(\sqrt{\opL}/\lambda) \|_{1\to 1} \| R_{\vec\epsilon,\lambda} \|_{1} \lesssim_{\vec\epsilon,N} \lambda^{-N},
\]
as desired.

Notice that
\[
\chi_1(\sqrt{\opL}/\lambda) \, \chi_1(2^{-j}|D_x|) \, \chi_1(2^{-k}|D_u|) \delta_0 
= \left[ (F^1_j \otimes F^2_k) * S_\lambda \right],
\]
where $*$ is the convolution on $G$, while, by homogeneity,
\[
S_\lambda(x,u) = \lambda^Q S(\lambda x, \lambda^2 u), \qquad F^1_j(x) = 2^{jd_1} F^1(2^j x), \qquad F^2_k(u) = 2^{kd_2} F^2(2^k u),
\]
with $S \defeq k_{\chi_1(\sqrt{\opL})} \in \Sz(G)$ (by Proposition \eqref{prp:hulanicki}), $F^1 \defeq \Four_{\RR^{d_1}}^{-1} (\chi_1(|\cdot|)) \in \Sz(\RR^{d_1})$ and $F^2 \defeq \Four_{\RR^{d_2}}^{-1} (\chi_1(|\cdot|)) \in \Sz(\RR^{d_2})$. Moreover, in light of \eqref{eq:vanishingmoments}, all moments of the function $S$ vanish, and the same is true of the functions $F^1$ and $F^2$, since their Euclidean Fourier transforms are supported away from the origin.

We start by proving \eqref{eq:frequency_remainder_l1} for $\ell=1$.
Notice that, for all $k \in \ZZ$,
\begin{equation}\label{eq:recomp}
\sum_{j \in \ZZ} \left[ (F^1_j \otimes F^2_k) * S_\lambda \right] = (\delta_0 \otimes F^2_k) * S_\lambda
\end{equation}
and
\[\begin{split}
(\delta_0 \otimes F^2_k) * S_\lambda(x,u) 
&= \int_{\RR^{d_2}} F^2_k(u') S_\lambda(x,u-u') \,du' \\
&= \int_{\RR^{d_2}} F^2(u') S_\lambda(x,u-2^{-k} u') \,du',
\end{split}\]
which has the same $L^1(G)$-norm as
\begin{equation}\label{eq:1st_canc_resc}
\lambda^{-Q} (\delta_0 \otimes F^2_k) * S_\lambda(\lambda^{-1}x,\lambda^{-2}u)  
= \int_{\RR^{d_2}} F^2(u') S(x,u-2^{-k} \lambda^2 u') \,du'.
\end{equation}

Notice now that, if we set
\[
f(s) \defeq S(x,u-s2^{-k} \lambda^2 u'),
\]
then, for all $h \in \NN$,
\[
f^{(h)}(s) = (-2^{-k} \lambda^2)^h (u' \cdot \nabla_u)^h S(x,u-s2^{-k} \lambda^2 u'),
\]
thus, by Taylor's formula with integral remainder,
\[\begin{split}
&S(x,u-2^{-k} \lambda^2 u') \\
&= f(1) 
= \sum_{h=0}^N \frac{f^{(h)}(0)}{h!} + \frac{1}{N!} \int_0^1 f^{(N+1)}(s) \,(1-s)^{N} \,ds \\
&= \sum_{h=0}^N \frac{(-2^{-k} \lambda^2)^h}{h!}  (u' \cdot \nabla_u)^h S(x,u) \\
&\quad+ \frac{(-2^{-k} \lambda^2)^{N+1}}{N!} \int_0^1 (u' \cdot \nabla_u)^{N+1} S(x,u-s2^{-k} \lambda^2 u') \, (1-s)^{N} \,ds
\end{split}\]
for any $N \in \NN$.

By plugging this expression into \eqref{eq:1st_canc_resc} and exploiting the vanishing moments of $F^2$, we obtain that
\[
\begin{split}
&\lambda^{-Q} (\delta_0 \otimes F^2_k) * S_\lambda(\lambda^{-1}x,\lambda^{-2}u)  \\
&= \int_{\RR^{d_2}} F^2(u')  \frac{(-2^{-k} \lambda^2)^{N+1}}{N!} \int_0^1 (u' \cdot \nabla_u)^{N+1} S(x,u-s2^{-k} \lambda^2 u') \, (1-s)^{N} \,ds \,du' \\
&= \frac{(-2^{-k} \lambda^2)^{N+1}}{N!}  \int_0^1 \int_{\RR^{d_2}} F^2(u') (u' \cdot \nabla_u)^{N+1} S(x,u-s2^{-k} \lambda^2 u')   \,du' \,(1-s)^{N} \,ds.
\end{split}
\]
Thus
\[
\begin{split}
\|(\delta_0 \otimes F^2_k) * S_\lambda \|_1 
&\lesssim_N (2^{-k} \lambda^2)^{N+1} \int_G \int_0^1 \int_{\RR^{d_2}} |F^2(u')| |u'|^{N+1} \\
&\qquad\times |\nabla_u^{N+1} S(x,u-s2^{-k} \lambda^2 u')|   \,du' \,(1-s)^{N} \,ds \,dx\,du \\
&= (2^{-k} \lambda^2)^{N+1} \int_0^1 (1-s)^N \,ds \\
&\qquad\times \int_{\RR^{d_2}} |F^2(u')| |u'|^{N+1}  \,du'  \int_G |\nabla_u^{N+1} S(x,u)| \,dx\,du \\
&\lesssim_N (2^{-k} \lambda^2)^{N+1},
\end{split}
\]
as $F^2$ and $S$ are in the Schwartz class. As a consequence, for all $\lambda \geq 1$,
\begin{equation}\label{eq:1st_negligible}
\sum_{k \tc 2^k \geq \lambda^{2+\epsilon_1}} \|(\delta_0 \otimes F^2_k) * S_\lambda \|_1 
\lesssim_{N} \sum_{k \tc 2^k \geq \lambda^{2+\epsilon_1}} (2^{-k} \lambda^2)^{N+1} \lesssim \lambda^{-\epsilon_1 (N+1)}.
\end{equation}
In conclusion, by \eqref{eq:recomp} and \eqref{eq:1st_negligible},
\[
\begin{split}
\|R^{(1)}_{\vec\epsilon,\lambda}\|_1 
&= \left\| \sum_{j \in \ZZ} \sum_{k \tc 2^k \geq \lambda^{2+\epsilon_1}} (F^1_j \otimes F^2_k) * S_\lambda \right\|_1 \\
&\leq \sum_{k \tc 2^k \geq \lambda^{2+\epsilon_1}} \|(\delta_0 \otimes F^2_k) * S_\lambda \|_1 \lesssim_{\epsilon_1,N} \lambda^{-N}
\end{split}
\]
for all $N \in \NN$ and $\lambda \geq 1$; this proves \eqref{eq:frequency_remainder_l1} for $\ell=1$.

We now prove the analogous bound for $\ell=2$. Observe that, for all $j \in \ZZ$,
\begin{equation}\label{eq:2nd_recomp}
\sum_{k \tc 2^k < \lambda^{2+\epsilon_1}} (F^1_j \otimes F^2_k) = F^1_j \otimes \tilde F^2_{k_0(\epsilon_1;\lambda)},
\end{equation}
where
\[
k_0(\epsilon_1;\lambda) \defeq \max \{ k \tc 2^k < \lambda^{2+\epsilon_1} \}+1
\]
and $\tilde F^2_{k} \defeq \sum_{k' < k} F^2_{k'} = 2^{k d_2} \tilde F^2(2^k \cdot)$, with $\tilde F^2 \defeq \Four_{\RR^{d_2}}^{-1}(\chi_0(|\cdot|)) \in \Sz(\RR^{d_2})$.
Notice that, for all $j,k \in \ZZ$,
\[\begin{split}
&(F^1_j \otimes \tilde F^2_k) * S_\lambda(x,u) \\
&= \int_G F^1_j(x') \, \tilde F^2_k(u') \, S_\lambda(x-x',u-u'+[x,x']/2) \,dx' \,du' \\
&= \int_G F^1(x') \, \tilde F^2_k(u') \, S_\lambda(x-2^{-j}x',u-u'+2^{-j}[x,x']/2) \,dx' \,du' ,
\end{split}\]
which has the same $L^1(G)$-norm as
\begin{equation}\label{eq:2nd_canc_resc}
\begin{split}
&\lambda^{-Q}(F^1_j \otimes \tilde F^2_k) * S_\lambda(\lambda^{-1}x,\lambda^{-2}u) \\
&= \int_G F^1(x') \, \tilde F^2_k(u') \, S(x-2^{-j}\lambda x',u-\lambda^2 u'+2^{-j}\lambda [x,x']/2) \,dx' \,du'.
\end{split}
\end{equation}

Arguing much as above, if we set
\[
f(s) \defeq S(x-s2^{-j}\lambda x',u-\lambda^2 u'+s2^{-j}\lambda [x,x']/2),
\]
then, for all $h \in \NN$,
\[\begin{split}
f^{(h)}(s) 
&= (2^{-j} \lambda)^h \sum_{\ell=0}^h \binom{h}{\ell} (-1)^\ell 2^{\ell-h} \\
&\quad\times ([x,x'] \cdot \nabla_u)^{h-\ell} (x' \cdot \nabla_x)^\ell S(x-s2^{-j}\lambda x',u-\lambda^2 u'+s2^{-j}\lambda [x,x']/2),
\end{split}\]
thus, by Taylor's formula, for all $N \in \NN$,
\[\begin{split}
&S(x-2^{-j}\lambda x',u-\lambda^2 u'+2^{-j}\lambda [x,x']/2) \\
&=f(1) 
= \sum_{h=0}^N \frac{f^{(h)}(0)}{h!} + \frac{1}{N!} \int_0^1 f^{(N+1)}(s) \,(1-s)^{N} \,ds \\
&= \sum_{h=0}^N \frac{(2^{-j}\lambda)^h}{h!} \sum_{\ell=0}^h \binom{h}{\ell} (-1)^\ell 2^{\ell-h} ([x,x'] \cdot \nabla_u)^{h-\ell} (x' \cdot \nabla_x)^\ell S(x,u-\lambda^2 u')\\
&\quad+ \frac{(2^{-j}\lambda)^{N+1}}{N!} \sum_{\ell=0}^{N+1} \binom{N+1}{\ell} (-1)^\ell 2^{\ell-N-1} \int_0^1 (1-s)^N \\
&\quad\times ([x,x'] \cdot \nabla_u)^{N+1-\ell} (x' \cdot \nabla_x)^\ell S(x-s2^{-j}\lambda x',u-\lambda^2 u'+s2^{-j}\lambda [x,x']/2) \,ds.
\end{split}\]
By plugging this expression into \eqref{eq:2nd_canc_resc} and exploiting the vanishing moments of $F^1$, we deduce that
\[\begin{split}
&\lambda^{-Q}(F^1_j \otimes \tilde F^2_k) * S_\lambda(\lambda^{-1}x,\lambda^{-2}u) \\
&=\frac{(2^{-j}\lambda)^{N+1}}{N!} \sum_{\ell=0}^{N+1} \binom{N+1}{\ell} (-1)^\ell 2^{\ell-N-1} \int_0^1 (1-s)^N \int_G F^1(x') \, \tilde F^2_k(u') \\
&\times ([x,x'] \cdot \nabla_u)^{N+1-\ell} (x' \cdot \nabla_x)^\ell S(x-s2^{-j}\lambda x',u-\lambda^2 u'+s2^{-j}\lambda [x,x']/2) \,dx' \,du' \,ds,
\end{split}\]
and thus
\[\begin{split}
&\|(F^1_j \otimes \tilde F^2_k) * S_\lambda\|_1\\
&\lesssim_N (2^{-j} \lambda)^{N+1} \sum_{\ell=0}^{N+1} \int_G \int_0^1 (1-s)^N \int_G |F^1(x')| \, |\tilde F^2_k(u')| \, |[x,x']|^{N+1-\ell} \, |x'|^{\ell}  \\
&\quad\times |\nabla_u^{N+1-\ell} \nabla_x^\ell S(x-s2^{-j}\lambda x',u-\lambda^2 u'+s2^{-j}\lambda [x,x']/2)| \,dx' \,du' \,ds \,dx \,du \\
&= (2^{-j} \lambda)^{N+1} \sum_{\ell=0}^{N+1} \int_0^1 (1-s)^N \int_G \int_G |F^1(x')| \, |\tilde F^2_k(u')| \, |[x,x']|^{N+1-\ell} \, |x'|^{\ell}  \\
&\qquad\times |\nabla_u^{N+1-\ell} \nabla_x^\ell S(x,u)| \,dx \,du \,dx' \,du' \,ds \\
&\lesssim_N (2^{-j} \lambda)^{N+1} \sum_{\ell=0}^{N+1} \int_0^1 (1-s)^N \,ds \int_{\RR^{d_1}} |F^1(x')| \, |x'|^{N+1}  \,dx' \int_{\RR^{d_2}} |\tilde F^2_k(u')| \,du' \\ 
&\qquad\times \int_G  |\nabla_u^{N+1-\ell} \nabla_x^\ell S(x,u)| |x|^{N+1-\ell} \,dx \,du \\
&\lesssim_N (2^{-j} \lambda)^{N+1},
\end{split}\]
where we used the fact that $F^1$, $\tilde F^2$ and $S$ are in the Schwartz class, that the $L^1$-norm of $\tilde F^2_k$ does not depend on $k$, and (in the intermediate change of variables) that $[x,x'] = [x+s 2^{-j}\lambda x',x']$. Thus
\begin{equation}\label{eq:2nd_negligible}
\sum_{j \tc 2^j \geq \lambda^{1+\epsilon_2}} \| (F^1_j \otimes \tilde F^2_{k}) * S_\lambda \|_1 
\lesssim_N \sum_{j \tc 2^j \geq \lambda^{1+\epsilon_2}} (2^{-j} \lambda)^{N+1}
\lesssim \lambda^{-\epsilon_2 (N+1)} 
\end{equation}
for any $N \in \NN$, $k \in \ZZ$, $\lambda \geq 1$, $\epsilon_2 > 0$.

As a consequence, by \eqref{eq:2nd_recomp} and \eqref{eq:2nd_negligible},
\[
\begin{aligned}
\|R_{\vec\epsilon,\lambda}^{(2)}\|_1
&= \left\| \sum_{j \tc 2^j \geq \lambda^{1+\epsilon_2}} \sum_{k \tc 2^k < \lambda^{2+\epsilon_1}} (F^1_j \otimes F^2_k) * S_\lambda \right\|_1 \\
&\leq \sum_{j \tc 2^j \geq \lambda^{1+\epsilon_2}} \| (F^1_j \otimes \tilde F^2_{k_0(\epsilon_1;\lambda)}) * S_\lambda \|_1
\lesssim_{\epsilon_2,N} \lambda^{-N} 
\end{aligned}
\]
for all $\lambda \geq 1$, $N \in \NN$, $\epsilon_1,\epsilon_2 >0$. This proves \eqref{eq:frequency_remainder_l1} for $\ell=2$.

Finally, we consider the analogous bound for $\ell=3$.
We can write
\begin{equation}\label{eq:3rd_recomp}
\sum_{j \tc 2^j < \lambda^{1-\epsilon_3}} \sum_{k \tc 2^k < \lambda^{2-\epsilon_3-\epsilon_4}} F^1_j \otimes F^2_k = \tilde F^1_{j_*(\epsilon_3;\lambda)} \otimes \tilde F^2_{k_*(\epsilon_3,\epsilon_4;\lambda)},
\end{equation}
where
\begin{equation}\label{eq:3rd_defjk}
\begin{aligned}
j_*(\epsilon_3;\lambda) &\defeq \max \{ j \tc 2^j < \lambda^{1-\epsilon_3} \}+1, \\
k_*(\epsilon_3,\epsilon_4;\lambda) &\defeq \max \{ k \tc 2^k < \lambda^{2-\epsilon_3-\epsilon_4} \}+1,
\end{aligned}
\end{equation}
and $\tilde F^1_{j} \defeq \sum_{j' < j} F^1_{j'} = 2^{j d_1} \tilde F^1(2^j \cdot)$, with $\tilde F^1 \defeq \Four_{\RR^{d_1}}^{-1}(\chi_0(|\cdot|)) \in \Sz(\RR^{d_1})$.

Now, for all $j,k \in \ZZ$,
\[\begin{split}
&(\tilde F^1_j \otimes \tilde F^2_k) * S_\lambda(x,u) \\
&= \int_G \tilde F^1_j(x-x') \, \tilde F^2_k(u-u'-[x,x']/2) \,S_\lambda(x',u') \,dx' \,du' \\
&= \int_G \tilde F^1_j(x-\lambda^{-1} x') \, \tilde F^2_k(u-\lambda^{-2} u'-\lambda^{-1}[x,x']/2) \,S(x',u') \,dx' \,du' ,
\end{split}\]
which has the same $L^1(G)$-norm as
\begin{equation}\label{eq:3rd_canc_resc}
\begin{split}
&2^{-d_1 j -d_2 k}(\tilde F^1_j \otimes \tilde F^2_k) * S_\lambda(2^{-j}x,2^{-k}u) \\
&= \int_G \tilde F^1(x-\lambda^{-1}2^j x') \, \tilde F^2(u-\lambda^{-2}2^k u'-\lambda^{-1} 2^{k-j} [x,x']/2) \,S(x',u') \,dx' \,du' .
\end{split}
\end{equation}

We now define
\[
f(s) \defeq \tilde F^1(x-s\lambda^{-1}2^j x') \, \tilde F^2(u-s(\lambda^{-2}2^k u'+\lambda^{-1} 2^{k-j} [x,x']/2))
\]
and observe that, for all $h \in \NN$,
\[\begin{split}
f^{(h)}(s) &= \sum_{a+b+c=h} \frac{h!}{a! b! c!} (-\lambda^{-1}2^j)^a (-\lambda^{-2} 2^k)^b (-\lambda^{-1} 2^{k-j}/2)^c \\
&\qquad\times (x' \cdot \nabla_x)^a \tilde F^1(x-s\lambda^{-1}2^j x') \\
&\qquad\times (u'\cdot\nabla_u)^b ([x,x']\cdot \nabla_u)^c \tilde F^2(u-s(\lambda^{-2}2^k u'+\lambda^{-1} 2^{k-j} [x,x']/2)),
\end{split}\]
whence, by Taylor's formula, for all $N \in \NN$,
\[\begin{split}
&\tilde F^1(x-\lambda^{-1}2^j x') \, \tilde F^2(u-\lambda^{-2}2^k u'-\lambda^{-1} 2^{k-j} [x,x']/2) \\
&=f(1) 
= \sum_{h=0}^N \frac{f^{(h)}(0)}{h!} + \frac{1}{N!} \int_0^1 f^{(N+1)}(s) \,(1-s)^{N} \,ds \\
&= \sum_{h=0}^N \sum_{a+b+c=h} \frac{(-1)^h}{a! b! c!} (\lambda^{-1}2^j)^a (\lambda^{-2} 2^k)^b (\lambda^{-1} 2^{k-j}/2)^c \\
&\qquad\times (x' \cdot \nabla_x)^a \tilde F^1(x) \,(u'\cdot\nabla_u)^b ([x,x']\cdot \nabla_u)^c \tilde F^2(u) \\
&\quad+ \sum_{a+b+c=N+1} \frac{(-1)^{N+1} (N+1)}{a! b! c!} (\lambda^{-1}2^j)^a (\lambda^{-2} 2^k)^b (\lambda^{-1} 2^{k-j}/2)^c \\
&\qquad\times \int_0^1 (x' \cdot \nabla_x)^a \tilde F^1(x-s\lambda^{-1}2^j x') \\
&\qquad\times (u'\cdot\nabla_u)^b ([x,x']\cdot \nabla_u)^c \tilde F^2(u-s(\lambda^{-2}2^k u'+\lambda^{-1} 2^{k-j} [x,x']/2)) \,(1-s)^N \,ds.
\end{split}\]
By plugging this into \eqref{eq:3rd_canc_resc} and exploiting the vanishing moments of $S$, 
\[\begin{split}
&2^{-d_1 j -d_2 k}(\tilde F^1_j \otimes \tilde F^2_k) * S_\lambda(2^{-j}x,2^{-k}u) \\
&= \sum_{a+b+c=N+1} \frac{(-1)^{N+1} (N+1)}{a! b! c!} (\lambda^{-1}2^j)^a (\lambda^{-2} 2^k)^b (\lambda^{-1} 2^{k-j}/2)^c \\
&\qquad\times \int_0^1 \int_G (x' \cdot \nabla_x)^a \tilde F^1(x-s\lambda^{-1}2^j x') \\
&\qquad\times (u'\cdot\nabla_u)^b ([x,x']\cdot \nabla_u)^c \tilde F^2(u-s(\lambda^{-2}2^k u'+\lambda^{-1} 2^{k-j} [x,x']/2)) \\
&\qquad\times S(x',u') \,dx' \,du' \,(1-s)^N \,ds,
\end{split}\]
thus
\[\begin{split}
&\|(\tilde F^1_j \otimes \tilde F^2_k) * S_\lambda\|_1 \\
&\lesssim_N \sum_{a+b+c=N+1} (\lambda^{-1} 2^j)^{a+b} (\lambda^{-1} 2^{k-j})^{b+c}
 \int_G \int_0^1 \int_G |x'|^a |\nabla_x^a \tilde F^1(x-s\lambda^{-1}2^j x')| \\
&\qquad\times |u'|^b |[x,x']|^c |\nabla_u^{b+c} \tilde F^2(u-s(\lambda^{-2}2^k u'+\lambda^{-1} 2^{k-j} [x,x']/2))| \\
&\qquad\times |S(x',u')| \,dx' \,du' \,(1-s)^N \,ds \,dx \,du \\
&= \sum_{a+b+c=N+1} (\lambda^{-1} 2^j)^{a+b} (\lambda^{-1} 2^{k-j})^{b+c}
 \int_0^1 \int_G \int_G |x'|^a |\nabla_x^a \tilde F^1(x)|\\
&\qquad\times  |u'|^b \, |[x,x']|^c \, |\nabla_u^{b+c} \tilde F^2(u)| 
 |S(x',u')| \,dx \,du \,dx' \,du' \,(1-s)^N \,ds \\
&\lesssim_N \sum_{a+b+c=N+1} (\lambda^{-1} 2^j)^{a+b} (\lambda^{-1} 2^{k-j})^{b+c} \int_0^1 (1-s)^N \,ds  
\int_G \int_{\RR^{d_1}} |x|^c \, |\nabla_x^a \tilde F^1(x)| \,dx \\
&\qquad\times \int_{\RR^{d_2}} |\nabla_u^{b+c} \tilde F^2(u)| \,du \int_G |x'|^{a+c} \, |u'|^b \, |S(x',u')| \,dx' \,du' \\
&\lesssim_N \sum_{a+b+c=N+1} (\lambda^{-1} 2^j)^{a+b} (\lambda^{-1} 2^{k-j})^{b+c},
\end{split}\]
as $\tilde F^1$, $\tilde F^2$ and $S$ are in the Schwartz class. In particular, if $\lambda^{1-\epsilon_3} \leq 2^j < 2\lambda^{1-\epsilon_3}$ and $2^k < 2\lambda^{2-\epsilon_3-\epsilon_4}$, then $2^{k-j} < 2\lambda^{1-\epsilon_4}$, so in this case the previous inequality gives
\begin{equation}\label{eq:3rd_negligible}
\|(\tilde F^1_j \otimes \tilde F^2_k) * S_\lambda\|_1 \lesssim_N \sum_{a+b+c=N+1} \lambda^{-(a+b)\epsilon_3-(b+c)\epsilon_4} \lesssim_N \lambda^{-(1+N)\min\{\epsilon_3,\epsilon_4\}}
\end{equation}
for all $\lambda \geq 1$. It is clear from \eqref{eq:3rd_defjk} that these conditions on $j$ and $k$ are satisfied when $j=j_*(\epsilon_3;\lambda)$, $k=k_*(\epsilon_3,\epsilon_4;\lambda)$. In conclusion, by \eqref{eq:3rd_recomp} and \eqref{eq:3rd_negligible},
\[
\begin{aligned}
\|R^{(3)}_{\vec\epsilon,\lambda}\|_1 
&= \left\| \sum_{j \tc 2^j < \lambda^{1-\epsilon_3}} \sum_{k \tc 2^k < \lambda^{2-\epsilon_3-\epsilon_4}} (F^1_j \otimes F^2_k) * S_\lambda \right\|_{1} \\
&= \|(\tilde F^1_{j_*(\epsilon_3;\lambda)} \otimes \tilde F^2_{k_*(\epsilon_3,\epsilon_4;\lambda)}) * S_\lambda\|_1 
\lesssim_{\epsilon_3,\epsilon_4,N} \lambda^{-N}
\end{aligned}
\]
for all $\lambda \geq 1$ and $N \in \NN$. This proves \eqref{eq:frequency_remainder_l1} for $\ell=3$.
\end{proof}

\begin{rem}
The term $R^{(1)}_{\vec\epsilon,\lambda}$ in \eqref{eq:Rremainders} actually vanishes for $\lambda \gg 1$, due to the fact that $|D_u| \lesssim \opL$ spectrally on any $2$-step group; see also \cite[eq.\ (56)]{MSe} for a similar use of this joint spectral localisation. In the above proof we present a different argument showing the negligibility of $R^{(1)}_{\vec\epsilon,\lambda}$, as it appears to be more robust and potentially amenable to generalisations to less symmetric settings.
\end{rem}

Before we turn to estimating the contributions by the main terms $A_{\vec\epsilon,\lambda}$ and $B_{\vec\epsilon,\lambda}$ in \eqref{eq:spectrum_frequency_dec}, it will be useful to gain a good understanding of the underlying sub-Riemannian geometry on $G$.

\section{The geodesic flow on \texorpdfstring{$2$}{2}-step groups}\label{s:flow}

It is well known that quite explicit formulas for the sub-Riemannian geodesic flow on a $2$-step group can be obtained, and various instances thereof can be found in multiple places in the literature (see, e.g., \cite{ABB,Gav,GHK,Li,MSe} and references therein). For the reader's convenience, we briefly present here the derivation of the formulas that we need later.

With ``geodesic flow'' here we refer to the Hamiltonian flow on the cotangent bundle $T^* G$ generated by the square root $\Ham$ of the principal symbol of the sub-Laplacian $\opL$. Projecting the flow curves onto $G$ yields the so-called ``normal geodesics'', which are locally length-minimising. On arbitrary sub-Riemannian manifolds, there may exist length-minimising curves that are not obtained this way, also known as ``strictly abnormal length minimisers''; such curves, however, do not occur in our $2$-step setting (see, e.g., \cite[Corollary 12.14]{ABB}).

Recall that the choice of a homogeneous sub-Laplacian $\opL$ on the $2$-step group $G$ determines an inner product $\langle\cdot,\cdot\rangle$ on $\lie{g}_1$.
Recall moreover from \eqref{eq:Jmu} the definition of the skew-symmetric linear map $J_\mu : \lie{g}_1 \to \lie{g}_1$ for all $\mu \in \lie{g}_2^*$.
The symbol of the sub-Laplacian $\opL$ can then be written as $\Ham(\bx,\bxi)^2$, where
\begin{equation}\label{eq:HamH}
\Ham(\bx,\bxi) \defeq |\xi + J_\mu x/2|;
\end{equation}
notice that, in these coordinates, $\Ham(\bx,\bxi)^2$ is homogeneous of degree $2$ in $\bxi$, so the full symbol and the principal symbol of $\opL$ coincide.

\subsection{The flow starting at the origin}
We start with studying the flow starting at the origin of $G$. 
Thanks to the left-invariance of $\opL$, this actually determines the flow starting at any point of $G$, as we shall see later.

\begin{prp}
Let $\bxi_0 = (\xi_0,\mu_0) \in \RR^d$.
\begin{enumerate}[label=(\roman*)]
\item\label{en:geodesic_quadratic_A} The solution curve $(\bx(t),\bxi(t))$ of the Hamilton equations
\[
\dot \bx = \nabla_{\bxi} \HamA, 
\quad \dot\bxi = -\nabla_{\bx} \HamA, 
\]
with Hamiltonian $\HamA(\bx,\bxi) \defeq \Ham(\bx,\bxi)^2/2$ and initial datum
\[
\bx(0) = 0, 
\quad \bxi(0) = \bxi_0, 
\]
is
\begin{equation}\label{eq:HamA_x}
\begin{aligned}
x(t) &= \frac{\exp(t J_{\mu_0}) - I}{J_{\mu_0}} \xi_0, \\
\xi(t) &= \frac{1}{2}(I+ \exp(t J_{\mu_0})) \xi_0, \\
u(t) &= \frac{1}{2} \int_0^t \left[\frac{\exp(\tau J_{\mu_0}) - I}{J_{\mu_0}} \xi_0, \exp(\tau J_{\mu_0}) \xi_0\right] \,d\tau, \\
\mu(t) &= \mu_0.
\end{aligned}
\end{equation}

\item\label{en:geodesic_quadratic_H}
The solution curve $(\bx(t),\bxi(t))$ of the Hamilton equations
\begin{equation}\label{eq:Ham}
\dot \bx = \nabla_{\bxi} \Ham, 
\quad \dot\bxi = -\nabla_{\bx} \Ham, 
\end{equation}
with Hamiltonian $\Ham(\bx,\bxi)$ as in \eqref{eq:HamH} and initial datum
\[
\bx(0) = 0, 
\quad \bxi(0) = \bxi_0, 
\]
with $\xi_0 \neq 0$, is
\begin{equation}\label{eq:Ham_x}
\begin{aligned}
x(t) &= \frac{\exp(t J_{\mu_0/|\xi_0|}) - I}{J_{\mu_0}} \xi_0, \\
\xi(t) &= \frac{1}{2}(I+ \exp(t J_{\mu_0/|\xi_0|})) \xi_0, \\
u(t) &= \frac{1}{2} \int_0^t \left[\frac{\exp(\tau J_{\mu_0/|\xi_0|}) - I}{J_{\mu_0}} \xi_0, \exp(\tau J_{\mu_0/|\xi_0|}) \frac{\xi_0}{|\xi_0|}\right] \,d\tau, \\
\mu(t) &= \mu_0.
\end{aligned}
\end{equation}
\end{enumerate}
\end{prp}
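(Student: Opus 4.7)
The plan is to first solve the quadratic-Hamiltonian system in part \ref{en:geodesic_quadratic_A}, where the explicit formulas emerge from a single linear ODE in an auxiliary variable; part \ref{en:geodesic_quadratic_H} is then obtained from part \ref{en:geodesic_quadratic_A} by a time-reparametrization, exploiting the fact that $\HamA = \Ham^2/2$.

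For part \ref{en:geodesic_quadratic_A}, I would first compute the four partial derivatives of $\HamA(\bx,\bxi) = |\xi + J_\mu x/2|^2/2$. Since $\HamA$ does not depend on $u$, Hamilton's equation immediately gives $\dot\mu \equiv 0$, so $\mu(t)\equiv\mu_0$. Computing $\partial_\xi \HamA$ and $\partial_x \HamA$ in terms of the auxiliary vector
\[
\eta \defeq \xi + J_\mu x/2,
\]
and using skew-symmetry of $J_\mu$, one finds $\dot x = \eta$ and $\dot\xi = J_{\mu_0}\eta/2$, whence
\[
\dot\eta = \dot\xi + J_{\mu_0}\dot x/2 = J_{\mu_0}\eta/2 + J_{\mu_0}\eta/2 = J_{\mu_0}\eta,\qquad \eta(0)=\xi_0.
\]
Thus $\eta(t)=\exp(tJ_{\mu_0})\xi_0$. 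Integrating $\dot x=\eta$ from $0$ with $x(0)=0$ produces
\[
x(t) = \int_0^t \exp(\tau J_{\mu_0})\xi_0\,d\tau = \frac{\exp(tJ_{\mu_0})-I}{J_{\mu_0}}\xi_0,
\]
and then $\xi(t)=\eta(t)-J_{\mu_0}x(t)/2 = \tfrac12(I+\exp(tJ_{\mu_0}))\xi_0$. For the $u$-component, I would compute $\partial_\mu\HamA$ using the defining identity \eqref{eq:Jmu}: for any $\nu \in \lie g_2^*$, $\langle J_\nu x,\eta\rangle = \nu[x,\eta]$, so $\partial_\mu \HamA = [x,\eta]/2$ as an element of $\lie g_2$. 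Substituting the expressions already obtained for $x$ and $\eta$ and integrating gives the stated formula for $u(t)$.

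For part \ref{en:geodesic_quadratic_H}, the key observation is that $\HamA = \Ham^2/2$ gives $d\HamA = \Ham\, d\Ham$ off $\{\Ham=0\}$, hence the Hamiltonian vector fields satisfy $X_{\HamA} = \Ham\, X_{\Ham}$. Since $\Ham$ is constant along both flows and the initial condition $\bx(0)=0$, $\bxi(0)=\bxi_0$ with $\xi_0\neq 0$ gives $\Ham(\bx(0),\bxi(0))=|\xi_0|\neq 0$, the flow of $\Ham$ at time $t$ agrees with the flow of $\HamA$ at time $t/|\xi_0|$. Substituting $t\mapsto t/|\xi_0|$ into \eqref{eq:HamA_x} and using the homogeneity identity $J_{\mu_0}/|\xi_0| = J_{\mu_0/|\xi_0|}$ converts all occurrences of $\exp((t/|\xi_0|)J_{\mu_0})$ into $\exp(tJ_{\mu_0/|\xi_0|})$; the change of variables $\sigma = \tau/|\xi_0|$ in the integral for $u$ yields the formula in \eqref{eq:Ham_x}, after identifying a leftover factor $1/|\xi_0|$ inside the bracket.

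The only nontrivial step is computing $\partial_\mu\HamA$ correctly, since this requires reading the defining relation \eqref{eq:Jmu} as an identification between $\lie g_2$ and (the bracket's image paired with) $\lie g_2^*$; the remainder of the argument is a linear ODE in $\eta$ and quadrature. No M\'etivier or H-type hypothesis is used here: the formulas hold for arbitrary $2$-step Carnot groups.
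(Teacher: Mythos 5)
Your proof is correct and follows essentially the same route as the paper's: introduce the auxiliary variable $\eta = \xi + J_\mu x/2$, solve the linear ODE $\dot\eta = J_{\mu_0}\eta$, and then recover $x,\xi,u$ by quadrature; part~(ii) is the same time-reparametrization argument via $X_{\HamA}=\Ham\,X_\Ham$ and conservation of $\Ham$. One small slip in the write-up: after replacing $t$ by $t/|\xi_0|$ in \eqref{eq:HamA_x}, the needed substitution in the integral for $u$ is $\sigma = |\xi_0|\tau$ (equivalently $\tau = \sigma/|\xi_0|$), not $\sigma=\tau/|\xi_0|$ as written, since the upper limit $t/|\xi_0|$ should transform to $t$; with the correction the leftover $1/|\xi_0|$ lands inside the bracket as stated.
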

\begin{proof}
\ref{en:geodesic_quadratic_A}. First, we compute
\begin{align*}
\nabla_\xi \HamA &= \xi + J_\mu x/2, & \nabla_x\HamA &= - \frac{1}{2} J_\mu(\xi + J_\mu x/2), \\
 \nabla_\mu\HamA &= \frac{1}{2} [x,\xi+J_\mu x/2], & \nabla_u\HamA &= 0.
\end{align*}
The last equation tells us that $\mu$ is constant along any solution curve. If we now set
\[
\zeta \defeq \xi+J_\mu x/2,
\]
then we deduce that
\[
\dot\zeta = \dot\xi+J_\mu \dot x/2 = J_\mu \zeta,
\]
whence
\[
\zeta(t) = \exp(t J_\mu) \zeta(0) = \exp(t J_\mu) \xi(0),
\]
if we assume that $x(0) = 0$.

From this we deduce
\[
\dot x(t) = \zeta(t) = \exp(t J_\mu) \xi(0), \qquad \dot \xi(t) = \frac{1}{2} J_\mu \zeta(t) = \frac{1}{2} J_\mu \exp(t J_\mu) \xi(0),
\]
whence also
\[
x(t) = \frac{\exp(t J_\mu) - I}{J_\mu} \xi(0), \qquad \xi(t) = \frac{1}{2}(I+ \exp(t J_\mu)) \xi(0).
\]

Finally,
\[
\dot u(t) = \frac{1}{2} [x(t),\zeta(t)] = \frac{1}{2} \left[\frac{\exp(t J_\mu) - I}{J_\mu} \xi(0), \exp(t J_\mu) \xi(0)\right]
\]
and, if we assume that $u(0)=0$, then
\[
u(t) = \frac{1}{2} \int_0^t \left[\frac{\exp(\tau J_\mu) - I}{J_\mu} \xi(0), \exp(\tau J_\mu) \xi(0)\right] \,d\tau,
\]
as desired.

\smallskip

\ref{en:geodesic_quadratic_H}.
From part \ref{en:geodesic_quadratic_A} we know that \eqref{eq:HamA_x} give a solution of
\[
\dot x = \nabla_\xi(\Ham^2/2), \quad \dot u = \nabla_\mu(\Ham^2/2), \quad \dot\xi = -\nabla_x(\Ham^2/2), \quad \dot\mu = -\nabla_u(\Ham^2/2),
\]
that is,
\[
\dot x = \Ham \nabla_\xi \Ham, \quad \dot u = \Ham \nabla_\mu \partial\Ham, \quad \dot\xi = -\Ham \nabla_x \Ham, \quad \dot\mu = -\Ham \nabla_u \Ham.
\]
In addition, the quantity $\Ham$ is constant along the flow, so $\Ham(t) = \Ham(0) = |\xi_0|$, which means that \eqref{eq:HamA_x} actually solve
\[
\dot x = |\xi_0| \nabla_\xi \Ham, \quad \dot u = |\xi_0| \nabla_\mu\Ham, \quad \dot\xi = -|\xi_0| \nabla_x\Ham, \quad \dot\mu = -|\xi_0| \nabla_u\Ham.
\]
Thus, the change of variables $t \mapsto t/|\xi_0|$ in \eqref{eq:HamA_x} gives the desired solution of \eqref{eq:Ham}.
\end{proof}

From now on, let
\begin{equation}\label{eq:flow_def}
t\mapsto \bx^t(\bxi) = (x^t(\bxi),u^t(\bxi)), \qquad t\mapsto \bxi^t(\bxi) = (\xi^t(\bxi),\mu^t(\bxi)) = (\xi^t(\bxi),\mu)
\end{equation}
denote the solution \eqref{eq:Ham_x} of the system of Hamilton equations with initial condition 
$(\xi_0,\mu_0) = \bxi \in \dot\RR^{d_1} \times \RR^{d_2}$. 
We record here some alternative formulas for the functions $\bx^t$ and $\bxi^t$ that will become useful later.

\begin{cor}\label{cor:flow_alt}
Assume that $\bxi \in \dot \RR^{d_1} \times \dot \RR^{d_2}$.
If we set
\begin{equation}\label{eq:theta_mu_xi}
\theta \defeq \frac{t|\mu|}{2|\xi|}, \qquad \bar\mu \defeq \frac{\mu}{|\mu|}, \qquad \bar\xi \defeq \frac{\xi}{|\xi|},
\end{equation}
then
\begin{equation}\label{eq:flow_2step}
\begin{aligned}
x^t &= \frac{1}{|\mu|} \frac{\exp(2\theta J_{\bar\mu}) - I}{J_{\bar\mu}} \xi = t \exp(\theta J_{\bar\mu}) \frac{\sinh(\theta J_{\bar\mu})}{\theta J_{\bar\mu}} \bar\xi, \\
\xi^t &= \frac{1}{2}(\exp(2\theta J_{\bar\mu})+I) \xi = \exp(\theta J_{\bar\mu}) \cosh(\theta J_{\bar\mu}) \xi,\\
u^t &= \frac{t^2}{4} \int_0^1 \left[\frac{\exp(2 \tau \theta J_{\bar\mu}) - I}{\theta J_{\bar\mu}} \bar\xi, \exp(2\tau \theta J_{\bar\mu}) \bar\xi \right] \,d\tau,\\
\mu \cdot u^t &= \frac{t|\xi|}{2} \left[ 1 - \left\langle \frac{\sinh(\theta J_{\bar\mu})}{\theta J_{\bar\mu}} \cosh(\theta J_{\bar\mu}) \bar\xi, \bar\xi \right\rangle \right] .
\end{aligned}
\end{equation}
Note that here $\theta J_{\bar\mu}=J_{t\mu/(2|\xi|)}$.
Moreover, if $G$ is of Heisenberg type, then
\begin{equation}\label{eq:flow_Htype}
\begin{aligned}
x^t &= t \frac{\sin \theta}{\theta} ((\cos\theta) I + (\sin\theta) J_{\bar\mu}) \bar\xi, \\
\xi^t &= \cos \theta ((\cos\theta) I + (\sin\theta) J_{\bar\mu}) \xi,\\
u^t &= \frac{t^2}{4\theta} \left[ 1 - \frac{\sin \theta }{\theta} \cos \theta \right] \bar\mu.
\end{aligned}
\end{equation}
\end{cor}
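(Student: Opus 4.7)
The proof is a direct computation starting from the explicit formulas \eqref{eq:Ham_x}. The key algebraic observation is that $J_{\mu/|\xi|} = (|\mu|/|\xi|)\,J_{\bar\mu}$ and $J_\mu = |\mu|\,J_{\bar\mu}$, so by the definition \eqref{eq:theta_mu_xi} we have $t J_{\mu/|\xi|} = 2\theta J_{\bar\mu}$. For $x^t$, I apply the elementary identity $e^{2A} - I = 2 e^A \sinh A$ with $A = \theta J_{\bar\mu}$ and rewrite $\sinh(\theta J_{\bar\mu})/J_{\bar\mu} = \theta \cdot \sinh(\theta J_{\bar\mu})/(\theta J_{\bar\mu})$; combining with $\theta/|\mu| = t/(2|\xi|)$ yields the stated formula. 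For $\xi^t$, the same strategy using $e^{2A} + I = 2 e^A \cosh A$ gives the result in one step. For $u^t$, I perform the change of variable $\tau \mapsto t\tau$ in the integral of \eqref{eq:Ham_x} and invoke bilinearity of $[\cdot,\cdot]$ to introduce $\bar\xi$ everywhere; the remaining scalar prefactor simplifies to $t^2/4$ via $|\xi|/|\mu| = t/(2\theta)$.

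For $\mu \cdot u^t$, the plan is to apply the defining identity \eqref{eq:Jmu} to convert the Lie bracket into an inner product, so that the integrand becomes $\theta^{-1}\langle (\exp(2\tau \theta J_{\bar\mu}) - I)\bar\xi, \exp(2\tau \theta J_{\bar\mu}) \bar\xi\rangle$. Since $J_{\bar\mu}$ is skew-symmetric, $\exp(2\tau\theta J_{\bar\mu})$ is orthogonal, so the ``diagonal'' piece reduces to $|\bar\xi|^2 = 1$; integrating the remainder uses $\int_0^1 \exp(2\tau\theta J_{\bar\mu})\,d\tau = \exp(\theta J_{\bar\mu})\sinh(\theta J_{\bar\mu})/(\theta J_{\bar\mu})$. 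Decomposing $\exp(\theta J_{\bar\mu}) = \cosh(\theta J_{\bar\mu}) + \sinh(\theta J_{\bar\mu})$ splits the remaining quadratic form into a symmetric part, which produces the claimed $\cosh \cdot \sinh/(\theta J_{\bar\mu})$ term, and an odd-degree polynomial in $J_{\bar\mu}$, which is skew-symmetric and therefore contributes zero to $\langle \cdot \,\bar\xi,\bar\xi\rangle$.

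The H-type formulas then follow by substituting the identity $J_{\bar\mu}^2 = -I$ (immediate from \eqref{eq:Htype}) into the general formulas, which reduces the hyperbolic functions of $\theta J_{\bar\mu}$ to trigonometric ones via $\cosh(\theta J_{\bar\mu}) = \cos\theta \cdot I$ and $\sinh(\theta J_{\bar\mu}) = \sin\theta \cdot J_{\bar\mu}$; the expressions for $x^t$ and $\xi^t$ are then immediate. The formula for $\mu \cdot u^t$ gives the correct scalar coefficient, but to conclude that $u^t$ itself is a multiple of $\bar\mu$ in the H-type case, I compute $\nu \cdot u^t$ for arbitrary $\nu \in \lie{g}_2^*$ using the anticommutation relation $J_\nu J_{\bar\mu} + J_{\bar\mu} J_\nu = -2\langle\nu,\bar\mu\rangle I$ (obtained by polarising \eqref{eq:Htype}): the symmetric part of $J_\nu J_{\bar\mu}$ equals $-\langle\nu,\bar\mu\rangle I$, so the same vanishing-of-odd-part argument yields $\nu \cdot u^t = \langle\nu,\bar\mu\rangle \cdot (\bar\mu \cdot u^t)$ for all $\nu$, as required. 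The computation is elementary throughout and presents no real obstacle beyond careful bookkeeping; the one delicate point is separating the symmetric and skew-symmetric parts of functions of $J_{\bar\mu}$, since only the former survive in the scalar quantities that arise.
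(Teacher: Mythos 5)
Your proposal is correct and follows essentially the same route as the paper: direct algebraic manipulation of \eqref{eq:Ham_x} via the substitution $tJ_{\mu/|\xi|} = 2\theta J_{\bar\mu}$ together with the identities $e^{2A}\pm I = 2e^A\cosh A$ (resp.\ $2e^A\sinh A$), the observation that odd functions of the skew-symmetric $J_{\bar\mu}$ have vanishing quadratic form for $\mu\cdot u^t$, and the anticommutation relation obtained by polarising \eqref{eq:Htype} to show $u^t\parallel\bar\mu$ in the H-type case. The only cosmetic differences are that the paper phrases the H-type step as proving $\nu\cdot u^t=0$ for $\nu\in\mu^\perp$ (rather than establishing $\nu\cdot u^t = \langle\nu,\bar\mu\rangle(\bar\mu\cdot u^t)$ directly) and uses the skew-symmetry of $J_\nu F(J_\mu)$ for arbitrary real $F$, which is the precise version of your ``vanishing-of-odd-part'' step; also note your polarisation $J_\nu J_{\bar\mu}+J_{\bar\mu}J_\nu=-2\langle\nu,\bar\mu\rangle I$ carries the correct factor of $2$, whereas the paper omits it (harmlessly, since only anticommutativity is used).
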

\begin{proof}
The formulas for an arbitrary $2$-step group are obtained through easy manipulation of the formulas \eqref{eq:Ham_x}. In particular, by \eqref{eq:Ham_x} and \eqref{eq:Jmu},
\[\begin{split}
\mu \cdot u(t) 
&= \frac{1}{2} \int_0^t \left\langle (\exp(\tau J_{\mu/|\xi|}) - I) \xi, \exp(\tau J_{\mu/|\xi|}) \bar\xi \right\rangle \,d\tau \\
&= \frac{|\xi|}{2} \int_0^t \left\langle  \bar\xi,(I- \exp(\tau J_{\mu/|\xi|})) \bar\xi \right\rangle \,d\tau,
\end{split}\]
and integration of this expression yields the claimed formula for $\mu \cdot u^t$.

\smallskip

In the case that $G$ is of Heisenberg type (see \eqref{eq:Htype}), by using the identity $J_{\bar\mu}^2 = -I$ the formulas proved for general $G$ reduce to the given ones for $x^t$ and $\xi^t$, as well as
\[
\mu \cdot u^t = \frac{t|\xi|}{2} \left[ 1 - \frac{\sin \theta }{\theta} \cos \theta \right] .
\]
In order to conclude the proof of \eqref{eq:flow_Htype}, it is thus sufficient to show that $u^t$ is parallel to $\mu$, i.e., that $\nu \cdot u^t = 0$ for all $\nu \in \mu^\perp$.

On the other hand, by polarization of \eqref{eq:Htype}, one gets that
\[
J_\mu J_\nu + J_\nu J_\mu = -\langle \mu, \nu \rangle I
\]
for all $\mu,\nu \in \RR^{d_2}$. In particular, if $\mu \perp \nu$, then $J_\mu$ and $J_\nu$ anticommute, and therefore
\[
(J_\nu J_\mu^k)^* = (-1)^{k+1} J_\mu^k J_\nu = - J_\nu J_\mu^k,
\]
i.e., $J_\nu J_\mu^k$ is skew-symmetric for all $k \in \NN$; as a consequence, if $\mu \perp \nu$, then $J_\nu F(J_\mu)$ is skew-symmetric for any real-valued polynomial $F$, hence for any real-valued function $F$.

Now, if $\nu \in \mu^\perp$, then, again by \eqref{eq:Ham_x} and \eqref{eq:Jmu},
\[\begin{split}
\nu \cdot u^t 
&= \frac{1}{2} \int_0^t \left\langle J_\nu \frac{\exp(\tau J_{\mu/|\xi|}) - I}{J_{\mu/|\xi|}} \bar\xi, \exp(\tau J_{\mu/|\xi|}) \bar\xi \right\rangle \,d\tau \\
&= \frac{1}{2} \int_0^t \left\langle J_\nu \exp(\tau J_{\mu/|\xi|}) \frac{\exp(\tau J_{\mu/|\xi|}) - I}{J_{\mu/|\xi|}} \bar\xi, \bar\xi \right\rangle \,d\tau = 0
\end{split}\]
by skew-symmetricity; so indeed $u^t$ is parallel to $\mu$, as claimed.
\end{proof}

\subsection{The flow starting at an arbitrary point}\label{ss:flowy}
For given $\by=(y,v)\in G$, denote by $\ltr_{\by} : G \to G$ the left-translation map by $\by$, i.e., $\ltr_{\by}(\bx) \defeq \by \cdot_G \bx$, $\bx\in G$. 
Since $\ltr_{\by} : G \to G$ is a diffeomorphism, it induces a vector bundle automorphism $\ltr_{\by}^* : T^* G \to T^* G$, given by $\ltr_{\by}^* (\bx,\bxi) = (\ltr_{\by}(\bx), (D\ltr_{\by}(\bx)^T)^{-1}\bxi)$. As
\[
\ltr_{\by}(\bx)=(y+x,v+u+[y,x]/2)
\]
is an affine map in these coordinates, we see that its differential
\begin{equation}\label{eq:diff_ltr}
\ltr_{\by}' \defeq D \ltr_{\by}(\bx) =
\left(\begin{array}{c|c}
I_{d_1} & 0 \\\hline
[y,\cdot]/2 & I_{d_2} 
\end{array}\right)
\end{equation}
is independent of the base point $\bx \in G$.
Thus $\dltr_{\by} \defeq (D \ltr_{\by}(\bx)^T)^{-1}$ is independent of the base point $\bx$ too, and a cotangent vector $\bxi = (\xi,\mu) \in T^*_{\bx} G$ is transformed under $\ltr_{\by}$ into
\begin{equation}\label{eq:dltr}
\dltr_{\by} \bxi = 
\left(\begin{array}{c|c}
I_{d_1} & -[y,\cdot]/2 \\\hline
0 & I_{d_2} 
\end{array}\right)
 \left(\begin{array}{c} \xi \\\hline \mu \end{array}\right) = 
\left(\begin{array}{c}
  \xi -J_\mu y/2  \\\hline
\mu    
\end{array}\right)
.
\end{equation}

The left-invariance of the sub-Laplacian $\opL$ on $G$ corresponds to the left-invariance of its symbol, hence also of the Hamiltonian $\Ham$, i.e.,
\[
\Ham \circ \ltr_{\by}^* = \Ham \qquad \forall \by \in G.
\]
Notice that $\Ham$ is a smooth function on the open subset
\[
\DHam \defeq \{ (\bx,\bxi) \tc \Ham(\bx,\bxi) \neq 0 \} = \{ (\bx,\bxi) \tc \xi+J_\mu x/2 \neq 0 \}
\]
of $T^* G$. Correspondingly, the Hamiltonian flow generated by $\Ham$ is $\ltr_{\by}^*$-covariant. In particular, if 
$t\mapsto (\bx_*^t(\by,\bxi),\bxi_*^t(\by,\bxi))$ denotes the Hamiltonian flow curve of $\Ham$ starting at $(\by,\bxi) \in \DHam$, then
\[
(\bx_*^t(\by,\bxi),\bxi_*^t(\by,\bxi)) =\ltr_{\by}^*(\bx_*^t(0,\dltr_{\by}^{-1}\bxi),\bxi_*^t(0,\dltr_{\by}^{-1}\bxi)),
\]
i.e.,
\begin{equation}\label{eq:flowy}
\bx_*^t(\by,\bxi) =\by \cdot_G \bx^t(\dltr_{\by}^{-1}\bxi), \qquad \bxi_*^t(\by,\bxi) = \dltr_{\by} \bxi^t(\dltr_{\by}^{-1}\bxi) .
\end{equation}
So we have reduced to the flow starting at $\by=0$ as in \eqref{eq:flow_def}.

\subsection{Properties of the flow}

We record here some identities that the functions $\bx^t_*$ and $\bxi^t_*$ from \eqref{eq:flowy} satisfy due to their being solutions of a system of Hamilton equations with a $1$-homogeneous Hamiltonian $\Ham$. A similar discussion is found in \cite[Sections 1.1 and 1.2]{LSV}.

\begin{lem}\label{lem:symplectic}
For all $t \in \RR$ and $(\by,\bxi) \in \DHam$,
\begin{equation}\label{eq:pres_symplectic}
\begin{aligned}
(\partial_{\by} \bx_*^t)^T (\partial_{\by} \bxi_*^t) - (\partial_{\by} \bxi_*^t)^T (\partial_{\by} \bx_*^t) &= 0,\\
(\partial_{\bxi} \bx_*^t)^T (\partial_{\bxi} \bxi_*^t) - (\partial_{\bxi} \bxi_*^t)^T (\partial_{\bxi} \bx_*^t) &= 0,\\
(\partial_{\by} \bx_*^t)^T (\partial_{\bxi} \bxi_*^t) - (\partial_{\by} \bxi_*^t)^T (\partial_{\bxi} \bx_*^t) &= I,
\end{aligned}
\end{equation}
and moreover
\begin{align}
\label{eq:symplecticy_xi}
(\partial_{\by} \bx_*^t)^T \bxi_*^t &= \bxi,\\
\label{eq:symplecticy_ort}
(\partial_{\bxi} \bx_*^t)^T \bxi_*^t &= 0,\\
\label{eq:symplecticy_ker}
\Ker \partial_{\bxi} \bx_*^t \cap \Ker \partial_{\bxi} \bxi_*^t &= \{0\} .
\end{align}
In particular, for the flow \eqref{eq:flow_def} starting at $\by = 0$, we have
\begin{align}
\label{eq:symplectic_com}
(\partial_{\bxi} \bx^t)^T (\partial_{\bxi} \bxi^t) &= (\partial_{\bxi} \bxi^t)^T (\partial_{\bxi} \bx^t),\\
\label{eq:symplectic_ort}
(\partial_{\bxi} \bx^t)^T \bxi^t &= 0,\\
\label{eq:symplectic_ker}
\Ker \partial_{\bxi} \bx^t \cap \Ker \partial_{\bxi} \bxi^t &= \{0\} .
\end{align}
\end{lem}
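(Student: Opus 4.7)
The plan is to derive everything from two standard facts about the flow of $\Ham$ on $\DHam$: (a) it is a symplectomorphism (pullback of the canonical symplectic form is preserved along the flow), and (b) $\Ham$ is smooth and positively $1$-homogeneous in $\bxi$, which forces the flow map $\Phi_t(\by,\bxi) \defeq (\bx_*^t,\bxi_*^t)$ to be $0$-homogeneous in $\bxi$ in its space component and $1$-homogeneous in $\bxi$ in its covector component. The identities \eqref{eq:symplectic_com}--\eqref{eq:symplectic_ker} for $\by=0$ will then follow as a direct specialisation.

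First I would establish \eqref{eq:pres_symplectic}. Writing the differential of the flow as the block matrix
\[
D\Phi_t = \begin{pmatrix} \partial_{\by} \bx_*^t & \partial_{\bxi} \bx_*^t \\ \partial_{\by} \bxi_*^t & \partial_{\bxi} \bxi_*^t \end{pmatrix},
\]
the condition $(D\Phi_t)^T J_0 (D\Phi_t) = J_0$, with $J_0 = \bigl(\begin{smallmatrix} 0 & -I \\ I & 0 \end{smallmatrix}\bigr)$ the standard symplectic matrix, unpacks precisely into the three identities in \eqref{eq:pres_symplectic}. This property of the flow is standard (it follows from the fact that the Lie derivative of the symplectic form along a Hamiltonian vector field vanishes, which can also be checked directly at the level of the Hamilton equations \eqref{eq:Ham}).

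Next I would turn to the Euler-type identities. Since $\Ham$ is positively $1$-homogeneous in $\bxi$, a direct inspection of the Hamilton equations shows that $\nabla_{\bxi}\Ham$ is $0$-homogeneous and $\nabla_{\bx}\Ham$ is $1$-homogeneous in $\bxi$; consequently, if $(\bx(t),\bxi(t))$ is the flow starting at $(\by,\bxi_0)$, then $(\bx(t),\bxi(t)/r)$ solves the same Hamilton equations with initial datum $(\by,\bxi_0/r)$ for any $r>0$. Hence $\bx_*^t(\by,r\bxi)=\bx_*^t(\by,\bxi)$ and $\bxi_*^t(\by,r\bxi)=r\,\bxi_*^t(\by,\bxi)$, so Euler's identity gives
\[
(\partial_{\bxi}\bx_*^t)\,\bxi = 0, \qquad (\partial_{\bxi}\bxi_*^t)\,\bxi = \bxi_*^t.
\]
Applying both sides of the third identity in \eqref{eq:pres_symplectic} to the vector $\bxi$ and using the two relations above yields \eqref{eq:symplecticy_xi}; applying both sides of the second identity in \eqref{eq:pres_symplectic} to $\bxi$ and using the same relations yields \eqref{eq:symplecticy_ort}. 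Finally, if $v \in \Ker\partial_{\bxi}\bx_*^t \cap \Ker\partial_{\bxi}\bxi_*^t$, then applying the third identity of \eqref{eq:pres_symplectic} to $v$ gives $v=(\partial_{\by}\bx_*^t)^T(\partial_{\bxi}\bxi_*^t)v - (\partial_{\by}\bxi_*^t)^T(\partial_{\bxi}\bx_*^t)v = 0$, proving \eqref{eq:symplecticy_ker}. The identities \eqref{eq:symplectic_com}--\eqref{eq:symplectic_ker} for the flow starting at $\by=0$ are immediate specialisations of, respectively, the second line of \eqref{eq:pres_symplectic}, \eqref{eq:symplecticy_ort}, and \eqref{eq:symplecticy_ker}.

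I do not expect any genuine obstacle here: the content is standard symplectic linear algebra combined with Euler's identity. The only mild care needed is that the flow and the derivatives above are only defined on the open set $\DHam$ where $\Ham$ is smooth, but this restriction is built into the hypothesis $(\by,\bxi)\in\DHam$, so no issue arises.
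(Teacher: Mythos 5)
Your proof is correct and follows essentially the same route as the paper's: derive \eqref{eq:pres_symplectic} from the fact that the Hamiltonian flow is a canonical transformation, combine with the Euler identities $(\partial_{\bxi}\bx_*^t)\bxi=0$ and $(\partial_{\bxi}\bxi_*^t)\bxi=\bxi_*^t$ (coming from the $0$- and $1$-homogeneity of $\bx_*^t$ and $\bxi_*^t$ in $\bxi$) to get \eqref{eq:symplecticy_xi}--\eqref{eq:symplecticy_ker}, and then specialise to $\by=0$. The only cosmetic difference is that you spell out the derivation of the homogeneity from the Hamilton equations, which the paper takes for granted.
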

\begin{proof}
For any $t \in \RR$, the map $(\by,\bxi) \mapsto (\bx_*^t(\by,\bxi),\bxi^t_*(\by,\bxi))$ is a canonical transformation, i.e., it preserves the symplectic form $d\by \wedge d\bxi$ on $T^* G$. Writing in coordinates the preservation of the symplectic form readily gives the identities \eqref{eq:pres_symplectic}.

The third identity in \eqref{eq:pres_symplectic} clearly implies \eqref{eq:symplecticy_ker}.
Moreover, \eqref{eq:symplecticy_xi} and \eqref{eq:symplecticy_ort} are obtained by multiplying the second and third identities in \eqref{eq:pres_symplectic} on the right by $\bxi$ and observing that 
\[
(\partial_{\bxi} \bxi_*^t) \bxi = \bxi_*^t, \qquad (\partial_{\bxi} \bx_*^t) \bxi = 0
\]
by Euler's identity, as $\bx_*^t$ and $\bxi_*^t$ are $0$-homogeneous and $1$-homogeneous in $\bxi$ respectively.

Finally, the identities \eqref{eq:symplectic_com}, \eqref{eq:symplectic_ort} and \eqref{eq:symplectic_ker} are obtained by evaluation of the previous ones at $\by = 0$.
\end{proof}

\section{The anti-FIO region}\label{s:antiFIO}

Recall that, in light of Proposition \ref{prp:spectrum_frequency}, the proof of \eqref{sploc_scloc_wave_est} is reduced to that of the estimates
\begin{equation}\label{eq:AB_region}
\begin{aligned}
\|\chr_{\overline{B}(0,2)} \exp(\pm i \sqrt{\opL}) A_{\vec\epsilon,\lambda} \|_1 &\lesssim_{\vec\epsilon} \lambda^{(d-1)/2 + \smallo(\vec\epsilon)} \\
\|\chr_{\overline{B}(0,2)} \exp(\pm i \sqrt{\opL}) B_{\vec\epsilon,\lambda} \|_1 &\lesssim_{\vec\epsilon} \lambda^{(d-1)/2 + \smallo(\vec\epsilon)}
\end{aligned}
\end{equation}
for all $\lambda \geq 1$. Here and in the sequel, $\smallo(\vec\epsilon)$ serves as shorthand for a positive quantity that can be made arbitrarily small by taking $\vec\epsilon = (\epsilon_1,\epsilon_2,\epsilon_3,\epsilon_4)$ sufficiently small.

\smallskip

Let us begin with some heuristic considerations as to how to look at the terms $A_{\vec\epsilon,\lambda}$ and $B_{\vec\epsilon,\lambda}$ when $\lambda \gg 1$.
By \eqref{eq:regionsAB}, the kernels $A_{\vec\epsilon,\lambda}$ and $B_{\vec\epsilon,\lambda}$ are obtained by convolving the spectral localisation kernel $\chi_1(\sqrt{\opL}/\lambda) \delta_0$ with frequency localisation kernels of the form $\chi_0(2^{-j} |D_x|) \chi_1(2^{-k} |D_u|) \delta_0$ or $\chi_1(2^{-j} |D_x|) \chi_0(2^{-k} |D_u|) \delta_0$, where $2^j \approx \lambda$ and $2^k \approx \lambda^2$ (see Figure \ref{fig:freq_loc}). 
If we ignore small powers of $\lambda$, all these kernels can be thought of as $L^1$-normalised $\dil_{1/\lambda}$-scaled versions of fixed Schwartz-class kernels; in particular, they
are essentially supported (up to negligible tails) in a small sub-Riemannian ball of radius $\simeq 1/\lambda$ centred at the origin, i.e., in a set of the form
\begin{equation}\label{eq:Wlambda}
W_\lambda = \{ \bx \tc |x|\lesssim 1/\lambda, \ |u| \lesssim 1/\lambda^2 \},
\end{equation}
and moreover we roughly have $\|A_{\vec\epsilon,\lambda}\|_p,\|B_{\vec\epsilon,\lambda}\|_p \simeq_p \lambda^{Q/p'}$ for $p \in [1,\infty]$.

\smallskip

In view of H\"ormander's theorem on the propagation of singularities (see \cite[Theorem 26.1.1]{Ho4} or \cite[Theorem 2.1, Ch.\ IV.2]{T}), 
we expect the singular support of the convolution kernel $\exp(it\sqrt {\opL}) \delta_0$ to be contained in the closure of the geodesic sphere $S(t) \defeq \{ \bx^t(\bxi) \tc \bxi \in \dot\RR^{d_1} \times \RR^{d_2} \}$. In accordance with the heurstics discussed in Section \ref{ss:intro_heuristics}, we may therefore expect both $\exp(\pm i\sqrt {\opL}) A_{\vec\epsilon,\lambda}$ and $\exp(\pm i\sqrt {\opL}) B_{\vec\epsilon,\lambda}$ to be essentially supported in the neighbourhood $W_\lambda \cdot_G S$ of the unit geodesic sphere $S \defeq S(1)$. A more precise picture can be obtained if we further assume that, when studying the effect of the wave propagator $\exp(\pm i \sqrt{\opL})$ on the kernels $A_{\vec\epsilon,\lambda}$ and $B_{\vec\epsilon,\lambda}$, we only consider the geodesics of initial covectors $\bxi$ in the frequency regions $A$ and $B$ of Figure \ref{fig:freq_loc} respectively, and thus split the geodesic sphere $S$ into $S_{A,\lambda}$ and $S_{B,\lambda}$.

Indeed, as seen in Corollary \ref{cor:flow_alt}, the sub-Riemannian geodesics $t \mapsto \bx^t=(x^t,u^t)$ are strongly steered by the parameter $\theta = t|\mu|/(2|\xi|)$. On the other hand, the size of $\theta$, which is the same as that of the ratio $|\mu|/|\xi|$ when $|t|=1$, behaves quite differently for the contributions by $A_{\vec\epsilon,\lambda}$ and by $B_{\vec\epsilon,\lambda}$:
\begin{itemize}
\item under the frequency localisation given by $B_{\vec\epsilon,\lambda}$, we roughly have that $|\xi|\simeq\lambda$ and $|\mu|\lesssim \lambda^2$, so that here $|\theta|\lesssim \lambda$;
\item for the term $A_{\vec\epsilon,\lambda}$, instead, we roughly have that $|\xi|\lesssim \lambda$ and $|\mu|\simeq \lambda^2$, so that here $|\theta|\gtrsim \lambda$.
\end{itemize}
This corresponds to a different behaviour of the corresponding geodesics. Indeed, by the formulas \eqref{eq:flow_2step} for the geodesic flow,
\[
|x^t| = \left| \frac{\sinh(\theta J_{\bar\mu})}{\theta J_{\bar\mu}} \bar\xi\right| \qquad\text{for } |t|=1.
\]
If we assume that $G$ is M\'etivier (see \eqref{eq:metivier}), then $\|J_{\bar\mu}^{-1}\| \simeq 1$, and consequently $|x^t|\lesssim (1+|\theta|)^{-1}$.
In a similar way, from \eqref{eq:flow_2step} one sees that $|u^t| \lesssim (1+|\theta|)^{-1}$.

\begin{figure}

\includegraphics{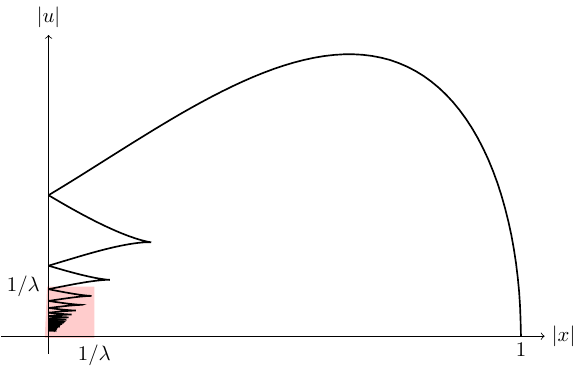}

\caption{Profile of the unit geodesic sphere on an H-type group, highlighting the part related to frequencies where $|\mu|/|\xi| \gtrsim \lambda \gg 1$.}
\label{fig:geo_A}

\end{figure}

Therefore, under the frequency localisation $|\theta| \gtrsim \lambda$ corresponding to $A_{\vec\epsilon,\lambda}$, the set $S_{A,\lambda}$ of endpoints of unit-length geodesics starting at the origin is contained in a Euclidean ball of radius $\simeq 1/\lambda$ (see Figure \ref{fig:geo_A}) and, a fortiori, in the larger cylindrical set
\begin{equation}\label{eq:Zlambda}
Z_\lambda = \{ \bx \tc |x|\lesssim 1/\lambda, \ |u| \lesssim 1 \}.
\end{equation}
Thus, we may expect the kernel $\exp(\pm i\sqrt{\opL}) A_{\vec\epsilon,\lambda}$ to be essentially supported in the product set $W_\lambda \cdot_G Z_\lambda$, which in turn, by \eqref{eq:group_law}, is contained in another cylindrical set of the form $Z_\lambda$ (with larger implicit constants).
In particular, as the sets \eqref{eq:Wlambda} and \eqref{eq:Zlambda} have comparable $x$-sizes, it is as if under this frequency localisation we did not have propagation in $x$.
Assuming this were true, we could simply estimate the $L^1$-norm of $\exp(\pm i\sqrt{\opL}) A_{\vec\epsilon,\lambda}$ by means of the Cauchy--Schwarz inequality:
\[\begin{split}
\| \exp(\pm i\sqrt{\opL}) A_{\vec\epsilon,\lambda} \|_1
&\lesssim |Z_\lambda|^{1/2} \| \exp(\pm i \sqrt{\opL}) A_{\vec\epsilon,\lambda} \|_2
\simeq  \lambda^{-d_1/2} \| A_{\vec\epsilon,\lambda} \|_2 \\
&\simeq \lambda^{-d_1/2} (\lambda^{d_1+2d_2})^{1/2}=\lambda^{d_2}.
\end{split}\]
As $d_2 \leq (d-1)/2$ on M\'etivier groups, this bound would be enough to obtain the first of the estimates \eqref{eq:AB_region}.

\smallskip

We shall next show that these heuristic considerations can indeed be made rigorous by means of weighted $L^2$-estimates with suitable weights on the first layer.

\begin{prp}\label{prp:fbi}
Assume that $G$ is a M\'etivier group.
Let $\vec\epsilon \in (\Rpos)^4$, $\lambda \geq 1$, and $A_{\vec\epsilon,\lambda}$ be defined as in Proposition \ref{prp:spectrum_frequency}. Then
\[
\|\chr_{\overline{B}(0,2)} \exp(\pm i \sqrt{\opL}) A_{\vec\epsilon,\lambda} \|_1 \lesssim_{\vec\epsilon,s} \lambda^s
\]
for all $s > d_2+d_1(\epsilon_3+\epsilon_4)$. Additionally,
\begin{equation}\label{eq:metivier_dim_ineq}
d_2 \leq (d-1)/2.
\end{equation}
\end{prp}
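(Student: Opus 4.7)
The dimensional inequality \eqref{eq:metivier_dim_ineq} is a quick linear-algebraic consequence of the M\'etivier condition. For any $x_0 \in \lie{g}_1 \setminus \{0\}$, consider the map $\mathrm{ad}(x_0) : \lie{g}_1 \to \lie{g}_2$, $y \mapsto [y,x_0]$. If its image were a proper subspace, some $\mu \in \lie{g}_2^* \setminus \{0\}$ would annihilate it, whence $\langle J_\mu y, x_0\rangle = \mu[y,x_0] = 0$ for every $y \in \lie{g}_1$, forcing $J_\mu x_0 = 0$ and contradicting \eqref{eq:metivier}. Since $x_0 \in \Ker\mathrm{ad}(x_0)$, surjectivity gives $d_2 \leq d_1 - 1$, i.e., $d_2 \leq (d-1)/2$.

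For the main $L^1$-bound, my plan is to make rigorous the heuristic that precedes the statement via a Cauchy--Schwarz split. Write $g \defeq \exp(\pm i\sqrt{\opL}) A_{\vec\epsilon,\lambda}$ and, for $\eta > 0$ to be chosen, set
\[
Z_\eta \defeq \overline{B}(0,2) \cap \{\bx \tc |x| \leq \lambda^{-1+\eta}\}.
\]
Points of $\overline{B}(0,2)$ automatically satisfy $|u| \lesssim 1$, so $|Z_\eta| \lesssim \lambda^{-d_1(1-\eta)}$. Combining the $L^2$-isometry of the half-wave propagator with a Plancherel estimate on the spectrally- and frequency-localised kernel $A_{\vec\epsilon,\lambda}$ (which gives $\|A_{\vec\epsilon,\lambda}\|_2 \lesssim \lambda^{Q/2+\smallo(\vec\epsilon)}$, using $\|\chi_1(\sqrt{\opL}/\lambda)\delta_0\|_2 \simeq \lambda^{Q/2}$ by Plancherel together with the $L^\infty$-boundedness of the additional frequency cutoffs), Cauchy--Schwarz yields
\[
\|\chr_{Z_\eta}\, g\|_1 \leq |Z_\eta|^{1/2}\|g\|_2 \lesssim \lambda^{d_2 + d_1\eta/2 + \smallo(\vec\epsilon)}.
\]
Taking $\eta = 2(\epsilon_3+\epsilon_4)$ matches the target exponent $d_2 + d_1(\epsilon_3+\epsilon_4)$.

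What remains---and this will be the main obstacle---is to show that the tail $\chr_{\overline{B}(0,2)\setminus Z_\eta}\, g$ is negligible in $L^1$. I would establish, for every $N \in \NN$, the first-layer weighted $L^2$-bound
\[
\bigl\||x|^N g\bigr\|_2 \lesssim_N \lambda^{-N(1-\smallo(\vec\epsilon)) + Q/2 + \smallo(\vec\epsilon)};
\]
combining this with $\|\chr_{\overline{B}(0,2)\setminus Z_\eta}\, g\|_1 \leq \lambda^{N(1-\eta)} |\overline{B}(0,2)|^{1/2}\, \||x|^N g\|_2$ reduces the tail contribution to $\lambda^{-N(\eta-\smallo(\vec\epsilon))+Q/2+\smallo(\vec\epsilon)}$, which becomes arbitrarily small as $N \to \infty$, provided $\vec\epsilon$ is shrunk so that $\smallo(\vec\epsilon) < \eta$. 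To prove the weighted $L^2$-bound, I would adapt the ``first-layer weighted Plancherel-type estimates'' of \cite{Heb,MSt_mult} (see also the H-type variant in \cite{MSe}): after the partial Euclidean Fourier transform in $u$, the weight $|x|^{2N}$ becomes $(-\Delta_\xi)^N$, and each $\xi$-derivative of the (matrix-valued) symbol of $A_{\vec\epsilon,\lambda}$ supplies a factor $\lambda^{-1+\smallo(\vec\epsilon)}$, because the joint constraints $\sqrt{\opL} \simeq \lambda$ and $|\mu| \simeq \lambda^2$ pin down $|\xi|$ at scale $\lambda^{1-\smallo(\vec\epsilon)}$. The M\'etivier nondegeneracy \eqref{eq:metivier}, which provides the bound $\|J_\mu^{-1}\| \lesssim \lambda^{-2}$ on the frequency support of $A_{\vec\epsilon,\lambda}$, is crucial here to make all such estimates uniform in $\mu$. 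Once this weighted estimate is in hand, the rest is the Cauchy--Schwarz assembly described above.
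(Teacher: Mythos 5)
Your overall strategy coincides with the paper's: after a Cauchy--Schwarz reduction, both arguments rest on the same first-layer weighted Plancherel-type estimate, with the M\'etivier condition supplying the decay per power of $|x|$. Your hard-cutoff decomposition $Z_\eta$ plus an arbitrary weight $|x|^N$ on the tail is a dyadic rephrasing of the paper's single soft weight $(1+|x|)^{-\sigma}$ with $\sigma > d_1/2$; the two are essentially equivalent. The dimensional inequality is also fine: surjectivity of $\mathrm{ad}(x_0) : \lie{g}_1 \to \lie{g}_2$ is the transpose of the paper's observation that $\mu \mapsto J_\mu x_0 \in x_0^\perp$ is injective.

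There are, however, two concrete gaps in the step that carries the whole weight. First, the claim that \emph{``after the partial Euclidean Fourier transform in $u$, the weight $|x|^{2N}$ becomes $(-\Delta_\xi)^N$''} is incorrect as stated. Fourier transform in $u$ replaces $u$ by $\mu$ but does not touch $x$; and a full Euclidean Fourier transform in $\bx=(x,u)$ would not simplify matters either, since the functional calculus of $\opL$ is not realised by a multiplier in $(\xi,\mu)$. What is actually used in \cite{Heb,MSt_mult,MSe,MP} is that, after Fourier transform in $u$, the multiplier $H(\sqrt{\opL},|D_u|)$ is the functional calculus of a $\mu$-twisted Laplacian in $x$, and the weight $|x|^{2\sigma}$ is handled through commutator/Hermite-type estimates, giving
\[
\| k_{H(\sqrt{\opL}, |D_u|)}\|_{L^2((1+|x|)^{2\sigma} \,d\bx)}\lesssim_\sigma \| k_{H(\sqrt{\opL}, |D_u|)}\|_2 + \| \opL^{\sigma/2}|D_u|^{-\sigma}  k_{H(\sqrt{\opL}, |D_u|)}\|_2 .
\]
The gain per power is then $\opL^{1/2}|D_u|^{-1} \simeq \lambda/\lambda^{2-\epsilon_3-\epsilon_4}=\lambda^{-1+\epsilon_3+\epsilon_4}$ on the frequency support of $A_{\vec\epsilon,\lambda}$, not a gain per $\xi$-derivative. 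Second, and related, the factor $\chi_0(2^{-j}|D_x|)$ is a Euclidean Fourier multiplier in $\xi$ that is \emph{not} a joint spectral multiplier of $(\sqrt{\opL},|D_u|)$ ($D_x$ is not left-invariant on $G$), so it cannot be absorbed into the displayed Plancherel estimate directly. The paper removes it first by peeling off a group convolution with $\varphi_{\lambda,j,k}=\chi_0(2^{-j}\lambda|D_x|)\chi_1(2^{-k}\lambda^2|D_u|)\delta_0$ and invoking Young's inequality with the submultiplicative weight $1+|x|$, before applying the weighted Plancherel to the remaining joint multiplier. Your plan needs an analogous step; as written, it presumes the $\chi_0(|D_x|)$ cutoff behaves like a joint spectral multiplier, which it does not.
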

\begin{proof}
As $G$ is M\'etivier, the linear map $\lie{g}_2^* \ni \mu \mapsto J_\mu x \in x^\perp$ is injective for any fixed $x \in \lie{g}_1 \setminus \{0\}$, thus
$d_2 \leq d_1-1$ and \eqref{eq:metivier_dim_ineq} follows.

Moreover, we notice that
\[\begin{split}
A_{\vec\epsilon,\lambda} 
&= \sum_{j \tc 2^j < \lambda^{1+\epsilon_2}} \sum_{k \tc \lambda^{2-\epsilon_3-\epsilon_4} \leq 2^k < \lambda^{2+\epsilon_1}} \chi_1(\sqrt{\opL}/\lambda) \, \chi_1(2^{-j}|D_x|) \, \chi_1(2^{-k}|D_u|) \delta_0 \\
&= \sum_{k \tc \lambda^{2-\epsilon_3-\epsilon_4} \leq 2^k < \lambda^{2+\epsilon_1}} \chi_1(\sqrt{\opL}/\lambda) \, \chi_0(2^{-j_0(\epsilon_2;\lambda)}|D_x|) \, \chi_1(2^{-k}|D_u|) \delta_0
\end{split}\]
where
\[
j_0(\epsilon_2;\lambda) \defeq \max \{ j \tc 2^j < \lambda^{1+\epsilon_2} \}+1.
\]
As the number of summands in the last sum is $\lesssim_{\vec\epsilon} \log\lambda$, it is enough to prove the desired estimate separately for each summand, namely,
\[
\|\chr_{\overline{B}(0,2)} \exp(\pm i \sqrt{\opL}) \chi_1(\sqrt{\opL}/\lambda) \, \chi_0(2^{-j}|D_x|) \, \chi_1(2^{-k}|D_u|) \delta_0 \|_1 \lesssim_{\vec\epsilon,s} \lambda^s
\]
for all $j,k \in \ZZ$ such that $\lambda^{1+\epsilon_2} \leq 2^j < 2\lambda^{1+\epsilon_2}$ and $\lambda^{2-\epsilon_3-\epsilon_4} \leq 2^k < \lambda^{2+\epsilon_1}$.

By automorphic scaling by $\dil_\lambda$, the latter estimate is equivalent to
\[
\|\chr_{\overline{B}(0,2\lambda)} \exp(\pm i \lambda\sqrt{\opL}) \chi_1(\sqrt{\opL}) \, \chi_0(2^{-j}\lambda|D_x|) \, \chi_1(2^{-k}\lambda^2|D_u|) \delta_0 \|_1 \lesssim_{\vec\epsilon,s} \lambda^s.
\]

Let us now write $\varphi_{\lambda,j,k} \defeq \chi_0(2^{-j}\lambda|D_x|) \, \chi_1(2^{-k}\lambda^2|D_u|) \delta_0$, so that 
\begin{multline*}
\exp(\pm i \lambda\sqrt{\opL}) \chi_1(\sqrt{\opL}) \, \chi_0(2^{-j}\lambda|D_x|) \, \chi_1(2^{-k}\lambda^2|D_u|) \delta_0 \\
= \varphi_{\lambda,j,k} * k_{\exp(\pm i \lambda\sqrt{\opL}) \chi_1(\sqrt{\opL}) \tilde\chi_1(2^{-k}\lambda^2|D_u|)}.
\end{multline*}
Now, by the Cauchy--Schwarz inequality, for any $\sigma > d_1/2$,
\[\begin{split}
&\|\chr_{\overline{B}(0,2\lambda)} \exp(\pm i \lambda\sqrt{\opL}) \chi_1(\sqrt{\opL}) \, \chi_0(2^{-j}\lambda|D_x|) \, \chi_1(2^{-k}\lambda^2|D_u|) \delta_0 \|_1 \\
&\leq \left(\int_{\overline{B}(0,2\lambda)} (1+|x|)^{-2\sigma}\right)^{1/2} \\
&\qquad\times \|\varphi_{\lambda,j,k} * k_{\exp(\pm i \lambda\sqrt{\opL}) \chi_1(\sqrt{\opL}) \tilde\chi_1(2^{-k}\lambda^2|D_u|)}\|_{L^2((1+|x|)^{2\sigma} \,d\bx)} \\
&\lesssim_{\sigma} \lambda^{d_2} \|\varphi_{\lambda,j,k}\|_{L^1((1+|x|)^\sigma \,d\bx)} \| k_{\exp(\pm i \lambda\sqrt{\opL}) \chi_1(\sqrt{\opL}) \tilde\chi_1(2^{-k}\lambda^2|D_u|)}\|_{L^2((1+|x|)^{2\sigma} \,d\bx)},
\end{split}\]
as $\sigma>d_1/2$ and $1+|x|$ is a submultiplicative weight on $G$.

Now,
\begin{multline*}
\|\varphi_{\lambda,j,k}\|_{L^1((1+|x|)^\sigma \,dx\,du)} \\
= \| \chi_0(2^{-j}\lambda|D_x|) \delta_0 \|_{L^1((1+|x|)^\sigma \,dx)} \| \chi_1(2^{-k}\lambda^2|D_u|) \delta_0 \|_{L^1(du)} 
\lesssim_{\sigma} 1,
\end{multline*}
where we used that $2^{-j}\lambda \leq \lambda^{-\epsilon_2} \leq 1$.

On the other hand, as $G$ is M\'etivier, the Plancherel-type identity \cite[eq.\ (2.6)]{MP} in combination with \cite[eq.\ (2.14)]{MP} implies that 
\[
\| k_{H(\sqrt{\opL}, |D_u|)}\|_{L^2((1+|x|)^{2\sigma} \,d\bx)}\lesssim_\sigma \| k_{H(\sqrt{\opL}, |D_u|)}\|_2+
\| \opL^{\sigma/2}|D_u|^{-\sigma}  k_{H(\sqrt{\opL}, |D_u|)}\|_2
\]
for any joint spectral multiplier $H$. Thus, we may estimate
\[\begin{split}
&\| k_{\exp(\pm i \lambda\sqrt{\opL}) \chi_1(\sqrt{\opL}) \tilde\chi_1(2^{-k}\lambda^2|D_u|)}\|_{L^2((1+|x|)^{2\sigma} \,d\bx)} \\
&\lesssim_\sigma \| k_{\exp(\pm i \lambda\sqrt{\opL}) \chi_1(\sqrt{\opL}) \tilde\chi_1(2^{-k}\lambda^2|D_u|)}\|_2\\
&\qquad+ \| \opL^{\sigma/2}|D_u|^{-\sigma} k_{\exp(\pm i \lambda\sqrt{\opL}) \chi_1(\sqrt{\opL}) \tilde\chi_1(2^{-k}\lambda^2|D_u|)}\|_2 \\
&\lesssim_{\vec\epsilon,\sigma} \lambda^{\sigma(\epsilon_3+\epsilon_4)} \|k_{\chi_1(\sqrt{\opL})}\|_2 \\
&\lesssim_{\vec\epsilon,\sigma} \lambda^{\sigma(\epsilon_3+\epsilon_4)},
\end{split}\]
where we used that $\opL$ and $|D_u|$ commute, $\tilde\chi_1 \in C^\infty_c(\Rpos)$, $\lambda^{-2} 2^k \geq \lambda^{-(\epsilon_3+\epsilon_4)}$, and  
\[
\|\exp(\pm i \lambda\sqrt{\opL}) \tilde\chi_1(2^{-k}\lambda^2|D_u|)\|_{2 \to 2} \leq \|\tilde\chi_1\|_\infty \lesssim 1.
\]
The desired estimate for $A_{\vec\epsilon,\lambda}$ follows by combining the above.
\end{proof}

\section{The FIO region: preliminary considerations}

\subsection{Splitting into the elliptic and non-elliptic regions}

It remains to prove an estimate of the form
\[
\|\chr_{\overline{B}(0,2)} \exp(\pm i \sqrt{\opL}) B_{\vec\epsilon,\lambda} \|_1 \lesssim_{\vec\epsilon} \lambda^{(d-1)/2+\smallo(\vec\epsilon)}
\]
for all $\lambda \geq 1$, where $B_{\vec\epsilon,\lambda}$ is as in Proposition \ref{prp:spectrum_frequency}. This estimate will be established by means of FIO techniques. We begin by decomposing
\begin{equation}\label{eq:ell_nonell}
B_{\vec\epsilon,\lambda} 
= B^{(0)}_{\vec\epsilon,\lambda} + B^{(1)}_{\vec\epsilon,\lambda},
\end{equation}
where
\begin{align}
\label{eq:elliptic}
B^{(0)}_{\vec\epsilon,\lambda} &=  \chi_1(\sqrt{\opL}/\lambda) \sum_{j \tc \lambda^{1-\epsilon_3} \leq 2^j < \lambda^{1+\epsilon_2}} \chi_1(2^{-j}|D_x|) \, \chi_0(2^{\gamma-j}|D_u|) \delta_0 ,\\
\label{eq:non-elliptic}
B^{(1)}_{\vec\epsilon,\lambda}&= \chi_1(\sqrt{\opL}/\lambda) \sum_{j \tc \lambda^{1-\epsilon_3} \leq 2^j < \lambda^{1+\epsilon_2}} \sum_{k \tc 2^{j-\gamma} \leq 2^k < \lambda^{2-\epsilon_3-\epsilon_4}} \chi_1(2^{-j}|D_x|) \, \chi_1(2^{-k}|D_u|) \delta_0 ,
\end{align}
while $\gamma \in \NN$ is a large parameter to be fixed later.

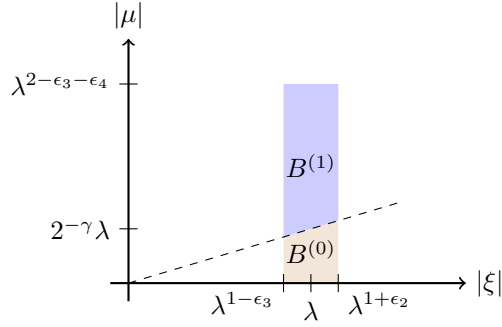
\begin{figure}

\begin{tikzpicture}[scale=1.2]

	\fill[fill=brown!20] (1.7,0) node[below left] {$\lambda^{1-\epsilon_3}$} -- (2.3,0) node[below right] {$\lambda^{1+\epsilon_2}$} -- (2.3,.69) -- (1.7,.51); 
	\fill[fill=blue!20] (1.7,.51) -- (2.3,.69) -- (2.3,2.2) -- (1.7,2.2); 

  \node[left] at (-.1,2.2) {$\lambda^{2-\epsilon_3-\epsilon_4}$};

  \draw[thick,->] (-0.2,0) -- (3.7,0) node[right] {$|\xi|$};
  \draw[thick,->] (0,-0.2) -- (0,2.7) node[above] {$|\mu|$};

  \draw[dashed] (0,0) -- (3,.9) ;

  \draw (1.7,.1) -- ++(0,-.2) ;
  \draw (2,.1) -- ++(0,-.2) node[below] {$\lambda$} ;
	\draw (2.3,.1) -- ++(0,-.2) ;
  \draw (0.1,2.2) -- ++(-.2,0) ;
	\draw (0.1,.6) -- ++(-.2,0) node[left] {$2^{-\gamma} \lambda$} ;
  \node at (2,1.3) {$B^{(1)}$};
  \node at (2,0.35) {$B^{(0)}$};

\end{tikzpicture}

\caption{Splitting into elliptic and non-elliptic regions.}
\label{fig:freq_loc_ell_nonell}

\end{figure}

Looking at Figures \ref{fig:freq_loc} and \ref{fig:freq_loc_ell_nonell}, the splitting \eqref{eq:ell_nonell} essentially corresponds to splitting the region $B$ into the two subregions where $|\mu| \ll |\xi|$ and where $|\mu| \gtrsim |\xi|$. 
Under the spatial localisation $\bx\in \overline{B}(0,2)$, from \eqref{eq:HamH} we see that in the cone $\{ |\mu| \ll |\xi|\}$ the symbol of $\opL$ behaves like an elliptic symbol, as
\[
\Ham(\bx,\bxi)^2 = |\xi + J_\mu x/2|^2\simeq|\xi|^2
\]
there, while the non-ellipticity of $\opL$ manifests itself in the complementary cone. Therefore we can think of $B^{(0)}_{\vec\epsilon,\lambda}$ and $B^{(1)}_{\vec\epsilon,\lambda}$ as the  contribution to $B_{\vec\epsilon,\lambda}$ by the \emph{elliptic region}, and by the \emph{non-elliptic region} respectively.

\smallskip

For the elliptic region, as the number of summands in \eqref{eq:elliptic} is $\lesssim_{\vec\epsilon} \log \lambda$, it is enough to consider each summand separately, i.e., to prove that
\begin{equation}\label{eq:elliptic_region}
\|\chr_{\overline{B}(0,2)} \exp(\pm i \sqrt{\opL}) \chi_1(\sqrt{\opL}/\lambda) \tilde H_j \|_1 \lesssim_{\vec\epsilon} \lambda^{(d-1)/2+\smallo(\vec\epsilon)}
\end{equation}
for all $j \in \NN$ such that
\begin{equation}\label{eq:cond_ell_j}
\lambda^{1-\epsilon_3} \leq 2^j < \lambda^{1+\epsilon_2},
\end{equation}
where
\begin{equation}\label{eq:initial_amplitude_elliptic}
\tilde H_j \defeq \chi_1(2^{-j}|D_x|) \, \chi_0(2^{\gamma-j}|D_u|) \delta_0,
\end{equation}

Similarly, for the non-elliptic region, as the number of pairs $(j,k)$ appearing in the sum in \eqref{eq:non-elliptic} is $\lesssim_{\vec\epsilon} (\log \lambda)^2$, it is enough to consider each summand separately and prove that
\begin{equation}\label{eq:non-elliptic_region}
\|\chr_{\overline{B}(0,2)} \exp(\pm i \sqrt{\opL}) \chi_1(\sqrt{\opL}/\lambda) \tilde H_{j,k} \|_1 \lesssim_{\vec\epsilon,s} \lambda^{(d-1)/2+\smallo(\vec\epsilon)}
\end{equation}
for all $j,k \in \ZZ$ such that
\begin{equation}\label{eq:cond_jk_nonell}
\lambda^{1-\epsilon_3} \leq 2^j < \lambda^{1+\epsilon_2} \qquad\text{and}\qquad  2^{j-\gamma} \leq 2^k < \lambda^{2-\epsilon_3-\epsilon_4},
\end{equation}
where
\begin{equation}\label{eq:initial_amplitude_non-elliptic}
\tilde H_{j,k} \defeq \chi_1(2^{-j}|D_x|) \, \chi_1(2^{-k}|D_u|) \delta_0.
\end{equation}

\subsection{Dropping the spectral localisation}

In order to simplify the subsequent discussion, we now show that in \eqref{eq:elliptic_region} and \eqref{eq:non-elliptic_region} we may drop the spectral localisation $\chi_1(\sqrt{\opL}/\lambda)$, at the cost of slightly relaxing the spatial localisation $\chr_{\overline{B}(0,2)}$.

As a matter of fact,  according to \eqref{eq:cond_ell_j} and \eqref{eq:cond_jk_nonell}, here we have $\lambda \approx 2^j$. Thus, the spectral localisation is essentially captured by the frequency localisation given by $\chi_1(2^{-j}|D_x|)$, i.e., $\lambda\approx|\xi|$ here (see Figure \ref{fig:freq_loc_ell_nonell}).

\begin{prp}\label{prp:sp_loc_removal}
Let $\vec\epsilon \in (\Rpos)^4$. For all $N \in \NN$ and $\lambda \geq 1$,
\begin{multline}\label{eq:sp_loc_removal_non-elliptic}
\|\chr_{\overline{B}(0,2)} \exp(\pm i \sqrt{\opL}) \chi_1(\sqrt{\opL}/\lambda) \tilde H_{j,k} \|_1 \\
\lesssim_{\vec\epsilon,N} \|\chr_{\overline{B}(0,4)} \cos(\sqrt{\opL}) \tilde H_{j,k} \|_1 
+ \lambda^{-1} \|\chr_{\overline{B}(0,4)} \sqrt{\opL} \sin(\sqrt{\opL}) \tilde H_{j,k} \|_1
 + \lambda^{-N}
\end{multline}
for all $j,k \in \ZZ$ satisfying \eqref{eq:cond_jk_nonell}, and
\begin{multline}\label{eq:sp_loc_removal_elliptic}
\|\chr_{\overline{B}(0,2)} \exp(\pm i \sqrt{\opL}) \chi_1(\sqrt{\opL}/\lambda) \tilde H_j \|_1 \\
 \lesssim_{\vec\epsilon,N} \|\chr_{\overline{B}(0,4)} \cos(\sqrt{\opL}) \tilde H_j \|_1 
+ \lambda^{-1} \|\chr_{\overline{B}(0,4)} \sqrt{\opL} \sin(\sqrt{\opL}) \tilde H_j \|_1 
+ \lambda^{-N}
\end{multline}
for all $j \in \NN$ satisfying \eqref{eq:cond_ell_j}.
\end{prp}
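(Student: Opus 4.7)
The plan is to combine the reasoning from Proposition \ref{prp:spatial_loc} with a Young-type convolution argument on $G$. First, applying \eqref{eq:exp_cos_sin} and the triangle inequality reduces matters, in either the elliptic or non-elliptic case, to showing that, with $\tilde H$ denoting either $\tilde H_{j,k}$ or $\tilde H_j$,
\[
\|\chr_{\overline{B}(0,2)}\cos(\sqrt{\opL})\chi_1(\sqrt{\opL}/\lambda)\tilde H\|_1 \lesssim_N \|\chr_{\overline{B}(0,4)}\cos(\sqrt{\opL})\tilde H\|_1 + \lambda^{-N},
\]
\[
\|\chr_{\overline{B}(0,2)}\sqrt{\opL}\sin(\sqrt{\opL})\chi_2(\sqrt{\opL}/\lambda)\tilde H\|_1 \lesssim_N \|\chr_{\overline{B}(0,4)}\sqrt{\opL}\sin(\sqrt{\opL})\tilde H\|_1 + \lambda^{-N},
\]
where $\chi_2(s) \defeq |s|^{-1}\chi_1(s)$. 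A direct computation via the Euclidean Fourier transform, together with the scaling invariance of the $L^1$-norm under Euclidean dilation of Schwartz functions, shows that $\tilde H$ is Schwartz on $G$ with $\|\tilde H\|_1 \lesssim 1$ uniformly in $j,k$.

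For each $r \in \{1,2\}$ the spectral cutoff is then split as in \eqref{eq:fourier_support_dec}, namely $\chi_r(\sqrt{\opL}/\lambda) = \chi_r^{0,\delta,\lambda}(\sqrt{\opL}/\lambda) + \chi_r^{\infty,\delta,\lambda}(\sqrt{\opL}/\lambda)$, with $\delta \in (0,1)$ fixed small. For the tail, the bound \eqref{eq:schwartz_bound_error} together with the polynomial growth of $\zeta \mapsto \cos\zeta$ and $\zeta \mapsto \zeta\sin\zeta$ implies that the combined multipliers $\cos(\zeta)\chi_1^{\infty,\delta,\lambda}(\zeta/\lambda)$ and $\zeta\sin(\zeta)\chi_2^{\infty,\delta,\lambda}(\zeta/\lambda)$ lie in $\Sz_e(\Rnon)$ with all seminorms bounded by $\lambda^{-N}$ for any $N$. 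Proposition \ref{prp:hulanicki} then yields corresponding convolution kernels on $G$ with $L^1$-norm $\lesssim_{N,\delta} \lambda^{-N}$, and Young's inequality combined with $\|\tilde H\|_1 \lesssim 1$ absorbs the tail contribution into the $\lambda^{-N}$ remainder.

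For the main part, we exploit that spectral multipliers of $\sqrt{\opL}$ commute to rewrite
\[
\cos(\sqrt{\opL})\chi_r^{0,\delta,\lambda}(\sqrt{\opL}/\lambda)\tilde H = [\cos(\sqrt{\opL})\tilde H] * k_{\chi_r^{0,\delta,\lambda}(\sqrt{\opL}/\lambda)},
\]
and analogously for the sine term. By finite propagation speed (Proposition \ref{prp:fps}), the kernel $k_{\chi_r^{0,\delta,\lambda}(\sqrt{\opL}/\lambda)}$ is supported in $\overline{B}(0,\lambda^{\delta-1}) \subseteq \overline{B}(0,1)$ for $\lambda \geq 1$, and by automorphic scaling its $L^1$-norm is uniformly bounded. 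Left-invariance and symmetry of the sub-Riemannian distance, combined with the triangle inequality, give $\bx\by^{-1} \in \overline{B}(0,3) \subseteq \overline{B}(0,4)$ whenever $\bx \in \overline{B}(0,2)$ and $\by \in \overline{B}(0,1)$; an application of Fubini and of the bi-invariance of Haar measure then yields
\[
\|\chr_{\overline{B}(0,2)}\, f * k_{\chi_r^{0,\delta,\lambda}(\sqrt{\opL}/\lambda)}\|_1 \lesssim \|\chr_{\overline{B}(0,4)} f\|_1,
\]
which supplies the main terms on the right-hand sides of \eqref{eq:sp_loc_removal_non-elliptic}--\eqref{eq:sp_loc_removal_elliptic}.

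No serious obstacle is anticipated: the whole argument is a rather direct adaptation of the finite-propagation-speed reasoning already employed in Section \ref{s:spatialloc}. The only point requiring minor care is to ensure that all the spectral multipliers involved act on honest functions rather than on ill-defined distributions, but this is automatic since $\tilde H_{j,k}$ and $\tilde H_j$ are Schwartz on $G$, the cosine operator $\cos(\sqrt{\opL})$ and the normalised sine $\sin(\sqrt{\opL})/\sqrt{\opL}$ are $L^2$-bounded, and $\opL$ preserves $\Sz(G)$.
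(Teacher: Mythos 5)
Your proof is correct. It follows the same overall architecture as the paper's — split via \eqref{eq:exp_cos_sin}, then via \eqref{eq:fourier_support_dec}, treat the main term by finite propagation speed and the remainder as small — but handles the remainder by a genuinely different route. The paper splits $\chr_{\overline{B}(0,2)}\chi_r(\sqrt{\opL}/\lambda)\cos(\sqrt{\opL})$ as $\chr_{\overline{B}(0,2)}\chi_r(\sqrt{\opL}/\lambda)\chr_{\overline{B}(0,4)}\cos(\sqrt{\opL})+\chr_{\overline{B}(0,2)}\chi_r^{\infty,\delta,\lambda}(\sqrt{\opL}/\lambda)\chr_{G\setminus\overline{B}(0,4)}\cos(\sqrt{\opL})$, bounds the second summand in $L^2$ via $\|\chi_r^{\infty,\delta,\lambda}\|_\infty\lesssim_N\lambda^{-N}$ and $\|\tilde H_{j,k}\|_2\simeq 2^{jd_1/2}2^{kd_2/2}$, and then applies Cauchy--Schwarz on $\overline{B}(0,2)$; controlling the resulting power of $2^j2^k$ is exactly where the constraints \eqref{eq:cond_jk_nonell} and \eqref{eq:cond_ell_j} enter. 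You instead control the remainder purely in $L^1$, by applying Proposition \ref{prp:hulanicki} to the \emph{combined} even multipliers $\cos(\cdot)\chi_1^{\infty,\delta,\lambda}(\cdot/\lambda)$ and $(\cdot)\sin(\cdot)\chi_2^{\infty,\delta,\lambda}(\cdot/\lambda)$ (precisely as is done in Proposition \ref{prp:spatial_loc}) to get kernels of $L^1$-norm $\lesssim_N\lambda^{-N}$, and pairing this with Young's inequality and the scale-invariant bound $\|\tilde H\|_1\lesssim 1$. A pleasant consequence of your argument is that the constraints on $j,k$ are not needed at all for the $\lambda^{-N}$ remainder; the paper, by contrast, requires them at this step. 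Two small caveats worth making explicit, though neither is a gap: the uniform $L^1$-bound on $k_{\chi_r^{0,\delta,\lambda}(\sqrt{\opL}/\lambda)}$ follows from automorphic scaling \emph{together with} the fact that $\chi_r^{0,\delta,\lambda}=\chi_r-\chi_r^{\infty,\delta,\lambda}$ is bounded in $\Sz_e(\Rnon)$ uniformly in $\lambda$ (the scaling alone only reduces to $\lambda=1$); and only the $L^1$-norm of $\tilde H$ is scale-invariant — its Schwartz seminorms are not — which is fine since you only use the $L^1$ bound.
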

\begin{proof}
We only prove the estimate \eqref{eq:sp_loc_removal_non-elliptic}, as \eqref{eq:sp_loc_removal_elliptic} can be proved analogously. For this, we write, as in \eqref{eq:exp_cos_sin},
\[
\exp(\pm i \sqrt{\opL}) \chi_1(\sqrt{\opL}/\lambda) = \cos(\sqrt{\opL}) \chi_1(\sqrt{\opL}/\lambda) \pm i \lambda^{-1} \sqrt{\opL} \sin(\sqrt{\opL}) \chi_2(\sqrt{\opL}/\lambda),
\]
and we decompose, as in \eqref{eq:fourier_support_dec},
\[
\chi_r(\sqrt{\opL}/\lambda) = 
\chi_r^{0,\delta,\lambda}(\sqrt{\opL}/\lambda)
+ \chi_r^{\infty,\delta,\lambda}(\sqrt{\opL}/\lambda)
\]
for $r=1,2$ and some $\delta \in (0,1]$. Then, by \eqref{eq:support_1de},
\[
\supp k_{\chi_r^{0,\delta,\lambda}(\sqrt{\opL}/\lambda)} \subseteq \overline{B}(0,\lambda^{\delta-1}) \subseteq \overline{B}(0,1).
\]
Therefore, much as in the proof of Proposition \ref{prp:spatial_loc},
\[\begin{split}
\chr_{\overline{B}(0,2)} \exp(\pm i\sqrt{\opL}) \chi_1(\sqrt{\opL}/\lambda) 
&= 
\chr_{\overline{B}(0,2)} \chi_1(\sqrt{\opL}/\lambda) \chr_{\overline{B}(0,4)} \cos(\sqrt{\opL})  \\
&\quad+
\chr_{\overline{B}(0,2)} \chi_1^{\infty,\delta,\lambda}(\sqrt{\opL}/\lambda) \chr_{G \setminus \overline{B}(0,4)} \cos(\sqrt{\opL})  \\
&\quad\pm i\lambda^{-1}
\chr_{\overline{B}(0,2)} \chi_2(\sqrt{\opL}/\lambda) \chr_{\overline{B}(0,4)} \sqrt{\opL} \sin(\sqrt{\opL})  \\
&\quad\pm i\lambda^{-1}
\chr_{\overline{B}(0,2)} \chi_2^{\infty,\delta,\lambda}(\sqrt{\opL}/\lambda) \chr_{G \setminus \overline{B}(0,4)} \sqrt{\opL} \sin(\sqrt{\opL})  
\end{split}\]
(here we think of characteristic functions as multiplication operators).
Moreover,
\begin{equation}\label{eq:main_Hmh}
\begin{aligned}
\|\chr_{\overline{B}(0,2)} \chi_1(\sqrt{\opL}/\lambda) \chr_{\overline{B}(0,4)} \cos(\sqrt{\opL}) \tilde H_{j,k} \|_{1} 
&\lesssim \| \chr_{\overline{B}(0,4)} \cos(\sqrt{\opL}) \tilde H_{j,k} \|_{1},\\
\|\chr_{\overline{B}(0,2)} \chi_1(\sqrt{\opL}/\lambda) \chr_{\overline{B}(0,4)} \sqrt{\opL} \sin(\sqrt{\opL}) \tilde H_{j,k} \|_{1} 
&\lesssim \| \chr_{\overline{B}(0,4)} \sqrt{\opL} \sin(\sqrt{\opL}) \tilde H_{j,k} \|_{1}
\end{aligned}
\end{equation}
by the uniform $L^1$-boundedness of the $\chi_1(\sqrt{\opL}/\lambda)$.
On the other hand, 
\begin{equation}\label{eq:l2norm_error}
\|\chi_r^{\infty,\delta,\lambda}(\sqrt{\opL}/\lambda)\|_{2 \to 2} \leq \|\chi_r^{\infty,\delta,\lambda}\|_\infty \lesssim_{N,\delta} \lambda^{-N}
\end{equation}
for $r=1,2$ and all $N \in \NN$, by \eqref{eq:schwartz_bound_error}. Furthermore, by \eqref{eq:initial_amplitude_non-elliptic},
\begin{equation}\label{eq:Hmh_L2}
\|\tilde H_{j,k}\|_2 \simeq 2^{jd_1/2} 2^{kd_2/2}.
\end{equation}
Thus
\begin{equation}\label{eq:error_Hmh}
\begin{split}
&\|\chr_{\overline{B}(0,2)} \chi_1^{\infty,\delta,\lambda}(\sqrt{\opL}/\lambda) \chr_{G \setminus \overline{B}(0,4)} \cos(\sqrt{\opL}) \tilde H_{j,k} \|_1  \\
&\qquad+\lambda^{-1} \|\chr_{\overline{B}(0,2)} \chi_2^{\infty,\delta,\lambda}(\sqrt{\opL}/\lambda) \chr_{G \setminus \overline{B}(0,4)} \sqrt{\opL} \sin(\sqrt{\opL}) \tilde H_{j,k} \|_1  \\
&\lesssim \|\chi_1^{\infty,\delta,\lambda}(\sqrt{\opL}/\lambda) \chr_{G \setminus \overline{B}(0,4)} \cos(\sqrt{\opL}) \tilde H_{j,k} \|_2 \\
&\qquad+\lambda^{-1} \|\chi_1^{\infty,\delta,\lambda}(\sqrt{\opL}/\lambda) \chr_{G \setminus \overline{B}(0,4)} \sqrt{\opL} \sin(\sqrt{\opL}) \tilde H_{j,k} \|_2 \\
&\lesssim_{N,\epsilon_4} \lambda^{-N} 2^{jd_1/2} 2^{kd_2/2} \\
&\leq \lambda^{(1+\epsilon_2) d_1/2 + 2 d_2/2 - N} 
\end{split}
\end{equation}
for all $N \in \NN$,
where we used the Cauchy--Schwarz inequality, the estimates \eqref{eq:l2norm_error} and \eqref{eq:Hmh_L2}, and the constraints \eqref{eq:cond_jk_nonell}. The desired estimate \eqref{eq:sp_loc_removal_non-elliptic} follows by combining \eqref{eq:main_Hmh} and \eqref{eq:error_Hmh}.
\end{proof}

In light of Proposition \ref{prp:sp_loc_removal}, the estimate \eqref{eq:elliptic_region} for the \emph{elliptic region} is reduced to proving that
\begin{equation}\label{eq:elliptic_region2}
\begin{aligned}
\|\chr_{\overline{B}(0,4)} \cos(\sqrt{\opL}) \tilde H_j \|_1 &\lesssim_{\vec\epsilon} \lambda^{(d-1)/2+\smallo(\vec\epsilon)}, \\
\|\chr_{\overline{B}(0,4)} \sqrt{\opL} \sin(\sqrt{\opL}) \tilde H_j \|_1 &\lesssim_{\vec\epsilon} \lambda^{1+(d-1)/2+\smallo(\vec\epsilon)},
\end{aligned}
\end{equation}
where $\tilde H_j$ is given by \eqref{eq:initial_amplitude_elliptic}, for all $\lambda \geq 1$ and $j \in \ZZ$ satisfying \eqref{eq:cond_ell_j}. 
As we shall see, we can treat this region with ``standard'' FIO techniques, without the use of a complex phase: indeed, due to the frequency localisation $|\mu| \lesssim 2^{-\gamma}|\xi|$, as well as the small time involved, this part can be dealt with the techniques of \cite{MMNG,SSS}, provided the parameter $\gamma$ in \eqref{eq:elliptic} is chosen to be sufficiently large. The corresponding discussion is found in Section \ref{s:elliptic} below.

Similarly, by Proposition \ref{prp:sp_loc_removal}, the estimate \eqref{eq:non-elliptic_region} for the \emph{non-elliptic region} is reduced to
\begin{equation}\label{eq:non-elliptic_region_nosploc}
\begin{aligned}
\|\chr_{\overline{B}(0,4)} \cos(\sqrt{\opL}) \tilde H_{j,k} \|_1 &\lesssim_{\vec\epsilon,s} \lambda^{(d-1)/2+\smallo(\vec\epsilon)}, \\
\|\chr_{\overline{B}(0,4)} \sqrt{\opL} \sin(\sqrt{\opL}) \tilde H_{j,k} \|_1 &\lesssim_{\vec\epsilon,s} \lambda^{1+(d-1)/2+\smallo(\vec\epsilon)} 
\end{aligned}
\end{equation}
for all $\lambda \geq 1$ and $j,k \in \ZZ$ satisfying \eqref{eq:cond_jk_nonell}. This estimate, instead, will be tackled by means of a complex-phase FIO parametrix, whose construction, based on ideas in \cite{LSV,SV}, is presented in Section \ref{s:LSVparametrix}. In preparation for this construction, a few additional reductions of the estimate \eqref{eq:non-elliptic_region_nosploc} will be discussed in the rest of this section.

\subsection{Automorphic scaling}\label{ss:scaling}

It will be convenient to re-scale each kernel $\tilde H_{j,k}$ in \eqref{eq:non-elliptic_region_nosploc} via automorphic dilations in such a way that the frequency variable $\bxi = (\xi,\mu)$ satisfies $|\xi| \simeq|\mu|$ after the scaling.
To this end, let us put 
\[
\ell \defeq k-j, \qquad m \defeq 2j-k,
\]
so that $2^j/2^\ell = 2^k/2^{2\ell} = 2^m$, and, by \eqref{eq:cond_jk_nonell},
\begin{equation}\label{eq:cond_lm}
2^{-\gamma} \leq 2^\ell < \lambda^{1-\epsilon_4}, \qquad \lambda^{\epsilon_4-\epsilon_3} < 2^m, \qquad 2^{\ell+m} < \lambda^{1+\epsilon_2}.
\end{equation}
We shall later actually choose $\epsilon_4>\epsilon_3$, so that $m>0$. 
Then, if we scale in $(x,u)$ by $\dil_{2^{-\ell}}$, i.e., in $(\xi,\mu)$ by $\dil_{2^\ell}$, the proof of \eqref{eq:non-elliptic_region_nosploc} is reduced to proving that
\begin{equation}\label{eq:non-elliptic_region2}
\begin{aligned}
\|\chr_{\overline{B}(0,2^{2+\ell})} \cos(2^\ell \sqrt{\opL}) H_m \|_1 &\lesssim_{\vec\epsilon} \lambda^{(d-1)/2+\smallo(\vec\epsilon)} , \\
\|\chr_{\overline{B}(0,2^{2+\ell})} 2^{\ell} \sqrt{\opL} \sin(2^\ell \sqrt{\opL}) H_m \|_1 &\lesssim_{\vec\epsilon} \lambda^{1+(d-1)/2+\smallo(\vec\epsilon)} 
\end{aligned}
\end{equation}
for all $\lambda \geq 1$ and $\ell,m \in \ZZ$ satisfying \eqref{eq:cond_lm}, where
\begin{equation}\label{eq:Hm_def}
H_m \defeq \chi_1(2^{-m}|D_x|) \, \chi_1(2^{-m}|D_u|) \delta_0.
\end{equation}
The advantage of this scaling is that now we have to deal with a single frequency localisation parameter, i.e., $|\xi| \simeq|\mu| \simeq2^m$ on the Fourier support of $H_m$ (see Figure \ref{fig:freq_scaling}); the price to pay is that the estimates \eqref{eq:non-elliptic_region2} involve the wave propagator at time $t = 2^\ell$, which according to \eqref{eq:cond_lm} may become arbitrarily large with $\lambda$.

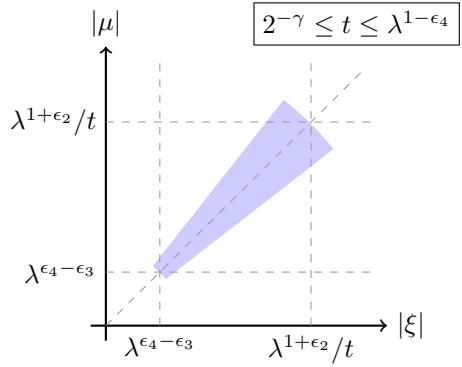
\begin{figure}

\begin{tikzpicture}[scale=1]

  \node at (3.3,4) {\fbox{$2^{-\gamma} \leq t \leq \lambda^{1-\epsilon_4}$}};

	\fill[fill=blue!20] (38:1) arc (38:52:1) -- (52:3.8) arc (52:38:3.8) -- cycle;

  \draw[thick,->] (-0.2,0) -- (3.7,0) node[right] {$|\xi|$};
  \draw[thick,->] (0,-0.2) -- (0,3.7) node[above] {$|\mu|$};

  \draw[dashed,draw=black!40] (.71,0) node[below] {$\lambda^{\epsilon_4-\epsilon_3}$} -- (.71,3.5);
  \draw[dashed,draw=black!40] (0,.71) node[left] {$\lambda^{\epsilon_4-\epsilon_3}$} -- (3.5,.71);
  \draw[dashed,draw=black!40] (2.7,0) node[below] {$\lambda^{1+\epsilon_2}/t$} -- (2.7,3.5);
  \draw[dashed,draw=black!40] (0,2.7) node[left] {$\lambda^{1+\epsilon_2}/t$} -- (3.5,2.7);
  \draw[dashed,draw=black!40] (0,0)  -- (3.4,3.4);

\end{tikzpicture}

\caption{The non-elliptic region after the automorphic scalings.}
\label{fig:freq_scaling}

\end{figure}

\subsection{Introduction of the complex phase}\label{ss:introcomplex}

To deal with the estimates \eqref{eq:non-elliptic_region2}, due to the large time parameter involved, we aim to use the ``complex phase FIO'' approach discussed in Section \ref{s:LSVparametrix} below. To this purpose, it is convenient to turn the ``Euclidean'' frequency localisation expressed by the kernel $H_m$ into an analogous expression involving the complex phase \eqref{eq:phase} at time $t=0$.

\begin{prp}\label{prp:introd_complexphase}
Assume that $\rk J_\mu$ is constant for $\mu \neq 0$.
For all $m,M \in \NN$, we can write the function $H_m$ of \eqref{eq:Hm_def} as
\begin{equation}\label{eq:real_complex_phase}
H_m
= \sum_{h=0}^M \frac{2^{-mh}}{4^h h!} H_{m,h} + R_{m,M},
\end{equation}
where
\begin{equation}\label{eq:initial_amplitude_complex}
H_{m,h}(\bx) = (2\pi)^{-d} \int_{\RR^d} e^{i\bx \cdot \bxi-\langle |J_\mu| x,x\rangle/4} \, \eta_{h} (2^{-m}\bxi)  \,d\bxi
\end{equation}
and $\eta_{h} \in C^\infty_c(\RR^d)$ satisfies
\[
\supp \eta_{h} \subseteq \{ \bxi \in \RR^d \tc 1/2 \leq |\xi| \leq 2, \ 1/2 \leq |\mu| \leq 2 \}.
\]
Moreover, if $\vec\epsilon \in (\Rpos)^4$ is such that $\epsilon_4>\epsilon_3$, then, for all $N \in \NN$, there exists $M = M(\vec\epsilon,N) \in \NN$ such that
\begin{equation}\label{eq:remainder_real_complex}
\|\chr_{\overline{B}(0,2^{2+\ell})} \cos(2^\ell \sqrt{\opL}) R_{m,M} \|_1  + \|\chr_{\overline{B}(0,2^{2+\ell})} 2^\ell \sqrt{\opL} \sin(2^\ell \sqrt{\opL}) R_{m,M} \|_1 \lesssim_{\vec\epsilon,N} \lambda^{-N},
\end{equation}
for all $\lambda \geq 1$ and all $\ell,m \in \NN$ satisfying \eqref{eq:cond_lm}.
\end{prp}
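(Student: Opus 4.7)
The plan is to build the representation \eqref{eq:real_complex_phase} via a Taylor expansion combined with integration by parts in $\xi$, and then to estimate the remainder in a weighted $L^2$-sense sufficient to survive the wave propagator.

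\textbf{Step 1 (Taylor expansion).} Starting from $H_m(\bx) = (2\pi)^{-d}\int e^{i\bx\cdot\bxi}\chi_1(2^{-m}|\xi|)\chi_1(2^{-m}|\mu|)\,d\bxi$, I would write $e^{i\bx\cdot\bxi} = e^{i\bx\cdot\bxi - \langle|J_\mu|x,x\rangle/4}\cdot e^{\langle|J_\mu|x,x\rangle/4}$ and expand the second factor as
\[
e^{y} = \sum_{h=0}^M \frac{y^h}{h!} + r_M(y), \qquad r_M(y) = \frac{y^{M+1}}{M!}\int_0^1 (1-t)^M e^{ty}\,dt,
\]
with $y = \langle|J_\mu|x,x\rangle/4$.

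\textbf{Step 2 (Integration by parts).} The key identity is that with $\tilde L_\mu \defeq -\sum_{i,j}(|J_\mu|)_{ij}\partial^2_{\xi_i\xi_j}$, which is an $\xi$-differential operator whose coefficients depend only on $\mu$, one has $\tilde L_\mu^h[e^{i\bx\cdot\bxi - \langle|J_\mu|x,x\rangle/4}] = \langle|J_\mu|x,x\rangle^h e^{i\bx\cdot\bxi - \langle|J_\mu|x,x\rangle/4}$ since $\tilde L_\mu$ does not see the $\xi$-independent Gaussian factor. Since $\tilde L_\mu$ is formally self-adjoint, integration by parts in $\xi$ transfers $\tilde L_\mu^h$ onto $\chi_1(2^{-m}|\xi|)$; using the $1$-homogeneity $|J_\mu| = 2^m|J_{2^{-m}\mu}|$ and scaling $\bxi' = 2^{-m}\bxi$ yields $\tilde L_\mu^h[\chi_1(2^{-m}|\xi|)] = 2^{-mh}\phi_h(2^{-m}\bxi)$ with $\phi_h(\bxi') = (-\sum(|J_{\mu'}|)_{ij}\partial^2_{\xi'_i\xi'_j})^h[\chi_1(|\xi'|)]$. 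Setting $\eta_h(\bxi') \defeq \phi_h(\bxi')\chi_1(|\mu'|)$ gives the stated form of $H_{m,h}$ with the correct support. Smoothness of $\eta_h$ on all of $\RR^d$ follows because $\chi_1(|\mu'|)$ is supported away from $\mu'=0$, and under the constant-rank hypothesis, Proposition \ref{prp:modJmu_analytic} gives that $\mu'\mapsto|J_{\mu'}|$ is real-analytic on $\Omega = \lie{g}_2^*\setminus\{0\}$.

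\textbf{Step 3 (Remainder estimate).} Applying the same integration-by-parts trick $M+1$ times to the term coming from $r_M$ (the Gaussian factor $e^{(t-1)\langle|J_\mu|x,x\rangle/4}$ appearing in the integrand is still $\xi$-independent, hence causes no issues) yields
\[
R_{m,M}(\bx) = \frac{2^{-m(M+1)}}{4^{M+1}M!}\int_0^1 (1-t)^M\, (2\pi)^{-d}\!\int e^{i\bx\cdot\bxi + (t-1)\langle|J_\mu|x,x\rangle/4}\tilde\phi_{M+1}(2^{-m}\bxi)\chi_1(2^{-m}|\mu|)\,d\bxi\,dt,
\]
for a smooth, compactly supported $\tilde\phi_{M+1}$. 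Euclidean rescaling $\tilde\bx = 2^m\bx$, $\bxi' = 2^{-m}\bxi$ realises $R_{m,M}$ as $2^{-m(M+1)}\cdot 2^{md}$ times a Schwartz-type function $\tilde V_t(\tilde\bx)$ whose amplitude has the form $e^{-s\langle|J_{\mu'}|\tilde x,\tilde x\rangle}\tilde\phi_{M+1}(\bxi')\chi_1(|\mu'|)$, with the key parameter $s = (1-t)2^{-m}/4 \in [0,1/4]$ being \emph{uniformly bounded}. Since $|e^{-s\langle|J_{\mu'}|\tilde x,\tilde x\rangle}| \leq 1$ and each $\mu'$-derivative of the Gaussian contributes at most $C|\tilde x|^2$, by repeated integration by parts in $\xi'$ (to gain $\tilde x$-decay) and $\mu'$ (to gain $\tilde u$-decay, at the cost of $|\tilde x|^2$ growth which is absorbed by the $\tilde x$-decay), one sees that all Schwartz seminorms of $\tilde V_t$ are uniformly bounded in $t\in[0,1]$ and $m\in\NN$. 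In particular, $\|R_{m,M}\|_2 \lesssim_M 2^{-m(M+1-d/2)}$ and $\|\sqrt{\opL} R_{m,M}\|_2 = \|\nabla_G R_{m,M}\|_2 \lesssim_M 2^{-m(M-d/2)}$, the latter by writing $X_j = \partial_{x_j} + \tfrac12[x,e_j]\cdot\partial_u$ and using that differentiation produces a factor of $2^m$ while multiplication by $|x|$ on the essential support costs $2^{-m}$.

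\textbf{Step 4 (Conclusion via Cauchy--Schwarz).} By Cauchy--Schwarz and the $L^2$-contractivity of $\cos(2^\ell\sqrt{\opL})$ and $\sin(2^\ell\sqrt{\opL})$,
\begin{align*}
\|\chr_{\overline{B}(0,2^{2+\ell})}\cos(2^\ell\sqrt{\opL})R_{m,M}\|_1 &\lesssim 2^{\ell Q/2}\|R_{m,M}\|_2 \lesssim_M 2^{\ell Q/2-m(M+1-d/2)},\\
\|\chr_{\overline{B}(0,2^{2+\ell})}2^\ell\sqrt{\opL}\sin(2^\ell\sqrt{\opL})R_{m,M}\|_1 &\lesssim 2^{\ell(Q/2+1)}\|\sqrt{\opL}R_{m,M}\|_2 \lesssim_M 2^{\ell(Q/2+1)-m(M-d/2)}.
\end{align*}
Invoking the constraints \eqref{eq:cond_lm}, namely $2^\ell < \lambda^{1-\epsilon_4}$ and (from $\epsilon_4 > \epsilon_3$) $2^m \geq \lambda^{\epsilon_4-\epsilon_3}$, both bounds take the form $\lambda^{C_1}\lambda^{-C_2(\vec\epsilon)M}$ with $C_2(\vec\epsilon) > 0$, so choosing $M = M(\vec\epsilon,N)$ large enough yields \eqref{eq:remainder_real_complex}.

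The main obstacle is the uniform Schwartz control of $\tilde V_t$ in Step 3: the $\mu'$-dependence of the Gaussian amplitude produces polynomial growth in $|\tilde x|$ at each integration by parts in $\mu'$, and one must carefully combine this with $\xi'$-integrations by parts to ensure Schwartz decay in both $\tilde x$ and $\tilde u$, uniformly in $t$ and $m$; it is crucial here that the effective parameter $s = (1-t)2^{-m}/4$ stays bounded, so that the Gaussian factor behaves as an element of a bounded family of amplitudes rather than as a degenerating object.
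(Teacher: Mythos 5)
Your proof is correct, and Steps 1, 2 and 4 (Taylor expansion, $\xi$-integration by parts producing the $\eta_h$'s, and the Cauchy--Schwarz closing argument exploiting $2^m>\lambda^{\epsilon_4-\epsilon_3}$ and $2^\ell\leq\lambda$) match the paper's proof. Step 3, however, takes a genuinely different route for the remainder. You use the Lagrange integral form of the Taylor remainder $r_M(y)=\frac{y^{M+1}}{M!}\int_0^1(1-t)^Me^{ty}\,dt$, which exposes the clean power $\langle|J_\mu|x,x\rangle^{M+1}$ multiplied by the $\xi$-independent factor $e^{(t-1)\langle|J_\mu|x,x\rangle/4}$ (note $t-1\leq 0$, so this is a decaying Gaussian), then integrate by parts once more in $\xi$ to transfer the full $(M+1)$th power onto the symbol, extracting the explicit gain $2^{-m(M+1)}$. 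The resulting kernel, after Euclidean rescaling, is Schwartz uniformly in $t$ and $m$ precisely because the effective Gaussian parameter $s=(1-t)2^{-m}/4$ stays in $[0,1/4]$, so that $\mu'$-integrations by parts cost only a controlled polynomial growth in $|\tilde x|$. The paper instead keeps $r_M$ as a smooth function, does not integrate by parts on the remainder, and splits the spatial domain at scale $\lambda^{\epsilon_0}2^{-m}$ (with an auxiliary $\epsilon_0>0$ subject to $2\epsilon_0<\epsilon_4-\epsilon_3$): near the origin it uses the pointwise bound $\langle|J_\mu|x,x\rangle\lesssim 2^{-m}\lambda^{2\epsilon_0}\leq 1$ so that each power of the quadratic form yields a negative power of $\lambda$, and far from the origin it uses Schwartz decay of $H_m$ and the $H_{m,h}$'s. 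Your approach avoids introducing $\epsilon_0$ and the spatial cutoff, at the price of a slightly more delicate uniform Schwartz estimate; both approaches rely on the same arithmetic of exponents from \eqref{eq:cond_lm} and $\epsilon_4>\epsilon_3$ to close the argument.
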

\begin{proof}
Under the above assumption on $J_\mu$, the expression $\langle |J_\mu| x, x \rangle$ is a positive semidefinite quadratic form in $x \in \RR^{d_1}$, while it is smooth and $1$-homogeneous in $\mu \in \dot\RR^{d_2}$ (see Proposition \ref{prp:modJmu_analytic}), and in particular it satisfies the bound
\begin{equation}\label{eq:metivier_nondeg}
0 \leq \langle |J_\mu| x, x \rangle \lesssim |\mu| |x|^2.
\end{equation}
Correspondingly, the operator $\langle |J_\mu| D_\xi, D_\xi \rangle$ is a second-order homogeneous differential operator in the variable $\xi$, with coefficients depending smoothly and $1$-homogeneously on the parameter $\mu \in \dot\RR^{d_2}$.

By Taylor's expansion, for any $M \in \NN$ and $t \in \RR$,
\begin{equation}\label{eq:taylor_exp}
e^{t/4} = \sum_{h=0}^M \frac{t^h}{4^h h!} + t^{M+1} r_M(t),
\end{equation}
where $r_M \in C^\infty(\RR)$. Now, by \eqref{eq:Hm_def},
\[\begin{split}
H_m(\bx) 
&= (2\pi)^{-d} \int e^{i\bx \cdot \bxi} \, \chi_1(2^{-m}|\xi|) \, \chi_1(2^{-m}|\mu|) \,d\bxi \\
&= (2\pi)^{-d} \int e^{i\bx \cdot \bxi - \langle |J_\mu| x,x \rangle/4} \, e^{\langle |J_\mu| x,x \rangle/4} \, \chi_1(2^{-m}|\xi|) \, \chi_1(2^{-m}|\mu|) \,d\bxi ,
\end{split}\]
so, by expanding the term $e^{\langle |J_\mu| x,x \rangle/4}$ via \eqref{eq:taylor_exp}, we obtain the decomposition \eqref{eq:real_complex_phase},
where
\begin{multline}\label{eq:Hmh}
H_{m,h}(\bx) \defeq 2^{mh} (2\pi)^{-d} \int e^{i\bx \cdot \bxi - \langle |J_\mu| x,x \rangle/4} \\
\times \langle |J_\mu| x,x \rangle^h \, \chi_1(2^{-m}|\xi|) \, \chi_1(2^{-m}|\mu|) \,d\bxi, 
\end{multline}
and  
\begin{multline}\label{eq:Rmh}
R_{m,M}(\bx) \defeq (2\pi)^{-d} \int e^{i\bx \cdot \bxi - \langle |J_\mu| x,x \rangle/4} \\
\times \langle |J_\mu| x,x \rangle^{M+1} r_M(\langle |J_\mu| x,x \rangle) \, \chi_1(2^{-m}|\xi|) \, \chi_1(2^{-m}|\mu|) \,d\bxi .
\end{multline}
Note now that, by repeated integration by parts in $\xi$, the expression \eqref{eq:Hmh} turns into \eqref{eq:initial_amplitude_complex},
where
\[
\eta_{h}(\bxi) \defeq \langle |J_\mu| D_\xi,D_\xi \rangle^h \chi_{1}(|\xi|) \chi_{1}(|\mu|);
\]
in particular, $\eta_h$ is smooth and satisfies the required support condition.

\smallskip

We now prove the estimate \eqref{eq:remainder_real_complex}. For this, we decompose
\[
R_{m,M} = \chi_0(\lambda^{-\epsilon_0} 2^m|\cdot|) R_{m,M} + (1-\chi_0(\lambda^{-\epsilon_0} 2^m|\cdot|)) R_{m,M},
\]
where $|\cdot|$ denotes the Euclidean norm on $\RR^d$, while $\epsilon_0 > 0$ is such that
\begin{equation}\label{eq:cond_delta}
2\epsilon_0 < \epsilon_4 - \epsilon_3;
\end{equation}
note that we can choose such an $\epsilon_0$ as we assumed that $\epsilon_4 > \epsilon_3$.

In order to estimate $\chi_0(\lambda^{-\epsilon_0} 2^m|\cdot|) R_{m,M}$, we shall take advantage of the power of $\langle |J_\mu| x, x \rangle$ in \eqref{eq:Rmh}, which can be made arbitrarily small by taking $M$ and $\lambda$ sufficiently large. Indeed, on the support of $\chi_0(\lambda^{-\epsilon_0} 2^m|\bx|) \chi_1(2^{-m}|\xi|) \chi_1(2^{-m}|\mu|)$, by \eqref{eq:metivier_nondeg}, \eqref{eq:cond_lm} and \eqref{eq:cond_delta}, we have that
\begin{equation}\label{eq:RmMbounds}
\begin{gathered}
|\xi| \simeq |\mu| \simeq 2^{m} \leq \lambda^{1+\epsilon_2}, \qquad |\bx| \lesssim 2^{-m} \lambda^{\epsilon_0} \leq \lambda^{\epsilon_0+\epsilon_3-\epsilon_4} \leq 1, \\
|\langle |J_\mu| x,x \rangle| \lesssim |\mu| |x|^2 \lesssim 2^{-m} \lambda^{2\epsilon_0} \leq \lambda^{2\epsilon_0+\epsilon_3-\epsilon_4} \leq 1,
\end{gathered}
\end{equation}
so indeed under the condition \eqref{eq:cond_delta} any factor $\langle |J_\mu| x,x \rangle$ produces a negative power of $\lambda$. 
Notice further that, when taking $\bx$-derivatives of $R_{m,M}$, each derivative may reduce by at most $1$ the exponent of $\langle |J_\mu| x,x \rangle$, and at the same time may produce (by the Chain Rule) one additional factor, which is a component of $i\bxi$ or $|J_\mu|x$ (according to which term is hit by the derivative), but any of these additional factors (and any of their $\bx$-derivatives) is bounded by $2^m$ under the conditions \eqref{eq:RmMbounds}. Similarly, one sees that $\|\partial_{\bx}^\alpha \chi_0(\lambda^{-\epsilon_0} 2^m|\cdot|)\|_\infty \lesssim_{\alpha} 2^{m|\alpha|}$ for all $\alpha \in \NN^d$. Hence, by \eqref{eq:Rmh} and \eqref{eq:RmMbounds}, for any $\alpha \in \NN^{d}$,
\[\begin{split}
&| \partial_{\bx}^\alpha (\chi_0(\lambda^{-\epsilon_0} 2^m|\bx|) R_{m,M}(\bx)) | \\
&\lesssim_{M,\alpha} 2^{m|\alpha|} \lambda^{-(M+1-|\alpha|)(\epsilon_4-\epsilon_3-2\epsilon_0)} \tilde\chi_0(\lambda^{-\epsilon_0} 2^m|\bx|) \int |\chi_1(2^{-m}|\xi|)| \, |\chi_1(2^{-m}|\mu|)| \,d\bxi \\
&\simeq 2^{m(d+|\alpha|)} \lambda^{-(M+1-|\alpha|)(\epsilon_4-\epsilon_3-2\epsilon_0)} \tilde \chi_0(\lambda^{-\epsilon_0} 2^m|\bx|),
\end{split}\]
where $\tilde\chi_0 = \chr_{\supp \chi_0}$;
thus, in combination with \eqref{eq:cond_lm},
\[\begin{split}
\|\partial_{\bx}^\alpha (\chi_0(\lambda^{-\epsilon_0} 2^m|\cdot|) R_{m,M})\|_2 
&\lesssim_{M,\alpha} 2^{m(d/2+|\alpha|)} \lambda^{\epsilon_0 d/2-(M+1-|\alpha|)(\epsilon_4-\epsilon_3-2\epsilon_0)} \\
&\leq \lambda^{(1+\epsilon_2+\epsilon_0) d/2+(1+\epsilon_2)|\alpha|-(M+1-|\alpha|)(\epsilon_4-\epsilon_3-2\epsilon_0)}.
\end{split}\]
As $\opL$ is a second-order differential operator and $|\bx| \lesssim 1$ on $\supp \chi_0(\lambda^{-\epsilon_0} 2^m|\cdot|)$, we conclude that, for any $N \in \NN$, there exists $M_0$ sufficiently large (depending on $\vec\epsilon,\epsilon_0,N$) so that, for all $M \geq M_0$,
\[
\|\chi_0(\lambda^{-\epsilon_0} 2^m|\cdot|)  R_{m,M}\|_2 + \|\opL (\chi_0(\lambda^{-\epsilon_0} 2^m|\cdot|)  R_{m,M})\|_2 \lesssim_{\vec\epsilon,N,M} \lambda^{-N}.
\]
Hence, by the Cauchy--Schwarz inequality and \eqref{eq:cond_lm}, for any $M \geq M_0$,
\[\begin{split}
&\| \chr_{\overline{B}(0,2^{2+\ell})} \cos(2^\ell \sqrt{\opL})  (\chi_0(\lambda^{-\epsilon_0} 2^m|\cdot|) R_{m,M}) \|_1 \\
&+ \| \chr_{\overline{B}(0,2^{2+\ell})} 2^\ell \sqrt{\opL} \sin(2^\ell \sqrt{\opL})  (\chi_0(\lambda^{-\epsilon_0} 2^m|\cdot|) R_{m,M}) \|_1 \\
&\lesssim 2^{\ell Q/2} \|\chi_0(\lambda^{-\epsilon_0} 2^m|\cdot|) R_{m,M}\|_2 +  2^{\ell (2+Q/2)} \|\opL( \chi_0(\lambda^{-\epsilon_0} 2^m|\cdot|) R_{m,M})\|_2 \\
&\lesssim_{\vec\epsilon,N,M} \lambda^{2+Q/2-N},
\end{split}\]
where the uniform $L^2$-boundedness of $\cos(2^\ell \sqrt{\opL})$ and $\sin(2^\ell\sqrt{\opL})/(2^\ell \sqrt{\opL})$ was used.
This shows that, if \eqref{eq:cond_delta} is satisfied, then, for any $N \in \NN$, by taking $M$ sufficiently large (depending on $\vec\epsilon,\epsilon_0,N$), we can ensure that
\[
\begin{aligned}
\|\chr_{\overline{B}(0,2^{2+\ell})} \cos(2^\ell \sqrt{\opL})  (\chi_0(\lambda^{-\epsilon_0} 2^m|\cdot|) R_{m,M}) \|_1 &\lesssim_{\vec\epsilon,N} \lambda^{-N},\\
\| \chr_{\overline{B}(0,2^{2+\ell})} 2^\ell \sqrt{\opL} \sin(2^\ell \sqrt{\opL})  (\chi_0(\lambda^{-\epsilon_0} 2^m|\cdot|) R_{m,M}) \|_1 &\lesssim_{\vec\epsilon,N} \lambda^{-N},
\end{aligned}
\]
for all $\lambda \geq 1$.

In order to prove the analogous estimate for $(1-\chi_0(\lambda^{-\epsilon_0} 2^m|\cdot|) R_{m,M}$, in light of the decomposition \eqref{eq:real_complex_phase}, it will be enough to show that
\begin{equation}\label{eq:large_Hmh}
\begin{aligned}
\|(1-\chi_0(\lambda^{-\epsilon_0} 2^m|\cdot|) H_{m} \|_2 + \|\opL ((1-\chi_0(\lambda^{-\epsilon_0} 2^m|\cdot|) H_{m})\|_2 &\lesssim_{\epsilon_0,N} \lambda^{-N}, \\
\|(1-\chi_0(\lambda^{-\epsilon_0} 2^m|\cdot|) H_{m,h} \|_2 + \|\opL ((1-\chi_0(\lambda^{-\epsilon_0} 2^m|\cdot|) H_{m,h})\|_2  &\lesssim_{\epsilon_0,N,h} \lambda^{-N} 
\end{aligned}
\end{equation}
for all $N,h \in \NN$, all $\lambda \geq 1$, and all $m \in \NN$ satisfying \eqref{eq:cond_lm}.

Starting from the expression \eqref{eq:initial_amplitude_complex}, we see that, for all $a \in \NN^{d_1}$ and $b \in \NN^{d_2}$,
\begin{multline*}
\partial_u^b \partial_x^a H_{m,h}(\bx) \\
= \sum_{a_1+a_2=a} \frac{a!}{a_1!a_2!} 2^{m(|a_2|+|b|)} \int e^{i\bx \cdot\bxi -\langle |J_\mu| x,x \rangle/4} \, Q_{a_1}(\mu,x) \, \eta_{h,a_2,b}(2^{-m}\bxi) \,d\bxi ,
\end{multline*}
where
\begin{align*}
\eta_{h,a_2,b}(\bxi) &\defeq (i\xi)^{a_2} (i\mu)^b \eta_h(\bxi), \\
Q_{a_1}(\mu,x) &\defeq e^{\langle|J_\mu| x,x\rangle/4} \partial_x^{a_1} e^{-\langle|J_\mu| x,x\rangle/4} = \sum_{r=\lceil |a_1|/2 \rceil}^{|a_1|} Q_{a_1,r}(\mu,x)
\end{align*}
and $Q_{a_1,r}(\mu,x)$ is smooth, $(2r-|a_1|)$-homogeneous in $x \in \RR^{d_1}$ and $r$-homogeneous in $\mu \in \dot\RR^{d_2}$. Now, by repeated integration by parts in $\xi$, for all $\alpha \in \NN^{d_1}$,
\begin{multline*}
(-ix)^\alpha \partial_u^b \partial_x^a H_{m,h}(\bx) 
= \sum_{a_1+a_2=a} \frac{a!}{a_1!a_2!} 2^{m(|a_2|+|b|-|\alpha|)} \\
\times \int e^{i\bx \cdot\bxi -\langle |J_\mu| x,x \rangle/4} \, Q_{a_1}(\mu,x) \, (\partial_\xi^\alpha \eta_{h,a_2,b})(2^{-m}\bxi) \,d\bxi ;
\end{multline*}
next, by repeated integration by parts in $\mu$, for all $\beta \in \NN^{d_2}$,
\[\begin{split}
&(-iu)^\beta (-ix)^\alpha \partial_u^b \partial_x^a H_{m,h}(\bx) \\ 
&= \sum_{a_1+a_2=a} \sum_{\beta_1+\beta_2+\beta_3=\beta} \frac{a!}{a_1!a_2!} \frac{\beta!}{\beta_1! \beta_2! \beta_3!} 
 2^{m(|a_2|+|b|-|\alpha|-|\beta_3|)} \\
&\qquad\times \int e^{i\bx\cdot\bxi-\langle|J_\mu| x,x\rangle/4} \, P_{\beta_1}(\mu,x) \, \partial_\mu^{\beta_2} Q_{a_1}(\mu,x) \,
(\partial_\mu^{\beta_3} \partial_\xi^\alpha \eta_{h,a_2,b}) (2^{-m}\bxi) \,d\bxi, 
\end{split}\]
where
\[
P_{\beta_1}(\mu,x) \defeq e^{\langle|J_\mu| x,x\rangle/4} \partial_\mu^{\beta_1} e^{-\langle|J_\mu| x,x\rangle/4} = \sum_{j=0}^{|\beta_1|} P_{\beta_1,j}(\mu,x)
\]
and $P_{\beta_1,j}(\mu,x)$ is smooth and homogeneous of degree $2j$ in $x \in \RR^{d_1}$ and of degree $j-|\beta_1|$ in $\mu \in \dot\RR^{d_2}$.

Notice now that, by homogeneity,
\[\begin{split}
|P_{\beta_1}(\mu,x) \, \partial_\mu^{\beta_2} Q_{a_1}(\mu,x)| 
&\lesssim_{\beta_1,\beta_2,a_1} \sum_{j=0}^{|\beta_1|} \sum_{r=\lceil |a_1|/2\rceil}^{|a_1|} |\mu|^{j-|\beta_1|+r-|\beta_2|} |x|^{2j+2r-|a_1|} \\
&\lesssim_{\beta_1,\beta_2,a_1} |\mu|^{|a_1|/2-|\beta_1|-|\beta_2|} (1+|\mu||x|^2)^{|\beta_1|+|a_1|/2}.
\end{split}\]
As a consequence, also using the fact that $|\mu|^{|a_1|/2} \lesssim_a |\mu|^{|a_1|}$ when $|\mu| \gtrsim 1$, we see that
\[\begin{split}
|u^\beta x^\alpha \partial_u^b \partial_x^a H_{m,h}(\bx)|
&\lesssim_{\alpha,\beta,a,b,h} 2^{m(|a|+|b|-|\alpha|-|\beta|)} 
\int e^{-\langle|J_\mu|x,x\rangle/4} (1+|\mu||x|^2)^{|\beta|+|a|/2} \\
&\qquad\times \chr_{[1/2,2]}(2^{-m}|\xi|) \, \chr_{[1/2,2]}(2^{-m}|\mu|) \,d\bxi \\
&\lesssim_{a,\beta} 2^{m(d+|a|+|b|-|\alpha|-|\beta|)}.
\end{split}\]
Thus, for all $N \in \NN$,
\[
\sup_{\bx \in \RR^d} (1+2^m|\bx|)^N \max_{|\alpha|\leq N} |2^{-m|\alpha|} \partial_{\bx}^\alpha H_{m,h}(\bx)| \lesssim_{h,N} 2^{md}.
\]
Clearly analogous estimates hold with $H_m$ in place of $H_{m,h}$, as $H_m = 2^{md} H_0(2^m \cdot)$ and $H_0 \in \Sz(G)$. In other words, both $H_m$ and $H_{m,h}$ can be thought of as ``normalised'' Schwartz class functions at scale $2^{-m}$; in particular, as $2^m |\bx| \gtrsim \lambda^{\epsilon_0}$ on the support of $1-\chi_0(2^{-m}\lambda^{\epsilon_0} \bx)$, and moreover $2^m \lesssim \lambda^{1+\epsilon_2}$ by \eqref{eq:cond_lm}, one obtains that, for all $k,N \in \NN$ and $\alpha \in \NN^d$,
\[
(1+|\bx|)^k |\partial_{\bx}^\alpha [(1-\chi_0(2^{-m}\lambda^{\epsilon_0} \bx) H_{m,h}(\bx)]| \lesssim_{h,k,\alpha,N} \lambda^{-N},
\]
and analogous estimates for $H_m$. As $\opL$ is a second-order differential operator with polynomial coefficients, these uniform estimates imply the $L^2$-estimates \eqref{eq:large_Hmh}.
\end{proof}

In light of Proposition \ref{prp:introd_complexphase}, the proof of \eqref{eq:non-elliptic_region2} is reduced to proving that
\begin{equation}\label{eq:target_est}
\begin{aligned}
\|\chr_{\overline{B}(0,2^{2+\ell})} \cos(2^\ell \sqrt{\opL}) H_{m,h} \|_1 &\lesssim_{\vec\epsilon,h} \lambda^{(d-1)/2+\smallo(\vec\epsilon)},\\
\|\chr_{\overline{B}(0,2^{2+\ell})} 2^{\ell} \sqrt{\opL} \sin(2^\ell \sqrt{\opL}) H_{m,h} \|_1 &\lesssim_{\vec\epsilon,h} \lambda^{1+(d-1)/2+\smallo(\vec\epsilon)}
\end{aligned}
\end{equation}
for all $\lambda \geq 1$, all $\ell,m \in \NN$ satsifying \eqref{eq:cond_lm}, and all $h \in \NN$, where $H_{m,h}$ is given by \eqref{eq:initial_amplitude_complex}.
The proof of this estimate is given in Proposition \ref{prp:non-elliptic_est} below, after developing a suitable theory of Fourier integral operators with complex phase in Sections \ref{s:LSVparametrix} and \ref{s:sss}.

\section{The contribution by the elliptic region}\label{s:elliptic}

Here we briefly discuss how the estimate \eqref{eq:elliptic_region2} can be proved, by employing the ``microlocal parametrix'' for the sub-Riemannian wave equation discussed in \cite{MMNG}, together with the method of \cite{SSS} to derive $L^1$-estimates for FIOs. In the course of the proof, we will make the choice of the large parameter $\gamma$ appearing in \eqref{eq:elliptic}, which determines the splitting between elliptic and non-elliptic regions.

Since $\lambda \approx 2^j$ by \eqref{eq:cond_ell_j}, the estimate \eqref{eq:elliptic_region2} reduces to proving the following bound.

\begin{prp}\label{prp:elliptic_region}
For all $j \in \NN$,
\begin{equation}\label{eq:elliptic_red1}
\begin{aligned}
\|\chr_{\overline{B}(0,4)} \cos(\sqrt{\opL}) \tilde H_j \|_1 &\lesssim 2^{j(d-1)/2}, \\
\|\chr_{\overline{B}(0,4)} \sqrt{\opL} \sin(\sqrt{\opL}) \tilde H_j \|_1 &\lesssim 2^{j(1+(d-1)/2)},
\end{aligned}
\end{equation}
where $\tilde H_j$ is as in \eqref{eq:initial_amplitude_elliptic}.
\end{prp}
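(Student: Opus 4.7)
The strategy exploits the fact that, on the Fourier support of $\tilde H_j$ (where $|\xi| \simeq 2^j$ and $|\mu| \lesssim 2^{j-\gamma}$) combined with the spatial restriction $\bx \in \overline{B}(0,4)$, the symbol $\Ham(\bx,\bxi) = |\xi + J_\mu x/2|$ of $\sqrt{\opL}$ is comparable to $|\xi| \simeq 2^j$. Indeed, for $|x| \lesssim 4$ one has $|J_\mu x/2| \lesssim |\mu| \lesssim 2^{j-\gamma}$, so by choosing $\gamma$ sufficiently large (independently of $j$) we can ensure that $\Ham(\bx,\bxi) \simeq |\xi|$ uniformly on the region at hand. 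After this joint spatial/microlocal restriction, $\opL$ thus behaves as a classical elliptic operator of order $2$, and there is no conjugate point to the origin within spatial distance $4$.

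With ellipticity secured, the plan is to invoke the microlocal FIO parametrix construction of \cite{MMNG}, which represents $\cos(\sqrt{\opL}) \tilde H_j$ and $\sqrt{\opL}\sin(\sqrt{\opL}) \tilde H_j$, up to $L^1$-negligible smoothing remainders, as Fourier integral operators of the form
\[
\bx \mapsto \int e^{i\phi_\pm(\bx,\bxi)} \, a^\pm_j(\bx,\bxi) \,d\bxi,
\]
with real phases $\phi_\pm$ obtained by solving the eikonal equation $\partial_t \phi_\pm + \Ham(\bx,\nabla_x \phi_\pm) = 0$ up to time $t=\pm 1$ (with initial condition $\bx \cdot \bxi$), and classical amplitudes $a^\pm_j$ of orders $0$ (for $\cos$) and $1$ (for $\sqrt{\opL}\sin$) supported in the frequency region $\{|\xi| \simeq 2^j,\ |\mu| \lesssim 2^{j-\gamma}\}$. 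The availability of real (rather than complex) phases here, in stark contrast with the non-elliptic region, is what makes this construction straightforward and justifies treating the elliptic region separately.

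The $L^1$ bounds then follow by adapting the key method of \cite{SSS} to this anisotropic setting. The frequency support of $\tilde H_j$ is decomposed into boxes of size $2^j$ in one radial direction and $2^{j/2}$ in the remaining transverse directions; on each such box, the second-order Taylor correction of $\phi_\pm$ is $O(1)$ and can be absorbed into the amplitude, so that the resulting oscillatory integral contributes at most a constant to the $L^1$-norm on $\overline{B}(0,4)$ via standard stationary-phase/integration-by-parts arguments. The principal point to verify, and the only real subtlety, is that the anisotropy of the frequency support (with $\mu$ at scale $2^{j-\gamma}$, much smaller than $\xi$ at scale $2^j$) does not worsen the box count: a direct computation gives $\lesssim 2^{j(d_1-1)/2} \cdot \max(1, 2^{(j/2-\gamma)d_2}) \lesssim 2^{j(d-1)/2}$ boxes, so the $\mu$-direction actually contributes at most as many boxes as the $\xi$-angular directions. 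Summing yields the first estimate in \eqref{eq:elliptic_red1}; the second follows identically, with one additional factor of $2^j$ coming from the order-$1$ amplitude of $\sqrt{\opL}\sin(\sqrt{\opL})$.
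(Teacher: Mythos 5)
Your proposal is correct and follows essentially the same route as the paper. Both arguments rest on the same three ingredients: (a) observing that on the spatial ball $\overline{B}(0,4)$ and the frequency cone $|\mu|\lesssim 2^{-\gamma}|\xi|$ the symbol $\Ham(\bx,\bxi)$ is comparable to $|\xi|$ once $\gamma$ is chosen large, so that $\opL$ behaves microlocally like an elliptic operator; (b) invoking the real-phase microlocal parametrix of \cite{MMNG} (Theorem 4.13 there, together with Lemma 5.1 to reduce to $\by$-independent symbols) for $\cos(t\sqrt{\opL})$ and $\sqrt{\opL}\sin(t\sqrt{\opL})$ up to $L^1$-negligible remainders; and (c) applying the SSS second-dyadic decomposition to the resulting FIO kernels. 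The paper handles step (c) by citing the classical references \cite[\S IX.4]{St} and \cite[Proposition 6.25]{S} for the standard isotropic decomposition of the sphere $\{|\bxi|\simeq 2^j\}$ into $\lesssim 2^{j(d-1)/2}$ caps, whereas you spell out an explicit (slightly anisotropic) box count separating the $\xi$-angular directions from the $\mu$-directions and verify it still lands at $\lesssim 2^{j(d-1)/2}$. Both counts are valid; the isotropic one is a touch cleaner since the constraint $|\mu|\lesssim 2^{j-\gamma}$ can only reduce the number of caps. The one technical point the paper makes explicit that you elide is the automorphic rescaling needed to pass from the small existence time $T$ of the \cite{MMNG} parametrix to time $1$, which in turn forces the choice of the cone aperture $\alpha$ (hence $\gamma$); but your remark that $\gamma$ must be taken large, independently of $j$, captures the essence of this.
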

\begin{proof}
By using the Cauchy--Schwarz inequality, the $L^2$-boundedness of $\cos(\sqrt{\opL})$ and $\sin(\sqrt{\opL})/\sqrt{\opL}$, and the fact that $\tilde H_j, \opL \tilde H_j \in \Sz(\RR^d)$, it is clear that the norms in the left-hand sides of \eqref{eq:elliptic_red1} are finite for any $j \in \NN$. So the question here is about large values of $j$, and we may certainly assume that $j \in \Npos$.

\smallskip

Recall that the symbol of $\opL$ at the origin is $|\xi|^2$, so it does not vanish on the cone $\Gamma_\alpha = \{ \bxi \neq 0 \tc |\mu| \leq 4\alpha|\xi| \}$, for any given $\alpha>0$.
Let now $P$ be the (Euclidean) Fourier multiplier operator with smooth symbol
\[
\chi_0(|\mu|/(4\alpha|\xi|)) (1-\chi_0(|\xi|)),
\]
which is supported in $\Gamma_\alpha$. By applying \cite[Theorem 4.13]{MMNG} to the sub-Laplacian $\opL$ and the cone $\Gamma_\alpha$, we can then find a bounded open neighbourhood $U_0$ of the origin and a time $T_0 > 0$ (depending on $\alpha$), and a smooth phase function
\[
w = w(t,\bx,\bxi) : (-T_0,T_0) \times U_0 \times \dot\RR^d \to \RR,
\]
which is $1$-homogeneous in $\bxi$ with
\begin{equation}\label{eq:phase_ell_zerot}
w(0,\bx,\bxi) = \bx \cdot \bxi,
\end{equation}
and such that the following holds: for any open neighbourhood $U$ of the origin with closure contained in $U_0$, we can find a time $T \in (0,T_0)$ such that, if $\chi_U$ is a smooth cutoff supported in $U$ (thought of as a multiplication operator), then we can write, for $|t| \leq T$,
\begin{equation}\label{eq:elliptic_FIO_rep}
\cos(t\sqrt{\opL}) \chi_U P \chi_U = \frac{Q_t+Q_{-t}}{2} + R_t,
\end{equation}
where $R_t$ is a smoothing operator supported in $U_0 \times U_0$, while $Q_t$ is an oscillatory integral operator with integral kernel
\[
Q_t(\bx,\by) = \int_{\RR^d} e^{i(w(t,\bx,\bxi)-\by\cdot\bxi)} \, q(t,\bx,\by,\bxi) \,d\bxi
\]
for a suitable smooth amplitude $q$, which is a classical symbol of order $0$ with respect to the frequency variable $\bxi$ and has compact $(\bx,\by)$-support contained in $U_0 \times U_0$; up to changing the smoothing term $R_t$, we may assume that $q$ vanishes when $\bxi$ is in a neighbourhood of $0$. By \eqref{eq:phase_ell_zerot} the mixed hessian $\partial_{\bxi} \nabla_{\bx} w$ is nondegenerate at $t=0$; up to shrinking $T_0$ and $U_0$, we may assume that
\begin{equation}\label{eq:nondeg_hessian_elliptic_phase}
\left|\det \partial_{\bxi} \nabla_{\bx} w\right| \simeq 1
\end{equation}
for $|t| < T_0$ and $\bx \in U_0$. By differentiating \eqref{eq:elliptic_FIO_rep} with respect to $t$, we also obtain that
\begin{equation}\label{eq:elliptic_FIO_rep_sin}
\sqrt{\opL} \sin(t\sqrt{\opL}) \chi_U P \chi_U = \frac{\mathring{Q}_t-\mathring{Q}_{-t}}{2} + \mathring{R}_t,
\end{equation}
where again $\mathring{R}_t$ is a smoothing operator supported in $U_0 \times U_0$, while $\mathring{Q}_t$ is an oscillatory integral operator with integral kernel
\[
\mathring{Q}_t(\bx,\by) = \int_{\RR^d} e^{i(w(t,\bx,\bxi)-\by\cdot\bxi)} \, \mathring{q}(t,\bx,\by,\bxi) \,d\bxi,
\]
where the amplitude
\[
\mathring{q}(t,\bx,\by,\bxi) \defeq (\partial_t q)(t,\bx,\by,\bxi) + i (\partial_t w)(t,\bx,\bxi) q(t,\bx,\by,\bxi).
\]
is smooth, has order $1$ in $\bxi$ and compact $(\bx,\by)$-support contained in $U_0 \times U_0$.

\smallskip

In the above construction, we now choose a cutoff $\chi_U$ which is identically one on a smaller neighbourhood $V$ of the origin. Moreover, by scaling the whole construction using automorphic dilations, we may actually assume that $T = 1$; however, dilations will also change the neighbourhoods $U$ and $V$, as well as the aperture $\alpha$ of the cone. Nevertheless, the parameter $\gamma \in \NN$ determining the splitting between elliptic and non-elliptic regions (see \eqref{eq:elliptic}) can be chosen sufficiently large that $2^{1-\gamma} \leq \alpha$.

\smallskip

As the symbol of $P$ is identically one on the region
\[
\{ \bxi \tc |\mu| \leq \alpha|\xi|, \ |\xi| \geq 1 \},
\]
while the Fourier support of $\tilde H_j$ is contained in 
\[
\{ \bxi \tc |\mu| \leq 2^{1-\gamma}|\xi|, \  2^{j-1} \leq |\xi| \leq 2^{j+1} \},
\]
(see \eqref{eq:initial_amplitude_elliptic}), we conclude that, with this choice of $\gamma$,
\begin{equation}\label{eq:PHjHj}
P \tilde H_j = \tilde H_j \qquad \forall j \in \Npos.
\end{equation}

Notice now that $\tilde H_j = 2^{dj} \tilde H_0 (2^j \cdot)$, and that $\tilde H_0$ is a Schwartz function. As $1-\chi_U$ vanishes on the neighbourhood $V$ of the origin, by using the Schwartz bounds of $\tilde H_0$ we easily deduce that
\begin{equation}\label{eq:rem_bds_Hj}
\varrho((1-\chi_U) \tilde H_j) \lesssim_{\varrho,N} 2^{-jN} \qquad \forall N \in \NN
\end{equation}
for any continuous seminorm $\varrho$ on $\Sz(\RR^d)$. So, thanks to \eqref{eq:PHjHj}, we can write
\[
\tilde H_j = \chi_U P \chi_U \tilde H_j + \tilde R_j, \qquad \tilde R_j = (1-\chi_U) \tilde H_j + \chi_U P (1-\chi_U) \tilde H_j;
\]
moreover, thanks to \eqref{eq:rem_bds_Hj} and the $L^2$-boundedness and translation-invariance of the Fourier multiplier operator $P$,
\[
\varrho(\tilde R_j) \lesssim_{\varrho,N} 2^{-jN} \qquad \forall N \in \NN
\]
for any continuous seminorm $\varrho$ on $\Sz(\RR^d)$, and in particular
\[
\|\tilde R_j\|_2 + \|\opL \tilde R_j\|_2 \lesssim_{N} 2^{-jN} \qquad \forall N \in \NN,
\]
as $\opL$ has polynomial coefficients. As $\cos(\sqrt{\opL})$ and $\sin(\sqrt{\opL})/\sqrt{\opL}$ are $L^2$-bounded, by the Cauchy--Schwarz inequality we deduce that
\[
\|\chr_{\overline{B}(0,4)} \cos(\sqrt{\opL}) \tilde R_j\|_1 + \|\chr_{\overline{B}(0,4)} \sqrt{\opL} \sin(\sqrt{\opL}) \tilde R_j\|_1 \lesssim_N 2^{-jN} \qquad \forall N \in \NN,
\]
so this contribution is negligible, and \eqref{eq:elliptic_red1} reduces to proving
\[
\begin{aligned}
\|\cos(\sqrt{\opL}) \chi_U P \chi_U \tilde H_j \|_1 &\lesssim 2^{j(d-1)/2},\\
\|\sqrt{\opL} \sin(\sqrt{\opL}) \chi_U P \chi_U \tilde H_j \|_1 &\lesssim 2^{j(1+(d-1)/2)}.\\
\end{aligned}
\]

Now we can apply the representations \eqref{eq:elliptic_FIO_rep} and \eqref{eq:elliptic_FIO_rep_sin} of $\cos(\sqrt{\opL}) \chi_U P \chi_U$ and $\sqrt{\opL} \sin(\sqrt{\opL}) \chi_U P \chi_U$. In fact, due to their compact supports, the contributions of the smoothing terms are easily estimated, as
\[
\| R_1 \tilde H_j\|_1  + \| \mathring{R}_1 \tilde H_j\|_1\lesssim \|\tilde H_j\|_1 \simeq 1,
\]
so it only remains to check that
\[
\| Q_{\pm 1} \tilde H_j\|_1 \lesssim 2^{j(d-1)/2}, \qquad \| \mathring{Q}_{\pm 1} \tilde H_j\|_1 \lesssim 2^{1+j(d-1)/2}.
\]

As $\tilde H_j = 2^{dj} \tilde H_0 (2^j \cdot)$, we can now apply \cite[Lemma 5.1]{MMNG} with $u = \tilde H_0$ and $\lambda=2^j$, and then scale by $2^{-j}$ the integration variable $\bxi$, to obtain the development
\[\begin{split}
Q_{\pm 1} \tilde H_j(\bx) 
&= \sum_{|\alpha| \leq N} c_\alpha 2^{-j|\alpha|} \int_{\RR^d} e^{iw(\pm 1,\bx,\bxi)} \partial_{\by}^\alpha q(\pm 1,\bx,0,\bxi) \partial_{\bxi}^\alpha \eta(2^{-j} \bxi) \,d\bxi \\
&\qquad+ 2^{j(d-(N+1))} R_{N,j}(\bx),
\end{split}\]
where
\[
\eta(\bxi) \defeq \chi_1(|\xi|) \chi_0(2^{\gamma}|\mu|)
\]
is the Fourier transform of $\tilde H_0$ (see \eqref{eq:initial_amplitude_elliptic}), while $\supp R_{N,j} \subseteq U_0$ and $\|R_{N,j}\|_\infty \lesssim_N 1$ uniformly in $j$, so also $\| R_{N,j} \|_1 \lesssim_N 1$, and therefore the contribution of the remainder term $2^{j(d-(N+1))} R_{N,j}$ can be neglected by taking $N$ sufficiently large. 
A straightforward variation of \cite[Lemma 5.1]{MMNG}, allowing for amplitudes of positive $\bxi$-order, also yields the development
\[\begin{split}
\mathring{Q}_{\pm 1} \tilde H_j(\bx) 
&= \sum_{|\alpha| \leq N} c_\alpha 2^{-j|\alpha|} \int_{\RR^d} e^{iw(\pm 1,\bx,\bxi)} \partial_{\by}^\alpha \mathring{q}(\pm 1,\bx,0,\bxi) \partial_{\bxi}^\alpha \eta(2^{-j} \bxi) \,d\bxi \\
&\qquad+ 2^{j(1+d-(N+1))} \mathring{R}_{N,j}(\bx),
\end{split}\]
where again $\| \mathring{R}_{N,j} \|_1 \lesssim_N 1$, and a similar argument allows us to neglect the remainder term.
In conclusion, it remains to show that
\begin{align*}
\int_{U_0} \left|\int_{\RR^d} e^{iw(\pm 1,\bx,\bxi)} \partial_{\by}^\alpha q(\pm 1,\bx,0,\bxi) \partial_{\bxi}^\alpha \eta(2^{-j} \bxi) \,d\bxi\right| \,d\bx &\lesssim_\alpha 2^{j(d-1)/2},\\
\int_{U_0} \left|\int_{\RR^d} e^{iw(\pm 1,\bx,\bxi)} \partial_{\by}^\alpha \mathring{q}(\pm 1,\bx,0,\bxi) \partial_{\bxi}^\alpha \eta(2^{-j} \bxi) \,d\bxi\right| \,d\bx &\lesssim_\alpha 2^{j(1+(d-1)/2)}
\end{align*}
for all $\alpha \in \NN^d$ and $j \in \Npos$. Thanks to the nondegeneracy \eqref{eq:nondeg_hessian_elliptic_phase} of the mixed hessian of the phase function $w$, the fact that the amplitudes $q$ and $\mathring{q}$ are classical symbols of orders $0$ and $1$ with compact $\bx$-support, together with the fact that $|\bxi| \simeq 1$ for $\bxi \in \supp \eta$, these Fourier integral operators are amenable to the classical arguments of \cite{SSS} (see, e.g., \cite[\S IX.4]{St} or \cite[Proposition 6.25]{S}), thus yielding the desired estimates.
\end{proof}

\section{A parametrix for the non-elliptic region}\label{s:LSVparametrix}

This and the next sections will be devoted to the proof of the estimate \eqref{eq:target_est} for the non-elliptic region. Recall that therein, due to the automorphic scaling discussed in Section \ref{ss:scaling}, the time parameter of interest is no longer $1$, but $2^\ell$, which may become arbitrarily large (see \eqref{eq:cond_lm}). On the other hand, the scaling allows us to work in the ``frequency region'' $|\xi| \simeq |\mu| \simeq 2^m$, see also Figure \ref{fig:freq_scaling}.

We shall now look for approximate representations of
\[
\cos(t\sqrt{\opL}) H(\bx), \qquad \sqrt{\opL} \sin(t\sqrt{\opL}) H(\bx),
\]
where $t \in \RR$ and $H$ is one of the functions $H_{m,h}$ of \eqref{eq:initial_amplitude_complex}, in terms of oscillatory integrals (which we shall refer to as \emph{FIO kernels}) of the form
\begin{equation}\label{eq:fio_Iq}
I[q](t,\bx) \defeq (2\pi)^{-d} \int_{\RR^d} e^{i\phi(t,\bx,\bxi)} \, q(t,\bxi) \, \Den_\phi(t,\bx,\bxi) \,d\bxi,
\end{equation}
where the terms $q$ are appropriate $\bx$-independent ``amplitudes'' depending on $H$.

In the oscillatory integrals \eqref{eq:fio_Iq}, partly following \cite{LSV,SV}, we shall use the complex-valued phase function
\begin{equation}\label{eq:phase}
\begin{split}
\phi(t,\bx,\bxi) 
&\defeq (\bx-\bx^t(\bxi))\cdot \bxi^t(\bxi) + \frac{i}{4} \langle |J_\mu| (x-x^t(\bxi)), x-x^t(\bxi) \rangle \\
&= (x-x^t(\bxi))\cdot \xi^t(\bxi) + (u-u^t(\bxi)) \cdot \mu + \frac{i}{4} \langle |J_\mu| (x-x^t(\bxi)), x-x^t(\bxi) \rangle
\end{split}
\end{equation}
defined in terms of the Hamiltonian flow \eqref{eq:flow_def} associated with the sub-Laplacian $\opL$.
The dot notation and the bracket notation for the inner products in \eqref{eq:phase} are actually interchangeable, as they are applied to real vectors (see Section \ref{ss:notation}); for later computations, it is anyway convenient to use two different symbols here.
By Proposition \ref{prp:modJmu_analytic}, we know that $|J_\mu|$ is a $1$-homogeneous Lipschitz-continuous function of $\mu \in \RR^{d_2}$, which is real-analytic on a homogeneous Zariski-open set $\Omega \subseteq \dot\RR^{d_2}$; as a consequence, the phase function $\phi$ is real-analytic on $\RR \times \dot \RR^{d_1} \times \Omega$.

Moreover, the density $\Den_\phi$ in \eqref{eq:fio_Iq} is given by	
\begin{equation}\label{eq:den}
\Den_\phi(t,\bx,\bxi) \defeq \sqrt{\det \Phi(t,\bx,\bxi)}, \qquad \Phi(t,\bx,\bxi) \defeq \partial_{\bxi} \nabla_{\bx} \phi(t,\bx,\bxi),
\end{equation}
where a continuous determination of the square root of $\det \Phi(t,\bx,\bxi)$ is chosen which equals $1$ when $t=0$; as we shall see, $\det \Phi(t,\bx,\bxi)$ never vanishes when $\bxi \in \dot\RR^{d_1} \times \Omega$, is independent of $\bx$, and equals $1$ at $t=0$, so $\Den_\phi$ is well defined.

\begin{rem}
The first step in many of the FIO parametrix constructions for the wave equation in the literature, including \cite{LSV}, is a reduction to the half-wave equation, which is made possible by the fact that the square root of an elliptic Laplacian on a compact manifold is a classical first-order pseudodifferential operator \cite{See}. However, for our nonelliptic sub-Laplacian $\opL$, the usual construction of fractional powers via pseudodifferential calculus breaks down, as one would be led to considering symbols in ``exotic'' symbol classes of type $(1/2,1/2)$, for which a full asymptotic calculus fails. To address the latter problem, a considerable amount of literature has been devoted to the development of pseudodifferential calculi adapted to nonelliptic sub-Laplacians in various settings (see, e.g., \cite{BG,CGGP,Po,SY,T_noncomm,vErp} and references therein). However, here we shall circumvent this issue, by directly constructing a parametrix for the second-order wave operator $\partial_t^2+\opL$, much as in \cite{SV}. As a consequence, our FIO representation directly applies to $\cos(t\sqrt{\opL})$ and $\sqrt{\opL} \sin(t\sqrt{\opL})$, rather than $\exp(it\sqrt{\opL})$.
\end{rem}

\subsection{Basic properties of the phase function}

For brevity, we shall sometimes omit variables and write, e.g., $\phi$ in place of $\phi(t,\bx,\bxi)$. Moreover, for any function $p = p(t,\bx,\bxi)$, we shall write
\begin{equation}\label{eq:def_underlining}
\underline{p} \defeq p|_{\bx=\bx^t} = p(t,\bx^t(\bxi),\bxi)
\end{equation}
for the $\bx$-evaluation of the function along the geodesic flow.

\begin{lem}\label{lem:phder}
The following hold for $\bxi \in \dot\RR^{d_1} \times \Omega$.
\begin{enumerate}[label=(\roman*)]
\item\label{en:phder_grad} We have
\[
\partial_{\bxi} \phi = (\bx-\bx^t)^T \underline{\Phi} + \frac{i}{4} \sum_{j=1}^{d_2} \langle (\partial_{\mu_j} |J_\mu|) (x-x^t), x-x^t \rangle \left( \begin{array}{c} 0 \\\hline e_j \end{array} \right)^T,
\]
where $e_1,\dots,e_{d_2}$ is the standard basis of $\RR^{d_2}$,
and
\[
\underline{\partial_{\bxi} \phi} = 0, \qquad \underline{\phi} = 0, \qquad \underline{\nabla_{\bx} \phi} = \bxi^t .
\]
\item\label{en:phder_hess} We have
\[
\Phi = \left(\begin{array}{c|c}
\Phi_0 & * \\\hline
0 & I
\end{array}\right),
\]
where
\[
\Phi_0 = \Phi_0(t,\bxi) = \partial_{\xi} \xi^t - \frac{i}{2} |J_\mu| \partial_\xi x^t
\]
does not depend on $\bx$, and equals $I$ at $t=0$. 
As a consequence,
\[
\Den_\phi = \Den_\phi(t,\bxi) = \sqrt{\det \Phi_0} = \sqrt{\det \underline{\Phi}}.
\]
\item\label{en:phder_hess_nondeg} $\Phi$ and $\Phi_0$ are nondegenerate for all $\bxi \in \dot\RR^{d_1} \times \Omega$. More precisely, with the notation of Corollary \ref{cor:flow_alt},
\[
\Phi_0 = \exp(\theta J_{\bar\mu}) \exp(-i\theta|J_{\bar\mu}|) \left( I + i \theta 	\left(|J_{\bar\mu}|+i J_{\bar\mu} \right)\bar\xi \, \bar\xi^T \right);
\]
in particular
\begin{equation}\label{eq:det_2step}
\begin{gathered}
\det \Phi = \det\Phi_0 = \exp(-i\theta\tr|J_{\bar\mu}|) (1+i\theta \langle |J_{\bar\mu}| \bar\xi, \bar\xi \rangle ) \neq 0,\\
\Den_\phi = \exp(-i\theta\tr|J_{\bar\mu}|/2) \sqrt{1+i\theta \langle |J_{\bar\mu}| \bar\xi, \bar\xi \rangle}.
\end{gathered}
\end{equation}
Moreover, if $G$ is a group of Heisenberg type, then
\begin{equation}\label{eq:det_Htype}
\det \Phi = e^{-i\theta d_1}(1+i\theta), \qquad \Den_\phi = e^{-i\theta d_1/2} \sqrt{1+i\theta}. 
\end{equation}
\item\label{en:phder_tx} The functions $\partial_t \phi$, $\partial_t^2 \phi$, $\nabla^\opL \phi$ are polynomials of degree $1$ in $x$ and independent of $u$, while $\opL \phi$ is $\bx$-independent; here
\begin{equation}\label{eq:hgradient}
\nabla^\opL f \defeq (X_1 f,\dots,X_{d_1} f)
\end{equation}
is the \emph{horizontal gradient} associated with $\opL = -\sum_{j=1}^{d_1} X_j^2$.
\end{enumerate}
\end{lem}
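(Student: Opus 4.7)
I shall establish each part by direct computation, relying on the symplectic structure (Lemma \ref{lem:symplectic}) and the explicit flow formulas (Corollary \ref{cor:flow_alt}). For (i) and (ii): since $\phi$ is affine in $u$ with constant coefficient $\mu$ and quadratic in $x$, one has $\nabla_u\phi = \mu$ and $\nabla_x\phi = \xi^t + \tfrac{i}{2}|J_\mu|(x-x^t)$; evaluation at $\bx = \bx^t$ immediately gives $\underline\phi = 0$ and $\underline{\nabla_\bx\phi} = \bxi^t$. For $\partial_\bxi\phi$, I apply the chain rule and invoke the symplectic orthogonality $(\partial_\bxi \bx^t)^T \bxi^t = 0$ from \eqref{eq:symplectic_ort} to cancel the contribution of $\partial_\bxi \bx^t$ in $\partial_\bxi[(\bx-\bx^t)\cdot\bxi^t]$; collecting the remaining terms and accounting for the $\mu$-dependence of $|J_\mu|$, one obtains the stated decomposition into $(\bx-\bx^t)^T\underline\Phi$ plus the correction involving $\partial_{\mu_j}|J_\mu|$, both vanishing along $\bx = \bx^t$. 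A second $\bxi$-differentiation of $\nabla_\bx\phi$ yields (ii): the constant row $\mu$ contributes the bottom block $(0 \mid I)$, while the $\xi$-derivative of the $x$-row is $\Phi_0 = \partial_\xi\xi^t - \tfrac{i}{2}|J_\mu|\partial_\xi x^t$, manifestly $\bx$-independent. At $t=0$ one has $\xi^0 = \xi$ and $x^0 = 0$, so $\Phi_0|_{t=0} = I$; block upper-triangularity of both $\Phi$ and $\underline\Phi$ with diagonal blocks $\Phi_0$ and $I$ yields $\det\Phi = \det\Phi_0 = \det\underline\Phi$.

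For (iii), I substitute the flow formulas from Corollary \ref{cor:flow_alt}: $\xi^t = F(\theta)\xi$ with $F(\theta) = \exp(\theta J_{\bar\mu})\cosh(\theta J_{\bar\mu})$, and the analogous sinh-expression for $x^t$. Since $\theta = t|\mu|/(2|\xi|)$ depends on $\xi$ through $|\xi|$, the chain rule gives $\partial_\xi\theta = -(\theta/|\xi|)\bar\xi^T$, producing rank-one corrections in $\bar\xi\bar\xi^T$ alongside the matrix pieces. The main obstacle lies in recognising that these rank-one corrections combine with the matrix exponentials into the ``creation-operator-like'' combination $|J_{\bar\mu}|+iJ_{\bar\mu}$; this identification, together with the commutativity of $J_{\bar\mu}$ and $|J_{\bar\mu}|$, lets one factor out $\exp(\theta(J_{\bar\mu} - i|J_{\bar\mu}|)) = \exp(\theta J_{\bar\mu}) \exp(-i\theta|J_{\bar\mu}|)$ and reach the stated product form of $\Phi_0$. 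The determinant is then computed by multiplicativity: $\det\exp(\theta J_{\bar\mu}) = 1$ by skew-symmetry of $J_{\bar\mu}$, $\det\exp(-i\theta|J_{\bar\mu}|) = \exp(-i\theta\tr|J_{\bar\mu}|)$, and the rank-one update $\det(I+vw^T) = 1 + w^T v$ with $v = i\theta(|J_{\bar\mu}|+iJ_{\bar\mu})\bar\xi$ and $w = \bar\xi$ produces $1 + i\theta\langle|J_{\bar\mu}|\bar\xi,\bar\xi\rangle$, the skew contribution $\bar\xi^T J_{\bar\mu}\bar\xi$ vanishing by skew-symmetry of $J_{\bar\mu}$. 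Since $|1 + i\theta\langle|J_{\bar\mu}|\bar\xi,\bar\xi\rangle| \geq 1$ and the exponential prefactor has unit modulus, $\det\Phi$ never vanishes, so $\Den_\phi$ is a well-defined continuous square root with value $1$ at $t=0$. The H-type formula \eqref{eq:det_Htype} follows upon setting $|J_{\bar\mu}| = I$ and $\tr|J_{\bar\mu}| = d_1$.

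For (iv), I exploit the polynomial structure: $\phi$ is of degree at most $2$ in $x$ and degree at most $1$ in $u$, with constant $u$-coefficient $\mu$; in particular $\nabla_u\phi = \mu$ is $\bx$-independent. The operator $\partial_t$ does not touch $\bx$ and, when applied to the only $x$-quadratic summand $\tfrac{i}{4}\langle|J_\mu|(x-x^t), x-x^t\rangle$, produces a quantity linear in $x - x^t$ (since only $x^t$ depends on $t$); hence $\partial_t\phi$ and $\partial_t^2\phi$ are polynomial of degree at most $1$ in $x$ and $u$-independent. Each left-invariant $X_j$ has the form $\partial_{x_j} + \tfrac{1}{2}\sum_l a_{j,l}(x)\partial_{u_l}$ with $a_{j,l}$ linear in $x$ and $u$-independent; applied to $\phi$, using $\nabla_u\phi = \mu$ constant and $\partial_{x_j}\phi$ linear in $x$, it yields a polynomial of degree at most $1$ in $x$ that is $u$-independent. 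A second application sees $\nabla_u(X_j\phi) = 0$, leaving only $\partial_{x_j}(X_j\phi)$, which is $\bx$-independent; summing over $j$ gives that $\opL\phi$ is $\bx$-independent.
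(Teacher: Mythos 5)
Your proposal is correct and follows essentially the same route as the paper: parts (i) and (ii) by direct differentiation plus the symplectic identity \eqref{eq:symplectic_ort}, part (iii) by substituting the flow formulas and pushing the chain rule through $\theta=t|\mu|/(2|\xi|)$ to expose the rank-one correction, and part (iv) from the polynomial structure of $\phi$ and the form \eqref{eq:Xfields} of the $X_j$. The one place where your sketch underplays the algebra is part (iii): attributing the factorisation of $\exp(\theta J_{\bar\mu})\exp(-i\theta|J_{\bar\mu}|)$ out of the rank-one term to ``commutativity of $J_{\bar\mu}$ and $|J_{\bar\mu}|$'' is not quite the whole story — what is really used is that $J_{\bar\mu}^2=-|J_{\bar\mu}|^2$, which gives both the identity $\cosh(\theta J_{\bar\mu})-i|J_{\bar\mu}|\sinh(\theta J_{\bar\mu})/J_{\bar\mu}=\exp(-i\theta|J_{\bar\mu}|)$ and the intertwining relation $F(J_{\bar\mu})(|J_{\bar\mu}|+iJ_{\bar\mu})=F(-i|J_{\bar\mu}|)(|J_{\bar\mu}|+iJ_{\bar\mu})$ of \eqref{eq:fctnJ_2step}; these, not commutativity alone, are what make the rank-one piece acquire the same $\exp(-i\theta|J_{\bar\mu}|)$ prefactor as the full-rank piece.
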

\begin{proof}
\ref{en:phder_grad}. From \eqref{eq:phase} it is easily seen that
\[
\underline{\phi} = 0, \qquad \underline{\nabla_{\bx} \phi} = \bxi^t,
\]
and moreover
\[\begin{split}
\partial_{\bxi} \phi &= (\bx - \bx^t)^T \partial_{\bxi} \bxi^t - \frac{i}{2} (x-x^t)^T |J_\mu| \partial_{\bxi} x^t\\
 &\quad+ \frac{i}{4} \sum_{j=1}^{d_2} \langle (\partial_{\mu_j} |J_\mu|) (x-x^t), x-x^t \rangle \left( \begin{array}{c} 0 \\\hline e_j \end{array} \right)^T,
\end{split}\]
where we used the fact that $(\bxi^t)^T \partial_{\bxi} \bx^t  = 0$ by Lemma \ref{lem:symplectic}. Consequently,
\begin{align*}
\Phi &= \partial_{\bxi} \nabla_{\bx} \phi = \partial_{\bxi} \bxi^t - \frac{i}{2} \left( \begin{array}{c} |J_\mu| \partial_{\bxi} x^t \\\hline 0 \end{array} \right) + \frac{i}{2} \sum_{j=1}^{d_2} \left( \begin{array}{c} (\partial_{\mu_j} |J_\mu|) (x-x^t) \\\hline 0 \end{array}\right) \left( \begin{array}{c} 0 \\\hline e_j \end{array} \right)^T, \\
\underline{\Phi}  &= \partial_{\bxi} \bxi^t - \frac{i}{2} \left( \begin{array}{c} |J_\mu| \partial_{\bxi} x^t \\\hline 0 \end{array} \right),
\end{align*}
and therefore
\begin{align*}
\partial_{\bxi} \phi &= (\bx - \bx^t)^T \underline{\Phi} + \frac{i}{4} \sum_{j=1}^{d_2} \langle (\partial_{\mu_j} |J_\mu|) (x-x^t), x-x^t \rangle \left( \begin{array}{c} 0 \\\hline e_j \end{array} \right)^T,\\
\underline{\partial_{\bxi} \phi} &= 0,
\end{align*}
as desired.

\smallskip

\ref{en:phder_hess}. The above computations, together with the fact that $\mu^t = \mu$, also give the claimed expressions for $\Phi$ and $\Phi_0$. In addition, as $x^0 = 0$ and $\xi^0 = \xi$, one sees that $\Phi_0|_{t=0} = I$.

\smallskip

\ref{en:phder_hess_nondeg}. 
From Corollary \ref{cor:flow_alt} we know that
\begin{equation}\label{eq:ximux}
\begin{aligned}
\xi^t &= \cosh(\theta J_{\bar\mu}) \exp(\theta J_{\bar\mu}) \xi = \frac{1}{2} \left( \exp(2\theta J_{\bar\mu}) + I\right) \xi, \\
\frac{1}{2}|J_\mu| x^t &= |J_{\bar\mu}| \frac{\sinh(\theta J_{\bar\mu})}{J_{\bar\mu}} \exp(\theta J_{\bar\mu}) \xi = \frac{1}{2} |J_{\bar\mu}|\frac{\exp(2\theta J_{\bar\mu}) - I}{J_{\bar\mu}} \xi, 
\end{aligned}
\end{equation}
where $\theta = t|\mu|/(2|\xi|)$ and $\bar\mu=\mu/|\mu|$. So
\begin{equation}\label{eq:derxitheta}
\nabla_\xi \theta = -\theta |\xi|^{-1} \bar\xi,
\end{equation}
where $\bar\xi = \xi/|\xi|$, and
\begin{equation}\label{eq:Phi0_prelim}
\begin{split}
\Phi_0 
&= \exp(\theta J_{\bar\mu}) \left(\cosh(\theta J_{\bar\mu}) - i |J_{\bar\mu}| \frac{ \sinh(\theta J_{\bar\mu}) }{J_{\bar\mu}} \right)-  \theta \exp(2\theta J_{\bar\mu}) \left(J_{\bar\mu} -i |J_{\bar\mu}| \right)\bar\xi \, \bar\xi^T \\
&= \exp(\theta J_{\bar\mu}) \left(\cosh(\theta J_{\bar\mu}) - i |J_{\bar\mu}| \frac{ \sinh(\theta J_{\bar\mu}) }{J_{\bar\mu}} + i \theta \exp(\theta J_{\bar\mu}) \left(|J_{\bar\mu}|+i J_{\bar\mu} \right)\bar\xi \, \bar\xi^T \right) .
\end{split}
\end{equation}
Notice now that, since $J_{\bar\mu}^2 = -|J_{\bar\mu}|^2$, 
\begin{equation}\label{eq:coshcos}
\cosh(\theta J_{\bar\mu}) - i |J_{\bar\mu}| \frac{ \sinh(\theta J_{\bar\mu})}{J_{\bar\mu}} = \cos(\theta|J_{\bar\mu}|) - i \sin(\theta|J_{\bar\mu}|) = \exp(-i\theta|J_{\bar\mu}|) 
\end{equation}
is a unitary operator, hence invertible; moreover
\[
J_{\bar\mu} \left(|J_{\bar\mu}| +i J_{\bar\mu} \right) = -i|J_{\bar\mu}| \left(|J_{\bar\mu}|+i J_{\bar\mu} \right),
\]
whence
\begin{equation}\label{eq:fctnJ_2step}
F(J_{\bar\mu}) \left(|J_{\bar\mu}|+i J_{\bar\mu} \right) = F(-i|J_{\bar\mu}|) \left(|J_{\bar\mu}|+i J_{\bar\mu} \right) 
\end{equation}
for any polynomial $F$, hence for any function $F : \CC \to \CC$, and in particular
\begin{equation}\label{eq:change_exp}
 \exp(\theta J_{\bar\mu}) \left(|J_{\bar\mu}|+i J_{\bar\mu} \right) = \exp(-i\theta|J_{\bar\mu}|) \left(|J_{\bar\mu}|+i J_{\bar\mu} \right) .
\end{equation}
By plugging the identities \eqref{eq:coshcos} and \eqref{eq:change_exp} into \eqref{eq:Phi0_prelim}, we obtain 
\[
\Phi_0 = \exp(\theta J_{\bar\mu}) \exp(-i\theta|J_{\bar\mu}|) \left( I + i \theta \left(|J_{\bar\mu}|+i J_{\bar\mu} \right)\bar\xi \, \bar\xi^T \right),
\]
and the corresponding formulas \eqref{eq:det_2step} for $\det\Phi_0$ and $\Den_\phi$ follow.

Finally, when $G$ is Heisenberg-type, we have $|J_{\bar\mu}| = I$, so the formulas \eqref{eq:det_2step} reduce to \eqref{eq:det_Htype}.

\smallskip

\ref{en:phder_tx}.
This quickly follows by differentiating \eqref{eq:phase} and using the fact that
\begin{equation}\label{eq:Xfields}
X_j = \partial_{x_j} + \frac{1}{2} [x,e_j] \cdot \nabla_u
\end{equation}
for $j=1,\dots,d_1$.
\end{proof}

\subsection{Wave front and Hamiltonian flow}

From the general theory of oscillatory integrals, we know that the wave front set of FIO kernels of the form \eqref{eq:fio_Iq} is related to the set of critical points of the phase function $\phi$. More precisely, if we define the \emph{stationary set} and the \emph{wave front} 
\begin{align*}
\Sigma_{\phi,t} &\defeq \{ (\bx,\bxi) \in G \times (\dot\RR^{d_1} \times \Omega) \tc \partial_{\bxi} \phi(t,\bx,\bxi) = 0\},\\
\Lambda_{\phi,t} &\defeq \{ (\bx,\nabla_{\bx} \phi(t,\bx,\bxi)) \tc (\bx,\bxi) \in \Lambda_{\phi,t} \}
\end{align*}
of the phase function $\phi$ at time $t$, then from \cite[Theorem 8.1.9]{Ho1} it follows that, under suitable assumptions on the amplitude $q$, the wave front set of the distribution $I[q](t,\cdot)$ on $G$ is contained in $\Lambda_{\phi,t}$.

As we aim to use FIO kernels to give an approximate representation of the convolution kernel of the wave propagator, it is reasonable to expect that the wave front $\Lambda_{\phi,t}$ can be described in terms of the Hamiltonian flow $(\bx^t,\bxi^t)$ for $\Ham$; we now show that this is indeed the case. The following result should be compared with \cite[Lemma 1.2]{LSV}; we point out that the proof in \cite{LSV} does not directly apply here, as our $\Im \phi$ is $u$-independent, and therefore it also vanishes at points $\bx$ that are arbitrarily far from $\bx^t$.

\begin{prp}\label{prp:wf}
Let $t \in \RR$.
\begin{enumerate}[label=(\roman*)]
\item\label{en:wf_sing} For all $\bx \in G$ and $\bxi \in \dot\RR^{d_1} \times \Omega$,
\[
\partial_{\bxi} \phi(t,\bx,\bxi) = 0 \quad\iff\quad \bx = \bx^t(\bxi).
\]
\item\label{en:wf_wf} We have
\[
\Lambda_{\phi,t} = \{ (\bx^t(\bxi), \bxi^t(\bxi)) \tc \bxi \in \dot\RR^{d_1} \times \Omega \}.
\]
\end{enumerate}
\end{prp}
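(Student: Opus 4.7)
The proof plan is to deduce both parts directly from the structural information about $\phi$ and its derivatives already collected in Lemma~\ref{lem:phder}, without re-examining the flow equations.

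For part~\ref{en:wf_sing}, the backward implication is immediate from Lemma~\ref{lem:phder}\ref{en:phder_grad}, which records $\underline{\partial_{\bxi}\phi}=0$. For the forward implication, I will work from the explicit formula
\[
\partial_{\bxi} \phi = (\bx-\bx^t)^T \underline{\Phi} + \frac{i}{4} \sum_{j=1}^{d_2} \langle (\partial_{\mu_j} |J_\mu|) (x-x^t), x-x^t \rangle \left( \begin{array}{c} 0 \\\hline e_j \end{array} \right)^T
\]
given in the same lemma. Splitting $\bx - \bx^t = (x-x^t, u-u^t)$ and exploiting the block structure of $\underline{\Phi}$ from Lemma~\ref{lem:phder}\ref{en:phder_hess}, namely
\[
\underline{\Phi} = \left(\begin{array}{c|c}\Phi_0 & M \\\hline 0 & I\end{array}\right),
\]
the $\xi$-component of $\partial_{\bxi}\phi = 0$ reads $(x-x^t)^T \Phi_0 = 0$. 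Since $\Phi_0$ is a nondegenerate (complex) matrix by Lemma~\ref{lem:phder}\ref{en:phder_hess_nondeg}, its transpose is injective on $\CC^{d_1}$, hence a fortiori on $\RR^{d_1}$, so $x=x^t$. Feeding this into the $\mu$-component, the quadratic term and the $(x-x^t)^T M$ piece both vanish, leaving $(u-u^t)^T = 0$, i.e.\ $u=u^t$. Thus $\bx = \bx^t(\bxi)$.

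For part~\ref{en:wf_wf}, combining part~\ref{en:wf_sing} with the identity $\underline{\nabla_{\bx}\phi} = \bxi^t$ from Lemma~\ref{lem:phder}\ref{en:phder_grad} immediately yields
\[
\Lambda_{\phi,t} = \{(\bx^t(\bxi), \underline{\nabla_{\bx}\phi}(t,\bxi)) \tc \bxi \in \dot\RR^{d_1}\times\Omega\} = \{(\bx^t(\bxi), \bxi^t(\bxi)) \tc \bxi \in \dot\RR^{d_1}\times\Omega\}.
\]

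The only genuinely delicate point is the invertibility argument for $(x-x^t)^T\Phi_0 = 0$: one must observe that, although $\Phi_0$ is \emph{complex}-valued and one might worry that a real vector $x-x^t$ could fail to be detected, nondegeneracy of $\Phi_0$ as a complex matrix guarantees that its transpose has trivial kernel even over $\CC$, which obviously forces real solutions to be zero as well. This is in contrast to the situation in \cite{LSV}, where the imaginary part of $\phi$ depends on more variables and a direct real-vs-complex argument is not available; here the $u$-independence of $\Im\phi$ combined with the block-triangular form of $\underline{\Phi}$ makes the stationary-set analysis essentially a one-line matrix computation.
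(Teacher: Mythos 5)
Your proof of part~\ref{en:wf_sing} is correct but takes a genuinely different route from the paper's. The paper's forward implication goes through the sign of the imaginary part: from $\partial_{\bxi}\phi = 0$ and $1$-homogeneity (Euler's identity) one deduces $\phi = 0$, hence $\Im\phi = \frac{1}{4}\langle|J_\mu|(x-x^t),x-x^t\rangle = 0$; then, since this is a smooth nonnegative function of $(\bx-\bx^t,\bxi)$, its zeros have vanishing gradient, so in particular the $\mu_j$-derivatives $\langle(\partial_{\mu_j}|J_\mu|)(x-x^t),x-x^t\rangle$ all vanish, which kills the extra term in the formula for $\partial_{\bxi}\phi$ and leaves $(\bx-\bx^t)^T\underline\Phi = 0$; nondegeneracy of $\underline\Phi$ then finishes. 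You instead never need to establish that the quadratic correction vanishes: you just observe that it only enters the last $d_2$ components of $\partial_{\bxi}\phi$, so the first $d_1$ components already force $(x-x^t)^T\Phi_0 = 0$ and hence $x = x^t$, after which the remaining components reduce to $(u-u^t)^T = 0$. Your argument is more elementary in that it avoids the nonnegativity/second-order-vanishing step entirely, and it nicely pinpoints that the $u$-independence of $\Im\phi$ (which the paper flags as the obstruction to running the \cite{LSV} argument verbatim) is not just tolerable but actually responsible for the block-triangular structure of $\underline\Phi$ that makes the computation trivial. The paper's approach is somewhat more in the spirit of general complex-phase FIO theory, where $\Im\phi \geq 0$ on the stationary set is the organizing principle and one does not get to assume such a clean block decomposition; for the specific phase at hand, your algebraic shortcut is cleaner. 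Part~\ref{en:wf_wf} is handled identically in both. One small remark: the point you flag as ``genuinely delicate'' about real versus complex invertibility is in fact routine (a nonzero complex determinant gives trivial kernel over $\CC$, a fortiori over $\RR$), so it need not be belaboured.
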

\begin{proof}
\ref{en:wf_sing}. We already know from Lemma \ref{lem:phder}\ref{en:phder_grad} that $\partial_{\bxi} \phi|_{\bx = \bx^t} = 0$. Conversely, if $(\bx,\bxi) \in \dot\RR^{d_1} \times \Omega$ is such that $\partial_{\bxi}\phi(t,\bx,\bxi) = 0$, then by $1$-homogeneity we also have $\phi(t,\bx,\bxi) = 0$, and in particular, by \eqref{eq:phase},
\[
\langle |J_\mu| (x-x^t(\bxi)), x-x^t(\bxi) \rangle = 4\Im \phi(t,\bx,\bxi) = 0.
\]
In other words, the function $(\bx,\bxi) \mapsto \langle |J_\mu| x, x \rangle$ vanishes at the point $(\bx-\bx^t(\bxi),\bxi)$; as this function is smooth and nonnegative on $G \times (\dot\RR^{d_1} \times \Omega)$, its zeros are at least of second order, and therefore $\langle (\partial_{\mu_j} |J_\mu|) (x-x^t(\bxi)), x-x^t(\bxi) \rangle = 0$ for $j=1,\dots,d_2$. Thus, from the expression for $\partial_{\bxi}\phi(t,\bx,\bxi)$ given in Lemma \ref{lem:phder}\ref{en:phder_grad} we conclude that $(\bx-\bx^t(\bxi))^T \underline\Phi(t,\bxi) = 0$. On the other hand, $\underline{\Phi}(t,\bxi)$ is nondegenerate by Lemma \ref{lem:phder}\ref{en:phder_hess}, thus $\bx = \bx^t(\bxi)$, as claimed.

\smallskip

\ref{en:wf_wf}. From part \ref{en:wf_sing} it follows that $\Sigma_{\phi,t} = \{ (\bx^t(\bxi),\bxi) \tc \bxi \in \dot\RR^{d_1} \times \Omega \}$; since we know that $\nabla_{\bx} \phi(t,\bx^t(\bxi),\bxi) = \bxi^t(\bxi)$ by Lemma \ref{lem:phder}\ref{en:phder_grad}, the conclusion follows.
\end{proof}

\begin{rem}
Much as in \cite{LSV}, the proof of Proposition \ref{prp:wf} makes fundamental use of the nondegeneracy at $\bx = \bx^t$ of the mixed hessian $\Phi$ of the phase function $\phi$, which in turn relies crucially on the presence of a suitable imaginary part of the phase function. We remark that the above characterisation of $\Lambda_{\phi,t}$ would in general not hold without the imaginary term in the phase, due to the presence of caustics; this so happens already for Heisenberg groups.
\end{rem}

In our approach, we are strongly making use of the left-invariance of the sub-Laplacian $\opL$ on $G$.
Ignoring this left-invariance, we could as well have tried to construct a parametrix in a similar way as in \cite{LSV} by means of an FIO with a suitable complex phase function $\phi_*(t,\bx,\by,\bxi)$, with $(\bx,\by)\in G \times G$. As in Section \ref{ss:flowy}, denote by $(\bx^t_*(\by,\bxi),\bxi_*^t(\by,\bxi))$ the Hamiltonian flow generated by $\Ham$ with initial datum $(\by,\bxi)$.
By comparison with \eqref{eq:phase}, the natural candidate for the phase function of such an FIO would be
\begin{equation}\label{eq:phasey}
\phi_*(t, \bx,\by,\bxi) 
\defeq (\bx-\bx_*^t(\by,\bxi))\cdot \bxi_*^t(\by,\bxi) + \frac{i}{4} \langle |J_\mu| (x-x_*^t(\by,\bxi)), x-x_*^t(\by,\bxi) \rangle.
\end{equation}
Notice that $\phi_*$ is smooth on $\RR \times G \times \Xi$, where
\begin{align*}
\Xi &\defeq \bigcup_{\by \in G} \{ \by \} \times \Xi_{\by},\\
\Xi_{\by} &\defeq \dltr_{\by}(\dot\RR^{d_1} \times \Omega) = \{ (\xi - J_\mu y/2, \mu) \tc \bxi \in \dot\RR^{d_1} \times \Omega\}.
\end{align*}

From the covariance of the Hamiltonian flow under left translations discussed in Section \ref{ss:flowy}, one easily deduces a relation between $\phi_*$ and $\phi$.

\begin{prp}
For all $t \in \RR$ and $(\bx,\by,\bxi) \in G \times \Xi$,
\begin{equation}\label{eq:phase_ltr}
\phi_*(t, \bx,\by,\bxi)=\phi(t,\ltr_{\by}^{-1} \bx, \dltr_{\by}^{-1}\bxi).
\end{equation}
\end{prp}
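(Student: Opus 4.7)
The plan is to verify the identity directly by substituting the definitions and using the covariance formulas \eqref{eq:flowy} together with basic properties of the left-translation maps $\ltr_{\by}$ and $\dltr_{\by}$.

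First I would set $\bxi' \defeq \dltr_{\by}^{-1}\bxi$ and $\bx' \defeq \ltr_{\by}^{-1}\bx$, so that by \eqref{eq:flowy} we have $\bx_*^t(\by,\bxi) = \ltr_{\by}(\bx^t(\bxi'))$ and $\bxi_*^t(\by,\bxi) = \dltr_{\by}\bxi^t(\bxi')$. The statement to be shown then reads $\phi_*(t,\bx,\by,\bxi)=\phi(t,\bx',\bxi')$, and I would handle the real and imaginary parts of the phase separately.

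For the real part, since $\ltr_{\by}$ is an affine map on $G$ with constant linear part $\ltr_{\by}' = D\ltr_{\by}$ as in \eqref{eq:diff_ltr}, we have
\[
\bx - \bx_*^t(\by,\bxi) = \ltr_{\by}(\bx') - \ltr_{\by}(\bx^t(\bxi')) = \ltr_{\by}'\,(\bx'-\bx^t(\bxi')) .
\]
Together with the defining identity $(\ltr_{\by}')^T \dltr_{\by} = I$, this gives
\[
(\bx-\bx_*^t(\by,\bxi))\cdot \bxi_*^t(\by,\bxi) = (\bx'-\bx^t(\bxi'))^T (\ltr_{\by}')^T \dltr_{\by}\, \bxi^t(\bxi') = (\bx'-\bx^t(\bxi'))\cdot\bxi^t(\bxi'),
\]
which matches the corresponding term in $\phi(t,\bx',\bxi')$.

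For the imaginary part, from \eqref{eq:dltr} we read off that the second-layer component of $\dltr_{\by}^{-1}\bxi$ equals the second-layer component of $\bxi$, so $\mu'=\mu$ and hence $|J_{\mu'}|=|J_\mu|$. Similarly, from the explicit form of the group law \eqref{eq:group_law} one sees that the first-layer projection of $\ltr_{\by}$ is the affine map $x\mapsto y+x$, so the first-layer component of $\bx-\bx_*^t(\by,\bxi)=\ltr_{\by}'\,(\bx'-\bx^t(\bxi'))$ is simply $x'-x^t(\bxi')$. Substituting these observations into the imaginary part of $\phi_*$ produces exactly the imaginary part of $\phi$ at $(\bx',\bxi')$, completing the proof.

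There is no real obstacle here beyond careful bookkeeping: the whole statement is a direct consequence of the left-invariance of $\Ham$ (hence of the flow, as in \eqref{eq:flowy}), the fact that the imaginary part of $\phi$ depends only on the first-layer spatial components and on $\mu$, and the identity $(\ltr_{\by}')^T\dltr_{\by}=I$.
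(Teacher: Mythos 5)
Your proposal is correct and follows essentially the same route as the paper: both reduce via \eqref{eq:flowy} and the affine structure of $\ltr_{\by}$ to the identity $\bx - \bx_*^t(\by,\bxi) = \ltr_{\by}'(\bx'-\bx^t(\bxi'))$, handle the real part through $(\ltr_{\by}')^T\dltr_{\by}=I$, and handle the imaginary part by noting that the first-layer block of $\ltr_{\by}'$ is the identity (so $x - x_*^t(\by,\bxi) = x'-x^t(\bxi')$) and that $\mu'=\mu$ by \eqref{eq:dltr}. No gaps.
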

\begin{proof}
We know from Section \ref{ss:flowy} that $\ltr_{\by}$ is an affine-linear map on $\RR^d$, whose differential $\ltr_{\by}' = D\ltr_{\by}(\bx)$ is independent of $\bx$, thus
\[
\ltr_{\by} (\bx_1) - \ltr_{\by} (\bx_2) = \ltr_{\by}' (\bx_1 - \bx_2) \qquad \forall \bx_1,\bx_2 \in \RR^d.
\]
In particular, if we set $\bx_0 \defeq \ltr_{\by}^{-1} \bx$ and $\bxi_0 \defeq \dltr_{\by}^{-1} \bxi$, then, by \eqref{eq:flowy}, we see that
\begin{equation}\label{eq:diff_aff}
\bx - \bx_*^t(\by,\bxi) = \ltr_{\by} (\bx_0) - \ltr_{\by} (\bx^t(\bxi_0)) = \ltr_{\by}' (\bx_0 - \bx^t(\bxi_0))
\end{equation}
and therefore
\[
(\bx-\bx_*^t(\by,\bxi)) \cdot \bxi_*^t(\by,\bxi) = (\ltr_{\by}' (\bx_0 - \bx^t(\bxi_0))) \cdot (\dltr_{\by} \bxi^t(\bxi_0)) = (\bx_0 - \bx^t(\bxi_0)) \cdot \bxi^t(\bxi_0),
\]
where we used that, by definition, $\dltr_{\by} = ((\ltr_{\by}')^T)^{-1}$. In light of \eqref{eq:phase} and \eqref{eq:phasey}, this proves that $\Re \phi_*(t,\bx,\by,\bxi) = \Re \phi(t,\bx_0,\bxi_0)$.

Now, by projecting \eqref{eq:diff_aff} onto the $x$-component, and taking the formula \eqref{eq:diff_ltr} for $\ltr_{\by}'$ into consideration, we also see that
\[
x - x_*^t(\by,\bxi) = x_0 - x^t(\bxi_0),
\]
and moreover $\mu_0 = \mu$ by \eqref{eq:dltr},
whence $\Im \phi_*(t,\bx,\by,\bxi) = \Im \phi(t,\bx_0,\bxi_0)$.
\end{proof}

From Proposition \ref{prp:wf} we then deduce an analogous result for $\phi_*$.

\begin{cor}
Let $t \in \RR$.
\begin{enumerate}[label=(\roman*)]
\item\label{en:cwf_sing} For all $\bx,\by \in G$ and $\bxi \in \Xi_{\by}$,
\[
\partial_{\bxi} \phi_*(t,\bx,\by,\bxi) = 0 \quad\iff\quad \bx = \bx_*^t(\by,\bxi).
\]
\item\label{en:cwf_wf} If
\begin{align*}
\Sigma_{\phi_*,t} &\defeq \{ (\bx,\by,\bxi) \in G \times \Xi \tc \partial_{\bxi}\phi_*(t,\bx,\by,\bxi) = 0\},\\
\Lambda_{\phi_*,t} &\defeq \{ ((\bx,\nabla_{\bx} \phi_*(t,\bx,\by,\bxi)),(\by,\nabla_{\by} \phi_*(t,\bx,\by,\bxi)) \tc (\bx,\by,\bxi) \in \Sigma_{\phi_*,t}\},
\end{align*}
then
\[
\Lambda_{\phi_*,t} = \{ ((\bx_*^t(\by,\bxi), \bxi_*^t(\by,\bxi)),(\by,-\bxi)) \tc (\by,\bxi) \in \Xi \}.
\]
In other words, the wave front $\Lambda_{\phi_*,t}$ of the phase function $\phi_*$ at time $t$ is the Lagrangian manifold associated with the canonical transformation $(\by,\bxi) \mapsto (\bx^t_*(\by,\bxi),\bxi^t_*(\by,\bxi))$.
\end{enumerate}
\end{cor}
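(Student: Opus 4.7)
The proof proceeds in two steps, building on the identity \eqref{eq:phase_ltr} established just before the corollary, together with the results of Proposition~\ref{prp:wf} and Lemma~\ref{lem:symplectic}.

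For part \ref{en:cwf_sing}, I would fix $\by$ and set $\bx_0 \defeq \ltr_{\by}^{-1}\bx$, $\bxi_0 \defeq \dltr_{\by}^{-1}\bxi$, so that $\phi_*(t,\bx,\by,\bxi)=\phi(t,\bx_0,\bxi_0)$ by \eqref{eq:phase_ltr}. Since at fixed $\by$ the map $\bxi\mapsto\bxi_0$ is a linear isomorphism that does not depend on $\bx$, the chain rule yields $\partial_{\bxi}\phi_* = (\partial_{\bxi_0}\phi)\cdot \dltr_{\by}^{-1}$. Invertibility of $\dltr_{\by}^{-1}$ then shows $\partial_{\bxi}\phi_*(t,\bx,\by,\bxi)=0$ iff $\partial_{\bxi_0}\phi(t,\bx_0,\bxi_0)=0$; by Proposition~\ref{prp:wf}\ref{en:wf_sing}, this last condition is equivalent to $\bx_0 = \bx^t(\bxi_0)$, which, via \eqref{eq:flowy}, translates back to $\bx = \by\cdot_G \bx^t(\dltr_{\by}^{-1}\bxi) = \bx_*^t(\by,\bxi)$.

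For part \ref{en:cwf_wf}, I would compute $\nabla_\bx\phi_*$ and $\nabla_\by\phi_*$ at a stationary point $\bx=\bx_*^t(\by,\bxi)$ by differentiating the explicit formula \eqref{eq:phasey} directly. The $\bx$-derivative is straightforward: the imaginary part contributes a factor proportional to $x-x_*^t$, so at $\bx=\bx_*^t$ only the linear term survives, giving $\nabla_{\bx}\phi_* = \bxi_*^t(\by,\bxi)$. For the $\by$-derivative, differentiating \eqref{eq:phasey} produces three contributions, of which two come with a factor of $\bx-\bx_*^t$ or $x-x_*^t$ and therefore vanish at the stationary point. What remains is
\[
\partial_{\by}\phi_*\big|_{\bx = \bx_*^t} = -(\partial_{\by}\bx_*^t)^T\bxi_*^t.
\]
Applying Lemma~\ref{lem:symplectic}, specifically the identity \eqref{eq:symplecticy_xi}, gives $(\partial_{\by}\bx_*^t)^T\bxi_*^t = \bxi$, so that $\nabla_{\by}\phi_* = -\bxi$. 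Combining with part~\ref{en:cwf_sing} yields the claimed description of $\Lambda_{\phi_*,t}$.

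The one place that requires care is the $\by$-differentiation: one must verify that the nontrivial dependences of $\bxi_*^t$ and (through $|J_\mu|$) of the quadratic imaginary part on $\by$ all enter with prefactors vanishing at $\bx=\bx_*^t$, so that the answer is governed solely by the ``tautological'' identity \eqref{eq:symplecticy_xi} coming from the $1$-homogeneity of $\Ham$ and the symplectic nature of the flow. This is the analogue, in the left-invariant setting, of the observation in \cite[Lemma 1.2]{LSV}, although here the simpler form of $\Im\phi_*$ (quadratic in $x$, $\bx$-independent after projection) makes the argument essentially computational once the relation \eqref{eq:phase_ltr} is in hand.
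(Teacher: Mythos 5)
Your argument is correct and follows essentially the same route as the paper's: part~\ref{en:cwf_sing} via the relation \eqref{eq:phase_ltr}, the chain rule, and Proposition~\ref{prp:wf}\ref{en:wf_sing}; part~\ref{en:cwf_wf} via the identity $\nabla_{\bx}\phi_*|_{\bx=\bx_*^t}=\bxi_*^t$ and the symplectic relation \eqref{eq:symplecticy_xi}. The only cosmetic difference is in how $\nabla_{\by}\phi_*|_{\bx=\bx_*^t}=-\bxi$ is derived: you differentiate the explicit formula \eqref{eq:phasey} and observe that the terms carrying a $\bx-\bx_*^t$ or $x-x_*^t$ prefactor drop out at the stationary point, whereas the paper differentiates the scalar identity $\phi_*|_{\bx=\bx_*^t}=0$ with respect to $\by$ and applies the chain rule, which packages those vanishing terms automatically; both reduce to $-(\partial_{\by}\bx_*^t)^T\bxi_*^t=-\bxi$ via \eqref{eq:symplecticy_xi}.
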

\begin{proof}
Thanks to the relation \eqref{eq:phase_ltr}, part \ref{en:cwf_sing} directly follows by straightforward changes of variables from Proposition \ref{prp:wf}\ref{en:wf_sing}. Part \ref{en:cwf_wf} then follows from part \ref{en:cwf_sing}, by using the identities
\[
\nabla_{\bx} \phi_*|_{\bx = \bx^t_*(\by,\bxi)} = \bxi^t_*(\by,\bxi), \qquad \nabla_{\by} \phi_*|_{\bx = \bx^t_*(\by,\bxi)} = -\bxi.
\]
The first identity is immediate from the definition of $\phi_*$. As for the second one, much as in \cite{LSV}, it is enough to differentiate the identity
\[
\phi_*|_{\bx = \bx^t_*(\by,\bxi)} = 0
\]
(which follows from part \ref{en:cwf_sing} and homogeneity) with respect to $\by$, and exploit the identity \eqref{eq:symplecticy_xi}.
\end{proof}

\subsection{The action of the wave operator on FIO kernels}

We now describe the action of the wave operator $\partial_t^2 + \opL$ on FIO kernels as in \eqref{eq:fio_Iq}.
To this purpose, we introduce the notation
\[
I[p](t,\bx) \defeq (2\pi)^{-d} \int_{\RR^d} e^{i\phi(t,\bx,\bxi)} \, p(t,\bx,\bxi) \, \Den_\phi(t,\bx,\bxi) \,d\bxi
\]
for the FIO kernel with phase $\phi$ and amplitude $p$, where the amplitude may also depend on $\bx$. For brevity, sometimes we may omit variables and just write
\[
I[p] = (2\pi)^{-d} \int_{\RR^d} e^{i\phi} \, p \, \Den_\phi \,d\bxi.
\]

\begin{lem}\label{lem:dal}
The following hold.
\begin{enumerate}[label=(\roman*)]
\item\label{en:dal_formula} For any smooth symbol $q = q(t,\bxi)$ with $\text{$\bxi$-}\supp q \Subset \dot\RR^{d_1} \times \Omega$,
\begin{align*}
(\partial_t^2 + \opL) I[q] &= \sum_{\substack{j,k \geq 0 \\ j+k \leq 2}} i^{-k} I[F_{kj} \partial_t^j q],\\
\partial_t I[q] &= \sum_{\substack{j,k \geq 0 \\ j+k \leq 1}} 2^{j-1} i^{-k} I[F_{k(j+1)} \partial_t^j q],
\end{align*}
where
\begin{equation}\label{eq:Fkj}
\begin{aligned}
F_{20} &= (\partial_t \phi)^2 - \nabla^\opL \phi \cdot \nabla^\opL \phi,\\
-F_{11} &= 2\partial_t \phi, \\
-F_{10} &= (\partial_t^2 + \opL)\phi + 2 (\partial_t \phi) (\Den_\phi^{-1} \partial_t \Den_\phi) - 2 (\nabla^\opL \phi) \cdot (\Den_\phi^{-1} \nabla^\opL \Den_\phi),\\
F_{02} &=1,\\
F_{01} &= 2 \Den_\phi^{-1} \partial_t \Den_\phi,\\
F_{00} &= \Den_\phi^{-1} (\partial_t^2 + \opL) \Den_\phi,
\end{aligned}
\end{equation}
and $\nabla^\opL$ is the horizontal gradient defined in \eqref{eq:hgradient}.
\item\label{en:dal_Fkj} Each of the terms $F_{kj}$ in \eqref{eq:Fkj} is:
\begin{itemize}
\item smooth as a function of $(t,\bx,\bxi) \in \RR \times \RR^d \times (\dot\RR^{d_1} \times \Omega)$;
\item $k$-homogeneous in $\bxi$;
\item a polynomial of degree $k$ in $x$;
\item independent of $u$.
\end{itemize}
\end{enumerate}
\end{lem}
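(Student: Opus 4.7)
The plan for part (i) is to compute $(\partial_t^2 + \opL)(e^{i\phi}\, q\, \Den_\phi)$ pointwise by the Leibniz rule, differentiate under the integral sign, and collect terms by how many $t$-derivatives fall on the amplitude $q$. The crucial preliminary calculation is
\[
(\partial_t^2 + \opL) e^{i\phi} = \bigl[-F_{20} + i (\partial_t^2 + \opL)\phi \bigr]\, e^{i\phi},
\]
which follows from $\partial_t^2 e^{i\phi} = (i\partial_t^2\phi - (\partial_t\phi)^2)e^{i\phi}$ together with the identity $\opL e^{i\phi} = (i\opL\phi + \nabla^\opL\phi\cdot\nabla^\opL\phi)e^{i\phi}$; the latter is obtained by applying $X_j^2$ to $e^{i\phi}$ via the chain rule (noting $X_j^2 e^{i\phi} = [i(X_j^2\phi) - (X_j\phi)^2]e^{i\phi}$) and summing.

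The second key observation is that both $q = q(t,\bxi)$ and $\Den_\phi = \Den_\phi(t,\bxi)$ are $\bx$-independent (the latter by Lemma \ref{lem:phder}\ref{en:phder_hess}). Hence $\nabla^\opL q = \opL q = 0$ and $\nabla^\opL \Den_\phi = \opL \Den_\phi = 0$. This dramatically simplifies the Leibniz expansion of $(\partial_t^2 + \opL)(e^{i\phi} q \Den_\phi)$: the only cross terms from $\opL$ that survive are $-2\,\nabla^\opL(e^{i\phi})\cdot\nabla^\opL \Den_\phi = -2i(\nabla^\opL\phi\cdot\nabla^\opL\Den_\phi)\,e^{i\phi}$, which later gets absorbed into the definition of $F_{10}$, while for $\partial_t^2$ all three-factor cross terms are kept. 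Dividing the resulting integrand by $e^{i\phi}\Den_\phi$, one obtains a polynomial expression in $q$, $\partial_t q$, $\partial_t^2 q$ whose coefficients are exactly (up to sign and factors of $i$ and $2$) the $F_{kj}$ defined in \eqref{eq:Fkj}; the formula for $\partial_t I[q]$ comes from the same type of expansion, keeping only first-order $t$-derivatives.

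Part (ii) is then read off from the structural information on $\phi$ already contained in Lemma \ref{lem:phder}. By Lemma \ref{lem:phder}\ref{en:phder_tx}, the quantities $\partial_t\phi$, $\partial_t^2\phi$ and $\nabla^\opL\phi$ are polynomials of degree $1$ in $x$ and independent of $u$, while $\opL\phi$ is $\bx$-independent; by Lemma \ref{lem:phder}\ref{en:phder_hess}, $\Den_\phi$ (and hence $\partial_t\Den_\phi$, $\Den_\phi^{-1}\partial_t\Den_\phi$, etc.) depends only on $(t,\bxi)$. Substituting these into the explicit expressions \eqref{eq:Fkj}, the polynomial-in-$x$ degree and $u$-independence of each $F_{kj}$ are immediate. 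The $\bxi$-homogeneity of order $k$ follows from the $1$-homogeneity of $\phi$ in $\bxi$ (which in turn comes from the $1$-homogeneity of $\Ham$ and the homogeneity of the flow, $\bx^t$ being $0$-homogeneous and $\bxi^t$ being $1$-homogeneous): $\partial_t\phi$ and $\nabla^\opL\phi$ are $1$-homogeneous, so their products yield $2$-homogeneity in $F_{20}$; $\Den_\phi$ is $0$-homogeneous, so $F_{00}$ and $F_{01}$ are $0$-homogeneous; and $F_{10}$, $F_{11}$ are sums/products of $1$- and $0$-homogeneous factors, hence $1$-homogeneous.

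The computation itself presents no real obstacle beyond careful bookkeeping of the signs and prefactors of $i$ and powers of $2$; the only genuinely non-trivial input is the identification, via Lemma \ref{lem:phder}\ref{en:phder_hess}, that $\Den_\phi$ is $\bx$-independent, which is what ensures that no horizontal derivatives of $\Den_\phi$ contribute and that the cross terms coming from $\opL$ applied to $\Den_\phi$-factors drop out of the final formula.
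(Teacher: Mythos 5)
Your proof is correct and takes essentially the same route as the paper's: apply the Leibniz rule to $e^{i\phi}\, q\, \Den_\phi$, use that $\nabla^\opL q = \opL q = 0$ since $q$ is $\bx$-independent, collect the terms by the number of $t$-derivatives on $q$, and read off part (ii) by inspecting the resulting formulas via Lemma~\ref{lem:phder} together with the $1$-homogeneity of $\phi$. One expository glitch worth fixing: you assert $\nabla^\opL\Den_\phi = \opL\Den_\phi = 0$ and then immediately say that the cross term $-2\,\nabla^\opL(e^{i\phi})\cdot\nabla^\opL\Den_\phi$ ``survives'' --- under your own premise it vanishes, so no $\opL$-cross term with $\Den_\phi$ survives at all. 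The paper deliberately does \emph{not} invoke the $\bx$-independence of $\Den_\phi$ inside this proof, which is why the formulas \eqref{eq:Fkj} come out literally from the Leibniz expansion with the (formally present, in fact zero) $\nabla^\opL\Den_\phi$ and $\opL\Den_\phi$ contributions kept; the simplifications are stated only in the subsequent Remark. If instead you use those vanishings inside the expansion as you propose, the computation directly produces $F_{00}=\Den_\phi^{-1}\partial_t^2\Den_\phi$ and the correspondingly shortened $F_{10}$, so you should add a line observing that these coincide with the stated expressions precisely because $\nabla^\opL\Den_\phi=\opL\Den_\phi=0$. None of this affects the validity of the argument.
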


\begin{rem}
Since $\Den_\phi$ in \eqref{eq:det_2step} is independent of $\bx$ (see Lemma \ref{lem:phder}\ref{en:phder_hess}), some of the  formulas in \eqref{eq:Fkj} could be slightly simplified: for example, 
$2 (\nabla^\opL \phi) \cdot (\Den_\phi^{-1} \nabla^\opL \Den_\phi)=0$ in the formula for $F_{10}$, and moreover $F_{00} = \Den_\phi^{-1} \partial_t^2 \Den_\phi$.
\end{rem}

\begin{proof}
\ref{en:dal_formula}. By differentiating under the integral sign, we are reduced to proving that
\begin{align*}
e^{-i\phi} \Den_\phi^{-1} (\partial_t^2 + \opL) (e^{i\phi} \, q \, \Den_\phi) 
&= \sum_{\substack{j,k \geq 0 \\ j+k \leq 2}} i^{-k} F_{kj} \partial_t^j q,\\
e^{-i\phi} \Den_\phi^{-1} \partial_t (e^{i\phi} \, q \, \Den_\phi) 
&= \sum_{\substack{j,k \geq 0 \\ j+k \leq 1}} 2^{j-1} i^{-k} F_{k(j+1)} \partial_t^j q.
\end{align*}

Now, by the Leibniz rule,
\[
\partial_t (e^{i\phi} \, q \, \Den_\phi) = (\partial_t e^{i\phi}) \, q \, \Den_\phi + e^{i\phi} \, (\partial_t q) \, \Den_\phi + e^{i\phi} \, q \, (\partial_t \Den_\phi)
\]
and
\[\begin{split}
(\partial_t^2 + \opL) (e^{i\phi} \, q \, \Den_\phi) 
&= [(\partial_t^2 + \opL) e^{i\phi}] \, q \, \Den_\phi \\
&+ e^{i\phi} \, [(\partial_t^2 + \opL) q] \, \Den_\phi \\
&+ e^{i\phi} \,  q \, [(\partial_t^2 + \opL) \Den_\phi] \\
&+ 2[(\partial_t e^{i\phi}) (\partial_t q) - (\nabla^\opL e^{i\phi}) \cdot (\nabla^\opL q)]  \, \Den_\phi \\
&+ 2[(\partial_t e^{i\phi}) (\partial_t \Den_\phi) - (\nabla^\opL e^{i\phi}) \cdot (\nabla^\opL \Den_\phi)]  \, q \\
&+ 2e^{i\phi} [(\partial_t q) (\partial_t \Den_\phi) - (\nabla^\opL q) \cdot (\nabla^\opL \Den_\phi)]  \, q,
\end{split}\]
and moreover, by the chain rule,
\begin{gather*}
\partial_t e^{i\phi} = i e^{i\phi} \partial_t \phi , \qquad \nabla^\opL e^{i\phi} = i e^{i\phi}  \nabla^\opL \phi,\\
 (\partial_t^2 + \opL) e^{i\phi} = e^{i\phi} [ - (\partial_t \phi)^2 + (\nabla^\opL \phi) \cdot (\nabla^\opL \phi) + i (\partial_t^2+\opL) \phi];
\end{gather*}
furthermore
\[
\nabla^\opL q = 0, \qquad \opL q = 0
\]
because $q = q(t,\bxi)$ is independent of $\bx$.
The desired formula follows by combining the above identities.

\smallskip

\ref{en:dal_Fkj}. This follows immediately by inspection of the formulas in \eqref{eq:Fkj} via the properties stated in Lemma \ref{lem:phder}, together with the $1$-homogeneity of $\phi$ in $\bxi$.
\end{proof}

\subsection{The amplitude-to-symbol reduction}

Lemma \ref{lem:phder} shows that the application of $\partial_t^2 + \opL$ to  FIO kernels  $I[q]$ with $\bx$-independent symbols $q$ produces new FIO kernels  $I[p]$ with $\bx$-dependent amplitudes. Following \cite[Lemma 1.10]{LSV} (see also \cite[Lemma 2.7.3]{SV} and \cite[Appendix A]{CLV}), we now discuss how such FIO kernels  $I[p]$ can be reverted into FIO kernels  with $\bx$-independent symbols.

Extending the notation of \eqref{eq:def_underlining}, for a function $p = p(t,\bx,\bxi)$ we write
\begin{align*}
\underline{p} &\defeq p|_{\bx = \bx^t} = p(t,\bx^t(\bxi),\bxi),\\
\widetilde{p} &\defeq \int_0^1 p|_{\bx = \bx^t + s(\bx - \bx^t)} \,ds.
\end{align*}
Of course, when $q = q(t,\bxi)$ is $\bx$-independent, one has
\[
\underline{q} = \widetilde{q} = q.
\]

\begin{lem}\label{lem:a2s}
Let $p = p(t,\bx,\bxi)$ be a smooth amplitude with $\text{$\bxi$-}\supp p \Subset \dot\RR^{d_1} \times \Omega$.
\begin{enumerate}[label=(\roman*)]
\item\label{en:a2s_gen} We have
\begin{equation}\label{eq:A2Sred}
I[p] = I[\underline{p}] + i I[Rp],
\end{equation}
where
\begin{equation}\label{eq:A2Sop}
\begin{split}
Rp &\defeq \Den_\phi^{-1} \Div_{\bxi} ( \Den_\phi \widetilde{\Phi}^{-1} \widetilde{\nabla_{\bx} p} ) \\
&= \frac{1}{2} \frac{\partial_{\bxi} \det \Phi}{\det \Phi} \widetilde{\Phi}^{-1} \widetilde{\nabla_{\bx} p} + \Div_{\bxi} (\widetilde{\Phi}^{-1} \widetilde{\nabla_{\bx} p}).
\end{split}
\end{equation}
\item\label{en:a2s_dx} If $\underline{\nabla_{\bx} p} = 0$, then 
\[
\underline{Rp} = \frac{1}{2} \tr (\underline{\Phi}^{-1} \underline{\partial_{\bxi}\nabla_{\bx} p}).
\]
\item\label{en:a2s_leib} If $q = q(t,\bxi)$ is an $\bx$-independent symbol, then
\[
R(qp) = q (Rp) + (\partial_{\bxi} q) \widetilde{\Phi}^{-1} {\widetilde{\nabla_{\bx} p}}.
\]
\item\label{en:a2s_uind} If $p$ is $u$-independent, then
\[\begin{split}
Rp &= \Den_\phi^{-1} \Div_{\xi} ( \Den_\phi \Phi_0^{-1} {\widetilde{\nabla_{x} p}} )\\
&= \frac{1}{2} \frac{\partial_{\xi} \det \Phi_0}{\det \Phi_0} \Phi_0^{-1} \widetilde{\nabla_{x} p} + \Div_{\xi} (\Phi_0^{-1} \widetilde{\nabla_{x} p}),
\end{split}\]
and $Rp$ is also $u$-independent. Moreover, for any $\bx$-independent symbol $q$,
\[
R(qp) = q (Rp) + (\partial_{\xi} q) \Phi_0^{-1} {\widetilde{\nabla_{x} p}}.
\]
\item\label{en:a2s_uindxpol} For any $k \in \NN$, if $p$ is $u$-independent and a polynomial in $x$ of degree $k$, then $Rp$ is a polynomial in $x$ of degree $k-1$ (i.e., it vanishes when $k=0$), and
\[
I[p] = \sum_{\ell=0}^k i^{\ell}I[\underline{R^{\ell} p}],
\]
where $R^\ell$ denotes the $\ell$th  compositional power of $R$.
\end{enumerate}
\end{lem}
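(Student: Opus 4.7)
The proof breaks into five parts and I would treat them in order, the first establishing the main identity and the rest being either structural consequences or a computational verification.

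For part \ref{en:a2s_gen}, my starting point is Taylor's formula in $\bx$ around $\bx = \bx^t$ applied to $p$ itself: writing
\[
p(t,\bx,\bxi) = \underline{p}(t,\bxi) + (\bx - \bx^t)^T \, \widetilde{\nabla_{\bx} p}(t,\bx,\bxi)
\]
gives $I[p] = I[\underline{p}] + I[(\bx - \bx^t)^T \widetilde{\nabla_{\bx} p}]$. The crucial identity I would then establish is
\[
\partial_{\bxi}\phi = (\bx - \bx^t)^T \, \widetilde{\Phi},
\]
obtained by applying the same Taylor formula componentwise to each scalar $\partial_{\xi_j}\phi$: these vanish at $\bx = \bx^t$ by Lemma \ref{lem:phder}\ref{en:phder_grad}, and their $\bx$-gradients form the corresponding rows of $\Phi$. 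Since $\widetilde{\Phi}$ is a convex-type average of the nondegenerate matrices $\Phi$ (and in any case invertible near $\bx^t$ by Lemma \ref{lem:phder}\ref{en:phder_hess_nondeg}, extending by the compact $\bxi$-support of $p$), I can invert to write $(\bx - \bx^t)^T = (\partial_{\bxi}\phi)\, \widetilde{\Phi}^{-1}$, whence
\[
e^{i\phi}\, (\bx - \bx^t)^T\, \widetilde{\nabla_{\bx} p} = -i\, \partial_{\bxi}(e^{i\phi}) \cdot \widetilde{\Phi}^{-1}\widetilde{\nabla_{\bx} p}.
\]
Integration by parts in $\bxi$, justified by compact $\bxi$-support, then produces the first form of $Rp$ in \eqref{eq:A2Sop}; the second form follows from the product rule and the identity $\Den_\phi^{-1}\partial_{\bxi}\Den_\phi = \frac{1}{2}\,\partial_{\bxi}\log\det\Phi$.

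For part \ref{en:a2s_dx}, the first summand of the expanded $Rp$ vanishes at $\bx = \bx^t$ because $\widetilde{\nabla_{\bx} p}|_{\bx^t} = \underline{\nabla_{\bx} p} = 0$; likewise, the ``interior'' part of $\Div_{\bxi}(\widetilde{\Phi}^{-1}\widetilde{\nabla_{\bx} p})|_{\bx^t}$ reduces to $\sum_{k,j}(\underline{\Phi}^{-1})_{kj}\,[\partial_{\xi_k}(\widetilde{\nabla_{\bx} p})_j]|_{\bx^t}$. Differentiating under the $s$-integral and evaluating produces two contributions: $\underline{\partial_{\xi_k}\partial_{x_j} p}$ and $\tfrac{1}{2}\sum_\ell \underline{\partial_{x_j}\partial_{x_\ell} p}\,(\partial_{\xi_k}\bx^t)_\ell$. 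The second piece can be rewritten by exploiting the hypothesis: differentiating the identity $\underline{\nabla_{\bx} p} \equiv 0$ with respect to $\bxi$ and using the chain rule yields $\underline{\partial_{\bxi}\nabla_{\bx} p} = -(\underline{\partial_{\bx}\nabla_{\bx} p})\,\partial_{\bxi}\bx^t$, from which
\[
\tr\bigl(\underline{\Phi}^{-1}\underline{\partial_{\bxi}\nabla_{\bx} p}\bigr) = -\sum_{k,j,\ell} (\underline{\Phi}^{-1})_{kj}\, \underline{\partial_{x_j}\partial_{x_\ell} p}\,(\partial_{\xi_k}\bx^t)_\ell.
\]
Substituting this into the direct computation leaves exactly $\tfrac{1}{2}\tr(\underline{\Phi}^{-1}\underline{\partial_{\bxi}\nabla_{\bx} p})$. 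This is the step I expect to be the main obstacle, since the factor $\tfrac{1}{2}$ emerges only after combining the Taylor-averaged computation with the identity coming from the hypothesis $\underline{\nabla_{\bx} p}\equiv 0$, and without this cancellation the two contributions would look unrelated.

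Parts \ref{en:a2s_leib} and \ref{en:a2s_uind} are then structural. For \ref{en:a2s_leib}, since $q$ is $\bx$-independent one has $\widetilde{qp} = q\widetilde{p}$, and the Leibniz rule for $\Div_{\bxi}$ applied to $q$ times the vector $\Den_\phi \widetilde{\Phi}^{-1}\widetilde{\nabla_{\bx} p}$ immediately produces the additional term $(\partial_{\bxi} q)\,\widetilde{\Phi}^{-1}\widetilde{\nabla_{\bx} p}$. For \ref{en:a2s_uind} I would use the block-triangular form of $\Phi$ from Lemma \ref{lem:phder}\ref{en:phder_hess}: when $p$ is $u$-independent, $\widetilde{\nabla_{\bx} p}$ has vanishing $u$-components, so $\widetilde{\Phi}^{-1}\widetilde{\nabla_{\bx} p}$ reduces to the single block $\Phi_0^{-1}\widetilde{\nabla_x p}$ in the $\xi$-variables (with zero $\mu$-components), and the divergence collapses to $\Div_\xi$; the $u$-independence of $Rp$ follows because all remaining ingredients ($\Den_\phi$, $\Phi_0$, and $\widetilde{\nabla_x p}$, the last because the $u$-component of $(1-s)\bx^t + s\bx$ does not enter $\nabla_x p$) are $u$-independent.

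Finally, part \ref{en:a2s_uindxpol} follows by induction. Inspecting the formula from \ref{en:a2s_uind} shows that if $p$ is polynomial in $x$ of degree $k$, then $\widetilde{\nabla_x p}$ is polynomial in $x$ of degree $\leq k-1$, and multiplication by the $x$-independent factors $\Phi_0^{-1}$ and $\Den_\phi^{\pm 1}$ and differentiation by $\partial_\xi$ preserve this bound, so $Rp$ is $u$-independent and polynomial in $x$ of degree $\leq k-1$; in particular $R^{k+1}p \equiv 0$. Iterating \ref{en:a2s_gen} applied in turn to $p, Rp, R^2 p, \ldots$ yields
\[
I[p] = \sum_{\ell=0}^{k} i^\ell I[\underline{R^\ell p}] + i^{k+1} I[R^{k+1}p],
\]
and the final remainder vanishes, giving the stated development.
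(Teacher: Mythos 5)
Your proof is correct and follows essentially the same line as the paper's: Taylor's formula applied to $p$ and to $\partial_{\bxi}\phi$, integration by parts in $\bxi$, and then the structural manipulations for the remaining parts. The one point that needs tightening is the justification in part \ref{en:a2s_gen} that $\widetilde{\Phi}$ is invertible: a convex average of nondegenerate matrices need not be nondegenerate, and ``invertible near $\bx^t$, extended by compact $\bxi$-support'' is not enough, since the spatial variable $\bx$ in the FIO kernel ranges over all of $G$. The correct reason is the block-triangular form of $\Phi$ from Lemma \ref{lem:phder}\ref{en:phder_hess}: since $\Phi_0$ is $\bx$-independent, $\widetilde{\Phi}$ inherits the block structure with the same top-left block $\Phi_0$, so $\det\widetilde{\Phi}=\det\Phi_0$, which is $\bx$-independent and nonzero by Lemma \ref{lem:phder}\ref{en:phder_hess_nondeg}. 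Everything else, including the cancellation in part \ref{en:a2s_dx} produced by combining the $s$-integral chain rule with the $\bxi$-differentiation of the identity $\underline{\nabla_{\bx}p}\equiv 0$, matches the paper's argument.
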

\begin{proof}
\ref{en:a2s_gen}. By the fundamental theorem of calculus,
\[
p - \underline{p} = (\bx - \bx^t)^T \widetilde{\nabla_{\bx} p}  
\]
and similarly
\[
\partial_{\bxi} \phi = \partial_{\bxi} \phi - \underline{\partial_{\bxi} \phi} = (\bx-\bx^t)^T \widetilde{\Phi}.
\]
So
\[
e^{i\phi} (p-\underline{p}) 
= e^{i\phi} (\bx - \bx^t)^T \widetilde{\nabla_{\bx} p} 
= e^{i\phi} (\partial_{\bxi} \phi) \widetilde{\Phi}^{-1} \widetilde{\nabla_{\bx} p} 
= -i \partial_{\bxi} (e^{i\phi}) \widetilde{\Phi}^{-1} \widetilde{\nabla_{\bx} p}
\]
and
\[
I[p]-I[\underline{p}] 
= (2\pi)^{-d} \int e^{i\phi} (p-\underline{p}) \Den_\phi \,d\bxi 
= - i (2\pi)^d \int \partial_{\bxi} (e^{i\phi}) \widetilde{\Phi}^{-1} \widetilde{\nabla_{\bx} p} \, \Den_\phi \,d\bxi,
\]
whence the identity \eqref{eq:A2Sred} follows by integration by parts.

The second expression for $Rp$ in \eqref{eq:A2Sop} follows from the first one by applying the Leibniz rule to $\bxi$-differentiation and observing that
\[
\Den_\phi^{-1} \partial_{\bxi} \Den_\phi = \partial_{\bxi} \log\Den_\phi = \frac{1}{2} \frac{\partial_{\bxi} \det\Phi}{\det\Phi}
\]
by \eqref{eq:den}.

\smallskip

\ref{en:a2s_dx}. By our assumption, $\underline{\widetilde{\nabla_{\bx} p}} = \underline{\nabla_{\bx} p} = 0$. So, by applying the Leibniz rule to $\bxi$-differentiation in \eqref{eq:A2Sop}, we obtain that
\begin{equation}\label{eq:a2s_dx_1red}
\underline{Rp} = \tr (\underline{\Phi}^{-1} \underline{\partial_{\bxi} \widetilde{\nabla_{\bx} p}}).
\end{equation}
Notice now that, for any function $f=f(t,\bx,\bxi)$, by the chain rule,
\[
\partial_{\bxi} \widetilde{f} =  \widetilde{\partial_{\bxi} f} + \int_0^1 (1-s)(\partial_{\bx} f)|_{\bx = \bx^t +s(\bx-\bx^t)} \partial_{\bxi} \bx^t \,ds,
\]
so
\[
\underline{\partial_{\bxi} \widetilde{f}} =  \underline{\partial_{\bxi} f} + \frac{1}{2} \underline{\partial_{\bx} f} \,\partial_{\bxi} \bx^t ,
\]
and in particular
\[
\underline{\partial_{\bxi} \widetilde{\nabla_{\bx} p}} =  \underline{\partial_{\bxi}\nabla_{\bx}  p} + \frac{1}{2} \underline{\partial_{\bx} \nabla_{\bx} p} \,\partial_{\bxi} \bx^t.
\]
On the other hand, as $\underline{\partial_{\bx} p} = 0$, again by the chain rule,
\[
0 = \partial_{\bxi} (\underline{\nabla_{\bx} p}) = \underline{\partial_{\bxi} \nabla_{\bx} p} + \underline{\partial_{\bx} \nabla_{\bx} p} \, \partial_{\bxi} \bx^t,
\]
so in conclusion
\[
\underline{\partial_{\bxi} \widetilde{\nabla_{\bx} p}} =  \frac{1}{2} \underline{\partial_{\bxi} \nabla_{\bx} p}.
\]
By combining this with \eqref{eq:a2s_dx_1red} we obtain the desired result.

\smallskip

\ref{en:a2s_leib}. If $q$ is $\bx$-independent, then
\[
R(qp) = \Den_\phi^{-1} \Div_{\bxi} (q \Den_\phi \widetilde{\Phi}^{-1} {\widetilde{\nabla_{\bx} p}} )
\]
and the result follows by the Leibniz rule.

\smallskip

\ref{en:a2s_uind}. If $p$ is $u$-independent, then
\[
\widetilde{\nabla_{\bx} p} = \left(\begin{array}{c}
\widetilde{\nabla_x p} \\\hline 0
\end{array}\right);
\]
moreover, from Lemma \ref{lem:phder}\ref{en:phder_hess} we know that
\[
\widetilde{\Phi}^{-1} = \left(\begin{array}{c|c}
\Phi_0^{-1} & * \\\hline
0 & I
\end{array}\right),
\]
as $\Phi_0$ is $\bx$-independent. Thus
\[
\widetilde{\Phi}^{-1} \widetilde{\nabla_{\bx} p} = \left(\begin{array}{c}
\Phi_0^{-1} \widetilde{\nabla_x p} \\ \hline 0
\end{array}\right),
\]
and the desired formulas for $Rp$ and $R(qp)$ follow from those in parts \ref{en:a2s_gen} and \ref{en:a2s_leib}. As a consequence, due to the fact that $\Phi_0$ and $\Den_\phi$ are $\bx$-independent by Lemma \ref{lem:phder}, we deduce that $Rp$ is $u$-independent.

\smallskip

\ref{en:a2s_uindxpol}. From the formula for $Rp$ in part \ref{en:a2s_uind} we also deduce that, if $p$ is $u$-independent and a polynomial in $x$ of degree $k$, then $\widetilde{\nabla_x p}$ and $Rp$ are also polynomial functions in $x$ of degree $k-1$. Inductively, we then deduce that $R^{(k+1)} p = 0$ and therefore, by iterating the expression for $I[p]$ in part \ref{en:a2s_gen} we obtain the desired formula.
\end{proof}

We now state some crucial properties of the coefficients in \eqref{eq:Fkj}, which are due to the form of the phase function $\phi$.
Analogous results are contained in the proofs of \cite[Lemma 3.1]{LSV} and \cite[Theorem 2.4.16 and Theorem 3.3.2]{SV}; in particular, the second-order vanishing of the coefficient $F_{20}$ at $\bx = \bx^t$ expresses the fact that the phase function $\phi$ satisfies the eikonal equation
\[
(\partial_t \phi)^2 - \nabla^\opL \phi \cdot \nabla^\opL \phi=0
\]
at least along the geodesic flow, while the vanishing of $F_{10} + RF_{20}$ corresponds to the vanishing of the ``sub-principal symbol'' of the sub-Laplacian $\opL$.

\begin{lem}\label{lem:crucial_coeff}
The coefficients defined in \eqref{eq:Fkj} satisfy the following identities:
\begin{align*}
\underline{F_{11}} &= 2 |\xi|, \\
\underline{F_{20}} &= 0,\\
\underline{\partial_{\bx} F_{20}} &= 0,\\
\underline{F_{10} + R F_{20}} &= 0. \\
\end{align*}
\end{lem}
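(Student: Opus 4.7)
The plan is to verify the four identities in sequence, using repeatedly the evaluations in Lemma~\ref{lem:phder}\ref{en:phder_grad}, the Hamilton equations, and Euler's identity for the $1$-homogeneous Hamiltonian $\Ham$.

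The first two identities come out essentially for free. Differentiating $\underline{\phi}=0$ in $t$ by the chain rule gives $\underline{\partial_t\phi}+(\underline{\nabla_\bx\phi})^T\dot\bx^t=0$; combining this with $\underline{\nabla_\bx\phi}=\bxi^t$, Euler's identity, and conservation of $\Ham$ along the flow reduces it to $\underline{\partial_t\phi}=-\Ham(\bx^t,\bxi^t)=-|\xi|$, which is exactly $\underline{F_{11}}=2|\xi|$. For $\underline{F_{20}}=0$, a direct evaluation of $\nabla^\opL\phi$ from \eqref{eq:phase} and \eqref{eq:Xfields}, together with $\partial_u\phi=\mu$, yields $\underline{\nabla^\opL\phi}=\xi^t+\tfrac{1}{2}J_\mu x^t$, whose square equals $\Ham(\bx^t,\bxi^t)^2=|\xi|^2=(\underline{\partial_t\phi})^2$.

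For $\underline{\partial_\bx F_{20}}=0$ I would exploit that, since $\partial_u\phi=\mu$, one can rewrite $F_{20}=(\partial_t\phi)^2-\Ham(\bx,\nabla_\bx\phi)^2$ (via the polynomial expression for $\Ham^2$); the chain rule then gives
\[
\underline{\partial_\bx F_{20}}=-2|\xi|\bigl[\underline{\partial_\bx\partial_t\phi}+\underline{\partial_\bx[\Ham(\bx,\nabla_\bx\phi)]}\bigr].
\]
Differentiating $\underline{\nabla_\bx\phi}=\bxi^t$ in $t$ by the chain rule, and using the symmetry of the Hessian $\partial_\bx\nabla_\bx\phi$, I would obtain $\underline{\partial_\bx\partial_t\phi}=(\dot\bxi^t)^T-(\dot\bx^t)^T\,\underline{\partial_\bx\nabla_\bx\phi}$. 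Applying the chain rule to $\Ham(\bx,\nabla_\bx\phi)$ at $\bx=\bx^t$, together with the Hamilton equations $\partial_\bx\Ham(\bx^t,\bxi^t)=-(\dot\bxi^t)^T$ and $\partial_\bxi\Ham(\bx^t,\bxi^t)=(\dot\bx^t)^T$, yields the exact negative of this expression. The two summands cancel, without even requiring the explicit form of $\underline{\partial_\bx\nabla_\bx\phi}$.

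The fourth identity $\underline{F_{10}+RF_{20}}=0$ is the transport-equation statement, and it is the main obstacle, requiring a direct computation. Since $\Den_\phi$ is $\bx$-independent (Lemma~\ref{lem:phder}\ref{en:phder_hess}), $\nabla^\opL\Den_\phi=0$ and so $\underline{F_{10}}=-(\underline{\partial_t^2\phi}+\underline{\opL\phi})+2|\xi|\,\Den_\phi^{-1}\partial_t\Den_\phi$. I would compute $\underline{\opL\phi}=-\tfrac{i}{2}\tr|J_\mu|$ from the explicit formula $\nabla^\opL\phi=\xi^t+\tfrac{i}{2}|J_\mu|(x-x^t)+\tfrac{1}{2}J_\mu x$ (using $\tr J_\mu=0$), obtain $\underline{\partial_t^2\phi}=(\dot\bx^t)^T\,\underline{\partial_\bx\nabla_\bx\phi}\,\dot\bx^t=\tfrac{i}{2}(\dot x^t)^T|J_\mu|\dot x^t$ by differentiating the constant $\underline{\partial_t\phi}=-|\xi|$ once in $t$ (noting that $(\dot\bxi^t)^T\dot\bx^t=0$ by skew-symmetry of $J_\mu$), and read off $\Den_\phi^{-1}\partial_t\Den_\phi$ from \eqref{eq:det_2step}. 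For $\underline{RF_{20}}$, the third identity $\underline{\nabla_\bx F_{20}}=0$ triggers Lemma~\ref{lem:a2s}\ref{en:a2s_dx}, giving $\underline{RF_{20}}=\tfrac{1}{2}\tr\bigl(\underline{\Phi}^{-1}\,\underline{\partial_\bxi\nabla_\bx F_{20}}\bigr)$. Since $F_{20}$ is a $u$-independent polynomial of degree $2$ in $x$ that vanishes to second order at $x=x^t$, one has $F_{20}=(x-x^t)^T C(t,\bxi)(x-x^t)$ for a symmetric matrix $C$; a key simplification is that the quadratic-in-$(x-x^t)$ part of $\nabla^\opL\phi\cdot\nabla^\opL\phi$ vanishes identically, because $M^T M=0$ for $M=\tfrac{1}{2}(J_\mu+i|J_\mu|)$, thanks to $|J_\mu|^2=-J_\mu^2$ and $[J_\mu,|J_\mu|]=0$. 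This reduces $C$ to a rank-one matrix built from $\zeta^t\defeq\xi^t+\tfrac{1}{2}J_\mu x^t$, and the cancellation $\underline{F_{10}}+\underline{RF_{20}}=0$ then follows by an algebraic verification using the explicit form of $\Phi_0$ in \eqref{eq:det_2step} and once again the commutativity of $J_\mu$ with $|J_\mu|$.
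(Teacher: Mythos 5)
Your proposal is correct in all four parts, and for the first three identities you follow essentially the same route as the paper: read $\underline{\partial_t\phi}=-|\xi|$ from the formulas for $\phi$ (the paper differentiates $\phi$ first and then evaluates, you differentiate $\underline{\phi}=0$ in $t$ and invoke $\underline{\nabla_\bx\phi}=\bxi^t$ — same content), get $\underline{F_{20}}=0$ by comparing squares, and cancel the two chain-rule contributions to $\underline{\partial_\bx F_{20}}$ via $\partial_\bx\Ham=-(\dot\bxi^t)^T$, $\partial_\bxi\Ham=(\dot\bx^t)^T$ and the symmetric Hessian of $\phi$.

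For the fourth identity $\underline{F_{10}+RF_{20}}=0$ you take a genuinely different route. Like the paper, you apply Lemma~\ref{lem:a2s}\ref{en:a2s_dx} to reduce $\underline{RF_{20}}$ to $\tfrac12\tr(\underline{\Phi}^{-1}\underline{\partial_\bxi\nabla_\bx F_{20}})$, but then you switch to an \emph{explicit} $2$-step calculation: you write $\nabla^{\opL}\phi=\zeta^t+M(x-x^t)$ with $M=\tfrac12(J_\mu+i|J_\mu|)$, note $M^TM=0$ (from $|J_\mu|^2=-J_\mu^2$ and $[J_\mu,|J_\mu|]=0$), conclude that the quadratic part of $\nabla^{\opL}\phi\cdot\nabla^{\opL}\phi$ vanishes so that $F_{20}=[(x-x^t)^Tv]^2$ is rank-one with $v=\dot\xi^t-\tfrac{i}{2}|J_\mu|\dot x^t$, and then compute $\underline{F_{10}}$ from the explicit $\underline{\opL\phi}$, $\underline{\partial_t^2\phi}=(\dot\bx^t)^T\underline{\partial_\bx\nabla_\bx\phi}\dot\bx^t=\tfrac{i}{2}(\dot x^t)^T|J_\mu|\dot x^t$ (the step $(\dot\bxi^t)^T\dot\bx^t=0$ is indeed a consequence of the skew-symmetry of $J_\mu$, since $\dot\mu^t=0$), and the formula for $\det\Phi_0$. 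I have checked that the final algebraic identity you invoke does close: using $\Phi_0^{-1}v=-\tfrac{i|\mu|}{2|\xi|}(1+i\theta\langle|J_{\bar\mu}|\bar\xi,\bar\xi\rangle)^{-1}(|J_{\bar\mu}|+iJ_{\bar\mu})\xi$ and $(|J_{\bar\mu}|-iJ_{\bar\mu})(|J_{\bar\mu}|+iJ_{\bar\mu})=0$, the first summand in $\partial_\xi x^t$ contributes nothing and the rank-one part yields exactly $-\underline{F_{10}}$. The paper, by contrast, stays entirely abstract at this point: it expands $\tfrac12\tr(\Phi^{-1}\partial_\bxi\nabla_\bx F_{20})$ via Leibniz/log-determinant identities, cancels against $F_{10}$, uses the key simplification $\sum_j(\partial_\bxi X_j\phi)\,\Phi^{-1}(\nabla_\bx X_j\phi)=-\opL\phi$, and closes the argument by observing $\partial_t\underline{\partial_\bxi\phi}=0$ — never touching the explicit formulas for the flow or for $\Phi_0$. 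Your route works, and amounts to importing at this stage a subset of the explicit computations that the paper defers to Lemma~\ref{lem:Fkj_2step} and Section~\ref{s:opLambda_2step}; the paper's abstract argument is cleaner, shorter, and more robust (it would survive perturbations of the $2$-step structure), but yours has the merit of exhibiting the cancellation concretely. One minor caveat: you phrase a step as taking $\partial_\bx$ of $\Ham(\bx,\nabla_\bx\phi)$, which for general $\bx$ is only defined via the polynomial $\Ham^2$ (since $\nabla_\bx\phi$ is complex off the flow); the paper avoids this by working with $\Ham^2$ throughout, and your computation should be read that way too.
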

\begin{proof}
From \eqref{eq:phase} it is clear that
\begin{equation}\label{eq:der_t_phi_s}
\underline{\partial_t \phi} = -\dot \bx^t \cdot \bxi^t = - (\partial_{\bxi} \Ham)(\bx^t,\bxi^t) \, \bxi^t = - \Ham(\bx^t,\bxi^t) = -\Ham(\bx^0,\bxi^0) = -|\xi|,
\end{equation}
where we used the Hamilton equations, Euler's identity ($\Ham$ is $1$-homogeneous in $\bxi$) and the fact that $\Ham$ is constant under the flow.

Moreover, by \eqref{eq:phase} and \eqref{eq:Xfields},
\[
\underline{\nabla^\opL \phi} = \xi^t +\frac{1}{2} J_\mu x^t,
\]
so
\[
\underline{(\nabla^\opL \phi) \cdot (\nabla^\opL \phi)} = \Ham(\bx^t,\bxi^t)^2 = |\xi|^2.
\]

The above identities, together with \eqref{eq:Fkj}, give that $\underline{F_{20}} = 0$ and $\underline{F_{11}} = 2|\xi|$.

\smallskip

Next, by differentiating the identity $\underline{\nabla_{\bx} \phi} = \bxi^t$ from Lemma \ref{lem:phder}\ref{en:phder_grad},
\begin{equation}\label{eq:der_tx_phi_s}
\dot\bxi^t = \partial_t \underline{\nabla_{\bx} \phi} = \underline{\partial_t\nabla_{\bx} \phi} + \underline{\partial_{\bx}\nabla_{\bx} \phi} \, \dot\bx.
\end{equation}
Moreover, from \eqref{eq:Xfields}, we deduce that
\[
\nabla^\opL \phi(t,\bx,\bxi) = \nabla_x \phi(t,\bx,\bxi) + \frac{1}{2} J_{\nabla_u \phi(t,\bx,\bxi)} x,
\]
thus
\[
\nabla^\opL \phi \cdot \nabla^\opL \phi = \Ham^2(\bx,\nabla_{\bx} \phi);
\]
notice that $\nabla_{\bx} \phi$ need not be real, however $\Ham^2(\bx,\bxi)$ is a polynomial in $\bxi$, so it makes sense to evaluate it at complex values of $\bxi$ too.
Therefore,
\[
\partial_{\bx} F_{20} = 2 (\partial_t \phi) (\partial_t \partial_{\bx}\phi) - \partial_{\bx} \Ham^2(\bx,\nabla_{\bx} \phi) - \partial_{\bxi} \Ham^2(\bx,\nabla_{\bx} \phi) \partial_{\bx} \nabla_{\bx} \phi
\]
and
\[\begin{split}
\underline{\partial_{\bx} F_{20}} 
&= 2 \underline{\partial_t \phi} \, \underline{\partial_t \partial_{\bx}\phi} - \partial_{\bx} \Ham^2(\bx^t,\underline{\nabla_{\bx} \phi}) - \partial_{\bxi} \Ham^2(\bx^t,\underline{\nabla_{\bx} \phi}) \underline{\partial_{\bx}\nabla_{\bx} \phi} \\
&= 2 \underline{\partial_t \phi} \, \underline{\partial_t \partial_{\bx}\phi} - \partial_{\bx} \Ham^2(\bx^t,\bxi^t) - \partial_{\bxi} \Ham^2(\bx^t,\bxi^t) \underline{\partial_{\bx} \nabla_{\bx} \phi} \\
&= 2 \underline{\partial_t \phi} \, \underline{\partial_t \partial_{\bx}\phi} - 2 \Ham(\bx^t,\bxi^t) \partial_{\bx} \Ham(\bx^t,\bxi^t) - 2 \Ham(\bx^t,\bxi^t) \partial_{\bxi} \Ham(\bx^t,\bxi^t) \underline{\partial_{\bx}\nabla_{\bx} \phi} \\
&= 2 \underline{\partial_t \phi} \, \underline{\partial_t \partial_{\bx}\phi} + 2 \Ham(\bx^t,\bxi^t) [ (\dot\bxi^t)^T -  (\dot\bx^t)^T \underline{\partial_{\bx} \nabla_{\bx} \phi} ]  = 0,
\end{split}\]
where we used Lemma \ref{lem:phder}\ref{en:phder_grad}, the Hamilton equations \eqref{eq:Ham} and the identities \eqref{eq:der_t_phi_s} and \eqref{eq:der_tx_phi_s}.

It remains to prove that $\underline{F_{10} + R F_{20}} = 0$. From Lemma \ref{lem:a2s}\ref{en:a2s_dx} and the fact that $\underline{\partial_{\bx} F_{20}} = 0$, we deduce that
\begin{equation}\label{eq:RF20tr}
\underline{R F_{20}} = \frac{1}{2} \tr \underline{\Phi^{-1} \partial_{\bx}\partial_{\bxi} F_{20}}.
\end{equation}
On the other hand, by the chain rule it is easily seen that, for any function $p = p(t,\bx,\bxi)$,
\[
\frac{1}{2} \tr (\Phi^{-1} \partial_{\bxi} \nabla_{\bx}(p^2)) = p \tr (\Phi^{-1} \partial_{\bxi}\nabla_{\bx} p)
+ (\partial_{\bxi} p) \Phi^{-1} (\nabla_{\bx} p),
\]
thus
\[
\frac{1}{2} \tr (\Phi^{-1} \partial_{\bxi} \nabla_{\bx}((\partial_t \phi)^2)) 
= (\partial_t \phi) \tr (\Phi^{-1} \partial_t \Phi)
+ (\partial_{\bxi} \partial_t\phi) \Phi^{-1} (\nabla_{\bx} \partial_t\phi).
\]
Additionally, if $X = a \cdot \nabla_{\bx}$ is a vector field with coefficients $a = a(\bx)$, then, by the chain rule,
\[
\partial_{\bxi}\nabla_{\bx} (Xp) = (\partial_{\bx} a)^T (\partial_{\bxi} \nabla_{\bx}p) + X\partial_{\bxi}\nabla_{\bx} p,
\]
where $X$ acts on $\partial_{\bxi}\nabla_{\bx} p$ componentwise; thus, when $p = \phi$,
\[
\tr(\Phi^{-1} \partial_{\bxi}\nabla_{\bx} (X\phi))
= \Div_{\bx} a +\tr(\Phi^{-1} X \Phi),
\]
and the term $\Div_{\bx} a$ vanishes when $X$ is divergence-free. As the vector fields $X_j$ defining the sub-Laplacian $\opL$ are divergence-free, we deduce that
\[
\frac{1}{2} \tr (\Phi^{-1} \partial_{\bxi}\nabla_{\bx} ((X_j \phi)^2)) 
= (X_j \phi) \tr (\Phi^{-1} X_j \Phi) + (\partial_{\bxi} X_j \phi) \Phi^{-1} (\nabla_{\bx} X_j \phi).
\]
In conclusion,
\[\begin{split}
\frac{1}{2} \tr (\Phi^{-1} \partial_{\bxi}\nabla_{\bx} F_{20})
&= 
(\partial_t \phi) (\partial_t \log\det\Phi) 
- \sum_{j=1}^{d_1} (X_j \phi) (X_j\log \det\Phi) \\
&\quad+ (\partial_{\bxi} \partial_t\phi) \Phi^{-1} (\nabla_{\bx} \partial_t\phi) 
- \sum_{j=1}^{d_1} (\partial_{\bxi} X_j \phi) \Phi^{-1} (\nabla_{\bx} X_j \phi),
\end{split}\]
where we also used the identities
\[
\partial_t \log \det \Phi = \tr(\Phi^{-1} \partial_t \Phi), \quad X_j \log \det \Phi = \tr(\Phi^{-1} X_j \Phi).
\]

On the other hand, from \eqref{eq:Fkj} we also deduce that
\[\begin{split}
-F_{10} 
&= (\partial_t^2 + \opL)\phi + 2 (\partial_t \phi) (\Den_\phi^{-1} \partial_t \Den_\phi) - 2 (\nabla^\opL \phi) \cdot (\Den_{\phi}^{-1} \nabla^\opL \Den_\phi),\\
&= (\partial_t^2 + \opL)\phi + (\partial_t \phi) (\partial_t \log \det \Phi) - (\nabla^\opL \phi) \cdot (\nabla^\opL \log\det \Phi),
\end{split}\]
and thus 
\begin{multline}\label{eq:F10trF20}
F_{10} + \frac{1}{2} \tr (\Phi^{-1} \partial_{\bxi} \nabla_{\bx} F_{20}) \\
= -(\partial_t^2 + \opL)\phi + (\partial_{\bxi} \partial_t\phi) \Phi^{-1} (\nabla_{\bx} \partial_t\phi) 
- \sum_{j=1}^{d_1} (\partial_{\bxi} X_j \phi) \Phi^{-1} (\nabla_{\bx} X_j \phi).
\end{multline}

Notice now that, for any vector field $X = a \cdot \nabla_{\bx}$ with coefficients $a= a(\bx)$,
\[
X p = a \cdot \nabla_{\bx} p, \qquad X^2 p = a \cdot \nabla_{\bx} (a \cdot \nabla_{\bx} p), \qquad \partial_{\bxi} Xp = a^T  ( \partial_{\bxi} \nabla_{\bx}p),
\]
so
\[
(\partial_{\bxi} X p) ( \partial_{\bxi}\nabla_{\bx} p)^{-1} (\nabla_{\bx} X p) = a^T \nabla_{\bx} (a \cdot \nabla_{\bx} p) = X^2 p.
\]
In light of this, the identity \eqref{eq:F10trF20} reduces to
\begin{equation}\label{eq:F10trF20red}
F_{10} + \frac{1}{2} \tr (\Phi^{-1} \partial_{\bxi}\nabla_{\bx} F_{20}) \\
= -\partial_t^2\phi + (\partial_{\bxi} \partial_t\phi) \Phi^{-1} (\nabla_{\bx} \partial_t\phi) .
\end{equation}

Finally, by differentiating \eqref{eq:der_t_phi_s},
\[
0 = \partial_t \underline{\partial_t \phi} = \underline{\partial_t^2 \phi} + \underline{\partial_{\bx} \partial_t \phi} \, \dot\bx^t;
\]
thus, by \eqref{eq:RF20tr} and \eqref{eq:F10trF20red},
\[\begin{split}
\underline{F_{10}+RF_{20}} &= \underline{F_{10} + \frac{1}{2} \tr (\Phi^{-1} \partial_{\bxi} \nabla_{\bx}F_{20})} \\
&= - \underline{\partial_t^2 \phi} + \underline{\partial_{\bxi} \partial_t\phi} \, \underline{\Phi}^{-1} \underline{\nabla_{\bx} \partial_t\phi} \\
&= ((\dot\bx^t)^T \underline{\Phi} + \underline{\partial_{\bxi}\partial_t \phi}) \underline{\Phi}^{-1} \underline{\nabla_{\bx} \partial_t \phi} \\
&= \partial_t \underline{\partial_{\bxi} \phi} \, \underline{\Phi}^{-1} \underline{\nabla_{\bx} \partial_t \phi} = 0,
\end{split}\]
where we used the fact that $\underline{\partial_{\bxi} \phi} = 0$ by Lemma \ref{lem:phder}\ref{en:phder_grad}.
\end{proof}

We can now prove an enhanced version of Lemma \ref{lem:dal}, showing that the application of the wave operator $\partial_t^2 + \opL$ to an FIO  kernel of the form \eqref{eq:fio_Iq} with $\bx$-independent symbol can  be written again as an FIO kernel with an $\bx$-independent symbol. The analogous results in \cite{CLV,LSV,SV} prove a similar identity only up to smoothing terms, and express the new symbol in terms of an infinite asymptotic expansion, involving iterated applications of the ``amplitude-to-symbol operator'' $R$; here, thanks to the $\bx$-polynomial nature of the coefficients $F_{kj}$ of Lemma \ref{lem:dal}, only finitely many terms of the expansion survive, so we obtain an exact formula.

\begin{prp}\label{prp:dal2}
For any smooth symbol $q = q(t,\bxi)$ with $\text{$\bxi$-}\supp q \Subset \dot\RR^{d_1} \times \Omega$,
\begin{align*}
(\partial_t^2 + \opL) I[q] &= I[-2i|\xi|\partial_t q + \Lambda q] , \\
\partial_t I[q] &= I[-i|\xi| q + \mho q],
\end{align*}
where
\begin{equation}
\label{eq:opLambda}
\Lambda q \defeq \sum_{\substack{j,k \geq 0 \\ j+k \leq 2}} \underline{R^{k}(F_{kj} \partial_t^j q)}, \qquad
\mho q \defeq \sum_{\substack{j,k \geq 0 \\ j+k \leq 1}} 2^{j-1} \underline{R^{k}(F_{k(j+1)} \partial_t^j q)},
\end{equation}
\end{prp}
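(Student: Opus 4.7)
\smallskip

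The plan is to start from the identities in Lemma \ref{lem:dal}\ref{en:dal_formula}, namely
\[
(\partial_t^2 + \opL) I[q] = \sum_{j+k \leq 2} i^{-k} I[F_{kj} \partial_t^j q],
\qquad
\partial_t I[q] = \sum_{j+k \leq 1} 2^{j-1} i^{-k} I[F_{k(j+1)} \partial_t^j q],
\]
and then turn the right-hand sides, which feature $\bx$-dependent amplitudes $F_{kj}\partial_t^j q$, into FIO kernels with $\bx$-independent symbols by means of the amplitude-to-symbol reduction of Lemma \ref{lem:a2s}. Since $q$ is $\bx$-independent and each $F_{kj}$ is, by Lemma \ref{lem:dal}\ref{en:dal_Fkj}, a $u$-independent polynomial of degree $k$ in $x$, the hypothesis of Lemma \ref{lem:a2s}\ref{en:a2s_uindxpol} is met, so
\[
I[F_{kj}\partial_t^j q] = \sum_{\ell=0}^{k} i^\ell\, I[\underline{R^\ell(F_{kj} \partial_t^j q)}];
\]
in particular, the expansion terminates after finitely many steps, so no remainder in an asymptotic sense arises.

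Substituting this into the formula for $(\partial_t^2+\opL) I[q]$ yields a sum of terms of the form $i^{\ell-k} I[\underline{R^\ell(F_{kj}\partial_t^j q)}]$ over $j+k \leq 2$, $0 \leq \ell \leq k$. I would then isolate the terms where the corresponding power of $i$ is strictly negative, and invoke Lemma \ref{lem:crucial_coeff} to cancel them: the coefficient of $i^{-2}$ is proportional to $\underline{F_{20}}\, q$, which vanishes; the terms of order $i^{-1}$ split as $i^{-1}I[\underline{F_{10}\,q + R(F_{20} q)}] + i^{-1} I[\underline{F_{11}\,\partial_t q}]$, and using $\underline{R(F_{20} q)} = q \underline{R F_{20}}$ (which follows from Lemma \ref{lem:a2s}\ref{en:a2s_leib} together with $\underline{\nabla_{\bx} F_{20}}=0$) the first of these terms is $i^{-1} I[q\underline{F_{10}+RF_{20}}] = 0$, while the second equals $I[-2i|\xi|\,\partial_t q]$ since $\underline{F_{11}} = 2|\xi|$. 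All remaining contributions have nonnegative powers of $i$ and precisely reassemble into $I[\Lambda q]$ as defined in \eqref{eq:opLambda}.

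The argument for $\partial_t I[q]$ proceeds in complete analogy: applying Lemma \ref{lem:a2s}\ref{en:a2s_uindxpol} to each of the three summands and collecting the contribution of order $i^{-1}$, which arises only from $2^{-1} i^{-1} I[\underline{F_{11}\,q}]$, gives the ``principal'' term $I[-i|\xi|\,q]$ via $\underline{F_{11}} = 2|\xi|$; the leftover pieces are exactly $I[\mho q]$. I expect the identification of $\underline{R(F_{20} q)} = q\underline{R F_{20}}$ and the careful bookkeeping of the powers $i^{\ell-k}$ to be the most error-prone point, but no genuine analytic obstacle arises: the polynomial-degree structure of the $F_{kj}$ on $2$-step groups and the ``eikonal-type'' vanishing identities of Lemma \ref{lem:crucial_coeff} together ensure that all potentially singular (in $|\xi|$) or $\bx$-dependent contributions collapse, leaving only the finite, explicit $\bx$-independent symbols $\Lambda q$ and $\mho q$.
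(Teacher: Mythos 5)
Your proposal is correct and follows essentially the same route as the paper's own proof: start from Lemma \ref{lem:dal}\ref{en:dal_formula}, expand each $I[F_{kj}\partial_t^j q]$ via Lemma \ref{lem:a2s}\ref{en:a2s_uindxpol}, and use the three vanishing/evaluation identities of Lemma \ref{lem:crucial_coeff} (together with $\underline{\partial_\bx F_{20}}=0$ to dispose of the Leibniz correction term in $\underline{R(F_{20}q)}$) to extract $-2i|\xi|\partial_t q$ and $-i|\xi| q$ as the ``principal'' contributions. The only cosmetic difference is that you invoke Lemma \ref{lem:a2s}\ref{en:a2s_leib} where the paper uses its $u$-independent specialization Lemma \ref{lem:a2s}\ref{en:a2s_uind}; the conclusion is the same.
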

\begin{proof}
We only prove the formula for $(\partial_t^2 + \opL) I[q]$; the proof for $\partial_t I[q]$ is fully analogous and is omitted.

By Lemma \ref{lem:dal},
\[
(\partial_t^2 + \opL) I[q] = \sum_{\substack{j,k \geq 0 \\ j+k \leq 2}} i^{-k} I[F_{kj} \partial_t^j q], 
\]
and moreover the function $F_{kj} \partial_t^j q$ is a polynomial in $x$ of degree $k$ and independent of $u$; thus, by Lemma \ref{lem:a2s},
\[\begin{split}
(\partial_t^2 + \opL) I[q] 
&= \sum_{\substack{j,k \geq 0 \\ j+k \leq 2}} \sum_{\ell=0}^k i^{\ell-k} I[\underline{R^{\ell} ( F_{kj} \partial_t^j q)}] \\
&= I \left[ \sum_{s=0}^2 \sum_{k=s}^2 \sum_{j=0}^{2-k} i^{-s} \underline{R^{k-s} ( F_{kj} \partial_t^j q)}\right],
\end{split}\]
where the sum was reindexed by setting $s=k-\ell$.

Now, the term with $s=2$ in the latter sum is
\[
i^{-2} \underline{F_{20}} \, q = 0
\]
due to Lemma \ref{lem:crucial_coeff}. So the sum can actually be restricted to $s=0,1$. In addition, for $s=1$, we get
\[\begin{split}
i^{-1} \underline{F_{11} \partial_t q + F_{10} q + R(F_{20} q)} 
&= i^{-1} ( \underline{F_{11}} \partial_t q + \underline{F_{10}+ RF_{20}} \, q + \partial_\xi q \Phi_0^{-1} \underline{\partial_x F_{20}}^T) \\
&= - 2i|\xi| \partial_t q,
\end{split}\]
by Lemmas \ref{lem:a2s}\ref{en:a2s_uind} and \ref{lem:crucial_coeff}. Finally, the part of the sum with $s=0$ gives $\Lambda q$ as defined in \eqref{eq:opLambda}, and we are done.
\end{proof}

\subsection{Construction of the parametrix}

Much as in \cite{CLV,LSV,SV}, the identities in Proposition \ref{prp:dal2} allows us to set up an iterative scheme, based on the solution of suitable ``transport equations'', to obtain an approximate representation of solutions of wave equation in terms of FIO kernels of the form \eqref{eq:fio_Iq}.

According to Proposition \ref{prp:dal2}, for a given symbol $q=q(\bxi)$, ideally we would like to construct a symbol  $p = p(t,\bxi)$ that satisfies the initial condition $p(0,\bxi)=q(\bxi)$ and solves the ``transport equation'' 
\begin{equation}\label{eq:transpeq}
(-2i|\xi|\partial_t + \Lambda) p = 0,
\end{equation} 
for then we would have a solution of the wave equation $(\partial_t^2 + \opL) I[p]=0.$

To this end, if we look at the $\bxi$-homogeneity degrees of the various quantities $\Phi$, $\Den_\phi$ and $F_{kj}$ (see \eqref{eq:den} and Lemma \ref{lem:dal}\ref{en:dal_Fkj}), then we see that the operator $R$ of \eqref{eq:A2Sop} decreases the $\bxi$-degree by one, while the operator $\Lambda$ of \eqref{eq:opLambda} preserves the $\bxi$-degree. On the other hand, the operator $2i|\xi|\partial_t$ increases the $\bxi$-degree by one.
Thus, assuming as usual an asymptotic expansion $p=\sum_{j=0}^\infty p_{-j}$ of $p$ into symbols $p_{-j}$ of order $-j$, at least at a formal level the equation \eqref{eq:transpeq} then transforms into the system of equations
\[
2i|\xi|\partial_t p_{-j}=\Lambda p_{1-j}, \qquad j\geq 0,
\]
which can be solved by setting recursively
\[
p_{-j}(t,\bxi) \defeq \frac{1}{2i|\xi|} \int_0^t \Lambda p_{1-j}(\tau,\bxi) \,d\tau, \qquad j \geq 0.
\]

Let us make these arguments more precise. 
For a symbol $q = q(t,\bxi)$, we introduce the notation
\[
I_0[q](\bx) \defeq I[q](0,\bx)
\]
for the oscillatory integral with symbol $q$ at time $t=0$.
Notice that the functions $H_{m,h}$ of \eqref{eq:initial_amplitude_complex} have the form $I_0[q]$ for an appropriate $q$.
We also define 
\begin{align}
\label{eq:opLambda_int}
\Lambda_I q(t,\bxi) &\defeq \frac{1}{2i|\xi|} \int_0^t \Lambda q(\tau,\bxi) \,d\tau,\\
\label{eq:opMho_int}
\mhoI q(t,\bxi) &\defeq |\xi| q(t,\bxi) +i \mho q(t,\bxi),
\end{align}
where $\Lambda$ and $\mho$ are as in \eqref{eq:opLambda}.

\begin{prp}\label{prp:FIO_repn_wave}
Assume that the symbol $q = q(\bxi)$ is smooth, $(t,\bx)$-independent and compactly supported in $\dot\RR^{d_1} \times \Omega$.
Then, for all $N \in \NN$,
\begin{multline}\label{eq:FIO_repn_wave}
\cos(t\sqrt{\opL}) I_0[q](\bx) = \frac{1}{2} \sum_{j=0}^N ( I[\Lambda_I^j q](t,\bx) + I[\Lambda_I^j q](-t,\bx)) \\
- \frac{1}{2} \int_0^t \int_0^{t-\tau} \cos(s \sqrt{\opL}) \left(I[\Lambda \Lambda_I^N q](\tau,\bx) + I[\Lambda \Lambda_I^N q](-\tau,\bx)\right) \,ds\,d\tau,
\end{multline}
where $\Lambda_I^j$ denotes the $j$th compositional power of $\Lambda_I$.
In addition,
\begin{multline}\label{eq:FIO_repn_wave_der}
\sqrt{\opL} \sin(t\sqrt{\opL}) I_0[q](\bx) = -\frac{1}{2i} \sum_{j=0}^N ( I[ \mhoI \Lambda_I^j q](t,\bx) - I[\mhoI \Lambda_I^j q](-t,\bx)) \\
+ \frac{1}{2} \int_0^t \cos((t-\tau) \sqrt{\opL}) \left( I[\Lambda \Lambda_I^N q](\tau,\bx) + I[\Lambda \Lambda_I^N q](-\tau,\bx) \right) \,d\tau.
\end{multline}
\end{prp}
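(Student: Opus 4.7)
The strategy is to construct an approximate solution $w_N$ of the wave equation with initial datum $I_0[q]$, of the form
\[
w_N(t,\bx) \defeq \sum_{j=0}^N I[\Lambda_I^j q](t,\bx),
\]
show that $(\partial_t^2 + \opL) w_N$ equals an explicit FIO kernel, and then invoke Duhamel's formula on the even part $\tilde w_N(t,\bx) \defeq \tfrac{1}{2}(w_N(t,\bx) + w_N(-t,\bx))$ to match $\cos(t\sqrt{\opL}) I_0[q]$. The second formula will then drop out by differentiating the first in $t$.

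The key observation is that, by the definition \eqref{eq:opLambda_int} of $\Lambda_I$, the iterates satisfy $2i|\xi|\,\partial_t \Lambda_I^j q = \Lambda \Lambda_I^{j-1} q$ for $j \geq 1$. Proposition \ref{prp:dal2} therefore yields
\[
(\partial_t^2 + \opL) I[\Lambda_I^j q] = I[\Lambda \Lambda_I^j q] - I[\Lambda \Lambda_I^{j-1} q] \qquad (j \geq 1),
\]
while $(\partial_t^2 + \opL) I[q] = I[\Lambda q]$ since $q$ is $t$-independent. Summing over $j=0,\dots,N$ telescopes to $(\partial_t^2 + \opL) w_N = I[\Lambda \Lambda_I^N q]$. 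Moreover, since $\Lambda_I^j q|_{t=0} = 0$ for $j \geq 1$, only the $j=0$ term contributes at $t=0$, giving $w_N(0,\bx) = I_0[q](\bx)$.

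The symmetrization $\tilde w_N$ then inherits $\tilde w_N(0,\bx) = I_0[q](\bx)$ and $\partial_t \tilde w_N(0,\bx) = 0$, while a parity argument gives
\[
(\partial_t^2 + \opL) \tilde w_N(t,\bx) = \tfrac{1}{2}\bigl(I[\Lambda\Lambda_I^N q](t,\bx) + I[\Lambda\Lambda_I^N q](-t,\bx)\bigr).
\]
Applying Duhamel's formula to $\cos(t\sqrt{\opL}) I_0[q] - \tilde w_N$, which has zero Cauchy data and the opposite inhomogeneity, and using the identity $\sin((t-\tau)\sqrt{\opL})/\sqrt{\opL} = \int_0^{t-\tau}\cos(s\sqrt{\opL})\,ds$, yields \eqref{eq:FIO_repn_wave}.

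Finally, \eqref{eq:FIO_repn_wave_der} follows by differentiating \eqref{eq:FIO_repn_wave} in $t$: the left-hand side becomes $-\sqrt{\opL}\sin(t\sqrt{\opL}) I_0[q]$; on the right, the identity $\partial_t I[\Lambda_I^j q] = -i\, I[\mhoI \Lambda_I^j q]$, obtained by combining Proposition \ref{prp:dal2} with the definition \eqref{eq:opMho_int} of $\mhoI$, converts the leading sum into the $\mhoI$-terms appearing in \eqref{eq:FIO_repn_wave_der}, while applying Leibniz's rule to the nested integral (whose boundary contribution $\int_0^0 \cos(s\sqrt{\opL})\,ds$ vanishes) produces the single-integral remainder. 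The main substance is the telescoping computation of $(\partial_t^2+\opL) w_N$; the rest reduces to careful sign bookkeeping in Duhamel's formula and in the $t$-differentiation.
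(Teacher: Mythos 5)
Your proposal is correct and follows essentially the same route as the paper's proof: the paper defines the single symbol $H^{(N)} = \sum_{j=0}^N \Lambda_I^j q$, checks $H^{(N)}(0,\cdot) = q$ and $(-2i|\xi|\partial_t + \Lambda)H^{(N)} = \Lambda\Lambda_I^N q$, applies Proposition \ref{prp:dal2} once to $I[H^{(N)}]$, and invokes a Duhamel-type formula, then differentiates to get the second identity. Your term-by-term application of Proposition \ref{prp:dal2} followed by telescoping is equivalent to the paper's symbol-level cancellation by linearity of $I[\cdot]$, and the remaining bookkeeping (symmetrization, Duhamel, $t$-differentiation with the identity $\partial_t I[\Lambda_I^j q] = -i I[\mhoI\Lambda_I^j q]$) matches the paper.
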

\begin{proof}
Notice that, by construction,
\[
\partial_t (\Lambda_I^0 q) = 0, \qquad \Lambda_I^{j+1} q|_{t=0} = 0, \qquad \partial_t (\Lambda_I^{j+1} q) = \frac{1}{2i|\xi|} \Lambda \Lambda_I^j q 
\]
for all $j \in \NN$. Thus, if we set, for all $N \in \NN$,
\[
H^{(N)}(t,\bxi) \defeq \sum_{j=0}^N \Lambda_I^j q(t,\bxi),
\]
then it is easily seen that
\begin{align*}
H^{(N)}(0,\bxi) &= q(\bxi),\\
(-2i|\xi| \partial_t+ \Lambda)H^{(N)}(t,\bxi)  &= \Lambda \Lambda_I^N q.
\end{align*}

From Proposition \ref{prp:dal2} we then deduce that
\[
I[H^{(N)}]|_{t=0} = I_0[q], \qquad
(\partial_t^2 + \opL) I[H^{(N)}] = I[\Lambda \Lambda_I^N q].
\]
An application of a Duhamel-type formula (as in, e.g., \cite[eq.\ (4.18)]{MMNG}) then yields
\[\begin{split}
\cos(t\sqrt{\opL}) I_0[q] 
&= \frac{I[H^{(N)}](t,\cdot) + I[H^{(N)}](-t,\cdot)}{2} \\
&\qquad-\int_0^t  \frac{\sin((t-\tau)\sqrt{\opL})}{\sqrt {\opL}}\frac{I[\Lambda \Lambda_I^N q](\tau,\cdot) + I[\Lambda \Lambda_I^N q](-\tau,\cdot)}{2} d\tau\\
&= \frac{I[H^{(N)}](t,\cdot) + I[H^{(N)}](-t,\cdot)}{2} \\
&\qquad-\int_0^t \int_0^{t-\tau} \cos(s\sqrt{\opL}) \frac{I[\Lambda \Lambda_I^N q](\tau,\cdot) + I[\Lambda \Lambda_I^N q](-\tau,\cdot)}{2} \,ds\,d\tau, 
\end{split}\]
thus proving \eqref{eq:FIO_repn_wave}.

\smallskip

Finally, by noticing that
\[
\sqrt{\opL} \sin(t\sqrt{\opL}) I_0[q] = -\partial_t \cos(t \sqrt{\opL}) I_0[q],
\]
then \eqref{eq:FIO_repn_wave_der} follows by differentiating \eqref{eq:FIO_repn_wave} with respect to $t$, and applying the formula $-\partial_t I[\Lambda_I^j q] = i I[\mhoI \Lambda_I^j q]$ from Proposition \ref{prp:dal2}.
\end{proof}

According to Proposition \ref{prp:FIO_repn_wave}, oscillatory integrals of the form \eqref{eq:fio_Iq} can be used to give an approximate FIO representation (or ``parametrix'') for solutions of the wave equation associated with the sub-Laplacian $\opL$.
In order to use this representation effectively, we actually need to show that the summands in the right-hand side of \eqref{eq:FIO_repn_wave} become of lower and lower order (in a suitable sense) as $j$ grows, and correspondingly the last summand can be treated as a negligible error term when $N$ is sufficiently large.

As we have already seen, the operator $\Lambda$ of \eqref{eq:opLambda} indeed preserves the $\bxi$-degree of our symbols, and thus the  operator $\Lambda_I$ of \eqref{eq:opLambda_int} decreases the $\bxi$-degree by one. In this sense the right-hand side of \eqref{eq:FIO_repn_wave} can be thought of as an expansion of $\cos(t\sqrt{\opL}) I_0[q]$ into oscillatory integrals with symbols of lower and lower $\bxi$-order.

In a similar way, the right-hand side of \eqref{eq:FIO_repn_wave_der} can be thought of as an expansion of $\sqrt{\opL}\sin(t\sqrt{\opL}) I_0[q]$ into oscillatory integrals with symbols of lower and lower $\bxi$-order. Note, however, that $\mhoI$ increases the $\bxi$-degree by one.

However, these $\bxi$-homogeneity considerations are not enough to guarantee that there is no blow-up in time of these expansions. Indeed, as the definition of $\Lambda_I$ in \eqref{eq:opLambda_int} involves integration on an interval of length $t$, and the lower order terms in these expansions involve iterated applications of $\Lambda_I$, more precise information is needed to ensure that the lower order terms remain negligible with respect to the main term as $t$ grows. To gain the appropriate control for large time, we shall need a more explicit expression for the operator $\Lambda$.

\begin{prp}\label{prp:Lambdaop_2step}
With the notation of Corollary \ref{cor:flow_alt},
the operators $\Lambda$ and $\mho$ from \eqref{eq:opLambda} are given by
\begin{align}
\label{eq:opLambda_2step}
\Lambda q 
&= \sum_{j=0}^2 \Lambda_{j0} \partial_t^j q + \sum_{j=0}^1 (\partial_t^j \partial_{\xi} q) \Lambda_{j1} + \tr(\Lambda_{02} \partial_\xi \nabla_\xi q),\\
\label{eq:opMho_2step}
\mho q 
&= \sum_{j=0}^1 2^{j-1} \Lambda_{(j+1)0} \partial_t^j q + 2^{-1} (\partial_{\xi} q) \Lambda_{11},
\end{align}
where
{\allowdisplaybreaks\begin{align*}\label{eq:opLambda_coeff_2step}
\Lambda_{00} &=
-\frac{|\mu|^2}{16|\xi|^2} \frac{1}{(1+ i\theta \langle |J_{\bar\mu}| \bar\xi,\bar\xi \rangle)^2} 
\Biggl[ 
 \tr^2|J_{\bar\mu}| - 6 \langle|J_{\bar\mu}|\bar\xi,\bar\xi\rangle \tr|J_{\bar\mu}| \\
&\quad -12\langle|J_{\bar\mu}|^2 \bar\xi,\bar\xi\rangle + 21\langle|J_{\bar\mu}|\bar\xi,\bar\xi\rangle^2
- 4i\theta \tr|J_{\bar\mu}| \frac{2\langle|J_{\bar\mu}|^2\bar\xi,\bar\xi\rangle-3\langle|J_{\bar\mu}|\bar\xi,\bar\xi\rangle^2}{1+i\theta\langle|J_{\bar\mu}|\bar\xi,\bar\xi\rangle}\\
&\quad- 2i\theta \frac{4\langle|J_{\bar\mu}|^3\bar\xi,\bar\xi\rangle-30\langle|J_{\bar\mu}|^2\bar\xi,\bar\xi\rangle \langle|J_{\bar\mu}|\bar\xi,\bar\xi\rangle+33\langle|J_{\bar\mu}|\bar\xi,\bar\xi\rangle^3}{1+i\theta\langle|J_{\bar\mu}|\bar\xi,\bar\xi\rangle}\\
&\quad+ 5(i\theta)^2 \frac{(2\langle|J_{\bar\mu}|^2\bar\xi,\bar\xi\rangle-3\langle|J_{\bar\mu}|\bar\xi,\bar\xi\rangle^2)^2}{(1+i\theta\langle|J_{\bar\mu}|\bar\xi,\bar\xi\rangle)^2} 
+ 2 \frac{2\langle|J_{\bar\mu}|^2\bar\xi,\bar\xi\rangle-3\langle|J_{\bar\mu}|\bar\xi,\bar\xi\rangle^2}{1+i\theta\langle|J_{\bar\mu}|\bar\xi,\bar\xi\rangle}
\Biggr],\\
\Lambda_{01} &= 
 -\frac{|\mu|^2}{4|\xi|^2} \frac{1}{(1+i\theta\langle|J_{\bar\mu}| \bar\xi,\bar\xi\rangle)^2}\\
&\quad\times \Biggl[ \left[  \tr|J_{\bar\mu}| -3 \langle |J_{\bar\mu}| \bar\xi, \bar\xi \rangle +2i\theta \frac{3\langle |J_{\bar\mu}| \bar\xi, \bar\xi \rangle^2 - 2\langle |J_{\bar\mu}|^2 \bar\xi,\bar\xi\rangle}{1+i\theta \langle |J_{\bar\mu}| \bar\xi, \bar\xi \rangle} \right] I + 2|J_{\bar\mu}| \Biggr]\\ 
&\quad\times \left(|J_{\bar\mu}|+iJ_{\bar\mu}\right) \xi ,\\
\Lambda_{02} &= - \frac{|\mu|^2}{4|\xi|^2} \frac{1}{(1+i\theta\langle|J_{\bar\mu}| \bar\xi,\bar\xi\rangle)^2} [\left(|J_{\bar\mu}|+iJ_{\bar\mu}\right) \xi] [\left(|J_{\bar\mu}| +iJ_{\bar\mu}\right) \xi]^T,\\
\Lambda_{10} &= \frac{i|\mu|}{2|\xi|} \frac{1}{1+i\theta \langle |J_{\bar\mu}| \bar\xi, \bar\xi \rangle} \left[ \tr|J_{\bar\mu}| - \langle |J_{\bar\mu}| \bar\xi, \bar\xi \rangle  +i\theta \frac{ 3\langle |J_{\bar\mu}| \bar\xi, \bar\xi \rangle^2 - 2\langle |J_{\bar\mu}|^2 \bar\xi,\bar\xi\rangle }{1+i\theta \langle |J_{\bar\mu}| \bar\xi, \bar\xi \rangle} \right],\\
\Lambda_{11} &= i\frac{|\mu|}{|\xi|} \frac{1}{1+ i\theta \langle |J_{\bar\mu}| \bar\xi,\bar\xi\rangle} (|J_{\bar\mu}| + i J_{\bar\mu}) \xi,\\
\Lambda_{20} &= 1. \stepcounter{equation}\tag{\theequation}
\end{align*}}
\end{prp}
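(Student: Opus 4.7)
My plan is to unfold the definitions \eqref{eq:opLambda}--\eqref{eq:opMho_int} term by term, using the general amplitude-to-symbol machinery of Lemma \ref{lem:a2s} together with the closed-form expressions that the $2$-step structure makes available.

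Since $q(t,\bxi)$ is $\bx$-independent and each $F_{kj}$ is $u$-independent and polynomial in $x$ of degree at most $k$ by Lemma \ref{lem:dal}\ref{en:dal_Fkj}, Lemma \ref{lem:a2s}\ref{en:a2s_leib} and \ref{en:a2s_uindxpol} allows us to write every $R^k(F_{kj}\partial_t^j q)$ as a finite combination of terms of the form $c(t,\bx,\bxi)\,\partial_t^j\partial_\xi^\alpha q$ with $|\alpha|\le k$. Evaluating at $\bx=\bx^t$ and collecting contributions of equal order of differentiation in $q$ then produces the six coefficients $\Lambda_{ij}$ of \eqref{eq:opLambda_2step}: namely, $\Lambda_{20}=\underline{F_{02}}=1$; from $R(F_{11}\partial_t q)$ one reads off $\Lambda_{11}=\underline{\Phi_0^{-1}\widetilde{\nabla_x F_{11}}}$ and $\Lambda_{10}=\underline{F_{01}}+\underline{R F_{11}}$; and the remaining $\Lambda_{00}$, $\Lambda_{01}$, $\Lambda_{02}$ collect the contributions of $F_{00}q$, $R(F_{10}q)$ and $R^2(F_{20}q)$ of matching orders in $\partial_\xi q$. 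A parallel but shorter bookkeeping for $\mho$ yields the three coefficients in \eqref{eq:opMho_2step}.

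The second step is to convert these abstract expressions into the closed forms of \eqref{eq:opLambda_coeff_2step}. For this I substitute the flow formulas \eqref{eq:flow_2step} and compute $\partial_t\phi$, $\partial_t^2\phi$, $\nabla^\opL\phi$, $\opL\phi$, and their derivatives in $t$ and $x$ directly from \eqref{eq:phase} and \eqref{eq:Xfields}; by Lemma \ref{lem:phder}\ref{en:phder_tx} each is a polynomial in $x$ of low degree, and its values and derivatives along the flow are elementary expressions in $\theta$, $\xi$, $\bar\mu$, and $J_{\bar\mu}$. The matrix $\Phi_0$ is a rank-one perturbation of the unitary operator $\exp(\theta J_{\bar\mu})\exp(-i\theta|J_{\bar\mu}|)$ by Lemma \ref{lem:phder}\ref{en:phder_hess_nondeg}, so Sherman--Morrison yields
\[
\Phi_0^{-1}=\left(I-\frac{i\theta\,(|J_{\bar\mu}|+iJ_{\bar\mu})\bar\xi\,\bar\xi^T}{1+i\theta\langle|J_{\bar\mu}|\bar\xi,\bar\xi\rangle}\right)\exp(i\theta|J_{\bar\mu}|)\exp(-\theta J_{\bar\mu}).
\]
The crucial algebraic fact that makes all the trigonometric machinery collapse to the rational expressions of \eqref{eq:opLambda_coeff_2step} is the identity \eqref{eq:fctnJ_2step}: any analytic function of $J_{\bar\mu}$ applied to a vector of the form $(|J_{\bar\mu}|+iJ_{\bar\mu})\xi$ reduces to the scalar $F(-i|J_{\bar\mu}|)$ multiplying it. Together with Euler-type identities for $\theta$-differentiation (e.g.\ \eqref{eq:derxitheta}), this reduces every tensor contraction that appears to a rational function of $\theta$, $\tr|J_{\bar\mu}|$, and the scalars $\langle|J_{\bar\mu}|^k\bar\xi,\bar\xi\rangle$ for $k=1,2,3$.

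The main obstacle is the computation of $\underline{R^2 F_{20}}$ and of the coefficient of $\partial_\xi q$ in $\underline{R^2(F_{20}q)}$: $F_{20}$ is quadratic in $x$, so a second application of $R$ requires differentiating $\Phi_0^{-1}\widetilde{\nabla_x F_{20}}$ once more in $\xi$, producing a proliferation of rank-one contractions. Potential singular behaviour is tamed by the identities $\underline{F_{20}}=0$, $\underline{\partial_{\bx}F_{20}}=0$ and $\underline{F_{10}+R F_{20}}=0$ of Lemma \ref{lem:crucial_coeff}, which guarantee cancellations leaving only the second-order Taylor remainder of $F_{20}$ around $\bx=\bx^t$ to contribute. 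A systematic application of \eqref{eq:fctnJ_2step}, combined with the observation that the vector $(|J_{\bar\mu}|+iJ_{\bar\mu})\xi$ is preserved by $\Phi_0^{-1}$ up to the explicit scalar factor $1/(1+i\theta\langle|J_{\bar\mu}|\bar\xi,\bar\xi\rangle)$, keeps the algebra tractable and delivers the stated formulas.
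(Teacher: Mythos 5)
Your plan matches the paper's proof: first a bookkeeping lemma using Lemma~\ref{lem:a2s} to express $\Lambda_{jr}$ in terms of $F_{kj}$, $\Phi_0^{-1}$ and $R$; then explicit computation of the $F_{kj}$ from the flow formulas \eqref{eq:flow_2step} and \eqref{eq:phase}; finally reduction to rational expressions in $\theta$, $\tr|J_{\bar\mu}|$ and $\langle|J_{\bar\mu}|^k\bar\xi,\bar\xi\rangle$ via the Sherman--Morrison form of $\Phi_0^{-1}$ and the key identity \eqref{eq:fctnJ_2step}. One small clarification: the vanishing identities of Lemma~\ref{lem:crucial_coeff} have already been used to derive the formula \eqref{eq:opLambda} itself (in Proposition~\ref{prp:dal2}), so they do not play a role in the subsequent bookkeeping that yields the $\Lambda_{jr}$; the paper instead organises that step through the combined quantity $K=F_{10}+RF_{20}$, which by linearity of $R$ agrees with the termwise collection you describe.
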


The proof of Proposition \ref{prp:Lambdaop_2step} requires a number of lengthy computations and is postponed to Section \ref{s:opLambda_2step}.

\begin{rem}
Assume that $G$ is a Heisenberg-type group and $\opL$ is its distinguished sub-Laplacian.
Then $|J_{\bar\mu}| = I$ and the formulas \eqref{eq:opLambda_coeff_2step} take a simpler form:
\[
\begin{aligned}
\Lambda_{00} &= -\frac{|\mu|^2}{16 |\xi|^2} \frac{1}{(1+ i\theta)^2} \left[ d_1 (d_1-2) - \frac{2 (2d_1-1)}{1+ i\theta} + \frac{5}{(1+ i\theta)^2} \right],\\
\Lambda_{01} &= -\frac{|\mu|^2}{4 |\xi|^2} \frac{1}{(1+ i\theta)^2} \left[ d_1 +1 - \frac{2}{1+ i\theta} \right] (I + i J_{\bar\mu}) \xi,\\
\Lambda_{02} &= -\frac{|\mu|^2}{4 |\xi|^2} \frac{1}{(1+ i\theta)^2}  [(I+ i J_{\bar\mu}) \xi ] [ (I+ i J_{\bar\mu}) \xi]^T,\\
\Lambda_{10} &= i\frac{|\mu|}{2 |\xi|} \frac{1}{1+ i\theta} \left[ d_1 - \frac{1}{1+ i\theta} \right],\\
\Lambda_{11} &= i\frac{|\mu|}{|\xi|} \frac{1}{1+ i\theta} (I + i J_{\bar\mu}) \xi,\\
\Lambda_{20} &= 1.
\end{aligned}
\]
\end{rem}

Let $\kappa > 1$, and define
\begin{equation}\label{eq:Omega_kappa}
\Omega_\kappa = \{ \bxi \in \RR^{d} \tc |\xi| \geq \kappa^{-1}, \  \kappa^{-1} \leq |\mu|/|\xi| \leq \kappa \}.
\end{equation}
Given $a,b \in \RR,$ we say that a family $\cQ$ of smooth symbols $q = q(t,\bxi)$ is \emph{$(a,b,\kappa)$-bounded} if
\begin{equation}\label{eq:Q_bound}
\sup_{q \in \cQ} \sup_{(t,\bxi) \in \RR \times \RR^{d}} (1+|t|)^{a-k} (1+|\bxi|)^{|\alpha|-b} |\partial_t^k \partial_{\bxi}^\alpha q(t,\bxi)| < \infty \qquad\forall k \in \NN, \ \alpha \in \NN^{d}
\end{equation}
(i.e., if $\cQ$ is bounded in the Fr\'echet space $S^a(\RR) \otimes S^{b}(\RR^d)$) and
\begin{equation}\label{eq:Q_supp}
\bigcup_{q \in \cQ} \bigcup_{t \in \RR} \supp q(t,\cdot) \subseteq \Omega_\kappa.
\end{equation}

\begin{lem}\label{lem:symbols_opLambda}
Assume that $G$ is a M\'etivier group.
Let $\kappa > 1$ and $m \in \RR$. 
If $\cQ$ is a $(0,m,\kappa)$-bounded family of symbols, then
\begin{itemize}
\item $\Lambda(\cQ) \defeq \{ \Lambda q \tc q \in \cQ\}$ is $(-2,m,\kappa)$-bounded, 
\item $\Lambda_I(\cQ) \defeq \{ \Lambda_I q \tc q \in \cQ \}$ is $(0,m-1,\kappa)$-bounded,
\item $\mho(\cQ) \defeq \{ \mho q \tc q \in \cQ\}$ is $(-1,m,\kappa)$-bounded, 
\item $\mhoI(\cQ) \defeq \{ \mhoI q \tc q \in \cQ\}$ is $(0,m+1,\kappa)$-bounded.
\end{itemize}
Moreover,
\begin{gather*}
\bigcup_{t \in \RR} \supp \Lambda_I q(t,\cdot) \subseteq \bigcup_{t \in \RR} \supp \Lambda q(t,\cdot) \subseteq \bigcup_{t \in \RR} \supp q(t,\cdot),\\
\bigcup_{t \in \RR} \supp \mhoI q(t,\cdot) \cup \bigcup_{t \in \RR} \supp \mho q(t,\cdot) \subseteq \bigcup_{t \in \RR} \supp q(t,\cdot).
\end{gather*}
\end{lem}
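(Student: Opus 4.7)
\smallskip

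The plan is to unpack the explicit formulas for $\Lambda$ and $\mho$ provided by Proposition \ref{prp:Lambdaop_2step}, estimate each coefficient $\Lambda_{jk}$ pointwise on $\Omega_\kappa$, and then combine the estimates via the Leibniz rule. The Métivier hypothesis is essential at the first step: by Proposition \ref{prp:modJmu_analytic} and the assumption \eqref{eq:metivier}, $\mu\mapsto |J_\mu|$ is smooth, positive-definite and $1$-homogeneous on $\dot\RR^{d_2}=\Omega$, so $\bar\mu\mapsto |J_{\bar\mu}|$ is smooth on the compact unit sphere and bounded below by a positive constant. Combined with the definition of $\Omega_\kappa$, this gives, uniformly on $\Omega_\kappa$,
\[
|\xi|\simeq |\mu|\simeq |\bxi|,\qquad \theta=\tfrac{t|\mu|}{2|\xi|}\simeq t,\qquad \langle |J_{\bar\mu}|\bar\xi,\bar\xi\rangle\simeq 1,
\]
and therefore $|1+i\theta\langle|J_{\bar\mu}|\bar\xi,\bar\xi\rangle|\simeq 1+|t|$. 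Every other ingredient appearing in \eqref{eq:opLambda_coeff_2step} is a smooth function of $(\bar\xi,\bar\mu)$, hence bounded with all derivatives on the unit spheres, while $\theta$-dependent expressions such as $i\theta/(1+i\theta\langle\cdot\rangle)$ are uniformly bounded on $\RR$ and each $\partial_\theta$ produces an extra factor of $(1+i\theta\langle\cdot\rangle)^{-1}$.

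The next step is to read off, from \eqref{eq:opLambda_coeff_2step}, the following $(a,b,\kappa)$-boundedness for the individual coefficients (where $a$ counts powers of $(1+|t|)^{-1}$ and $b$ counts the degree of $\bxi$-homogeneity): $\Lambda_{20}=1$ is $(0,0,\kappa)$-bounded; $\Lambda_{10}$ is $(-1,0,\kappa)$-bounded and $\Lambda_{11}$ is $(-1,1,\kappa)$-bounded because each contains a single denominator factor and a $|\mu|/|\xi|\simeq 1$ prefactor; and $\Lambda_{00},\Lambda_{01},\Lambda_{02}$ are respectively $(-2,0,\kappa)$, $(-2,1,\kappa)$ and $(-2,2,\kappa)$-bounded because of the squared denominator and the $\bxi$-degrees $0,1,2$ carried by the tensorial factors. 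In each case, $t$-derivatives either act on bounded quantities or increase the power of the denominator, producing the decay gain $(1+|t|)^{-1}$ per derivative, while $\bxi$-derivatives reduce the homogeneity degree by one, consistently with the symbol class.

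Given these building blocks, the bound on $\Lambda(\cQ)$ follows by the Leibniz rule, using that the product of an $(a_1,b_1,\kappa)$-bounded family and an $(a_2,b_2,\kappa)$-bounded family is $(a_1+a_2,b_1+b_2,\kappa)$-bounded: each summand in \eqref{eq:opLambda_2step} pairs a coefficient $\Lambda_{kj}$ with $\partial_t^j$ and $\partial_\xi$-derivatives of $q$, and the orders always add up to $(-2,m)$. The analogous computation with \eqref{eq:opMho_2step} yields the $(-1,m,\kappa)$-boundedness of $\mho(\cQ)$. For $\Lambda_I q(t,\bxi)=(2i|\xi|)^{-1}\int_0^t\Lambda q(\tau,\bxi)\,d\tau$, the decisive point is that the $(-2,\cdot)$-decay of $\Lambda q$ makes $\int_0^t\Lambda q(\tau,\bxi)\,d\tau$ uniformly bounded in $t\in\RR$; since $|\xi|^{-1}$ is smooth and $(-1)$-homogeneous on $\Omega_\kappa$, multiplication by it reduces the $\bxi$-order by one, giving $(0,m-1,\kappa)$-boundedness. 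Differentiating once in $t$ gives $(2i|\xi|)^{-1}\Lambda q$, which is in fact $(-2,m-1,\kappa)$-bounded and thus satisfies the required $(0,m-1,\kappa)$ bound with room to spare. Finally, $\mhoI q=|\xi| q+i\mho q$ is $(0,m+1,\kappa)$-bounded because the leading term $|\xi| q$ is $(0,m+1,\kappa)$-bounded while $i\mho q$ is $(-1,m,\kappa)$-bounded, which is stronger in both indices.

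The support inclusions require no further work: $\Lambda$ and $\mho$ are linear combinations of multiplications by functions smooth on $\RR\times\Omega_\kappa$ and of differentiations in $t$ and $\bxi$, so they do not enlarge the $\bxi$-support at any time; taking the union over $t$ gives the stated inclusions. For $\Lambda_I$, if $\bxi\notin\bigcup_{t\in\RR}\supp q(t,\cdot)$, then $q(\tau,\bxi)$, hence $\Lambda q(\tau,\bxi)$, vanishes for every $\tau$, and thus $\Lambda_I q(t,\bxi)=0$ for all $t$. The main conceptual obstacle in this argument is ensuring that no loss occurs from the integral defining $\Lambda_I$ over arbitrarily large time intervals; this is precisely what the $(1+|t|)^{-2}$ decay of $\Lambda q$ buys, and this decay is in turn tied to the Métivier nondegeneracy through the denominators $(1+i\theta\langle|J_{\bar\mu}|\bar\xi,\bar\xi\rangle)$ being uniformly of size $1+|t|$.
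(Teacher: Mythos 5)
Your proof is correct and follows essentially the same route as the paper's: both rely on reading off from the explicit formulas \eqref{eq:opLambda_coeff_2step} that the M\'etivier assumption makes $\tr|J_{\bar\mu}|\simeq 1$ and $\langle |J_{\bar\mu}|^N\bar\xi,\bar\xi\rangle\simeq_N 1$ on $\Omega_\kappa$, so that each $\Lambda_{jr}$ is $(j-2,r,\kappa)$-bounded; the boundedness of $\Lambda,\mho,\mhoI$ then follows from the Leibniz rule, and for $\Lambda_I$ the $(1+|t|)^{-2}$ decay of $\Lambda q$ is exactly what makes the time integral uniformly bounded before dividing by $|\xi|$. The only cosmetic difference is that the paper explicitly introduces an auxiliary operator $\Lambda_I'q=\int_0^t\Lambda q\,d\tau$ and treats the factor $(2i|\xi|)^{-1}$ as a separate final step, while you fold the two operations together; the underlying estimates are identical.
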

\begin{proof}
This follows easily by inspection of the formulas \eqref{eq:opLambda_2step}, \eqref{eq:opMho_2step}, \eqref{eq:opLambda_coeff_2step}, \eqref{eq:opLambda_int} and \eqref{eq:opMho_int}.

Indeed, since $G$ is M\'etivier, the map $\mu \mapsto |J_{\bar\mu}|$ is smooth on $\dot\RR^{d_2}$ (see Proposition \ref{prp:modJmu_analytic}). So the coefficients in \eqref{eq:opLambda_coeff_2step} are smooth where $|\xi|\neq 0$ and $|\mu|\neq 0$, thus the support condition \eqref{eq:Q_supp} ensures that the symbols $\Lambda q$ are also smooth for $q \in \cQ$. Moreover clearly $\supp \Lambda q \subseteq \supp q$ and $\bigcup_{t \in \RR} \supp \Lambda q(t,\cdot) \subseteq \bigcup_{t \in \RR} \supp q(t,\cdot)$, and similar support considerations apply to $\mho$ and $\mhoI$.

\smallskip

Additionally, again by the M\'etivier assumption,
\[
\tr |J_{\bar\mu}| \simeq 1, \qquad \langle |J_{\bar\mu}|^N \bar\xi,\bar\xi \rangle \simeq_N 1 
\]
for all $\xi \neq 0$ and $\mu \neq 0$. As $\theta = t|\mu|/(2|\xi|)$ and $|\mu|/|\xi| \simeq 1$ on $\Omega_\kappa$, it is clear that the coefficients $\Lambda_{jr}$ in \eqref{eq:opLambda_coeff_2step} behave on $\Omega_\kappa$ as symbols in $S^{j-2}(\RR) \otimes S^{r}(\RR^d)$; so from \eqref{eq:opLambda_2step} it follows that, if $q$ ranges in a bounded set of $S^{0}(\RR) \otimes S^{m}(\RR^d)$, then $\Lambda q$ ranges in a bounded set of $S^{-2}(\RR) \otimes S^{m}(\RR^d)$. This shows that $\Lambda(\cQ)$ is $(-2,m,\kappa)$-bounded. In a similar way, from \eqref{eq:opMho_2step} and \eqref{eq:opMho_int} one sees that $\mho(\cQ)$ is $(-1,m,\kappa)$-bounded and $\mhoI(\cQ)$ is $(0,m+1,\kappa)$-bounded.

\smallskip

Now, if we define $\Lambda_I' q(t,\bxi) \defeq \int_0^t \Lambda q(\tau,\bxi) \,d\tau$, then clearly the $\Lambda_I' q$ are also smooth, and moreover, for any $\alpha \in \NN^d$,
\[\begin{split}
|\partial_{\bxi}^\alpha \Lambda_I'q(t,\bxi)| 
= \left|\int_0^t \partial_{\bxi}^\alpha \Lambda q(\tau,\bxi) \,d\tau\right| 
&\lesssim_{\cQ,\alpha} (1+|\bxi|)^{m-|\alpha|} \int_0^{\infty} (1+\tau)^{-2} \,d\tau \\
&\lesssim (1+|\bxi|)^{m-|\alpha|},
\end{split}\]
while, for any $k \in \Npos$,
\[\begin{split}
|\partial_t^k \partial_{\bxi}^\alpha \Lambda_I'q(t,\bxi)| 
= |\partial_t^{k-1} \partial_{\bxi}^\alpha \Lambda q(\tau,\bxi) | 
&\lesssim_{\cQ,\alpha,k} (1+|\bxi|)^{m-|\alpha|} (1+|t|)^{-2-(k-1)} \\
&\leq (1+|\bxi|)^{m-|\alpha|} (1+|t|)^{-k};
\end{split}\]
this shows that $\Lambda_I' q$ ranges in a bounded subset of $S^{0}(\RR) \otimes S^{m}(\RR^d)$ if $q \in \cQ$. As clearly $\bigcup_{t \in \RR} \supp \Lambda_I' q(t,\cdot) \subseteq \bigcup_{t \in \RR} \supp \Lambda q(t,\cdot)$, we conclude that $\Lambda_I'(\cQ)$ is $(0,m,\kappa)$-bounded.

Finally, the division by $|\xi|$ in \eqref{eq:opLambda_int} does not create singularities due to the support condition, and only has the effect of decreasing by one the order in $\bxi$ of the symbol, thus resulting in $\Lambda_I(\cQ)$ being $(0,m-1,\kappa)$-bounded.
\end{proof}

As a consequence of Lemma \ref{lem:symbols_opLambda}, we see that, if in Proposition \ref{prp:FIO_repn_wave} we start with a smooth symbol $q = q(\bxi)$ of order $0$ supported in some $\Omega_\kappa$, then the symbols $\Lambda_I^j q(t,\bxi)$ and $\Lambda \Lambda_I^j q(t,\bxi)$ appearing in the FIO representation of $\cos(t\sqrt{L}) I_0[q]$ are of lower and lower $\bxi$-order as $j$ increases, while staying uniformly bounded in $t$ and having $\bxi$-supports contained in that of $q$. A similar observation applies to the FIO representation of $\sqrt{\opL} \sin(t\sqrt{\opL}) I_0[q]$.

\section{\texorpdfstring{$L^1$}{L1}-estimates for FIO kernels}\label{s:sss}

In light of Proposition \ref{prp:FIO_repn_wave}, the action of $\cos(t\sqrt{\opL})$ and $\sqrt{\opL} \sin(t\sqrt{\opL})$ on the functions $H_{m,h}$ of \eqref{eq:initial_amplitude_complex} can be described in terms of FIO kernels $I[q]$ for appropriate choices of $q$. As a consequence, the crucial estimate \eqref{eq:target_est}, to which we had reduced the proof of the estimate \eqref{eq:non-elliptic_region} for the non-elliptic region, can be in turn reduced to spatial $L^1$-estimates for FIO kernels, which are discussed in this section in the case where $G$ is a M\'etivier group.

\subsection{Local-in-time \texorpdfstring{$L^1$}{L1}-estimates}

By means of a modification of the key method of Seeger, Sogge and Stein in \cite{SSS}, we shall first obtain a local $L^1$-estimate for oscillatory integrals of the form $I[q](t,\cdot)$, which is uniform as long as $|t|$ is bounded.

One key difference between our approach and that of \cite{SSS} is due to the fact that the phase function $\phi$ that we use here  (see \eqref{eq:phase}) has a nontrivial imaginary part. By taking advantage of the Gaussian decay of $e^{i\phi}$ in $x$ away from $x=x^t$ that is granted by $\Im \phi$ (compare \eqref{eq:useIm} below), we can effectively avoid the use of integration by parts in $\bxi$ to gain a suitable localisation in $x$, and 
we shall only need to integrate by parts in $\bxi$ in order to localise in $u$.

\begin{prp}\label{prp:sdd_0}
Assume that $G$ is a M\'etivier group.
Let $\kappa > 1$ and $a \in \RR$.
Let $\cQ$ be a $(0,a,\kappa)$-bounded family of symbols.
Let $r,T,\delta_0 > 0$. For all $m \in \NN$, all $q \in \cQ$ such that
\begin{equation}\label{eq:Q_supp_m}
\bigcup_{t \in \RR} \supp q(t,\cdot) \subseteq \{ \bxi \tc \kappa^{-1} \leq |\xi|/2^m \leq \kappa \},
\end{equation}
and all $t \in [-T,T]$,
\[
\|\chr_{\overline{B}(0,r)} I[q](t,\cdot) \|_1 \lesssim_{\cQ,\kappa,r,T,\delta_0} 2^{m[\delta_0+a+(d-1)/2]}.
\]
\end{prp}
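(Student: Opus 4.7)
The proof adapts the Seeger--Sogge--Stein argument \cite{SSS}, with the imaginary part of $\phi$ replacing integration by parts in some of the $\bxi$-directions. On $\supp q$, the M\'etivier condition \eqref{eq:metivier} together with \eqref{eq:Omega_kappa} and \eqref{eq:Q_supp_m} forces $|J_\mu|\simeq|\mu|\simeq|\xi|\simeq 2^m$, so from the expression \eqref{eq:phase} for the phase we get
\[
\Im\phi(t,\bx,\bxi) \gtrsim 2^m\,|x-x^t(\bxi)|^2.
\]
Hence $|e^{i\phi}|$ is Gaussian-concentrated in $x$ around $x^t(\bxi)$ at scale $2^{-m/2}$. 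Combined with the uniform bounds $|q|\lesssim 2^{ma}$ and $|\Den_\phi|\lesssim 1$ (the latter following from \eqref{eq:det_2step} and the boundedness of $\theta = t|\mu|/(2|\xi|)$ for $|t|\leq T$ and $\bxi\in\Omega_\kappa$), this substitutes for the transverse-in-$\xi$ integrations by parts usually needed in the elliptic SSS argument to obtain $L^1$-localisation in $x$: no integration by parts is required to handle the $x$-variable.

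We then perform an SSS-type dyadic decomposition adapted to the $1$-homogeneity of $\phi$ in $\bxi$: we partition $\supp q$ into $\lesssim 2^{m(d-1)/2}$ pieces $\{q_\nu\}$, each supported in a box of radial $\bxi$-size $\simeq 2^m$ and of size $\simeq 2^{m/2}$ in each of the $d-1$ directions tangent to the $(d-1)$-sphere of directions in $\bxi$-space. Pick a representative $\bxi_\nu$ in each box and perform integration by parts in the tangential $\bxi$-directions, using a first-order differential operator dual to the tangential components of $\nabla_{\bxi}\phi$. Near the wavefront set $\Lambda_{\phi,t}$, these tangential components are essentially the projections of $(x-x^t(\bxi_\nu),u-u^t(\bxi_\nu))$ to the sphere tangent space, thanks to Lemma \ref{lem:phder} and the nondegeneracy of the mixed Hessian $\Phi$ (Lemma \ref{lem:phder}\ref{en:phder_hess_nondeg}); this yields rapid decay of $I[q_\nu](t,\cdot)$ in the tangential $\bx$-directions outside a $\bx$-box of tangential width $\simeq 2^{-m/2+m\delta}$, while the remaining radial $x$-direction is handled by the Gaussian decay. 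Each piece is thus essentially supported on a $\bx$-set of volume $\lesssim 2^{-m(d+1)/2+m\delta}$ centred at $(x^t(\bxi_\nu),u^t(\bxi_\nu))$, and the uniform bounds on $q,\Den_\phi$ and the $\bxi$-volume $\simeq 2^{m(d+1)/2}$ of each piece give
\[
\|\chr_{\overline{B}(0,r)}I[q_\nu](t,\cdot)\|_1 \lesssim 2^{m(a+\delta)}
\]
uniformly in $\nu$ and $t \in [-T,T]$.

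The main technical obstacle will be to carry out these integrations by parts uniformly in $t \in [-T,T]$ in the presence of the complex phase: one needs uniform (in $\bxi\in\Omega_\kappa$ and $|t|\leq T$) control of all $\bxi$-derivatives of $\bx^t,\bxi^t,|J_\mu|,\Den_\phi$ and of the amplitude $q$, with bounds respecting the anisotropy of the decomposition (each tangential $\bxi$-derivative must cost $2^{-m/2}$, each radial $\bxi$- or $t$-derivative must cost $2^{-m}$); the existence of such uniform estimates relies on the real-analyticity of $|J_\mu|$ on $\Omega\supseteq\Omega_\kappa$ given by Proposition \ref{prp:modJmu_analytic}, together with the explicit expression \eqref{eq:flow_2step} for the flow. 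Granted this, summing the per-piece bound over the $\lesssim 2^{m(d-1)/2}$ pieces and choosing $\delta \leq \delta_0$ small enough to absorb the box-counting and Schwartz-tail losses yields
\[
\|\chr_{\overline{B}(0,r)}I[q](t,\cdot)\|_1 \lesssim 2^{m(d-1)/2}\cdot 2^{m(a+\delta)} \lesssim 2^{m(\delta_0+a+(d-1)/2)},
\]
as claimed.
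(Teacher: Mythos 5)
Your overall strategy is right, and you correctly identify the key new ideas: on a M\'etivier group, the bound $\Im\phi\gtrsim|\mu|\,|x-x^t|^2\gtrsim 2^m|x-x^t|^2$ lets the Gaussian decay localise $x$ and thereby replace the integrations by parts in the $\xi$-tangential directions that the elliptic SSS argument would use; and this is combined with an SSS-type angular decomposition into $\lesssim 2^{m(d-1)/2}$ caps. This matches the paper's architecture.

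There is, however, a genuine gap in the integration-by-parts step. You propose to integrate by parts \emph{only} in the $d-1$ tangential $\bxi$-directions (at scale $2^{-m/2}$ per IBP), and to leave the ``remaining radial $x$-direction'' to the Gaussian. But the paper's proof also integrates by parts in the \emph{radial} direction $\breve{\bxi}$: by Euler's identity, $(\breve{\bxi}\cdot\nabla_{\bxi})[\partial_{\bxi}\phi(t,\bx,\breve{\bxi})\bxi]=\phi(t,\bx,\breve{\bxi})$, so the radial IBP yields decay of order $(2^m|\phi(t,\bx,\breve{\bxi})|)^{-N}$, and via $|\phi|\geq|\Re\phi|$ this localises the single combination $(x-\breve{x}^t)\cdot\breve{\xi}^t+(u-\breve{u}^t)\cdot\breve{\mu}$ at the much finer scale $2^{-m}$. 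The Gaussian cannot substitute for this: it controls only $|x-x^t|$ (not $u$), and only at scale $2^{-m/2}$. With just tangential IBPs (all at $2^{-m/2}$) and the Gaussian (also at $\approx 2^{-m/2}$), the best achievable $\bx$-volume per cap is $\approx 2^{-md/2}$, not the $2^{-m(d+1)/2}$ you claim; the missing factor $2^{-m/2}$ means the summed estimate would be $\lesssim 2^{m[a+d/2]}$, not $\lesssim 2^{m[a+(d-1)/2]}$, i.e.\ you would land at the non-sharp threshold. The paper's precise bookkeeping is: radial IBP gives one $\bx$-constraint at scale $2^{-m}$; IBP in the $d_2-1$ directions $\breve{\bxi}^\perp_j=(0,\breve{\mu}^\perp_j)$ (the $\mu$-transverse tangent directions, \emph{not} the $\xi$-tangential ones) gives $d_2-1$ constraints at scale $2^{-m/2}$; the Gaussian gives $d_1$ constraints on $x$ at scale $2^{m(\delta_0-1)/2}$. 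Together these yield exactly $d$ independent constraints and the volume $2^{-m(d+1)/2+m\delta_0 d_1/2}$. Restoring the radial IBP (and making sure that the IBPs used are exactly those $d_2$ directions that actually see the $u$-variable) closes the gap.
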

\begin{proof}
Let $q \in \cQ$ and $m \in \NN$ satisfying \eqref{eq:Q_supp_m}.

By Proposition \ref{prp:modJmu_analytic}, due to our constant rank assumption, the function $\mu \mapsto |J_\mu|$ is smooth on $\dot\RR^{d_2}$.
Thus, by the expression \eqref{eq:phase} for $\phi$ and the formulas \eqref{eq:flow_2step}, it is clear that $\phi$ is a smooth function on $\RR \times \RR^d \times \{ \bxi \tc |\xi| \neq 0 \neq |\mu| \}$, so the same is true for $\Phi$ and $\Den_\phi$; moreover $\phi$ and $\Den_\phi$ are $1$-homogeneous and $0$-homogeneous in $\bxi$ respectively. Thus,
if $\bx \in \overline{B}(0,r)$ and $|t| \leq T$,
\begin{equation}\label{eq:est_phi_unif}
|\partial_{\bxi}^\alpha \phi(t,\bx,\bxi)| \lesssim_{\kappa,r,T,\alpha} |\bxi|^{1-|\alpha|}, \quad
|\partial_{\bxi}^\alpha \Den_\phi(t,\bxi)| \lesssim_{\kappa,r,T,\alpha} |\bxi|^{-|\alpha|}
\end{equation}
whenever $\bxi = (\xi,\mu) \in \Omega_\kappa$ (see \eqref{eq:Omega_kappa}); as a consequence, by the support conditions \eqref{eq:Q_supp} and \eqref{eq:Q_supp_m}, we have
\begin{equation}\label{eq:first_dyadic}
|\bxi| \simeq_\kappa 2^m
\end{equation}
and
\begin{equation}\label{eq:est_simb_unif}
|\partial_{\bxi}^\alpha \phi(t,\bx,\bxi)| \lesssim_{\kappa,r,T,\alpha} 2^{m(1-|\alpha|)}, \quad
|\partial_{\bxi}^\alpha (q(t,\bxi) \Den_\phi(t,\bxi))| \lesssim_{\cQ,\kappa,r,T,\alpha} \lesssim 2^{m(a-|\alpha|)}.
\end{equation}
when $\bxi \in \supp q(t,\cdot)$. 

\smallskip

As just mentioned, our support conditions enforce the dyadic frequency localisation \eqref{eq:first_dyadic}.
We now proceed with a ``second dyadic decomposition'' according to the direction of $\bxi$, analogous to that used in \cite{SSS} for estimates for oscillatory integrals with real phase functions. As we shall see, while this decomposition is essentially the same as that in \cite{SSS}, the way one estimates the resulting pieces is partly different, due to the presence of an imaginary part in the phase function $\phi$.

\smallskip

Namely, we can find (see, e.g., \cite[Section IX.4]{St} or \cite[Lemma 14]{MM_newclasses}) a finite set $Z_m \subseteq \{ \bxi \tc |\bxi| = 1\}$ of directions in $\RR^d$ with
\begin{equation}\label{eq:count_sdd_0}
\sharp Z_m \lesssim 2^{m(d-1)/2}
\end{equation}
and a smooth homogeneous partition of unity $\{\chi_{m,\breve{\bxi}}\}_{\breve{\bxi} \in Z_m}$ on $\dot\RR^d$ such that
\begin{equation}\label{eq:supp_sdd_0}
\supp \chi_{m,\breve{\bxi}} \subseteq \{ \bxi \tc |\bxi/|\bxi|-\breve{\bxi}| \leq c \, 2^{-m/2} \}
\end{equation}
for an appropriately small $c>0$, and
\begin{equation}\label{eq:der_sdd_0}
|\partial_{\bxi}^\alpha (\breve{\bxi} \cdot \nabla_{\bxi})^N \chi_{m,\breve{\bxi}} | \lesssim_{N,\alpha} 2^{|\alpha|m/2} |\bxi|^{-N-|\alpha|}
\end{equation}
for all $\breve{\bxi} \in Z_m$, $\alpha \in \NN^d$ and $N \in \NN$, where the implicit constants do not depend on $m$. Accordingly, we split
\begin{equation}\label{eq:dec_int_sdd_0}
I[q] = \sum_{\breve{\bxi} \in \tilde  Z_m} I[q_{\breve{\bxi}}],
\end{equation}
where $q_{\breve{\bxi}}(t,\bxi) \defeq q(t,\bxi) \chi_{m,\breve{\bxi}}(\bxi)$
and, due to the support condition \eqref{eq:Q_supp}, the sum is restricted to
\[
\tilde Z_m = \{ \breve{\bxi} \in Z_m \tc \supp \chi_{m,\breve{\bxi}} \cap \Omega_{\kappa} \neq \emptyset\}.
\]
As $|\mu| \simeq_\kappa |\xi|$ for all $\bxi \in \Omega_{\kappa}$, by choosing a small enough $c=c(\kappa)$ in \eqref{eq:supp_sdd_0} we may assume that
\begin{equation}\label{eq:normal_dir_0}
|\breve{\xi}| \simeq |\breve{\mu}| \simeq 1 \qquad\forall \breve{\bxi} = (\breve{\xi},\breve{\mu}) \in \tilde Z_m.
\end{equation}

We now fix a direction $\breve{\bxi} \in \tilde Z_m$ and decompose
\[
\phi(t,\bx,\bxi) = \partial_{\bxi}\phi(t,\bx,\bxi/|\bxi|) \bxi = \partial_{\bxi}\phi(t,\bx,\breve{\bxi}) \bxi + h_{\breve{\bxi}}(t,\bx,\bxi),
\]
where the first equality is due to Euler's identity. By construction, $h_{\breve{\bxi}}(t,\bx,\cdot)$ is $1$-homogeneous and vanishes of second order on the half-line $\Rpos \breve{\bxi}$, as
\[
\partial_{\bxi} h_{\breve{\bxi}}(t,\bx,\bxi) = \partial_{\bxi}\phi(t,\bx,\bxi/|\bxi|) - \partial_{\bxi}\phi(t,\bx,\breve{\bxi});
\]
from this and \eqref{eq:est_phi_unif} one then deduces, for all $N \in \NN$ and $\alpha \in \NN^d$, the estimates
\begin{align*}
|(\breve{\bxi} \cdot \nabla_{\bxi})^N h_{\breve{\bxi}}(t,\bx,\bxi)| 
&\lesssim_{\kappa,r,T,N} \left|\frac{\bxi}{|\bxi|}-\breve{\bxi}\right|^2 |\bxi|^{1-N}, \\
|\partial_{\bxi} (\breve{\bxi} \cdot \nabla_{\bxi})^N h_{\breve{\bxi}}(t,\bx,\bxi)| 
&\lesssim_{\kappa,r,T,N} \left|\frac{\bxi}{|\bxi|}-\breve{\bxi}\right| |\bxi|^{-N}, \\
|\partial_{\bxi}^\alpha h_{\breve{\bxi}}(t,\bx,\bxi)|
&\lesssim_{\kappa,r,T,\alpha}  |\bxi|^{1-|\alpha|} 
\end{align*}
under the assumptions $|t|\leq T$, $\bx \in \overline{B}(0,r)$ and $\bxi \in \Omega_\kappa$. Thus
\begin{equation}\label{eq:SSSder}
|(\breve{\bxi} \cdot \nabla_{\bxi})^N h_{\breve{\bxi}}(t,\bx,\bxi)| 
\lesssim_{\kappa,r,T,N} 2^{-mN} , \qquad
|\partial_{\bxi}^\alpha h_{\breve{\bxi}}(t,\bx,\bxi)| 
\lesssim_{\kappa,r,T,\alpha} 2^{-m|\alpha|/2} 
\end{equation}
whenever $|t|\leq T$, $\bx \in \overline{B}(0,r)$ and $\bxi \in \supp q(t,\cdot) \cap \supp \chi_{m,\breve{\bxi}}$.

\smallskip

As a consequence, we also deduce that
\[
|e^{-ih_{\breve{\bxi}}} (\breve{\bxi} \cdot \nabla_{\bxi})^N e^{ih_{\breve{\bxi}}}| \lesssim_{\kappa,r,T,N} 2^{-mN}, \quad
|e^{-ih_{\breve{\bxi}}} \partial_{\bxi}^\alpha e^{ih_{\breve{\bxi}}}| \lesssim_{\kappa,r,T,\alpha} 2^{-m|\alpha|/2},
\]
whence also, by \eqref{eq:est_simb_unif} and \eqref{eq:der_sdd_0},
\begin{equation}\label{eq:est_ampl_SSS_0}
\begin{aligned}
|e^{-ih_{\breve{\bxi}}} (\breve{\bxi} \cdot \nabla_{\bxi})^N (e^{ih_{\breve{\bxi}}} \tilde q_{\breve{\bxi}})|
&\lesssim_{\cQ,\kappa,r,T,N}  2^{m(a-N)}, \\
|e^{-ih_{\breve{\bxi}}} \partial_{\bxi}^{\alpha} (e^{ih_{\breve{\bxi}}} \tilde q_{\breve{\bxi}})| 
&\lesssim_{\cQ,\kappa,r,T,\alpha}  2^{m(a-|\alpha|/2)}
\end{aligned}
\end{equation}
for all $N \in \NN$, where $\tilde q_{\breve{\bxi}} \defeq (2\pi)^{-n} q_{\breve{\bxi}} \Den_\phi$.

\smallskip

Notice now that
\[
(\breve{\bxi} \cdot \nabla_{\bxi})[\partial_{\bxi}\phi(t,\bx,\breve{\bxi})\bxi] = \partial_{\bxi}\phi(t,\bx,\breve{\bxi})\breve{\bxi} = \phi(t,\bx,\breve{\bxi})
\]
by $1$-homogeneity. Hence, by iterated integration by parts in the direction $\breve{\bxi}$,
\[\begin{split}
&I[q_{\breve{\bxi}}](t,\bx) \\
&= \int e^{i \partial_{\bxi}\phi(t,\bx,\breve{\bxi})\bxi} [e^{ih_{\breve{\bxi}}(t,\bx,\bxi)} \tilde q_{\breve{\bxi}}(t,\bxi)] \,d\bxi \\
&= (-i\phi(t,\bx,\breve{\bxi}))^{-N}  \int e^{i \partial_{\bxi}\phi(t,\bx,\breve{\bxi})\bxi} (\breve{\bxi} \cdot \nabla_{\bxi})^N [e^{ih_{\breve{\bxi}}(t,\bx,\bxi)} \tilde q_{\breve{\bxi}}(t,\bxi)] \,d\bxi \\
&= (-i\phi(t,\bx,\breve{\bxi}))^{-N} 
\int e^{i \phi(t,\bx,\bxi)} [e^{-ih_{\breve{\bxi}}(t,\bx,\bxi)} (\breve{\bxi} \cdot \nabla_{\bxi})^N [e^{ih_{\breve{\bxi}}(t,\bx,\bxi)} \tilde q_{\breve{\bxi}}(t,\bxi)]] \,d\bxi.
\end{split}\]
As $\Im\phi \geq 0$, from this expression, the estimate \eqref{eq:est_ampl_SSS_0} and the support conditions \eqref{eq:Q_supp}, \eqref{eq:Q_supp_m} and \eqref{eq:supp_sdd_0}, we finally deduce that
\begin{equation}\label{eq:est_SSS_rad_0}
\begin{split}
|I[q_{\breve{\bxi}}](t,\bx)| 
&\lesssim_{\cQ,\kappa,r,T,N} 2^{m[(d+1)/2+a]} 2^{-mN}  |\phi(t,\bx,\breve{\bxi})|^{-N} \\
&\leq 2^{m[(d+1)/2+a]} (2^{m} |(x-\breve{x}^{t}) \cdot \breve{\xi}^{t} + (u-\breve{u}^{t}) \cdot \breve{\mu}|)^{-N}
\end{split}
\end{equation}
where $\breve{x}^{t},\breve{u}^{t}, \breve{\xi}^{t}$ are the values of $x^{t},u^{t},\xi^{t}$ at $\bxi=\breve{\bxi}$, 
and we used that $|\phi| \geq \left|\Re\phi\right|$.

\smallskip

Recall now that $|\breve{\mu}| \simeq 1$ by \eqref{eq:normal_dir_0}. So we can complete $\breve{\mu}/|\breve{\mu}|$ to an orthonormal basis $\breve{\mu}/|\breve{\mu}|,\breve{\mu}^\perp_{1},\dots,\breve{\mu}^\perp_{d_2-1}$ of $\RR^{d_2}$. Thus, if we set $\breve{\bxi}^\perp_j \defeq (0,\breve{\mu}^\perp_j)$ for any $j=1,\dots,{d_2-1}$, then, by Lemma \ref{lem:phder}\ref{en:phder_grad}-\ref{en:phder_hess},
\[\begin{split}
|(\breve{\bxi}^\perp_j \cdot \nabla_{\bxi}) [ \partial_{\bxi} \phi(t,x,\breve{\bxi}) \bxi]|
&\geq |\Re (\breve{\bxi}^\perp_j \cdot \nabla_{\bxi}) [ \partial_{\bxi} \phi(t,x,\breve{\bxi}) \bxi] |\\
&= |\Re\partial_{\bxi} \phi(t,x,\breve{\bxi}) \breve{\bxi}^\perp_j| \\
&= |(\bx-\breve{\bx}^t)^T \Re\underline{\Phi}|_{\bxi=\breve{\bxi}} \, \breve{\bxi}^\perp_j| \\
&= |(x - \breve{x}^t) \cdot \breve{A}^t_j + (u-\breve{u}^t) \cdot \breve{\mu}^\perp_j|,
\end{split}\]
where $\breve{\bx}^t \defeq (\breve{x}^t,\breve{u}^t)$ and $\breve{A}^t_j \defeq \Re (\underline{\partial_x \partial_\mu \phi}|_{\bxi=\breve{\bxi}} \, \breve{\mu}^\perp_j)^T$. Arguing as before, by repeated integration by parts in the direction $\breve{\bxi}_j^\perp$ and using the second set of estimates in  \eqref{eq:SSSder} in place of the first ones, we also deduce
\begin{equation}\label{eq:est_SSS_ort_0}
|I[q_{\breve{\bxi}}](t,\bx)| 
\leq_{\cQ,\kappa,r,T,N} 2^{m[(d+1)/2+a]} (2^{m/2} |(x-\breve{x}^{t})\cdot\breve{A}^t_j   + (u-\breve{u}^{t})\cdot \breve{\mu}_j^\perp|)^{-N}
\end{equation}

We now notice that, as $\bxi$ ranges in $\Omega_{\kappa} \cap \supp \chi_{m,\breve{\bxi}}$, its direction $\bxi/|\bxi|$ stays in a $2^{-m/2}$-neighbourhood of $\breve{\bxi}$. Moreover, as $x^{t}$ is $0$-homogeneous in $\bxi$, its dependence on $\bxi$ is only through $\bxi/|\bxi|$, and $|\partial_{\bxi} x^{t}| \lesssim_{\kappa,T} 1$ on the unit sphere cap $\{ \bxi/|\bxi| \tc \bxi \in \Omega_{\kappa}\}$ provided $|t| \leq T$. From this we deduce that
\[
|x^{t}-\breve{x}^{t}| \lesssim_{\kappa,T} 2^{-m/2}
\]
whenever $\bxi \in \Omega_{\kappa} \cap \supp \chi_{m,\breve{\bxi}}$ and $|t| \leq T$.
Consequently, on the support of $q_{\breve{\bxi}}$,
\begin{equation}\label{eq:useIm}
\Im\phi \gtrsim |\mu| |x-x^{t}|^2 \gtrsim_{\kappa,T} 2^{\delta_0 m}
\end{equation}
whenever $|x-\breve{x}^{t}| > b_{\kappa,T} 2^{m(\delta_0-1)/2}$ for a sufficiently large $b_{\kappa,T} \in \Rpos$ and any $\delta_0 > 0$; in the first inequality in \eqref{eq:useIm}, the M\'etivier condition \eqref{eq:metivier} was used.
Hence
\[\begin{split}
|I[q_{\breve{\bxi}}](t,\bx)| \leq \int_{\RR^d} e^{-\Im \phi(t,\bx,\bxi)} |\tilde q_{\breve{\bxi}}(t,\bxi)| \,d\bxi
&\lesssim_{\cQ,\kappa,r,T} (2^m)^{a+d} e^{-c_{\kappa,T} 2^{\delta_0 m}} \\
&\lesssim_{a,\kappa,T,\delta_0,N} 2^{-mN}
\end{split}\]
for an appropriate $c_{\kappa,T} \in \Rpos$ and all $N \in \NN$; thus 
\begin{equation}\label{eq:local1est}
\|\chr_{\overline{B}(0,r) \setminus (\overline{B}_{\RR^{d_1}}(\breve{x}^{t},b_{\kappa,T} 2^{m(\delta_0-1)/2}) \times \RR^{d_2})} I[q_{\breve{\bxi}}](t,\cdot)\|_1 \lesssim_{\cQ,\kappa,r,T,\delta_0,N} 2^{-mN}
\end{equation}
for all $N \in \NN$, showing that the contribution of this part is negligible.

\smallskip

As for the remaining part, we can use the estimates \eqref{eq:est_SSS_rad_0} and \eqref{eq:est_SSS_ort_0} with $N=0$ and $N=d_2+1$ to deduce
\begin{equation}\label{eq:nonlocal1est}
\begin{split}
&\|\chr_{\overline{B}(0,r)} \chr_{\overline{B}_{\RR^{d_1}}(\breve{x}^{t},b_{\kappa,T} 2^{m(\delta_0-1)/2}) \times \RR^{d_2}} 
I[q_{\breve{\bxi}}](t,\cdot)\|_1 \\ 
&\lesssim_{\cQ,\kappa,r,T} 2^{m[(d+1)/2+a]} \\
&\qquad\times \int_{\RR^{d_2}} \int_{|x-\breve{x}^{t}| \leq b_{\kappa,T} 2^{m(\delta_0-1)/2}} \Biggl( 1+2^m|(x-\breve{x}^{t}) \cdot \breve{\xi}^{t} + (u-\breve{u}^{t}) \cdot \breve{\mu}| \\
&\qquad\qquad+ 2^{m/2}\sum_{j=1}^{d_2-1} |(x-\breve{x}^t) \breve{A}^t_j + (u-\breve{u}^t)\cdot \breve{\mu}^\perp_j|\Biggr)^{-d_2-1} \,dx \,du \\
&\lesssim_{\kappa,T}  2^{m[(d+1)/2+a]}2^{md_1(\delta_0-1)/2} 2^{-m(d_2+1)/2}
= 2^{m(a+\delta_0 d_1/2)}, 
\end{split}
\end{equation}
where we used the fact that, as $|\breve{\mu}| \simeq_\kappa  1$ by \eqref{eq:normal_dir_0}, the change of variables
\[
u \mapsto (\breve{\mu} \cdot (u-\breve{u}^t),\breve{\mu}^\perp_1 \cdot (u-\breve{u}^t),\dots,\breve{\mu}^\perp_{d_2-1} \cdot (u-\breve{u}^t))\
\]
has Jacobian determinant $\simeq_\kappa 1$ in absolute value.

\smallskip

In light of \eqref{eq:count_sdd_0} and \eqref{eq:dec_int_sdd_0}, we can now sum the above $L^1$-norm estimates \eqref{eq:local1est} and
\eqref{eq:nonlocal1est} for the FIO kernels $I[q_{\breve{\bxi}}]$ to obtain, up to a relabelling of $\delta_0$, the desired estimate for $I[q]$.
\end{proof}

\subsection{Large-time \texorpdfstring{$L^1$}{L1}-estimates}

The estimate in Proposition \ref{prp:sdd_0} is not sufficient for our purposes, as the implicit constant depends on the bound $T$ on $|t|$ and therefore may blow up for large time.

Indeed, recall from Corollary \ref{cor:flow_alt} that the Hamiltonian flow, hence also our phase function $\phi$, depends in a fundamental way on the quantity $\theta = t|\mu|/(2|\xi|)$, and derivatives of $\theta$ with respect to $\bxi$ (as needed in integration-by-parts arguments) would lead to unfavourable losses of powers of $t$, eventually yielding unsuitable estimates for our FIO kernels  for large time.

We will now show how this problem can be overcome, by exploiting a certain ``quasi-periodicity'' in $\theta$ of crucial terms appearing in the phase function $\phi$ and the density $\Den_\phi$ (see the formulas in \eqref{eq:flow_2step} and \eqref{eq:det_2step}).
More precisely, we shall make fundamental use of the following uniform bounds.

\begin{lem}\label{lem:der_modJmu}
The following estimates hold.
\begin{enumerate}[label=(\roman*)]
\item\label{en:der_modJmu_nonmod} For any $v,\mu \in \RR^{d_2}$ and all $\alpha \in \NN^{d_2}$,
\begin{align*}
\|\partial_\mu^\alpha \exp(J_v + J_\mu)\| \lesssim_\alpha 1,\\
\|\partial_\mu^\alpha \cosh(J_v + J_\mu)\| \lesssim_\alpha 1,\\
\|\partial_\mu^\alpha \sinh(J_v + J_\mu)\| \lesssim_\alpha 1,
\end{align*}
where the implicit constants are independent of $\mu$ and $v$.
\item\label{en:der_modJmu_mod} Assume that $\rk J_\mu$ is constant for $\mu \neq 0$. For any $v,\mu \in \RR^{d_2}$ with $v+\mu \neq 0$ and all $\alpha \in \NN^{d_2}$,
\begin{align*}
\|\partial_\mu^\alpha |J_v + J_\mu|\| &\lesssim_{\alpha} |v+\mu|^{1-|\alpha|},\\
|\partial_\mu^{\alpha} \tr |J_v+J_\mu|| &\lesssim_{\alpha} |v+\mu|^{1-|\alpha|},\\
|\partial_\mu^{\alpha} \exp(\pm i\tr|J_v+J_\mu|)| &\lesssim_{\alpha} 1 + \min\{1,|v+\mu|\}^{1-|\alpha|},
\end{align*}
where the implicit constants are independent of $\mu$ and $v$.
\end{enumerate}
\end{lem}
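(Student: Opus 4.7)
The plan is twofold: part \ref{en:der_modJmu_nonmod} rests on the skew-symmetry of $A(\mu) \defeq J_v + J_\mu$, while part \ref{en:der_modJmu_mod} rests on homogeneity and analyticity of $|J_\nu|$, where we set $\nu \defeq v+\mu$ so that $\partial_\mu^\alpha = \partial_\nu^\alpha$.

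For part \ref{en:der_modJmu_nonmod}, since $J : \RR^{d_2} \to \mathfrak{so}(d_1)$ is linear, $A(\mu)^T = -A(\mu)$, so $e^{\pm A(\mu)}$ is an orthogonal matrix and in particular has unit operator norm; consequently $\|\cosh A(\mu)\|$ and $\|\sinh A(\mu)\|$ are also bounded by $1$. To control derivatives, I would use Duhamel's formula: as $\partial_{\mu_j} A(\mu) = J_{e_j}$ is a constant skew-symmetric matrix with $\|J_{e_j}\| \lesssim 1$, one has
\[
\partial_{\mu_j} e^{A(\mu)} = \int_0^1 e^{sA(\mu)} J_{e_j} e^{(1-s)A(\mu)} \,ds.
\]
Iterating $|\alpha|$ times produces a finite linear combination of $|\alpha|$-fold simplex integrals of products of the form $e^{s_0 A(\mu)} J_{e_{j_1}} e^{s_1 A(\mu)} J_{e_{j_2}} \cdots J_{e_{j_{|\alpha|}}} e^{s_{|\alpha|} A(\mu)}$; since each exponential has unit operator norm and each $J_{e_j}$ is uniformly bounded, the total operator norm is $\lesssim_\alpha 1$ uniformly in $v,\mu$. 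The bounds for $\cosh$ and $\sinh$ follow from the identities $\cosh A = (e^A+e^{-A})/2$ and $\sinh A = (e^A-e^{-A})/2$ together with the fact that $-A(\mu)$ is also skew-symmetric.

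For part \ref{en:der_modJmu_mod}, the constant-rank assumption ensures, via Proposition \ref{prp:modJmu_analytic}, that $\Omega = \dot\RR^{d_2}$, so $\nu \mapsto |J_\nu|$ is real-analytic and $1$-homogeneous on the whole of $\dot\RR^{d_2}$; in particular $\tr|J_\nu|$ is smooth and $1$-homogeneous there too. Hence each $\partial_\nu^\alpha |J_\nu|$ is a smooth $(1-|\alpha|)$-homogeneous matrix-valued function, whose operator norm restricted to the unit sphere is bounded, and homogeneity yields $\|\partial_\nu^\alpha |J_\nu|\| \lesssim_\alpha |\nu|^{1-|\alpha|}$; the analogous bound for $\tr|J_\nu|$ follows identically.

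For the third inequality I would apply Fa\`a di Bruno to the composition $\exp(\pm i\,\cdot\,) \circ \tr|J_\nu|$: for $|\alpha| \geq 1$,
\[
\partial_\nu^\alpha \exp(\pm i \tr|J_\nu|) = \exp(\pm i \tr|J_\nu|) \sum_{k=1}^{|\alpha|} \sum_{\substack{\beta_1+\cdots+\beta_k = \alpha \\ |\beta_i| \geq 1}} c_{\alpha,k,\beta} \prod_{i=1}^k \partial_\nu^{\beta_i} \tr|J_\nu|.
\]
Using $|\exp(\pm i \tr|J_\nu|)| = 1$ and $|\partial_\nu^{\beta_i} \tr|J_\nu|| \lesssim |\nu|^{1-|\beta_i|}$, each summand with $k$ factors is $\lesssim |\nu|^{k-|\alpha|}$, and summing over $k = 1,\dots,|\alpha|$ gives $\sum_{k=1}^{|\alpha|} |\nu|^{k-|\alpha|} \lesssim 1 + \min\{1,|\nu|\}^{1-|\alpha|}$ (the largest term is $k = |\alpha|$ when $|\nu| \geq 1$ and $k=1$ when $|\nu| \leq 1$). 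The case $|\alpha| = 0$ is trivial since $|\exp(\pm i\tr|J_\nu|)| = 1$.

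The main obstacle is essentially bookkeeping: keeping the iterated Duhamel expansion clean for high-order derivatives in part \ref{en:der_modJmu_nonmod}, and splitting the Fa\`a di Bruno sum into the two regimes $|\nu| \leq 1$ and $|\nu| \geq 1$ for the third bound in part \ref{en:der_modJmu_mod}. All the conceptual weight is carried by the unitarity of $e^{A(\mu)}$ in part \ref{en:der_modJmu_nonmod} and by the smoothness plus $1$-homogeneity of $|J_\nu|$ on $\dot\RR^{d_2}$ in part \ref{en:der_modJmu_mod}.
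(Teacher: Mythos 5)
Your proof is correct and follows essentially the same route as the paper's: Duhamel's formula for $\partial_\mu \exp$, iterated, together with skew-symmetry of $J_v+J_\mu$ and linearity of $J$ for part (i); and $1$-homogeneity plus smoothness of $|J_\nu|$ on $\dot\RR^{d_2}$ (via the constant-rank hypothesis and Proposition \ref{prp:modJmu_analytic}) combined with a change of variables for part (ii). You spell out the Fa\`a di Bruno computation for the third estimate, which the paper leaves implicit, but the underlying idea is identical.
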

\begin{proof}
\ref{en:der_modJmu_nonmod}. As $\cosh$ and $\sinh$ can be expressed in terms of $\exp$, it is enough to prove the first estimate. Moreover, as $J_\mu$ is linear in $\mu$, by a change of variables we may assume $v = 0$. A well-known formula for the derivative of the matrix exponential (see, e.g., \cite[Example IX.4.2(v)]{Bhatia} or \cite[Section IX.2.1]{Kato}) gives that, for all $t \in \RR$, $\mu \in \RR^{d_1}$ and $j=1,\dots,d_2$,
\[
\partial_{\mu_j} \exp(t J_\mu) = \int_0^t \exp((t-s)J_\mu) (\partial_{\mu_j} J_{\mu}) \exp(s J_\mu) \,ds;
\]
this formula can clearly be iterated, thus obtaining an expression for $\partial^\alpha_\mu \exp(J_\mu)$ as a sum of multiple integrals of products of exponentials $\exp(sJ_\mu)$ and derivatives of $J_\mu$. Notice now that $\exp(sJ_{\mu})$ is an isometry for any $s \in \RR$, as $J_{\mu}$ is skew-symmetric; moreover, $\partial_{\mu_j} J_\mu$ is constant in $\mu$, as $J_\mu$ is linear in $\mu$, and therefore higher-order derivatives of $J_\mu$ vanish. This readily gives the desired estimate for $\partial_\mu^\alpha \exp(J_\mu)$.

\smallskip

\ref{en:der_modJmu_mod}. In the case $v=0$, the above estimates are easy consequences of the $1$-homogeneity and smoothness of $|J_\mu|$ and $\tr|J_\mu|$ as functions of $\mu \in \dot\RR^{d_2}$ (see Proposition \ref{prp:modJmu_analytic}). The general case follows from a change of variables and the linearity of $\mu \mapsto J_\mu$.  
\end{proof}

Due to the mentioned dependence on $\theta = t|\mu|/(2|\xi|)$ of the phase function and the density, it is natural to perform the change of variables $t\mu \mapsto \mu$ in the oscillatory integrals $I[q]$ under consideration, which makes the ``harmful factor'' $t$ disappear. This scaling in $\mu$, however, results in a change of frequency localisation: namely, assuming $|t| \simeq T$, if the original frequency localisation was $|\xi| \simeq |\mu| \simeq 2^m$, after the scaling the localisation becomes $|\xi| \simeq 2^m$, $|\mu| \simeq 2^m T$.

In order to restore the frequency localisation to ``standard'' $\bxi$-boxes of size $2^m$, we perform a splitting in the variable $\mu$ (see Figure \ref{fig:freq_mushear}). Specifically, we first split the annulus $|\mu| \simeq T2^m$ into sectors of angular aperture $T^{-1}$ (so the transversal size of each sector is $2^m$); each of these sectors is then further split in the radial direction into boxes of radial length $2^m$. Finally, the centre of each $\mu$-box is translated to the origin, eventually resulting in a frequency localisation $|\mu| \lesssim 2^m \simeq |\xi|$.

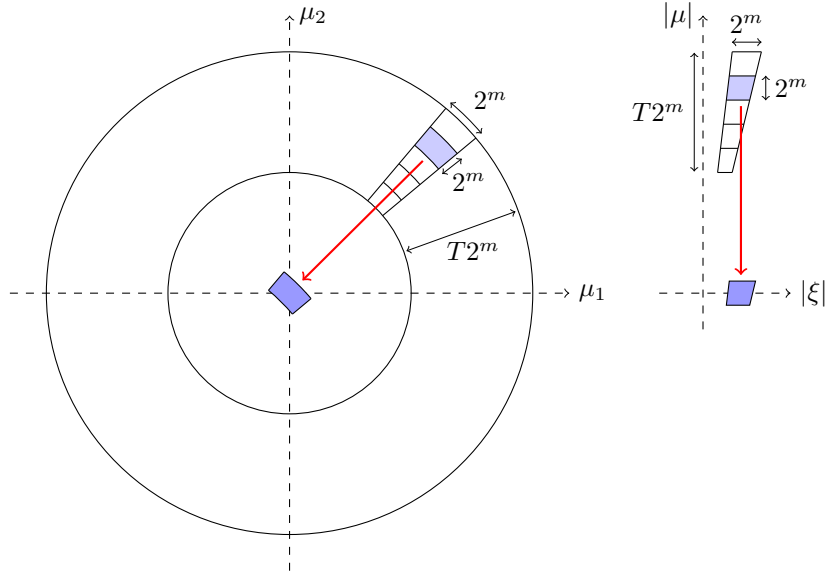
\begin{figure}

\begin{tikzpicture}[scale=1.6]

\draw[dashed,->] (-2.3,0) -- (2.3,0) node[right] {$\mu_1$};
\draw[dashed,->] (0,-2.3) -- (0,2.3) node[right] {$\mu_2$};

\fill[fill=blue!20] (40:1.6) arc (40:50:1.6) -- (50:1.8) arc (50:40:1.8) -- cycle;

\begin{scope}[shift={(-135:1.7)}]
\fill[fill=blue!40] (40:1.6) arc (40:50:1.6) -- (50:1.8) arc (50:40:1.8) -- cycle;
\draw ([shift={(40:1.6)}]0,0) arc (40:50:1.6);
\draw ([shift={(40:1.8)}]0,0) arc (40:50:1.8);
\draw (40:1.6) -- (40:1.8);
\draw (50:1.8) -- (50:1.6);
\end{scope}

\draw (0,0) circle (2);
\draw (0,0) circle (1);

\draw (40:1) -- (40:2);
\draw (50:1) -- (50:2);

\draw ([shift={(40:1.2)}]0,0) arc (40:50:1.2);
\draw ([shift={(40:1.4)}]0,0) arc (40:50:1.4);
\draw ([shift={(40:1.6)}]0,0) arc (40:50:1.6);
\draw ([shift={(40:1.8)}]0,0) arc (40:50:1.8);

\draw[very thin,<->] ([shift={(40:2.06)}]0,0) arc (40:50:2.06);
\node at (44:2.3) {$2^m$}; 
\draw[very thin,<->] (38.2:1.6) -- (38.2:1.8);
\node at (32:1.74) {$2^m$}; 
\draw[very thin,<->] (20:1.025) -- (20:1.975);
\node at (13:1.55) {$T 2^m$};

\draw[thick,->,red] (45:1.55) -- (45:0.15);

\begin{scope}[shift={(3.4,0)},xscale=1.2]

\draw[dashed,->] (0,-0.3) -- (0,2.3) node[left] {$|\mu|$};
\draw[dashed,->] (-0.3,0) -- (0.6,0) node[right] {$|\xi|$};

\fill[fill=blue!20] (.16,1.6) -- (.32,1.6) -- (.36,1.8) -- (.18,1.8) -- cycle;

\begin{scope}[shift={(0,-1.7)}]
\fill[fill=blue!40] (.16,1.6) -- (.32,1.6) -- (.36,1.8) -- (.18,1.8) -- cycle;
\draw (.16,1.6) -- (.32,1.6) -- (.36,1.8) -- (.18,1.8) -- cycle;
\end{scope}

\draw (.1,1) -- (.2,2);
\draw (.2,1) -- (.4,2);
\draw (.1,1) -- (.2,1);
\draw (.12,1.2) -- (.24,1.2);
\draw (.14,1.4) -- (.28,1.4);
\draw (.16,1.6) -- (.32,1.6);
\draw (.18,1.8) -- (.36,1.8);
\draw (.2,2) -- (.4,2);

\draw[very thin,<->] (-.06,1.01) -- (-.06,1.99) node[midway,left] {$T 2^m$};
\draw[very thin,<->] (.43,1.6) -- (.43,1.8) node[midway,right] {$2^m$}; 

\draw[very thin,<->] (.2,2.08) -- (.4,2.08) node[midway,above] {$2^m$}; 

\draw[thick,->,red] (.26,1.55) -- (.26,0.15);

\end{scope}

\end{tikzpicture}

\caption{Splitting and shifting of $\mu$ after scaling by $t$.}
\label{fig:freq_mushear}

\end{figure}

The decomposition of the oscillatory integral $I[q]$ corresponding to the above change of variables and splitting in $\mu$ is expressed by \eqref{eq:dec_periodic} below. Each of the resulting pieces $I_{k,v}[q_{k,v,T,\kappa}]$ turns out to be amenable to an analysis analogous to that in Proposition \ref{prp:sdd_0} above, eventually yielding the desired $L^1$-estimates.

Notice that the number of pieces resulting from this splitting is comparable to $T^{d_2}$, and therefore is compensated by the Jacobian factor $|t|^{-d_2}$ due to the change of variables in $\mu$. Moreover, in light of Lemma \ref{lem:der_modJmu}, the aforementioned translation in $\mu$ does not affect the uniformity of the derivative estimates for the crucial terms in the phase function $\phi$ and the density $\Den_\phi$.

Our analysis does actually lead to a $|t|^{d_2-1/2}$ growth of the resulting estimate for $I[q]$: this is due, on the one hand, to the $|t|^{1/2}$ growth of the density $\Den_\phi$, and on the other hand, to a coarser spatial localisation in $u$ corresponding to the aforementioned $\mu$-splitting (specifically, $|u^\perp_v| \lesssim |t|$, where $u^\perp_v$ denotes the component of $u$ transversal to the direction $v$ of the $\mu$-box). Nevertheless, thanks to our assumption \eqref{eq:cond_lm} on the time and frequency localisation parameters, this growth in time is still compatible with the desired wave propagator estimate, as $d_2-1/2 \leq (d-1)/2$ on a M\'etivier group (see \eqref{eq:metivier_dim_ineq}).

\smallskip

For any unit vector $v \in \RR^{d_2}$ and all $\mu \in \RR^{d_2}$, we write
\[
\mu_v^\parallel \defeq \mu \cdot v, \qquad \mu_v^\perp \defeq \mu - \mu_v^\parallel v,
\]
so we have the decomposition $\mu = \mu_v^\parallel v + \mu_v^\perp$.

\begin{prp}\label{prp:dec_periodic}
Assume that $G$ is a M\'etivier group.
Let $\kappa > 1$, $T \geq 16\kappa^2$ be given. There exists a finite set $V_{T,\kappa}$ of unit vectors in $\RR^{d_2}$ such that
\begin{equation}\label{eq:count_sdd_2ndlayer}
\sharp V_{T,\kappa} \lesssim_\kappa T^{d_2-1} 
\end{equation}
and the following hold.
Let $a \in \RR$, and let $\cQ$ be a $(0,a,\kappa)$-bounded family of symbols. 
Then, for any $q \in \cQ$, and any $t \in \RR$ with $\kappa^{-1} \leq |t/T| \leq \kappa$,
\begin{equation}\label{eq:dec_periodic}
I[q](t,\bx) = |t|^{1/2-d_2} \sum_{k \geq 2} \sum_{v \in V_{T,\kappa}} I_{k,v}[q_{k,v,T,\kappa}](t,\bx),
\end{equation}
where
\begin{align}
\label{eq:int_kv}
I_{k,v}[A](t,\bx) &\defeq \int_{\RR^d} e^{i \phi_{k,v}(t,\bx,\bxi)} A(t,\bxi) \,d\bxi, \\ 
\label{eq:phase_tk_orig}
\phi_{k,v}(t,\bx,\bxi) &\defeq \phi(t,\bx,(\xi,\mu^{t,k,v}(\bxi)), \\
\label{eq:mu_tk}
\mu^{t,k,v}(\bxi) &\defeq \frac{2k|\xi|v + \mu}{t} = \frac{\mu_v^\parallel + 2 k|\xi|}{t} v+ \frac{\mu_v^\perp}{t}, 
\end{align}
while the symbols $q_{k,v,T,\kappa}$ for $k \geq 2$ and $v \in V_{T}$ satisfy the bounds
\begin{equation}\label{eq:est_symb_sdd_2ndlayer}
|\partial_t^h \partial_{\bxi}^\alpha q_{k,v,T,\kappa}(t,\bxi)| \lesssim_{\cQ,\kappa,h,\alpha} (1+|t|)^{-h} (1+|\bxi|)^{a-|\alpha|}
\end{equation}
for all $h \in \NN$, $\alpha \in \NN^{d}$, and moreover
\begin{equation}\label{eq:supp_symb_sdd_2ndlayer}
\supp q_{k,v,T,\kappa}(t,\cdot) \subseteq (\overline{\pi_{\RR^{d_1}} \supp q(t,\cdot)} \times \RR^{d_2}) \cap \Omega_{\kappa,0},
\end{equation}
where $\pi_{\RR^{d_1}} : \RR^{d_1} \times \RR^{d_2} \to \RR^{d_1}$ is the projection onto the first factor, and
\begin{equation}\label{eq:Omegakappa0}
\Omega_{\kappa,0} \defeq \{ \bxi \tc |\xi| \geq \kappa^{-1}, \ |\mu|/|\xi| \leq 5/2 \}.
\end{equation}
In addition, $q_{k,v,T,\kappa}(t,\cdot)$ and $I_{k,v}[q_{k,v,T,\kappa}](t,\cdot)$ vanish identically unless
\begin{equation}\label{eq:cond_tk_vanish}
\frac{|t|}{8 \kappa} < k < \kappa |t|.
\end{equation}
\end{prp}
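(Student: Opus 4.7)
The plan is to neutralise the problematic $t$-dependence of $\phi$ and $\Den_\phi$ -- which enters through the parameter $\theta = t|\mu|/(2|\xi|)$ of size $\simeq T$ on the support of $q$ -- by rescaling $\mu$ by a factor of $|t|$ in each piece of a suitable partition of unity. After the substitution $\mu \mapsto \mu^{t,k,v}(\bxi) = (2k|\xi|v+\mu)/t$, all relevant quantities in the phase and the density depend on $\mu$ only through the combination $J_{2k|\xi|v+\mu}$, for which Lemma \ref{lem:der_modJmu}\ref{en:der_modJmu_mod} provides uniform-in-$t$ derivative estimates. The Jacobian $|t|^{-d_2}$ of the substitution will combine with a factor $|t|^{1/2}$ extracted from $\Den_\phi$ to produce the claimed prefactor $|t|^{1/2-d_2}$.

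Concretely, I would fix $V_{T,\kappa} \subset S^{d_2-1}$ as a maximal $c/T$-separated set of unit vectors, for some small $c = c(\kappa) > 0$, so that $\sharp V_{T,\kappa} \lesssim_\kappa T^{d_2-1}$, and construct a smooth homogeneous partition of unity $\{\chi_{k,v}\}_{k \geq 2,\, v \in V_{T,\kappa}}$ on the conical region $\{|\mu'|/|\xi| \lesssim_\kappa 1\}$ of the original frequency variables, subordinate to boxes of size $\simeq |\xi|/|t|$ centred at the points $(\xi, 2k|\xi|v/t)$. Only indices $k \in (|t|/(8\kappa),\kappa|t|)$ meet the support $\Omega_\kappa$ of $q$, which accounts for \eqref{eq:cond_tk_vanish}. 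On each piece I would apply the change of variables $\mu' = \mu^{t,k,v}(\bxi)$: each box becomes a region $|\mu|\lesssim|\xi|$ centred at $\mu=0$, and choosing $c$ small enough yields $|\mu|/|\xi|\leq 5/2$, hence the support condition \eqref{eq:supp_symb_sdd_2ndlayer}. From \eqref{eq:det_2step} one factors out $\sqrt{1 + i\langle|J_{2k|\xi|v+\mu}|\bar\xi,\bar\xi\rangle/(2|\xi|)}$, whose magnitude is $\simeq \sqrt{k} \simeq \sqrt{|t|}$ thanks to the M\'etivier bound \eqref{eq:metivier} and the range of $k$; extracting $|t|^{1/2}$ leaves a factor with uniform derivative bounds. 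The resulting remaining amplitude is then taken as $q_{k,v,T,\kappa}$.

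The principal technical obstacle is the verification of the symbol bounds \eqref{eq:est_symb_sdd_2ndlayer} uniformly in $t \simeq T$. For the exponential factor $\exp(-i\tr|J_{2k|\xi|v+\mu}|/(4|\xi|))$ coming from $\Den_\phi$, one exploits the identity $\tr|J_{2k|\xi|v+\mu}|/(4|\xi|) = (k/2)\tr|J_{v+\mu/(2k|\xi|)}|$ derived from $1$-homogeneity, combined with Lemma \ref{lem:der_modJmu}\ref{en:der_modJmu_mod}: since $|v+\mu/(2k|\xi|)| \simeq 1$ on the relevant support, the chain rule yields derivative bounds decaying like $|\xi|^{-|\alpha|}$ in $\mu$; a similar computation controls the residual square-root factor. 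For the symbol $q$ itself, $\mu$-differentiation through the substitution brings down a factor $1/t$ each time (improving the estimates by $|t|^{-1}$), while $\xi$-differentiation only introduces harmless terms such as $(2k/t)v$ of size $\simeq 1$. Once these uniform bounds are established, the decomposition \eqref{eq:dec_periodic}, the support condition \eqref{eq:supp_symb_sdd_2ndlayer}, the cardinality bound \eqref{eq:count_sdd_2ndlayer}, and the vanishing condition \eqref{eq:cond_tk_vanish} all follow by assembling the pieces.
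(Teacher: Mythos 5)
Your proposal is correct and follows essentially the same strategy as the paper: decompose $\mu$ into $\simeq_\kappa T^{d_2-1}$ angular sectors of aperture $\sim T^{-1}$, within each sector split radially into boxes of width $\sim |\xi|/|t|$, apply the shear-plus-rescaling $\mu\mapsto(\tilde\mu+2k|\xi|v)/t$ (Jacobian $|t|^{-d_2}$), and use Lemma \ref{lem:der_modJmu}\ref{en:der_modJmu_mod} together with the factor $|t|^{-1/2}$ pulled out of $\Den_\phi$ to obtain the $(0,a)$-uniform symbol bounds. The paper realizes the same $(k,v)$-indexed partition more explicitly as a product of a $T$-dependent angular partition $\{\chi_{T,\kappa,v}\}_v$ on the sphere and a fixed periodic cutoff $\chi_+(\cdot-k)$, and additionally inserts a smooth cutoff $\chi_\kappa(|t|/T)$ in time so that the $\partial_t^h$-bounds in \eqref{eq:est_symb_sdd_2ndlayer} hold globally in $t$ rather than only on the interval $|t/T|\in[\kappa^{-1},\kappa]$; you should check this last point, and also verify explicitly (as the paper does) that transversal $\mu$- and $t$-derivatives of the angular cutoff, which a priori cost factors of $T$, are compensated by the $|t|^{-1}$ coming from the argument $\mu^{t,k,v}$ after the change of variables. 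These are implementation details rather than gaps in the argument.
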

\begin{proof}
Much as in the proof of Proposition \ref{prp:sdd_0},
we can split $\dot\RR^{d_2}$ into sectors of aperture $\simeq T^{-1}$ by means of a partition of unity. Namely, we can find a finite set $V_{T,\kappa}$ of unit vectors in $\RR^{d_2}$ such that \eqref{eq:count_sdd_2ndlayer} holds, and
and a smooth $0$-homogeneous partition of unity $\{\chi_{T,\kappa,v}\}_{v \in V_{T,\kappa}}$ on $\dot\RR^{d_2}$ such that
\begin{equation}\label{eq:supp_sdd_2ndlayer}
\supp \chi_{T,\kappa,v} \subseteq \{ \mu \tc \mu_v^\parallel > 0, \  |\mu_v^\perp|/|\mu| \leq c_\kappa T^{-1} \}
\end{equation}
for an appropriately small $c_\kappa \in (0,1/2]$, to be fixed later, and
\begin{equation}\label{eq:der_sdd_2ndlayer}
|(v \cdot \nabla_\mu)^N \partial_{\mu}^\alpha \chi_{T,\kappa,v}(\mu) | \lesssim_{\kappa,N,\alpha} T^{|\alpha|} |\mu|^{-|\alpha|-N}
\end{equation}
for all $v \in V_{T,\kappa}$, $\alpha \in \NN^{d_2}$ and $N \in \NN$, where the implicit constants do not depend on $T$.
Accordingly, we can split, for any $t \in \RR$ with $\kappa^{-1} \leq |t/T| \leq \kappa$,
\begin{equation}\label{eq:dec_int_sdd_2ndlayer_prelim}
I[q](t,\bx) = \sum_{v \in V_{T,\kappa}} I[q_{v,T,\kappa}](t,\bx),
\end{equation}
where
\[
q_{v,T,\kappa}(t,\bxi) \defeq q(t,\bxi) \chi_{T,\kappa,v}(t\mu) \chi_\kappa(|t|/T)=q(t,\bxi) \chi_{T,\kappa,v}(\pm\mu) \chi_\kappa(|t|/T),
\]
with $\pm \defeq \sgn t$, and $\chi_\kappa \in C^\infty_c(\Rpos)$ is a smooth cutoff such that
\begin{equation}\label{eq:time_cutoff}
\chi_\kappa|_{[\kappa^{-1},\kappa]} = 1, \qquad \supp \chi_\kappa \subseteq [(2\kappa)^{-1},2\kappa].
\end{equation}

Choose now an even cutoff function $\chi_+ \in C^\infty_c(\RR)$ with the properties
\begin{equation}\label{eq:additivecutoff}
0 \leq \chi_+ \leq 1, \qquad \supp \chi_+ \subseteq [-1,1], \qquad \sum_{k \in \ZZ} \chi_+(s-k) = 1 \quad\forall s \in \RR. 
\end{equation}
Accordingly, we split
\begin{multline}\label{eq:dec_int_sdd_2ndlayer}
I[q_{v,T,\kappa}](t,\bx) \\ = \sum_{k \in \ZZ}
(2\pi)^{-d} \int_{\RR^d} e^{i \phi(t,\bx,\bxi)}  q_{v,T,\kappa}(t,\bxi) \,  \chi_+(t\mu_v^\parallel/(2|\xi|)-k) \, \Den_\phi(t,\bxi) \,d\bxi.
\end{multline}

In each of the integrals in the right-hand side of \eqref{eq:dec_int_sdd_2ndlayer} we now make a different ($\xi$-dependent) change of variables from $\mu$ to  $\tilde\mu$, given by
\[
\mu = \frac{\tilde\mu + 2k|\xi|v}{t}, \qquad\text{i.e.,}\qquad \mu_v^\parallel = \frac{\tilde\mu_v^\parallel + 2k|\xi|}{t}, \qquad \mu_v^\perp = \frac{\tilde\mu_v^\perp}{t};
\]
this leads to a change of variable from $\bxi=(\xi,\mu)$ to $\tilde\bxi \defeq (\xi,\tilde\mu)$.
Then in particular
\begin{equation}\label{eq:jacobiantildemu}
d\mu = |t|^{-d_2} \,d\tilde\mu.
\end{equation}
In analogy with our previous quantity $\theta=t|\mu|/(2|\xi|)$, here we define
\[
\vartheta_v=\vartheta_v(\tilde\bxi) \defeq \frac{\tilde\mu_v^\parallel}{2|\xi|}.
\]
We then may write
\[
\frac{t\mu_v^\parallel}{2|\xi|} = \frac{\tilde\mu_v^\parallel}{2|\xi|} + k= \vartheta_v + k,
\]
i.e.,
\[
\mu=2\frac{\vartheta_v + k}{t} |\xi| v+\frac{1}{t} \tilde\mu_v^\perp.
\]

If we now define
\begin{equation}\label{eq:mutkv_formula}
\mu^{t,k,v}(\tilde\bxi) \defeq 2\frac{\vartheta_v + k}{t} |\xi| v+\frac{1}{t} \tilde\mu_v^\perp, \qquad\pm \defeq \sgn t,
\end{equation}
then, by using the notation \eqref{eq:int_kv} and taking \eqref{eq:jacobiantildemu} into consideration, we can rewrite \eqref{eq:dec_int_sdd_2ndlayer} as
\begin{equation}\label{eq:dec_int_sdd_2ndlayer_chvar}
I[q_{v,T,\kappa}](t,\bx) = |t|^{1/2-d_2} \sum_{k \in \ZZ} I_{k,v}[q_{k,v,T,\kappa}],
\end{equation}
where
\begin{equation}\label{eq:new_symb_all}
q_{k,v,T,\kappa}(t,\tilde\bxi) 
\defeq (2\pi)^{-d}  \tilde q_{k,v}(t,\tilde\bxi)
\chi_{\kappa}(|t|/T) \, \tilde\chi_{k,T,\kappa,v}(t,\tilde\bxi) \, \chi_+(\vartheta_v) \, \Den_{\phi,k,v}(t,\tilde\bxi)
\end{equation}
and
\begin{align}
\label{eq:new_symb_main}
\tilde q_{k,v}(t,\tilde\bxi) &\defeq q(t,(\xi,\mu^{t,k,v}(\tilde\bxi))),\\
\label{eq:new_symb_sddcutoff}
\tilde\chi_{k,T,\kappa,v}(t,\tilde\bxi) &\defeq \chi_{T,\kappa,v}(\pm \mu^{t,k,v}(\tilde\bxi)),\\
\label{eq:new_symb_density}
\Den_{\phi,k,v}(t,\tilde\bxi) &\defeq |t|^{-1/2} \Den_\phi(t,(\xi,\mu^{t,k,v}(\tilde\bxi))).
\end{align}

Notice that \eqref{eq:mutkv_formula} is consistent with \eqref{eq:mu_tk}: in the latter, the coordinates $\tilde \mu$ were renamed back to $\mu$. Moreover, due to the support conditions \eqref{eq:Q_supp}, \eqref{eq:supp_sdd_2ndlayer}, \eqref{eq:time_cutoff} and \eqref{eq:additivecutoff}, the quantity $q_{k,v,T,\kappa}(t,\tilde\bxi)$ vanishes unless
\begin{gather*}
(2\kappa)^{-1} \leq |t/T| \leq 2\kappa , \quad |\xi| \geq \kappa^{-1}, \quad |\vartheta_v| = |\tilde\mu_v^\parallel|/(2|\xi|) < 1, \quad \vartheta_v +  k > 0, \\
\kappa^{-1} |\xi| \leq |\mu^{t,k,v}| \leq \kappa |\xi|,\\
\left|\frac{1}{t} \tilde\mu_v^\perp\right| \leq c_\kappa T^{-1} \left|\mu^{t,k,v}\right|.
\end{gather*}
As $c_\kappa T^{-1/2} < 1/2$, from the above we deduce that
\[
\left|2\frac{\vartheta_v +  k}{t} |\xi|\right| \leq |\mu^{t,k,v}| \leq 2 \left|2\frac{\vartheta_v +  k}{t} |\xi|\right|
\]
and
\[
\frac{1}{4\kappa} \leq \frac{\vartheta_v+ k}{|t|} \leq \frac{\kappa}{2}.
\]
Since $T \geq 16\kappa^2$, under the above conditions we also have $|t| \geq 8\kappa$ and $|\vartheta_v| < 1$; the above inequalities thus imply the constraint \eqref{eq:cond_tk_vanish}, and in particular the bound $k \geq 2$. Combining this restriction with \eqref{eq:dec_int_sdd_2ndlayer_chvar} and \eqref{eq:dec_int_sdd_2ndlayer_prelim} completes the proof of the decomposition \eqref{eq:dec_periodic}. In addition, the above conditions also imply that
\[
|\tilde\mu_v^\perp| \leq 2 c_\kappa \kappa |\xi|;
\]
as we already know that $|\tilde\mu_v^\parallel| \leq 2|\xi|$,
by choosing $c_\kappa$ small enough we may ensure that 
\[
|\tilde \mu| \leq 5 |\xi|/2,
\]
thus obtaining the support condition \eqref{eq:supp_symb_sdd_2ndlayer}.

\smallskip

Now, from the above discussion it follows that, on the support of $q_{k,v,T,\kappa}$,
\begin{equation}\label{eq:rough_tk_cond}
1 \lesssim_\kappa T \simeq_\kappa |t| \simeq_\kappa k \simeq_\kappa \vartheta_v + k.
\end{equation}
By using the bounds \eqref{eq:rough_tk_cond}, 
one readily sees from \eqref{eq:mu_tk} that, on the support of $q_{k,v,T,\kappa}$,
\[
|\partial_t^h \partial_{\tilde\bxi}^\alpha  \mu^{t,k,v}(\tilde\bxi)| \lesssim_{\kappa,h,\alpha} (1+|t|)^{-h} (1+|\xi|)^{1-|\alpha|}
\]
for all $\alpha \in \NN^{d}$, $h \in \NN$. Thus, 
from \eqref{eq:new_symb_main} and \eqref{eq:Q_bound} one readily derives that, on the support of $q_{k,v,T,\kappa}$,
\begin{equation}\label{eq:new_symb_main_est}
|\partial_t^h \partial_{\tilde\bxi}^\alpha \tilde q_{k,v}(t,\tilde\bxi)| \lesssim_{\cQ,\kappa,h,\alpha} (1+|t|)^{-h} (1+|\xi|)^{a-|\alpha|}
\end{equation}
for all $\alpha \in \NN^{d}$, $h \in \NN$. 
Moreover, by \eqref{eq:new_symb_sddcutoff} and \eqref{eq:mu_tk},
\begin{equation}\label{eq:new_symb_sddcutoff_expl}
\tilde\chi_{k,T,\kappa,v}(t,\tilde\bxi) = \chi_{T,\kappa,v}\left(\frac{\mu + 2 k|\xi|v}{|t|}\right),
\end{equation}
and from \eqref{eq:der_sdd_2ndlayer} one deduces that, on the support of $q_{k,v,T,\kappa}$,
\begin{equation}\label{eq:new_symb_sddcutoff_est}
|\partial_t^h \partial_{\tilde\bxi}^\alpha \tilde\chi_{k,T,\kappa,v}(t,\tilde\bxi)| \lesssim_{\kappa,h,\alpha} (1+|t|)^{-h} (1+|\xi|)^{a-|\alpha|}
\end{equation}
for all $\alpha \in \NN^{d}$, $h \in \NN$: we point out that, when differentiating \eqref{eq:new_symb_sddcutoff_expl} with respect to $\xi$, the Chain Rule only produces derivatives of $\chi_{T,\kappa,v}$ in the direction of $v$, so by \eqref{eq:der_sdd_2ndlayer} there are no extra $T$ factors; when differentiating \eqref{eq:new_symb_sddcutoff_expl} with respect to $\mu$ or $t$, instead, the factors $T$ from \eqref{eq:der_sdd_2ndlayer} are compensated by factors $|t|^{-1}$ from the corresponding derivatives of the argument of $\chi_{T,\kappa,v}$ in \eqref{eq:new_symb_sddcutoff_expl}. 

\smallskip

Notice now that, by \eqref{eq:mu_tk},
\[
\frac{t\mu^{t,k,v}(\tilde\bxi)}{2|\xi|} = kv+\frac{\tilde\mu}{2|\xi|};
\]
so, by \eqref{eq:det_2step} and \eqref{eq:new_symb_density},
\[
\Den_{\phi,k,v}(t,\tilde\bxi) = \exp(\mp i\tr|J_{kv} + J_{\tilde\mu/(2|\xi|)}|/2) \sqrt{\frac{1\pm i \langle |J_{kv} + J_{\tilde\mu/(2|\xi|)}| \bar\xi, \bar\xi \rangle}{|t|}}.
\]
From the above discussion it follows that $|\xi| \gtrsim_\kappa 1$, $|\tilde\mu| \lesssim |\xi|$ and $|kv+\tilde\mu/(2|\xi|)| \simeq_\kappa k \simeq |t| \gtrsim_\kappa 1$ on the support of $q_{k,v,T,\kappa}$; these estimates, together with Lemma \ref{lem:der_modJmu}\ref{en:der_modJmu_mod} and the Chain Rule, readily imply that, on the support of $q_{k,v,T,\kappa}$,
\begin{equation}\label{eq:new_symb_density_est}
|\partial_t^h \partial_{\tilde\bxi}^\alpha \Den_{\phi,k,v}(t,\tilde\bxi)| \lesssim_{\kappa,h,\alpha} (1+|t|)^{-h} (1+|\xi|)^{-|\alpha|}
\end{equation}
for all $\alpha \in \NN^{d}$ and $h \in \NN$.

\smallskip

In light of \eqref{eq:new_symb_all}, combining the estimates \eqref{eq:new_symb_main_est}, \eqref{eq:new_symb_sddcutoff_est} and \eqref{eq:new_symb_density_est} finally gives the estimates \eqref{eq:est_symb_sdd_2ndlayer}.
\end{proof}

According to Proposition \ref{prp:dec_periodic}, we can split, as in \eqref{eq:dec_periodic}, the FIO kernel $I[q]$ into a sum of oscillatory integrals $I_{k,v}[q_{k,v,T,\kappa}]$, whose amplitudes $q_{k,v,T,\kappa}$, according to \eqref{eq:est_symb_sdd_2ndlayer}, constitute a bounded subset of $S^0(\RR) \otimes S^{a}(\RR^d)$, provided $q$ ranges in a $(0,a,\kappa)$-bounded family of symbols.

In order to obtain spatial $L^1$-bounds for the $I_{k,v}[q_{k,v,T,\kappa}]$, ideally we would also like that the derivatives $\partial_{\bxi} \phi_{k,v}$ of the phase functions defined in \eqref{eq:phase_tk_orig} behave like a bounded family of $S^0(\RR^d)$-symbols with respect to the frequency variable $\bxi$, at least on the supports of the corresponding $q_{k,v,T,\kappa}$. The following proposition shows that the $\bx$-independent terms in these phase functions indeed satisfy such bounds.

In addition, the identity \eqref{eq:dxi_re_phase_tk} below, which originates from the properties of the geodesic flow discussed in Lemma \ref{lem:symplectic}, will greatly simplify computations of $\bxi$-derivatives of these phase functions.

\begin{prp}\label{prp:phase_tkv_bounds}
Assume that $G$ is a M\'etivier group.
With the notation of Proposition \ref{prp:dec_periodic}, we can write
\begin{multline}\label{eq:phase_tk}
\phi_{k,v} = (x - x^{t,k,v}) \cdot \xi^{t,k,v} + (u-u^{t,k,v}) \cdot \mu^{t,k,v} \\
+ \frac{i}{4} \langle|J_{\mu^{t,k,v}}| (x-x^{t,k,v}), x-x^{t,k,v}\rangle,
\end{multline}
where
\[
x^{t,k,v}(\bxi) \defeq x^t(\xi,\mu^{t,k,v}(\bxi)), \  u^{t,k,v}(\bxi) \defeq u^t(\xi,\mu^{t,k,v}(\bxi)), \  \xi^{t,k,v}(\bxi) \defeq \xi^t(\xi,\mu^{t,k,v}(\bxi)).
\]
Moreover, under the assumptions
\begin{equation}\label{eq:cond_tkv}
\begin{gathered}
\bxi \in \tilde\Omega_{\kappa,0} \defeq \{ \bxi \tc |\xi| \geq \kappa^{-1},\  |\mu|/|\xi| \leq 3 \} , \\
k \geq 2, \qquad \kappa^{-1} \leq |t|/k \leq \kappa,
\end{gathered}
\end{equation}
we have
\begin{equation}
\label{eq:bd_mu_tk}
|v \cdot \mu^{t,k,v}(\bxi)| \simeq_\kappa |\mu^{t,k,v}(\bxi)| \simeq_\kappa |\xi| \simeq_\kappa 1+|\bxi|
\end{equation}
and
\begin{align}
\label{eq:bd_mu_tk_der}
|\partial_t^{h} \partial_{\bxi}^\alpha \mu^{t,k,v}(\bxi)| &\lesssim_{\kappa,h,\alpha} (1+|t|)^{-h} (1+|\bxi|)^{1-|\alpha|},\\
\label{eq:bd_x_tk_der}
|\partial_t^{h} \partial_{\bxi}^\alpha x^{t,k,v}(\bxi)| &\lesssim_{\kappa,h,\alpha} (1+|t|)^{-h} (1+|\bxi|)^{-|\alpha|},\\
\label{eq:bd_xi_tk_der}
|\partial_t^{h} \partial_{\bxi}^\alpha \xi^{t,k,v}(\bxi)| &\lesssim_{\kappa,h,\alpha} (1+|t|)^{-h} (1+|\bxi|)^{1-|\alpha|}
\end{align}
for all $\alpha \in \NN^d$, $h \in \NN$. Furthermore, we can write
\begin{equation}\label{eq:umutkv}
(u-u^{t,k,v}) \cdot \mu^{t,k,v} = \frac{u_v^\perp}{t} \cdot \mu_v^\perp + 2|\xi| \frac{\vartheta_v + k}{t} \left(u_v^\parallel - \frac{t^2}{4 k}\right) + \rho^{t,k,v},
\end{equation}
where $\vartheta_v \defeq \mu^\parallel_v/(2|\xi|)$ and, under the assumptions \eqref{eq:cond_tkv},
\begin{equation}\label{eq:bd_rho_tk_der}
|\partial_t^{h} \partial_{\bxi}^\alpha \rho^{t,k,v}(\bxi)| \lesssim_{\kappa,h,\alpha} (1+|t|)^{-h} (1+|\bxi|)^{1-|\alpha|}
\end{equation}
for all $\alpha \in \NN^d$, $h \in \NN$.
In addition, with the notation $\bx^{t,k,v} = (x^{t,k,v},u^{t,k,v})$, $\bxi^{t,k,v} = (\xi^{t,k,v},\mu^{t,k,v})$,
\begin{equation}\label{eq:dxi_re_phase_tk}
\Re \partial_{\bxi} \phi_{k,v}(t,\bx,\bxi) = (\bx-\bx^{t,k,v})^T \partial_{\bxi} \bxi^{t,k,v}.
\end{equation}
\end{prp}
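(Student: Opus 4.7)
The formula \eqref{eq:phase_tk} is immediate from \eqref{eq:phase_tk_orig} upon substituting $\mu \mapsto \mu^{t,k,v}(\bxi)$ into \eqref{eq:phase}, since by construction $\bx^{t,k,v} = \bx^t \circ \Psi$ and $\xi^{t,k,v} = \xi^t \circ \Psi$ for $\Psi(\bxi) \defeq (\xi, \mu^{t,k,v}(\bxi))$. For the bounds \eqref{eq:bd_mu_tk} and \eqref{eq:bd_mu_tk_der}, I would simply inspect \eqref{eq:mu_tk} under \eqref{eq:cond_tkv}: the parallel component $v \cdot \mu^{t,k,v} = (\mu^\parallel_v + 2k|\xi|)/t$ is dominated by $2k|\xi|/t \simeq |\xi|$, while $\mu^\perp_v/t$ has size $\lesssim |\xi|/|t| \ll |\xi|$; the derivative estimates follow since $\bxi \mapsto \mu^{t,k,v}$ is essentially affine in $\bxi$, with $\bxi$-derivatives of $|\xi|$ contributing the factors $|\xi|^{1-|\alpha|}$ and $t$-derivatives of the explicit $1/t$ prefactor yielding the $(1+|t|)^{-h}$ gain.

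The core mechanism for \eqref{eq:bd_x_tk_der} and \eqref{eq:bd_xi_tk_der} is the observation that, after the substitution $\mu \mapsto \mu^{t,k,v}(\bxi)$, the quantity $\theta J_{\bar\mu} = t J_\mu/(2|\xi|)$ appearing throughout Corollary \ref{cor:flow_alt} transforms into the $t$-\emph{independent} operator $J_{kv+\mu/(2|\xi|)}$. Rewriting \eqref{eq:flow_2step} in the equivalent forms $x^t = J_\mu^{-1}(\exp(tJ_\mu/|\xi|)-I)\xi$ and $\xi^t = \frac{1}{2}(I + \exp(tJ_\mu/|\xi|))\xi$ and performing the substitution yields
\[
x^{t,k,v} = t\, J_{2kv+\mu/|\xi|}^{-1} \bigl(\exp(J_{2kv+\mu/|\xi|}) - I\bigr)\bar\xi, \qquad \xi^{t,k,v} = \tfrac{1}{2}\bigl(I + \exp(J_{2kv+\mu/|\xi|})\bigr)\xi.
\]
Under the M\'etivier hypothesis and \eqref{eq:cond_tkv}, one has $\|J_{2kv+\mu/|\xi|}^{-1}\| \simeq k^{-1} \simeq |t|^{-1}$, which absorbs the explicit prefactor $t$ and gives $|x^{t,k,v}| \lesssim 1$; note that $\xi^{t,k,v}$ is actually $t$-independent. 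The $\bxi$-derivative estimates then follow from Lemma \ref{lem:der_modJmu}\ref{en:der_modJmu_nonmod}, with each $\bxi$-derivative contributing a factor $|\xi|^{-1}$ through the $\bxi$-dependence of $\bar\xi$ and of $\mu/|\xi|$.

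For the decomposition \eqref{eq:umutkv}, I plan to first expand $u \cdot \mu^{t,k,v} = 2|\xi|\, u^\parallel_v (k+\vartheta_v)/t + u^\perp_v \cdot \mu^\perp_v/t$ directly from \eqref{eq:mu_tk}. The last identity in \eqref{eq:flow_2step}, combined with $\sinh(A)\cosh(A) = \frac{1}{2}\sinh(2A)$ and the substitution above, will give
\[
u^{t,k,v} \cdot \mu^{t,k,v} = \frac{t|\xi|}{2}\left[1 - \left\langle \frac{\sinh J_{2kv+\mu/|\xi|}}{J_{2kv+\mu/|\xi|}}\bar\xi,\, \bar\xi \right\rangle\right].
\]
Subtracting this from $u \cdot \mu^{t,k,v}$ and regrouping the main $t|\xi|/2$ piece with $2|\xi|(\vartheta_v+k)/t \cdot t^2/(4k) = t|\xi|(\vartheta_v+k)/(2k)$ then produces \eqref{eq:umutkv} with
\[
\rho^{t,k,v} = \frac{t\,\mu^\parallel_v}{4k} + \frac{t|\xi|}{2}\left\langle \frac{\sinh J_{2kv+\mu/|\xi|}}{J_{2kv+\mu/|\xi|}}\bar\xi,\, \bar\xi \right\rangle.
\]
Both summands are $O(|\bxi|)$ under \eqref{eq:cond_tkv}: the first since $|t|/k \simeq 1$, the second since $\|\sinh(J)/J\| \lesssim k^{-1}$ for $\|J\| \simeq k$ cancels the factor $t$. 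The derivative bounds \eqref{eq:bd_rho_tk_der} then follow from Lemma \ref{lem:der_modJmu}\ref{en:der_modJmu_nonmod}, with $t$-derivatives gaining the factor $1/|t|$ because, after the substitution, the only $t$-dependence is through the explicit prefactor (all $J$-dependent terms being $t$-independent).

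Finally, for the identity \eqref{eq:dxi_re_phase_tk}: from Lemma \ref{lem:phder}\ref{en:phder_grad} and direct computation of $\underline{\Phi}$ via Lemma \ref{lem:phder}\ref{en:phder_hess}, the imaginary part of $\underline{\Phi}$ is of the form $-\frac{i}{2}(|J_\mu|\partial_{\bxi}x^t,\,0)^T$, so $\Re\underline{\Phi} = \partial_{\bxi}\bxi^t$ and hence $\Re\partial_{\bxi}\phi = (\bx - \bx^t)^T \partial_{\bxi} \bxi^t$. Applying the Chain Rule to $\phi_{k,v} = \phi \circ \Psi$, together with the identity $\bxi^t \circ \Psi = \bxi^{t,k,v}$, then yields \eqref{eq:dxi_re_phase_tk}. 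I expect the main technical difficulty of this proof to lie in carefully verifying the derivative bounds on $\rho^{t,k,v}$, where one has to combine the cancellation $|t| \simeq k$ with the uniform estimates for $\bxi$-derivatives of operator-valued functions of $J_{2kv+\mu/|\xi|}$ provided by Lemma \ref{lem:der_modJmu}\ref{en:der_modJmu_nonmod}.
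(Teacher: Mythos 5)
Your proof is correct and takes essentially the same approach as the paper. The explicit formulas you derive for $x^{t,k,v}$, $\xi^{t,k,v}$, $u^{t,k,v}\cdot\mu^{t,k,v}$ and $\rho^{t,k,v}$ agree with the paper's (the paper writes $J_{kv}+J_{\mu/(2|\xi|)}$ where you write $\tfrac12 J_{2kv+\mu/|\xi|}$; these are identical by linearity of $\mu\mapsto J_\mu$), and the key mechanism you identify---that the substitution $\mu\mapsto\mu^{t,k,v}(\bxi)$ turns $\theta J_{\bar\mu}$ into the $t$-independent operator $J_{kv+\mu/(2|\xi|)}$, so that Lemma~\ref{lem:der_modJmu} and the M\'etivier bound $\|J_{2kv+\mu/|\xi|}^{-1}\|\simeq k^{-1}$ yield uniform derivative estimates---is exactly the paper's. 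The only point where you diverge is in the proof of \eqref{eq:dxi_re_phase_tk}: you compute $\Re\partial_{\bxi}\phi$ from the explicit formula in Lemma~\ref{lem:phder}\ref{en:phder_grad}--\ref{en:phder_hess} (using $\Re\underline\Phi=\partial_{\bxi}\bxi^t$) and then apply the chain rule, whereas the paper differentiates $\Re\phi_{k,v}=(\bx-\bx^{t,k,v})^T\bxi^{t,k,v}$ directly and invokes the orthogonality identity \eqref{eq:symplectic_ort}; the two routes are equivalent in substance, since Lemma~\ref{lem:phder}\ref{en:phder_grad} was itself established via that same identity. A very minor point: your closing sentence cites Lemma~\ref{lem:der_modJmu}\ref{en:der_modJmu_nonmod} alone for the $\bxi$-derivative bounds on $\rho^{t,k,v}$, but the presence of $J^{-1}$ (inside $\sinh(J)/J$) also requires the smoothness and $(-1)$-homogeneity of $\mu\mapsto J_\mu^{-1}$ granted by the M\'etivier hypothesis, which you did invoke earlier but should carry through to this step as well.
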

\begin{proof}
The formula \eqref{eq:phase_tk} immediately follows from \eqref{eq:phase} and \eqref{eq:phase_tk_orig}.
Notice now that, under the assumptions \eqref{eq:cond_tkv}, we have
\begin{equation}\label{eq:rough_tk_cond_bis}
|\vartheta_v| \leq 3/2, \qquad
1 \lesssim_\kappa |t| \simeq_\kappa k \simeq_\kappa \vartheta_v + k,
\end{equation}
so \eqref{eq:bd_mu_tk} and \eqref{eq:bd_mu_tk_der} easily follow from \eqref{eq:mu_tk}.

\smallskip

Now, from the formulas \eqref{eq:flow_2step} and \eqref{eq:mu_tk} we also deduce that
\begin{equation}\label{eq:xtkv}
\begin{aligned}
x^{t,k,v} &= 2 J_{\mu^{t,k,v}/|\mu^{t,k,v}|}^{-1} \sinh(J_{kv} + J_{\mu/(2|\xi|)}) \exp(J_{kv} + J_{\mu/(2|\xi|)}) \frac{\xi}{|\mu^{t,k,v}|}, \\
\xi^{t,k,v} &= \cosh(J_{kv} + J_{\mu/(2|\xi|)}) \exp(J_{kv} + J_{\mu/(2|\xi|)})  \xi,\\
\end{aligned}
\end{equation}
as well as
\[
\mu^{t,k,v} \cdot u^{t,k,v} 
= \frac{t|\xi|}{2} - \frac{|\xi|^2}{2|\mu^{t,k,v}|} \left\langle J_{\mu^{t,k,v}/|\mu^{t,k,v}|}^{-1} \sinh(J_{2kv} + J_{\mu/|\xi|}) \bar\xi,\bar\xi \right\rangle,
\]
so the formula \eqref{eq:umutkv} holds, with
\[
\rho^{t,k,v} 
\defeq \frac{|\xi|}{2} \left[\frac{t}{k} \vartheta_v  
+ \frac{|\xi|}{|\mu^{t,k,v}|} \left\langle J_{\mu^{t,k,v}/|\mu^{t,k,v}|}^{-1} \sinh(J_{2kv} + J_{\mu/|\xi|}) \bar\xi,\bar\xi \right\rangle \right].
\]
Now, as $G$ is M\'etivier, the function $\mu \mapsto J_\mu^{-1}$ is smooth and $(-1)$-homogeneous on $\dot\RR^{d_2}$. So, from the above formulas, together with the bounds \eqref{eq:rough_tk_cond_bis}, \eqref{eq:bd_mu_tk}, \eqref{eq:bd_mu_tk_der} and Lemma \ref{lem:der_modJmu}, it is now readily deduced that the estimates \eqref{eq:bd_x_tk_der}, \eqref{eq:bd_xi_tk_der}, \eqref{eq:bd_rho_tk_der} hold.

\smallskip

Finally, in light of \eqref{eq:phase_tk}, we have $\Re\phi_{k,v} = (\bx-\bx^{t,k,v})^T \bxi^{t,k,v}$, so the proof of \eqref{eq:dxi_re_phase_tk} reduces to showing that
\[
(\partial_{\bxi} \bx^{t,k,v})^T \bxi^{t,k,v} = 0.
\]
However, as
\[
\bxi^{t,k,v} = \bxi^t|_{\mu = \mu^{t,k,v}}, \qquad \partial_{\bxi} \bx^{t,k,v} = (\partial_{\bxi}\bx^t)|_{\mu = \mu^{t,k,v}} \, \partial_{\bxi} \left(\begin{array}{c} \xi \\\hline \mu^{t,k,v} \end{array}\right),
\]
the desired identity follows immediately from \eqref{eq:symplectic_ort}.
\end{proof}

The estimates in Proposition \ref{prp:phase_tkv_bounds} show that the only possible sources of unboundedness of the derivatives $\partial_{\bxi} \phi_{k,v}$ as a family of $S^0(\RR^d)$-symbols in the variable $\bxi$ are the $\bx$-dependent terms $x - x^{t,k,v}$, $u_v^\perp/t$ and $u_v^\parallel - t^2/(4 k)$ appearing in \eqref{eq:phase_tk} and \eqref{eq:umutkv}. We shall now see that these terms can effectively be assumed to be bounded, in the sense that the remainder is negligible for our purposes. This fact is contained in the following proposition, which shows that, in the study of oscillatory integrals of the form $I_{k,v}[q]$, we can achieve a tighter spatial localisation than that obtained in Section \ref{s:spatialloc} from finite propagation speed.

\begin{prp}\label{prp:first_space_loc}
Assume that $G$ is a M\'etivier group.
Let $a \in \RR$, $\kappa > 1$, $\vec\epsilon \in (\Rpos)^4$ with $\epsilon_4>\epsilon_3$, $m,\ell \in \NN$, $\lambda \geq 1$ satisfying \eqref{eq:cond_lm}. Let $v \in \RR^{d_2}$ be a unit vector. Let $\cQ$ be a bounded family of symbols in $S^0(\RR) \otimes S^a(\RR^d)$.
Then, for all $q \in \cQ$ such that
\begin{equation}\label{eq:supp_Qk}
\bigcup_{t \in \RR} \supp q(t,\cdot) \subseteq \Omega_{\kappa,0} \cap \{ \bxi \tc \kappa^{-1} \leq |\xi|/2^m \leq \kappa \},
\end{equation}
for all $t \in \RR$ and $k \in \NN$ such that
\begin{equation}\label{eq:cond_tk}
2^{\ell-1} \leq |t| \leq 2^{\ell+1}, \qquad
k \geq 2, \qquad \kappa^{-1} \leq |t|/k \leq \kappa,
\end{equation}
and for all $b,\delta_0>0$, we have
\[
I_{k,v}[q](t,\cdot) 
= \chr_{\{ \bx \tc |x| \leq r_k, \, |u_v^\parallel-t^2/(4 k)| \leq b\lambda^{\delta_0}, \, |u_v^\perp| \leq |t| b \lambda^{\delta_0}\}} I_{k,v}[q](t,\cdot) 
+ R_{k,v,t,b,\delta_0,\lambda,q}
\]
for some $r_\kappa \in \Rpos$, where,
for all $N \in \NN$,
\[
\|\chr_{\overline{B}(0,2^{2+\ell})} R_{k,v,t,b,\delta_0,\lambda,q}\|_1 \lesssim_{\cQ,\kappa,\vec\epsilon,b,\delta_0,N} \lambda^{-N}.
\]
\end{prp}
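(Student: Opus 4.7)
The strategy is to decompose $I_{k,v}[q](t,\bx) = I_{\mathrm{loc}}(t,\bx) + R_{k,v,t,b,\delta_0,\lambda,q}(t,\bx)$, where $I_{\mathrm{loc}}$ is the restriction of the integral to the target support, and to show via three separate arguments that each ``exterior'' region contributes negligibly. The three arguments exploit three different features of the complex phase $\phi_{k,v}$: Gaussian decay from $\Im \phi_{k,v}$ for the $x$-localisation, and two distinct integration-by-parts arguments for the $u_v^\perp$- and $u_v^\parallel$-localisations.

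The starting point is to record three auxiliary bounds, valid under \eqref{eq:cond_tkv}: (i) $|x^{t,k,v}| \lesssim_\kappa 1$, which is \eqref{eq:bd_x_tk_der} at $h=|\alpha|=0$; (ii) $|u^{t,k,v}| \lesssim_\kappa |t|$, extracted from the explicit formula \eqref{eq:flow_2step} for $u^t$ upon substituting $\mu = \mu^{t,k,v}$ (so that $\theta J_{\bar\mu}$ is replaced by $J_{kv} + J_{\mu/(2|\xi|)}$), using the M\'etivier bound $|J_{\bar\mu}| \simeq 1$ to control $\sinh(\theta J_{\bar\mu})/(\theta J_{\bar\mu})$ in operator norm by $1/|\theta|$ together with $|\theta^{t,k,v}| \simeq k \simeq |t|$; and (iii) $|(u^{t,k,v})_v^\parallel - t^2/(4k)| \lesssim_\kappa 1$, which follows from expanding \eqref{eq:umutkv} into $v$-parallel and $v$-perpendicular components and invoking (ii) together with $|\rho^{t,k,v}| \lesssim |\xi|$ from \eqref{eq:bd_rho_tk_der}.

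With (i) in hand, take $r_\kappa = 2\sup|x^{t,k,v}|$, so that on $|x| > r_\kappa$ one has $|x-x^{t,k,v}| \geq r_\kappa/2$; the M\'etivier estimate \eqref{eq:metivier}, combined with $|\mu^{t,k,v}| \simeq |\xi| \simeq 2^m \gtrsim \lambda^{\epsilon_4-\epsilon_3}$, then yields $\Im \phi_{k,v} \gtrsim_\kappa 2^m$ and hence pointwise superexponential decay $|e^{i\phi_{k,v}}| \leq e^{-c\lambda^{\epsilon_4-\epsilon_3}}$, whose integral over $\overline{B}(0,2^{2+\ell})$ is superpolynomially small. On the complementary region $|x| \leq r_\kappa$, where $|x-x^{t,k,v}| \lesssim 1$ uniformly, I would use \eqref{eq:dxi_re_phase_tk} to compute directional derivatives of $\Re\phi_{k,v}$: the identity $\partial_\mu \mu^{t,k,v} = I/t$ from \eqref{eq:mu_tk} gives, for a unit $w \in v^\perp$,
\[
w \cdot \Re \partial_\mu \phi_{k,v} = (x-x^{t,k,v}) \cdot (\partial_\mu \xi^{t,k,v}) w + w \cdot (u-u^{t,k,v})/t,
\]
so, choosing $w$ in the direction of $u_v^\perp$ and using (ii), one has $|w \cdot \Re \partial_\mu \phi_{k,v}| \gtrsim b\lambda^{\delta_0}$ when $|u_v^\perp| > |t|b\lambda^{\delta_0}$; similarly, $\partial_\xi \mu^{t,k,v} = (2k/t)\,v\,\bar\xi^T$ gives
\[
\bar\xi \cdot \Re \partial_\xi \phi_{k,v} = (x-x^{t,k,v}) \cdot (\partial_\xi \xi^{t,k,v})\bar\xi + \frac{2k}{t}\bigl(u_v^\parallel - (u^{t,k,v})_v^\parallel\bigr),
\]
which, thanks to (iii) and $k \simeq |t|$, is $\gtrsim b\lambda^{\delta_0}$ when $|u_v^\parallel - t^2/(4k)| > b\lambda^{\delta_0}$ (for $b\lambda^{\delta_0}$ sufficiently large). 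In either case, iterated integration by parts in the chosen direction---using the Euler operator $\xi \cdot \nabla_\xi$ in the radial case, in order to bypass the non-smoothness of $\bar\xi$---produces a factor of $(b\lambda^{\delta_0})^{-1}$ per iteration, thanks to the uniform bounds $|\partial_\bxi^\alpha \phi_{k,v}| \lesssim |\bxi|^{1-|\alpha|}$ (from \eqref{eq:bd_mu_tk_der}--\eqref{eq:bd_xi_tk_der} and the homogeneity of $|J_\mu|$ from Proposition \ref{prp:modJmu_analytic}) and the amplitude estimates \eqref{eq:est_symb_sdd_2ndlayer}. Against volume and amplitude factors $\lesssim 2^{m(d+a)} \lesssim \lambda^{d+a}$, choosing the number $N$ of iterations large yields the claimed $\lambda^{-N}$ bound.

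The principal obstacle is the $u_v^\parallel$-localisation: a direct integration by parts in $\mu_v^\parallel$ would yield a derivative of order only $(u_v^\parallel - (u^{t,k,v})_v^\parallel)/|t|$, which is insufficient for small $\delta_0$ since $|t|$ may be as large as $\lambda^{1-\epsilon_4}$. It is the rank-one structure $\partial_\xi \mu^{t,k,v} \propto v\,\bar\xi^T$, a consequence of the $|\xi|$-factor in the non-linear substitution \eqref{eq:mu_tk}, that effectively amplifies this derivative by a factor $\simeq |t|$ once one differentiates in the radial $\bar\xi$-direction instead, thereby restoring the full size $\simeq b\lambda^{\delta_0}$.
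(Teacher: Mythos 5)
Your overall strategy --- Gaussian decay of $\Im\phi_{k,v}$ for the $x$-localisation, then directional integration by parts using the identity \eqref{eq:dxi_re_phase_tk} to detect the large phase derivative for the two $u$-localisations, with the rank-one structure $\partial_\xi\mu^{t,k,v} \propto v\,\bar\xi^T$ restoring the size $\simeq b\lambda^{\delta_0}$ in the $u_v^\parallel$ case --- matches the paper's in spirit. Your auxiliary bounds (i)--(iii) are correct, as is the first-derivative computation. However, there is a gap in the justification of the iterated integration by parts.

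You invoke ``the uniform bounds $|\partial_\bxi^\alpha\phi_{k,v}| \lesssim |\bxi|^{1-|\alpha|}$ (from \eqref{eq:bd_mu_tk_der}--\eqref{eq:bd_xi_tk_der}).'' This bound is false, and those inequalities do not imply it. The full phase $\phi_{k,v}$ contains the summand $(u-u^{t,k,v})\cdot\mu^{t,k,v}$, which is linear in $u$ with a coefficient $\mu^{t,k,v}$ of size $\simeq 2^m$. On $\overline{B}(0,2^{2+\ell})$ one can have $|u|\simeq 2^{2\ell}$, so for example the radial $\xi$-derivative $(\bar\xi\cdot\nabla_\xi)\phi_{k,v}$ contains the term $\tfrac{2k}{t}(u_v^\parallel - (u^{t,k,v})_v^\parallel)$, of size up to $\simeq 2^{2\ell}$, far exceeding $|\bxi|^0 \simeq 1$; and the estimates \eqref{eq:bd_mu_tk_der}--\eqref{eq:bd_xi_tk_der} concern only $\mu^{t,k,v}$, $x^{t,k,v}$, $\xi^{t,k,v}$ --- they say nothing about the $u$-dependent part of $\phi_{k,v}$, which is precisely what is dangerous here. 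As written, your integration-by-parts step is not supported by the cited bounds.

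The gap is closable, and the closure is essentially what the paper's proof makes explicit via the decompositions $\phi_{k,v}=(v\cdot\mu^{t,k,v})(u_v^\parallel-t^2/(4k))+\psi_{k,v}$ and $\phi_{k,v}=\tfrac{u_v^\perp}{t}\cdot\mu_v^\perp+\tilde\psi_{k,v}$. What makes the iterated integration by parts work is a structural cancellation, not a uniform bound on $\partial_\bxi^\alpha\phi_{k,v}$: via \eqref{eq:umutkv}, the $u$-contribution to $\phi_{k,v}$ is $\tfrac{u_v^\perp}{t}\cdot\mu_v^\perp + (v\cdot\mu^{t,k,v})(u_v^\parallel - t^2/(4k))$, which is (a) $\xi$-independent in its $u_v^\perp$-part and $\mu_v^\perp$-independent in its $u_v^\parallel$-part, and (b) affine in $\mu_v^\perp$ and, via $v\cdot\mu^{t,k,v}=(\mu_v^\parallel+2k|\xi|)/t$, essentially $1$-homogeneous in $\xi$ in its $u_v^\parallel$-part. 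Consequently, for $w\in v^\perp$ one has $(w\cdot\nabla_\mu)^j\phi_{k,v}$ free of $u$ for $j\geq 2$ and of the claimed size, while for the Euler operator $E=\xi\cdot\nabla_\xi$ one has $E^j(v\cdot\mu^{t,k,v})=2k|\xi|/t$ for all $j\geq1$, so the ratios $E^j\phi_{k,v}/(E\phi_{k,v})^j$ remain controlled. Either make this cancellation explicit, or (as the paper does) peel off the $u$-linear term as the ``phase'' and put $e^{i\psi_{k,v}}q$ into the ``amplitude,'' whose $\xi$- or $\mu_v^\perp$-derivatives are then genuinely uniformly bounded because $\psi_{k,v}$ (resp.\ $\tilde\psi_{k,v}$) no longer sees $u$ in the relevant directions. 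Also, for symbol bounds you should cite the hypothesis that $\cQ$ is bounded in $S^0(\RR)\otimes S^a(\RR^d)$, not \eqref{eq:est_symb_sdd_2ndlayer}, which belongs to the preceding proposition.
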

\begin{proof}
The spatial localisation in $x$ is again easily achieved by taking advantage of the Gaussian decay of $e^{i\phi}$ in $x$ away from $x=x^t$. Indeed, under the conditions \eqref{eq:supp_Qk} and \eqref{eq:cond_tk}, we have,
by \eqref{eq:bd_mu_tk}, \eqref{eq:bd_x_tk_der} and \eqref{eq:xtkv},
\[
|\mu^{t,k,v}| \simeq_\kappa |\xi| \simeq_\kappa 2^m, \qquad |x^{t,k,v}| \lesssim_\kappa 1, \qquad |t| \simeq_\kappa \vartheta_v + k \simeq  k \simeq 2^\ell
\]
on the support of $q$. Thus also, by \eqref{eq:phase_tk} and \eqref{eq:metivier_nondeg},
\[
\Im \phi_{k,v}(t,\bx,\bxi) \simeq_\kappa 2^m |x-x^{t,k,v}|^2 \gtrsim_\kappa 2^m
\]
whenever $|x| > r_\kappa$ for a sufficiently large $r_\kappa \in \Rpos$, and, by \eqref{eq:int_kv},
\[
|I_{k,v}[q](t,\bx)| \leq \int_{\RR^d} e^{-\Im \phi_{k,v}(t,\bx,\bxi)} |q(t,\bxi)| \,d\bxi \lesssim_{\cQ,\kappa} (2^m)^{a+d} e^{-c_\kappa 2^m} \lesssim_{a,\kappa,N,\vec\epsilon} \lambda^{-N}
\]
for an appropriate $c_\kappa \in \Rpos$ and all $N \in \NN$, where we used \eqref{eq:cond_lm}. As $|\overline{B}(0,2^{2+\ell})| \lesssim \lambda^Q$, we readily deduce that
\[
\| \chr_{\overline{B}(0,2^{2+\ell})} \chr_{(\RR^{d_1} \setminus \overline{B}_{\RR^{d_1}}(0,r_\kappa)) \times \RR^{d_2}} I_{k,v}[q](t,\cdot)\|_1 \lesssim_{\cQ,\kappa,\vec\epsilon,N} \lambda^{-N}
\]
for all $N \in \NN$.

\smallskip

We now prove the localisation in $u_v^\parallel$.
According to Proposition \ref{prp:phase_tkv_bounds} and \eqref{eq:umutkv}, we can decompose
\[
\phi_{k,v} = (v \cdot \mu^{t,k,v}) \left( u_v^\parallel -\frac{t^2}{4 k}\right) + \psi_{k,v},
\]
where
\[
\psi_{k,v} \defeq (x - x^{t,k,v}) \cdot \xi^{t,k,v} +  \frac{ u_v^\perp}{t} \cdot \mu_v^\perp + \rho^{t,k,v} + \frac{i}{4} \langle|J_{\mu^{t,k,v}}| (x-x^{t,k,v}), x-x^{t,k,v}\rangle.
\]
Thanks to the bounds in Lemma \ref{lem:der_modJmu}, Proposition \ref{prp:phase_tkv_bounds} and \eqref{eq:xtkv}, together with the fact that the only summand in $\psi^{t,k}$ containing $u$ is independent of $\xi$, it is easily seen that, under the assumption $|x| \leq r_\kappa$,
\[
|\partial_\xi^\alpha \psi_{t,k}| \lesssim_{\kappa,\alpha} 2^{m(1-|\alpha|)}
\]
for all $\alpha \in \NN^{d_1} \setminus \{0\}$, from which it readily follows that
\[
|\partial_\xi^\alpha (e^{i\psi_{t,k}} q)| \lesssim_{\cQ,\kappa,\alpha} 2^{ma}
\]
for all $\alpha \in \NN^{d_1}$ (notice that $\Im\psi_{t,k} \geq 0$); similarly, from \eqref{eq:mu_tk} one deduces that
\[
|\partial_\xi^\alpha (v \cdot \mu^{t,k,v})| \lesssim_{\kappa,\alpha} 2^{m(1-|\alpha|)}, \qquad |\partial_\xi (v \cdot \mu^{t,k,v})| = \left|\frac{2 k}{t}\right| \simeq_\kappa 1.
\]
Therefore, if we write
\[
I_{k,v}[q](t,\bx) = \int_{\RR^d} e^{i (u_v^\parallel-t^2/(4 k)) (v\cdot\mu^{t,k,v})} [e^{i\psi_{k,v}} q(t,\bxi)] \,d\bxi
\]
(i.e., consider $e^{i\psi_{k,v}}$ as part of the amplitude), then repeated integration by parts in $\xi$ (see, e.g., \cite[Theorem 7.7.1]{Ho1}) yields, for $|x| \leq r_\kappa$, the estimate
\[
|I_{k,v}[q](t,\bx)| \lesssim_{\cQ,\kappa,N} |u_v^\parallel-t^2/(4 k)|^{-N} 2^{m(a+d)}
\]
for all $N \in \NN$; thus, for $|x| \leq r_\kappa$ and $|u_v^\parallel-t^2/(4 k)| > b\lambda^{\delta_0}$, we also get
\[
|I_{k,v}[q](t,\bx)| \lesssim_{\cQ,\kappa,b,\delta_0,N} \lambda^{-N}
\]
for all $N \in \NN$. As before, combining this with the volume bound $|\overline{B}(0,2^{2+\ell})| \lesssim \lambda^Q$ yields the estimate
\[
\| \chr_{\overline{B}(0,2^{2+\ell})} \chr_{\{ \bx \tc |x| \leq r_\kappa, \, |u_v^\parallel-t^2/(4 k)| > b\lambda^{\delta_0}\}} I_{k,v}[q](t,\cdot)\|_1 \lesssim_{\cQ,\kappa,\vec\epsilon,b,\delta_0,N} \lambda^{-N}
\]
for all $N \in \NN$.

\smallskip

It remains to prove the localisation in $u_v^\perp$. Here instead we decompose
\[
\phi_{k,v} =  \frac{u_v^\perp}{t} \cdot \mu_v^\perp + \tilde\psi_{k,v},
\]
with
\begin{multline*}
\tilde\psi_{k,v} \defeq (x - x^{t,k,v}) \cdot \xi^{t,k,v} + (v \cdot \mu^{t,k,v}) \left( u_v^\parallel -\frac{t^2}{4 k}\right) + \rho^{t,k,v} \\
+ \frac{i}{4} \langle |J_{\mu^{t,k,v}}| (x-x^{t,k,v}),x-x^{t,k,v}\rangle .
\end{multline*}
Thanks to the localisation $|x| \leq r_\kappa$, and due to the fact that the only summand in $\tilde\psi_{k,v}$ depending on $u$ is $\mu_v^\perp$-independent (see \eqref{eq:mu_tk}), 
as before one deduces that
\begin{align*}
|\partial_{\mu^\perp_v}^\alpha \tilde\psi_{k,v}| &\lesssim_{\kappa,b,\alpha} 2^{m(1-|\alpha|)} \quad\text{for } \alpha \neq 0, \\
|\partial_{\mu^\perp_v}^\alpha (e^{i\tilde\psi_{k,v}} q)| &\lesssim_{\cQ,\kappa,b,\alpha}  2^{ma} \quad\quad\ \text{for all } \alpha.
\end{align*}
Thus, iterated integration by parts in $\mu_v^\perp$ gives, for all $N \in \NN$, the estimate
\[
|I_{k,v}[q](t,\bx)| \lesssim_{\cQ,\kappa,b,N}  |u_v^\perp/t|^{-N} 2^{m(a+d)},
\]
and therefore, under the assumption  $|u_v^\perp| > b\lambda^{\delta_0}|t|$, also
\[
|I_{k,v}[q](t,\bx)| \lesssim_{\cQ,\kappa,b,\delta_0,N} \lambda^{-N}
\]
for all $N \in \NN$. In conclusion, as before,
\[
\| \chr_{\overline{B}(0,2^{2+\ell})} \chr_{\{ \bx \tc |x| \leq r_\kappa, \ |u_v^\perp| > |t| b \lambda^{\delta_0}\}} I_{k,v}[q](t,\cdot)\|_1 \\
 \lesssim_{\cQ,\kappa,\vec\epsilon,b,\delta_0,N} \lambda^{-N}
\]
for all $N \in \NN$.

\smallskip

The desired bound follows by combining the above estimates.
\end{proof}

We are now ready to treat the main contribution of the oscillatory integrals of the form $I_{k,v}[q]$.

\begin{prp}\label{prp:sdd}
Assume that $G$ is a M\'etivier group.
Let $\kappa > 1$, $r>0$, $\delta_0 > 0$, $\vec\epsilon \in (\Rpos)^4$ with $\epsilon_4>\epsilon_3$, $m,\ell \in \NN$, $\lambda \geq 1$ satisfying \eqref{eq:cond_lm}, and $a \in \RR$.
Let $\cQ$ be a bounded family of symbols in $S^0(\RR) \otimes S^a(\RR^d)$. 
For all $t \in \RR$ and $k \in \ZZ$ satisfying \eqref{eq:cond_tk}, and all $q \in \cQ$ satisfying the support condition \eqref{eq:supp_Qk}, we have
\begin{multline*}
\|\chr_{\{\bx \tc |x| \leq r, \, |u_v^\parallel-t^2/(4 k)| \leq r\lambda^{\delta_0}, |u_v^\perp| \leq |t| r \lambda^{\delta_0}\} }
I_{k,v}[q](t,\cdot) \|_1 
\\
\lesssim_{\cQ,\kappa,r,\delta_0,\vec\epsilon} 
\lambda^{(d+1)\delta_0} 2^{\ell(d_2-1)} 2^{m[a+(d-1)/2]}.
\end{multline*}
\end{prp}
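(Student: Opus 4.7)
The plan is to extend the local-in-time argument of Proposition~\ref{prp:sdd_0} to the large-time setting, relying on the uniform-in-$t$ bounds of Proposition~\ref{prp:phase_tkv_bounds} together with the identity \eqref{eq:dxi_re_phase_tk}. First I would perform a second dyadic decomposition in $\bxi$: split $q = \sum_{\breve\bxi \in \tilde Z_m} q_{\breve\bxi}$, where $\tilde Z_m$ is a maximal $2^{-m/2}$-separated subset of the unit sphere in $\RR^d$ with $\#\tilde Z_m \lesssim 2^{m(d-1)/2}$, and $q_{\breve\bxi}$ is supported in a cone of aperture $\sim 2^{-m/2}$ around $\breve\bxi$. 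By the support conditions \eqref{eq:supp_Qk} and \eqref{eq:Omegakappa0}, only unit vectors $\breve\bxi = (\breve\xi, \breve\mu)$ with $|\breve\xi| \simeq 1$ and $|\breve\mu| \lesssim 1$ contribute.

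For each sector, the Gaussian decay from $\Im \phi_{k,v} \simeq 2^m |x - x^{t,k,v}|^2$, which follows from \eqref{eq:metivier_nondeg} and $|\mu^{t,k,v}| \simeq 2^m$, shows by the same argument used in Proposition~\ref{prp:first_space_loc} that outside the $x$-ball of radius $b \cdot 2^{m(\delta_0-1)/2}$ centred at $x^{t,k,v}|_{\bxi = \breve\bxi}$ the contribution to $I_{k,v}[q_{\breve\bxi}](t,\cdot)$ is $O(\lambda^{-N})$ and can be discarded. Inside this ball, I would perform repeated integration by parts in $\bxi$: once in the radial direction $\breve\bxi$ (using Euler's identity for the $1$-homogeneity of $\phi_{k,v}$), and in the $d_2 - 1$ transverse directions $\breve\bxi^\perp_j = (0, v^\perp_j)$, where $\{v^\perp_j\}_{j=1}^{d_2-1}$ is an orthonormal basis of the hyperplane $v^\perp \subseteq \RR^{d_2}$. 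Using \eqref{eq:dxi_re_phase_tk} and the uniform derivative bounds \eqref{eq:bd_mu_tk_der}--\eqref{eq:bd_xi_tk_der}, these IBPs yield the pointwise estimate
\[
|I_{k,v}[q_{\breve\bxi}](t,\bx)| \lesssim 2^{m(a + (d+1)/2)} \bigl( 1 + 2^m |A(\bx)| + 2^{m/2} \textstyle\sum_{j=1}^{d_2-1} |B_j(\bx)| \bigr)^{-N},
\]
with $A$ and $B_j$ linear forms in $\bx - \bx^{t,k,v}|_{\bxi = \breve\bxi}$ whose $u$-gradients are, respectively, $\mu^{t,k,v}(\breve\bxi) = 2|\breve\xi|v + \breve\mu/t$ and $v^\perp_j / t$ (the latter from $\partial_\mu \mu^{t,k,v} = I/t$, see \eqref{eq:mu_tk}).

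The key new ingredient is the anisotropic Jacobian. In the basis $\{v, v^\perp_1, \ldots, v^\perp_{d_2-1}\}$ of $\RR^{d_2}$, the matrix $M_u$ of $u$-gradients is upper triangular with diagonal entries $(2|\breve\xi| + O(1/|t|), 1/t, \ldots, 1/t)$, whence $|\det M_u| \gtrsim 1/|t|^{d_2-1}$. Consequently, the $u$-tube $\{|A| \lesssim 2^{-m},\, |B_j| \lesssim 2^{-m/2}\}$ has volume $\simeq |t|^{d_2-1} \cdot 2^{-m(d_2+1)/2}$ and is contained in the localized $u$-region of the statement for $\lambda$ sufficiently large (since both $\lambda^{\delta_0} > 2^{-m}$ and $|t|\lambda^{\delta_0} > |t| 2^{-m/2}$). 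Integrating the pointwise bound over the product of this $u$-tube with the $x$-ball of volume $\simeq 2^{m d_1 (\delta_0-1)/2}$ yields, per sector,
\[
2^{m(a + (d+1)/2)} \cdot 2^{m d_1(\delta_0-1)/2} \cdot |t|^{d_2-1} \cdot 2^{-m(d_2+1)/2} \simeq 2^{m(a + d_1\delta_0/2)} \cdot |t|^{d_2-1}.
\]
Summing over $\#\tilde Z_m \lesssim 2^{m(d-1)/2}$ sectors and using $2^m \lesssim \lambda^{1+\epsilon_2}$ from \eqref{eq:cond_lm} to absorb $2^{m d_1 \delta_0/2}$ into $\lambda^{(d+1)\delta_0}$ (after relabelling $\delta_0$) then gives the claimed bound.

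The main obstacle is verifying that the adapted transverse directions $\breve\bxi^\perp_j = (0, v^\perp_j)$ --- which are only approximately orthogonal to $\breve\bxi$, with angular error of order $c_\kappa$ inherited from the support constraint $|\breve\mu_v^\perp| \lesssim c_\kappa$ of Proposition~\ref{prp:dec_periodic} --- are admissible for the Seeger--Sogge--Stein integration-by-parts scheme. Concretely, one must check that the decomposition $\phi_{k,v} = \partial_\bxi \phi_{k,v}(t, \bx, \breve\bxi) \cdot \bxi + h_{k,v,\breve\bxi}$ still yields a residual $h_{k,v,\breve\bxi}$ with $(\breve\bxi \cdot \nabla_\bxi)^N h_{k,v,\breve\bxi} = O(2^{-mN})$ and $(\breve\bxi^\perp_j \cdot \nabla_\bxi)^N h_{k,v,\breve\bxi} = O(2^{-mN/2})$ on the sector. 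This adaptation of the transverse directions from $\breve\mu^\perp_j$ (as in the proof of Proposition~\ref{prp:sdd_0}) to $v^\perp_j$ is essential in order to obtain the robust Jacobian lower bound $|\det M_u| \gtrsim 1/|t|^{d_2-1}$ uniformly in $\breve\bxi$, covering in particular the degenerate cases where $\breve\mu$ is small or misaligned with $v$.
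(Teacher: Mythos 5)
Your proposal follows essentially the same route as the paper's proof of this proposition: a $2^{-m/2}$-aperture decomposition in $\bxi$, Gaussian $x$-localisation from $\Im\phi_{k,v}$, integration by parts once in the radial direction $\breve\bxi$ and once in each of the $d_2-1$ transverse directions $(0,v_j^\perp)$ via \eqref{eq:dxi_re_phase_tk}, and then the anisotropic Jacobian $|t|^{-(d_2-1)}$ for the change of variables in $u$, giving the per-sector bound $\simeq 2^{ma}|t|^{d_2-1}$ times small $\delta_0$-power losses, which after summing over $2^{m(d-1)/2}$ sectors yields the claimed estimate.

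One remark on your final paragraph: the worry that $(0,v_j^\perp)$ is only approximately orthogonal to $\breve\bxi$ (with angular error $O(c_\kappa)$) is not actually an obstacle. What the integration-by-parts scheme needs is the bound $|\partial_\bxi^\alpha h_{k,v,\breve\bxi}|\lesssim_\kappa \lambda^{\delta_0}2^{-m|\alpha|/2}$ on the sector, which holds for all $\alpha$, hence for directional derivatives in \emph{any} direction, without any orthogonality to $\breve\bxi$. The exact orthogonality $\breve\mu^\perp_j\perp\breve\mu$ used in Proposition~\ref{prp:sdd_0} was a convenience, not a requirement; the genuinely new point in the large-time setting is, as you correctly identify, choosing the transverse directions adapted to $v$ rather than to $\breve\mu$, so that the $u$-Jacobian lower bound is uniform and the $|t|^{d_2-1}$ Jacobian factor appears cleanly. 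A further minor point: your pointwise bound for $|I_{k,v}[q_{\breve\bxi}]|$ omits the $\lambda^{N\delta_0}$ factors that appear because, under the spatial localisation $|u_v^\parallel-t^2/(4k)|\lesssim\lambda^{\delta_0}$ and $|u_v^\perp|\lesssim|t|\lambda^{\delta_0}$, the phase derivatives $\partial_\bxi^\alpha\phi_{k,v}$ are bounded by $\lambda^{\delta_0}|\bxi|^{1-|\alpha|}$ rather than $|\bxi|^{1-|\alpha|}$; these extra factors (with $N\le d_2+1$) combine with the $x$-ball volume loss to give exactly the $\lambda^{(d+1)\delta_0}$ in the statement, consistent with your "relabelling $\delta_0$" remark but worth tracking explicitly.
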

\begin{proof}
We preliminarily observe that, under the spatial localisation
\begin{equation}\label{eq:spatial_loc}
|x| \leq r, \qquad |u_v^\parallel-t^2/(4 k)| \leq r \lambda^{\delta_0}, \qquad |u_v^\perp| \leq |t| r \lambda^{\delta_0},
\end{equation}
from the formulas and bounds of Proposition \ref{prp:phase_tkv_bounds} and Lemma \ref{lem:der_modJmu}, one readily sees that
\begin{equation}\label{eq:phase_tkv_bound_sploc}
|\partial_{\bxi}^\alpha \phi_{k,v}(t,\bx,\bxi)| \lesssim_{\kappa,r,\alpha} \lambda^{\delta_0} |\bxi|^{1-|\alpha|}
\end{equation}
whenever $\bxi \in \tilde\Omega_{\kappa,0}$, for all $\alpha \in \NN^d$.

Due to the support condition \eqref{eq:supp_Qk}, we know that in the oscillatory integral
\[
I_{k,v}[q](t,\bx) = \int_{\RR^d} e^{i\phi_{k,v}(t,\bx,\bxi)} q(t,\bxi) \,d\bxi
\]
we already have the dyadic frequency localisation $|\bxi| \simeq_\kappa 2^m$. Much as in the proof of Proposition \ref{prp:sdd_0}, we now proceed with a ``second dyadic decomposition'' in the spirit of \cite{SSS}.

\smallskip

Namely, we can find a finite set $Z_m \subseteq \{ \bxi \tc |\bxi| = 1\}$ of directions in $\RR^d$ with
\begin{equation}\label{eq:count_sdd}
\sharp Z_m \lesssim 2^{m(d-1)/2}
\end{equation}
and a smooth $0$-homogeneous partition of unity $\{\chi_{m,\breve{\bxi}}\}_{\breve{\bxi} \in Z_m}$ on $\dot\RR^d$ such that
\begin{equation}\label{eq:supp_sdd}
\supp \chi_{m,\breve{\bxi}} \subseteq C_{m,\breve{\bxi}} \defeq \{ \bxi \tc |\bxi/|\bxi|-\breve{\bxi}| \leq c \, 2^{-m/2} \}
\end{equation}
for an appropriately small $c>0$, and
\begin{equation}\label{eq:der_sdd}
|\partial_{\bxi}^\alpha (\breve{\bxi} \cdot \nabla_{\bxi})^N \chi_{m,\breve{\bxi}} | \lesssim_{N,\alpha} 2^{m|\alpha|/2} |\bxi|^{-N-|\alpha|}
\end{equation}
for all $\breve{\bxi} \in Z_m$ and $N \in \NN$, where the implicit constants do not depend on $m$. Accordingly, we split
\begin{equation}\label{eq:dec_int_sdd}
I_{k,v}[q] = \sum_{\breve{\bxi} \in \tilde  Z_m} I_{k,v}[q_{\breve{\bxi}}],
\end{equation}
where $q_{\breve{\bxi}}(t,\bxi) \defeq q(t,\bxi) \chi_{m,\breve{\bxi}}(\bxi)$ and, due to the support condition \eqref{eq:supp_Qk}, the sum is restricted to
\[
\tilde Z_m = \{ \breve{\bxi} \in Z_m \tc \supp \chi_{m,\breve{\bxi}} \cap \Omega_{\kappa,0} \neq \emptyset\}.
\]
As $|\mu| \leq 5 |\xi|/2$ for all $\bxi \in \Omega_{\kappa,0}$ (see \eqref{eq:Omegakappa0}), by choosing a small enough $c$ in \eqref{eq:supp_sdd} we may assume that
\begin{equation}\label{eq:normal_dir}
C_{m,\breve{\bxi}} \subseteq \Rpos \tilde \Omega_{\kappa,0}
\ \text{ and } \
|\breve{\xi}| \simeq 1 \qquad\forall \breve{\bxi} = (\breve{\xi},\breve{\mu}) \in \tilde Z_m,
\end{equation}
where $\tilde \Omega_{\kappa,0}$ is as in \eqref{eq:cond_tkv}.

\smallskip

We now fix a direction $\breve{\bxi} \in \tilde Z_m$ and decompose
\[
\phi_{k,v}(t,\bx,\bxi) = \partial_{\bxi}\phi_{k,v}(t,\bx,\bxi/|\bxi|) \bxi = \partial_{\bxi}\phi_{k,v}(t,\bx,\breve{\bxi}) \bxi + h_{k,v,\breve{\bxi}}(t,\bx,\bxi),
\]
where the first equality is due to Euler's identity. By construction, $h_{k,v,\breve{\bxi}}(t,\bx,\cdot)$ is $1$-homogeneous and vanishes of second order on the half-line $\Rpos \breve{\bxi}$, as
\[
\partial_{\bxi} h_{k,v,\breve{\bxi}}(t,\bx,\bxi) = \partial_{\bxi}\phi_{k,v}(t,\bx,\bxi/|\bxi|) - \partial_{\bxi}\phi_{k,v}(t,\bx,\breve{\bxi}).
\]
From this and the bounds \eqref{eq:phase_tkv_bound_sploc} one deduces the estimates
\begin{align*}
|(\breve{\bxi} \cdot \nabla_{\bxi})^N h_{k,v,\breve{\bxi}}(t,\bx,\bxi)| 
&\lesssim_{\kappa,r,N} \lambda^{\delta_0} \left|\frac{\bxi}{|\bxi|}-\bxi\right|^2 |\bxi|^{1-N}, \\
|\partial_{\bxi} (\breve{\bxi} \cdot \nabla_{\bxi})^N h_{k,v,\breve{\bxi}}(t,\bx,\bxi)| 
&\lesssim_{\kappa,r,N} \lambda^{\delta_0} \left|\frac{\bxi}{|\bxi|}-\bxi\right| |\bxi|^{-N} , \\
|\partial_{\bxi}^\alpha h_{k,v,\breve{\bxi}}(t,\bx,\bxi)|
&\lesssim_{\kappa,r,\alpha} \lambda^{\delta_0} |\bxi|^{1-|\alpha|}
\end{align*}
for all $N \in \NN$ and $\alpha \in \NN^{d}$, whenever $\bxi \in \tilde\Omega_{\kappa,0}$ and \eqref{eq:spatial_loc} holds. In particular, if $\bxi \in \supp q_{\breve{\bxi}}$, then, by the support conditions \eqref{eq:supp_Qk} and \eqref{eq:supp_sdd} and \eqref{eq:spatial_loc},
\begin{align*}
|(\breve{\bxi} \cdot \nabla_{\bxi})^N h_{k,v,\breve{\bxi}}(t,\bx,\bxi)| 
&\lesssim_{\kappa,r,N} \lambda^{\delta_0} 2^{-mN},\\
|\partial_{\bxi}^\alpha h_{k,v,\breve{\bxi}}(t,\bx,\bxi)| 
&\lesssim_{\kappa,r,\alpha} \lambda^{\delta_0} 2^{-m|\alpha|/2}.
\end{align*}

As a consequence, we also deduce that
\begin{align*}
|e^{-ih_{k,v,\breve{\bxi}}} (\breve{\bxi} \cdot \nabla_{\bxi})^N e^{ih_{k,v,\breve{\bxi}}}| 
&\lesssim_{\kappa,r,N} \lambda^{N\delta_0} 2^{-mN},\\
|e^{-ih_{k,v,\breve{\bxi}}} \partial_{\bxi}^\alpha e^{ih_{k,v,\breve{\bxi}}}| 
&\lesssim_{\kappa,r,\alpha} \lambda^{|\alpha|\delta_0} 2^{-m|\alpha|/2},
\end{align*}
whence also, by \eqref{eq:der_sdd} and the boundedness of $\cQ$ in $S^0(\RR) \otimes S^a(\RR^d)$,
\begin{equation}\label{eq:est_ampl_SSS}
\begin{aligned}
|e^{-ih_{k,v,\breve{\bxi}}} (\breve{\bxi} \cdot \nabla_{\bxi})^N (e^{ih_{k,v,\breve{\bxi}}} q_{\breve{\bxi}})| 
&\lesssim_{\cQ,\kappa,r,N} \lambda^{N\delta_0} 2^{m(a-N)},\\
|e^{-ih_{k,v,\breve{\bxi}}} \partial_{\bxi}^\alpha (e^{ih_{k,v,\breve{\bxi}}} q_{\breve{\bxi}})| 
&\lesssim_{\cQ,\kappa,r,\alpha} \lambda^{|\alpha|\delta_0} 2^{m(a-|\alpha|/2)},
\end{aligned}
\end{equation}
for all $N \in \NN$.

\smallskip

Notice now that
\[
(\breve{\bxi} \cdot \nabla_{\bxi})[\partial_{\bxi}\phi_{k,v}(t,\bx,\breve{\bxi})\bxi] = \partial_{\bxi}\phi_{k,v}(t,\bx,\breve{\bxi})\breve{\bxi} = \phi_{k,v}(t,\bx,\breve{\bxi})
\]
by $1$-homogeneity. Hence, by iterated integration by parts in the direction $\breve{\bxi}$,
\[\begin{split}
&I_{k,v}[q_{\breve{\bxi}}](t,\bx) \\
&= \int_{\RR^d} e^{i \partial_{\bxi}\phi_{k,v}(t,\bx,\breve{\bxi})\bxi} [e^{ih_{k,v,\breve{\bxi}}(t,\bx,\bxi)} q_{\breve{\bxi}}(t,\bxi)] \,d\bxi \\
&= (-i\phi_{k,v}(t,\bx,\breve{\bxi}))^{-N}  \int_{\RR^d} e^{i \partial_{\bxi}\phi_{k,v}(t,\bx,\breve{\bxi})\bxi}  (\breve{\bxi} \cdot \nabla_{\bxi})^N [e^{ih_{k,v,\breve{\bxi}}(t,\bx,\bxi)} q_{\breve{\bxi}}(t,\bxi) ] \,d\bxi \\
&= (-i\phi_{k,v}(t,\bx,\breve{\bxi}))^{-N} \\
&\qquad\times \int_{\RR^d} e^{i \phi_{k,v}(t,\bx,\bxi)} [e^{-ih_{k,v,\breve{\bxi}}(t,\bx,\bxi)} (\breve{\bxi} \cdot \nabla_{\bxi})^N [e^{ih_{k,v,\breve{\bxi}}(t,\bx,\bxi)} q_{\breve{\bxi}}(t,\bxi) ]] \,d\bxi.
\end{split}\]
 As $\Im\phi_{k,v} \geq 0$, from this expression, the estimate \eqref{eq:est_ampl_SSS} and the support conditions \eqref{eq:supp_Qk} and \eqref{eq:supp_sdd}, we finally deduce that
\begin{multline}\label{eq:est_SSS_rad}
|I_{k,v}[q_{\breve{\bxi}}](t,\bx)| 
\lesssim_{\cQ,\kappa,r,N} \lambda^{N\delta_0} 2^{m[(d+1)/2+a]} 2^{-mN} |\phi_{k,v}(t,\bx,\breve{\bxi})|^{-N} \\
\leq \lambda^{N\delta_0} 2^{m[(d+1)/2+a]} 2^{-mN} |(x-\breve{x}^{t,k,v}) \cdot \breve{\xi}^{t,k,v} + (u-\breve{u}^{t,k,v}) \cdot \breve{\mu}^{t,k,v}|^{-N},
\end{multline}
where $\breve{x}^{t,k,v},\breve{u}^{t,k,v}, \breve{\xi}^{t,k,v},\breve{\mu}^{t,k,v}$ are the values of $x^{t,k,v},u^{t,k,v},\xi^{t,k,v},\mu^{t,k,v}$ at $\bxi=\breve{\bxi}$, 
and we used that $|\phi_{k,v}| \geq |\Re\phi_{k,v}|$.

\smallskip

Choose now an orthonormal basis $v_1^\perp,\dots,v_{d_2-1}^\perp$ of the orthogonal complement of $\RR v$ in $\RR^{d_2}$, and set $\bxi^v_j = (0,v_j^\perp)$. Then, by \eqref{eq:dxi_re_phase_tk},
\begin{multline*}
|\Re (\bxi^v_j \cdot \nabla_{\bxi})[\partial_{\bxi}\phi_{k,v}(t,\bx,\breve{\bxi})\bxi]| = |\Re \partial_{\bxi}\phi_{k,v}(t,\bx,\breve{\bxi})\bxi^v_j|  \\
= |(x-\breve{x}^{t,k,v}) \cdot \breve{A}^{t,k,v}_j + (u-\breve{u}^{t,k,v}) \cdot t^{-1} v_j^\perp|,
\end{multline*}
where $\breve{A}^{t,k,v}_j \defeq \partial_\mu \xi^{t,k,v}|_{\bxi = \breve{\bxi}} v_j^\perp,$ and where we used that
\[
\partial_\mu \mu^{t,k,v} v_j^\perp =  t^{-1} v_j^\perp
\]
by \eqref{eq:mu_tk}. Arguing much as above, repeated integration by parts in the direction $\bxi_j^v$ yields the estimate
\begin{multline}\label{eq:est_SSS_ort}
|I_{k,v}[q_{\breve{\bxi}}](t,\bx)| \\
\lesssim_{\cQ,\kappa,r,N}
\lambda^{N\delta_0} 2^{m[(d+1)/2+a]} 2^{-mN/2} |(x-\breve{x}^{t,k,v}) \cdot \breve{A}^{t,k,v}_j + (u-\breve{u}^{t,k,v}) \cdot t^{-1} v_j^\perp|^{-N}
\end{multline}
for all $N \in \NN$.

\smallskip

The above estimates per se are not enough to deduce the required $L^1$-bound for $I_{k,v}[q_{\breve{\bxi}}]$. 
 Here, much as in the proof of Proposition \ref{prp:first_space_loc}, we shall take advantage of the imaginary part of the phase to deduce a sharper localisation in $x$, due to the finer localisation in $\bxi$ achieved through the introduction of the cutoff $\chi_{m,\breve{\bxi}}$.

\smallskip

Namely, let us notice that, as $\bxi$ ranges in $C_{m,\breve{\bxi}}$, its direction $\bxi/|\bxi|$ stays in a $2^{-m/2}$-neighbourhood of $\breve{\bxi}$ contained in $\tilde\Omega_{\kappa,0}$, by \eqref{eq:supp_sdd} and \eqref{eq:normal_dir}. Moreover, as $x^{t,k,v}$ is $0$-homogeneous in $\bxi$, its dependence on $\bxi$ is only through $\bxi/|\bxi|$; finally, from \eqref{eq:bd_x_tk_der} one sees that $|\partial_{\bxi} x^{t,k,v}| \lesssim_\kappa 1$ on the unit sphere cap $\{ \bxi/|\bxi| \tc \bxi \in \tilde\Omega_{\kappa,0}\}$. From this we deduce that
\[
|x^{t,k,v}-\breve{x}^{t,k,v}| \lesssim_\kappa 2^{-m/2}
\]
whenever $\bxi \in C_{m,\breve{\bxi}}$; moreover, from \eqref{eq:bd_mu_tk} we deduce that, on the support of $q_{\breve{\bxi}}$,
\[
|\mu^{t,k,v}| \simeq_\kappa 2^m.
\]
Consequently, by \eqref{eq:phase_tk} and the M\'etivier condition \eqref{eq:metivier},
\[
\Im\phi_{k,v} \gtrsim |\mu^{t,k,v}| |x-x^{t,k,v}|^2 \gtrsim_\kappa \lambda^{2\delta_0}
\]
whenever $|x-\breve{x}^{t,k,v}| > b_\kappa \lambda^{\delta_0} 2^{-m/2}$ for a sufficiently large $b_\kappa \in \Rpos$.
Hence
\[\begin{split}
|I_{k,v}[q_{\breve{\bxi}}](t,\bx)| 
&\leq \int_{\RR^d} e^{-\Im \phi_{k,v}(t,\bx,\bxi)} |q_{\breve{\bxi}}(t,\bxi)| \,d\bxi \lesssim_{\cQ,\kappa} (2^m)^{a+d} e^{-c_\kappa \lambda^{2\delta_0}} \\
&\lesssim_{a,\kappa,N,\delta_0,\vec\epsilon} \lambda^{-N}
\end{split}\]
for an appropriate $c_\kappa \in \Rpos$ and all $N \in \NN$, where we used \eqref{eq:cond_lm}; thus also
\begin{multline*}
\|\chr_{\{\bx \tc |x| \leq r, \, |u_v^\parallel-t^2/(4 k)| \leq r\lambda^{\delta_0}, \, |u_v^\perp| \leq |t| r \lambda^{\delta_0}, \, |x-\breve{x}^{t,k,v}| > b_\kappa \lambda^{\delta_0} 2^{-m/2}\}} I_{k,v}[q_{\breve{\bxi}}](t,\cdot)\|_1 \\
\lesssim_{\cQ,\kappa,r,\vec\epsilon,N} \lambda^{-N}
\end{multline*}
for all $N \in \NN$, showing that the contribution of this part is negligible.

\smallskip

As for the remaining part, we can finally use the estimates \eqref{eq:est_SSS_rad} and \eqref{eq:est_SSS_ort} with $N=0$ and $N=d_2+1$ to deduce
\[\begin{split}
&\|\chr_{\{\bx \tc |x-\breve{x}^{t,k,v}| \leq b_\kappa \lambda^{\delta_0} 2^{-m/2}, \, |u_v^\parallel-t^2/(4 k)| \leq r\lambda^{\delta_0}, \, |u_v^\perp| \leq |t| r \lambda^{\delta_0}\}} I_{k,v}[q_{\breve{\bxi}}](t,\cdot)\|_1 \\
&\lesssim_{\cQ,\kappa,r} \lambda^{(d_2+1)\delta_0} 2^{m[(d+1)/2+a]} \\
&\quad\times \int_{\RR^{d_2}} \int_{|x-\breve{x}^{t,k,v}| \leq b_\kappa \lambda^{\delta_0} 2^{-m/2}} \Biggl(1+2^m|(x-\breve{x}^{t,k,v}) \cdot \breve{\xi}^{t,k,v} + (u-\breve{u}^{t,k,v}) \cdot \breve{\mu}^{t,k,v}| \\
&\qquad\qquad+ \sum_{j=1}^{d_2-1} 2^{m/2} |(x-\breve{x}^{t,k,v}) \breve{A}^{t,k,v}_j + (u-\breve{u}^{t,k,v}) \cdot t^{-1} v_j^\perp | \Biggr)^{-(1+d_2)} \,dx \,du \\
&\lesssim_{\kappa} \lambda^{(d+1)\delta_0} |t|^{d_2-1} 2^{ma};
\end{split}
\]
in the last step we used the facts that
\[
|v \cdot \breve{\mu}^{t,k,v}| \simeq_\kappa |\breve{\xi}| \simeq_\kappa 1
\]
by \eqref{eq:bd_mu_tk} and \eqref{eq:normal_dir}, whence one deduces that the linear map
\[
u \mapsto (u \cdot \breve{\mu}^{t,k,v},u \cdot t^{-1} v_1^\perp,\dots,u \cdot t^{-1} v_{d_2-1}^\perp )
\]
has determinant $\simeq_\kappa |t|^{-(d_2-1)}$ in absolute value.

\smallskip

In light of \eqref{eq:count_sdd}, \eqref{eq:dec_int_sdd} and \eqref{eq:cond_tk}, we can now sum the above $L^1$-norm estimates for the $I_{k,v}[q_{\breve{\bxi}}]$ to obtain the desired estimate for $I_{k,v}[q]$.
\end{proof}

We can now sum the previously obtained estimates for the oscillatory integrals $I_{k,v}[q_{k,v,T,\kappa}]$ and obtain the desired spatial $L^1$-estimates for the FIO kernels $I[q]$.

\begin{prp}\label{prp:sss_sum}
Assume that $G$ is a M\'etivier group.
Let $a \in \RR$, $\kappa > 1$, $m \in \NN$.
Let $\vec\epsilon \in (\Rpos)^4$ and $\delta_0 > 0$ with $\epsilon_4 > \epsilon_3$.
Let $\cQ$ be a $(0,a,\kappa)$-bounded family of symbols.
Then, for all $q \in \cQ$ such that
\begin{equation}\label{eq:qsupp_m}
\bigcup_{t \in \RR} \supp q(t,\cdot) \subseteq \{ \bxi \tc \kappa^{-1} \leq |\xi|/2^m \leq \kappa \},
\end{equation}
for all $\lambda \geq 1$ and $\ell \in \NN$ satisfying \eqref{eq:cond_lm},
\begin{equation}\label{eq:est_Iq_aarb}
\| \chr_{\overline{B}(0,2^{2+\ell})} I[q](\pm 2^\ell,\cdot) \|_1 
\lesssim_{\cQ,\kappa,\vec\epsilon,\delta_0} 2^{m a} \lambda^{\delta_0 + (1+\epsilon_2)(d-1)/2};
\end{equation}
in particular, if $a \geq 0$, then
\begin{equation}\label{eq:est_Iq_apos}
\| \chr_{\overline{B}(0,2^{2+\ell})} I[q](\pm 2^\ell,\cdot) \|_1 
\lesssim_{\cQ,\kappa,\vec\epsilon,\delta_0} 2^{-\ell a} \lambda^{\delta_0 + (1+\epsilon_2)(a+(d-1)/2)}.
\end{equation}
\end{prp}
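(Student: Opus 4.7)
The plan is to combine Proposition \ref{prp:dec_periodic} (anisotropic splitting in $\mu$), Proposition \ref{prp:first_space_loc} (spatial localisation) and Proposition \ref{prp:sdd} ($L^1$-bound for each piece), falling back on Proposition \ref{prp:sdd_0} when $|t|$ stays bounded.

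First, I would split into two regimes according to $\ell$. When $2^\ell \leq 16\kappa^2$, Proposition \ref{prp:sdd_0} applied with $T = 16\kappa^2$ directly yields
\[
\| \chr_{\overline{B}(0,2^{2+\ell})} I[q](\pm 2^\ell,\cdot) \|_1 \lesssim_{\cQ,\kappa,\delta_0} 2^{m[\delta_0 + a + (d-1)/2]},
\]
and the constraint $2^m \leq 2^{\ell+m} \leq \lambda^{1+\epsilon_2}$ from \eqref{eq:cond_lm} converts this into the desired bound \eqref{eq:est_Iq_aarb} after renaming $\delta_0$.

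For $2^\ell > 16\kappa^2$, I would set $T = 2^\ell = |t|$ (so $|t/T| = 1 \in [\kappa^{-1},\kappa]$) and apply Proposition \ref{prp:dec_periodic}, obtaining
\[
I[q](t,\cdot) = |t|^{1/2-d_2} \sum_{k \geq 2} \sum_{v \in V_{T,\kappa}} I_{k,v}[q_{k,v,T,\kappa}](t,\cdot),
\]
where, by \eqref{eq:count_sdd_2ndlayer} and \eqref{eq:cond_tk_vanish}, the sum contains $\lesssim 2^{\ell(d_2-1)}$ directions $v$ and $\lesssim 2^\ell$ values of $k$, for a total of $\lesssim_\kappa 2^{\ell d_2}$ nonvanishing terms. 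The family $\{q_{k,v,T,\kappa}\}$ is bounded in $S^0(\RR) \otimes S^a(\RR^d)$, with supports inside $\Omega_{\kappa,0}$ and still satisfying $|\xi| \simeq_\kappa 2^m$ because of \eqref{eq:qsupp_m} and \eqref{eq:supp_symb_sdd_2ndlayer}.

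Next, I would estimate each summand by a two-step argument. First, Proposition \ref{prp:first_space_loc} (applied with any fixed $b$ and a small $\delta_0$) allows us to insert a cutoff to the set
\[
\{\bx \tc |x| \leq r_\kappa,\ |u_v^\parallel - t^2/(4k)| \leq r_\kappa \lambda^{\delta_0},\ |u_v^\perp| \leq |t| r_\kappa \lambda^{\delta_0}\},
\]
modulo a remainder whose $L^1$-norm on $\overline{B}(0,2^{2+\ell})$ is $O_N(\lambda^{-N})$; summing these $O_N(\lambda^{-N})$ remainders over the $\lesssim 2^{\ell d_2} \lesssim \lambda^{d_2 d}$ terms remains $O_N(\lambda^{-N})$ by raising $N$. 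Second, Proposition \ref{prp:sdd} bounds each localised piece by
\[
\lambda^{(d+1)\delta_0} \, 2^{\ell(d_2-1)} \, 2^{m[a+(d-1)/2]}.
\]

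Summing over the $\lesssim 2^{\ell d_2}$ nonzero $(k,v)$ pairs and including the prefactor $|t|^{1/2-d_2} = 2^{\ell(1/2-d_2)}$, the total contribution is
\[
\lambda^{(d+1)\delta_0} \, 2^{\ell(d_2 - 1/2)} \, 2^{m[a+(d-1)/2]}.
\]
The M\'etivier inequality $d_2 - 1/2 \leq (d-1)/2$ from \eqref{eq:metivier_dim_ineq}, combined with $2^{\ell+m} \leq \lambda^{1+\epsilon_2}$ from \eqref{eq:cond_lm}, gives
\[
2^{\ell(d_2-1/2)} \, 2^{m(d-1)/2} \leq 2^{(\ell+m)(d-1)/2} \leq \lambda^{(1+\epsilon_2)(d-1)/2},
\]
so the total is bounded by $2^{ma} \, \lambda^{(d+1)\delta_0 + (1+\epsilon_2)(d-1)/2}$. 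Renaming $(d+1)\delta_0$ as $\delta_0$ yields \eqref{eq:est_Iq_aarb}. Finally, \eqref{eq:est_Iq_apos} follows from \eqref{eq:est_Iq_aarb} when $a \geq 0$ by writing $2^{ma} = 2^{-\ell a} \cdot 2^{(\ell+m)a} \leq 2^{-\ell a} \lambda^{(1+\epsilon_2)a}$ via \eqref{eq:cond_lm}.

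There is no real obstacle: the proof is essentially a bookkeeping exercise that weighs the $2^{\ell d_2}$ combinatorial loss coming from the anisotropic $\mu$-splitting against the $|t|^{1/2-d_2}$ Jacobian gain of Proposition \ref{prp:dec_periodic} and the $2^{\ell(d_2-1)}$ loss from the finer-scale spatial spread of each $I_{k,v}$. The only point that needs care is verifying that the M\'etivier bound $d_2 \leq (d-1)/2$ is \emph{exactly} what is needed to absorb the residual $2^{\ell(d_2-1/2)}$ factor into $\lambda^{(1+\epsilon_2)(d-1)/2}$ via \eqref{eq:cond_lm}.
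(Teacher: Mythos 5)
Your proof is correct and follows the paper's argument essentially verbatim: split into the small-time case handled by Proposition \ref{prp:sdd_0} and the large-time case handled by Propositions \ref{prp:dec_periodic}, \ref{prp:first_space_loc} and \ref{prp:sdd}, then sum over the $\lesssim_\kappa 2^{\ell d_2}$ nonvanishing $(k,v)$ pairs and absorb the residual powers of $2^\ell$ and $2^m$ into $\lambda^{(1+\epsilon_2)(d-1)/2}$ using \eqref{eq:cond_lm} and the M\'etivier inequality. The only cosmetic difference is in the final arithmetic: you bound $2^{\ell(d_2-1/2)} 2^{m(d-1)/2} \leq 2^{(\ell+m)(d-1)/2}$ directly from $d_2 - 1/2 \leq (d-1)/2$, whereas the paper first rewrites $2^{m(d-1)/2} = 2^{m(d_2-1/2)} 2^{m(d_1-d_2)/2}$ before invoking $2^{\ell+m}, 2^m \leq \lambda^{1+\epsilon_2}$; the two are equivalent.
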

\begin{proof}
Let us first notice that, when $a \geq 0$, the estimate \eqref{eq:est_Iq_apos} follows from \eqref{eq:est_Iq_aarb} and the condition $2^{\ell+m} \leq \lambda^{1+\epsilon_2}$ from \eqref{eq:cond_lm}. Hence, we only need to prove \eqref{eq:est_Iq_aarb}.

In the case $2^\ell < 16 \kappa^2$,
by Proposition \ref{prp:sdd_0},
\[
\| \chr_{\overline{B}(0,2^{2+\ell})} I[q](\pm 2^\ell,\cdot) \|_1 
\lesssim_{\cQ,\kappa,\delta_0} 2^{m[\delta_0+a+(d-1)/2]} \lesssim 2^{m a} \lambda^{(1+\epsilon_2)\delta_0 + (1+\epsilon_2)(d-1)/2},
\]
where we used \eqref{eq:cond_lm}, and the fact that $\delta_0 + (d-1)/2 \geq 0$; up to a relabelling of $\delta_0$, this implies the desired estimate \eqref{eq:est_Iq_aarb}.

\smallskip

It remains to consider the case where $2^\ell \geq 16 \kappa^2$.
In this case,
we can apply the decomposition \eqref{eq:dec_periodic} of $I[q](t,\cdot)$ from Proposition \ref{prp:dec_periodic} with $T = 2^\ell$, i.e.,
\[
I[q](\pm 2^\ell,\cdot) 
= \sum_{\substack{ k \geq 2 \\ k \simeq_\kappa 2^\ell}} \sum_{v \in V_{2^\ell,\kappa}} 2^{\ell(1/2- d_2)} I_{k,v}[q_{k,v,\ell}](\pm 2^\ell,\cdot) ,
\]
where we write $q_{k,v,\ell}$ instead of $q_{k,v,2^\ell,\kappa}$. In addition
\[
\tilde\cQ = \{ q_{k,v,\ell} \tc q \in \cQ, \ k \geq 2, \ \ell \in \NN, \ 2^\ell \geq 16 \kappa^2, \  v \in V_{2^\ell,\kappa} \}
\]
is a bounded family of symbols in $S^0(\RR) \otimes S^a(\RR^d)$ with
\[
\supp q_{k,v,\ell}(t,\cdot) \subseteq \Omega_{\kappa,0} \cap \{ \bxi \tc \kappa^{-1} \leq |\xi|/2^m \leq \kappa \}
\]
whenever $q \in \cQ$ satisfies \eqref{eq:qsupp_m}.

Hence, by applying Propositions \ref{prp:first_space_loc} and \ref{prp:sdd} to the family of symbols $\tilde\cQ$, and taking \eqref{eq:count_sdd_2ndlayer} into consideration, we obtain that, for all $N \in \NN$,
\[
\begin{split}
&\| \chr_{\overline{B}(0,2^{2+\ell})} I[q](\pm 2^\ell,\cdot) \|_1 \\
&\lesssim_{\cQ,\kappa,\vec\epsilon,\delta_0,N} 
2^{\ell(1/2- d_2)} \\
&\times\sum_{\substack{ |k| \geq 2 \\ |k| \simeq_\kappa 2^\ell}} \sum_{v \in V_{2^\ell,\kappa}} 
\| \chr_{\{ \bx \tc |x| \leq r_\kappa, \, |u_v^\parallel-2^{2\ell}/(4 k)| \leq \lambda^{\delta_0/2}, \, |u_v^\perp| \leq 2^{\ell} \lambda^{\delta_0} \}}  I_{k,v}[q_{k,v,\ell}](\pm 2^\ell,\cdot) \|_1 \\
&\qquad+ \lambda^{-N} \\
&\lesssim_{\cQ,\kappa,\delta_0,\vec\epsilon,N} 
\lambda^{(d+1)\delta_0}  2^{\ell (d_2-1/2)} 2^{m[a+(d-1)/2]} + \lambda^{-N} \\
&= \lambda^{(d+1)\delta_0}  2^{(\ell+m) (d_2-1/2)} 2^{m[a+(d_1-d_2)/2]} + \lambda^{-N}\\
&\lesssim_{\vec\epsilon,N} 
2^{m a} \lambda^{(d+1)\delta_0 + (1+\epsilon_2)[a+(d-1)/2]};
\end{split}
\]
in the last inequality, we used \eqref{eq:cond_lm} and the fact that $(d_1-d_2)/2 \geq 0$, as $G$ is M\'etivier.
This is the desired estimate \eqref{eq:est_Iq_aarb}, up to a relabelling of $\delta_0$.
\end{proof}

\begin{rem}\label{rem:notMS}
One of the crucial tools in our approach has been Lemma \ref{lem:der_modJmu}, which can be interpreted as expressing a very weak form of almost-periodicity in $\theta$ of $\exp(\theta J_{\bar\mu})$ and related expressions which come up in the Hamiltonian flow in Corollary \ref{cor:flow_alt}, and hence also in our phase $\phi$.
Thanks to this lemma, we were able to avoid computations of ``explicit'' formulas for quantities like $\exp(\theta J_{\bar\mu})$.
Such explicit formulas, however, would be needed in order to extend the approach in \cite{MSe} from Heisenberg-type groups to M\'etivier groups, since that approach was heavily based on Mehler-type formulas, which again contain terms like $\exp(\theta J_{\bar\mu})$.
This would require to apply suitable orthogonal linear changes of coordinates (depending on $\bar\mu$), which put the skew-symmetric matrices $J_{\bar\mu}$ into normal forms. Such kind of approach has also been used in numerous proofs of spectral multiplier theorems for sub-Laplacians on certain classes of $2$-step Carnot groups. However, the eigenvalues of $J_{\bar\mu}$ are just algebraic functions of $\bar\mu$, which often have singularities at which these functions are typically not differentiable, so that integrations by parts in $\mu$ (which again play a crucial role in \cite{MSe}) will in general break down. Related obstacles would also show up with the representation-theoretic approach in \cite{MSt}.
\end{rem}

\section{Proof of the main results}\label{s:mainproofs}

We are now able to prove estimate \eqref{eq:target_est} in the case where $G$ is a M\'etivier group.

\begin{prp}\label{prp:non-elliptic_est}
Assume that $G$ is a M\'etivier group.
Let $\vec\epsilon \in (\Rpos)^4$ and $\delta_0 > 0$ with $\epsilon_4 > \epsilon_3$.
Let $\lambda \geq 1$ and $m,\ell \in \NN$ satisfy \eqref{eq:cond_lm}. Let $h \in \NN$. Then
\begin{align*}
\|\chr_{\overline{B}(0,2^{2+\ell})} \cos(2^\ell \sqrt{\opL}) H_{m,h} \|_1 
&\lesssim_{\vec\epsilon,\delta_0,h} \lambda^{(d-1)/2 + \epsilon_2 (d-1)/2 + \delta_0},\\
\|\chr_{\overline{B}(0,2^{2+\ell})} 2^\ell \sqrt{\opL} \sin(2^\ell \sqrt{\opL}) H_{m,h} \|_1 
&\lesssim_{\vec\epsilon,\delta_0,h} \lambda^{1+(d-1)/2 + \epsilon_2 (1+(d-1)/2) + \delta_0},
\end{align*}
where $H_{m,h}$ is as in \eqref{eq:initial_amplitude_complex}.
\end{prp}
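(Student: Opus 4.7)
The plan is to identify $H_{m,h}$ with an oscillatory integral of the form $I_0[q_m]$, and then invoke the parametrix of Proposition~\ref{prp:FIO_repn_wave} to reduce the analysis of $\cos(2^\ell\sqrt{\opL}) H_{m,h}$ and $\sqrt{\opL}\sin(2^\ell\sqrt{\opL}) H_{m,h}$ to the analysis of a finite number of FIO kernels $I[\Lambda_I^j q_m]$ and $I[\mhoI\Lambda_I^j q_m]$ (plus a high-order remainder), each of which can be estimated by Proposition~\ref{prp:sss_sum}.

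First I would observe that, by Lemma~\ref{lem:phder}\ref{en:phder_grad}--\ref{en:phder_hess}, the phase $\phi$ specialises at $t=0$ to $\phi(0,\bx,\bxi) = \bx \cdot \bxi + i\langle|J_\mu|x,x\rangle/4$ and $\Den_\phi(0,\bxi)=1$, so that $H_{m,h}=I_0[q_m]$ where $q_m(\bxi)\defeq \eta_h(2^{-m}\bxi)$. The support and smoothness of $\eta_h$ guarantee, via a trivial scaling, that $\{q_m\}_m$ is a $(0,0,\kappa)$-bounded family in the sense of \eqref{eq:Q_bound}, for some $\kappa=\kappa(h)>1$, with $\supp q_m \subseteq \{\bxi \tc \kappa^{-1}\le |\xi|/2^m \le \kappa\}$. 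Applying Proposition~\ref{prp:FIO_repn_wave} at $t=2^\ell$ and using Lemma~\ref{lem:symbols_opLambda} iteratively, one deduces that $\Lambda_I^j q_m$ is $(0,-j,\kappa)$-bounded and $\mhoI \Lambda_I^j q_m$ is $(0,1-j,\kappa)$-bounded, with supports still contained in the localisation set \eqref{eq:qsupp_m}. Hence Proposition~\ref{prp:sss_sum} applies to each of the resulting FIO kernels. For the cosine expansion, the $j=0$ term (order $a=0$) contributes $\lesssim \lambda^{\delta_0+(1+\epsilon_2)(d-1)/2}$ by \eqref{eq:est_Iq_aarb}, while the remaining terms come with gain factors $2^{-mj}$ and sum to a convergent geometric series. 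For the sine expansion, one has to multiply each contribution by $2^\ell$; the $j=0$ term has order $a=1$ and yields, via \eqref{eq:est_Iq_apos}, the bound $2^\ell\cdot 2^{-\ell}\lambda^{\delta_0+(1+\epsilon_2)(1+(d-1)/2)}$, which is exactly the target estimate, while the $j\geq 1$ terms are smaller thanks to the constraints $2^\ell\le \lambda^{1-\epsilon_4}$ and $2^{-m}\le \lambda^{\epsilon_3-\epsilon_4}$ coming from \eqref{eq:cond_lm}.

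The main obstacle will be to show that the remainder term in the expansion of Proposition~\ref{prp:FIO_repn_wave} is negligible. This remainder involves the symbol $\Lambda\Lambda_I^N q_m$, which by Lemma~\ref{lem:symbols_opLambda} is $(-2,-N,\kappa)$-bounded, and is integrated against the wave propagator $\cos(s\sqrt{\opL})$ over $s,\tau\in [0,2^\ell]$. My plan here is to exploit the two crucial features of this symbol: the $(1+|\tau|)^{-2}$ decay in time, which ensures that the iterated Duhamel integral in \eqref{eq:FIO_repn_wave}--\eqref{eq:FIO_repn_wave_der} does not blow up as $\ell$ grows, and the spatial order $-N$, which makes $\|I[\Lambda\Lambda_I^N q_m](\tau,\cdot)\|_\infty$ fall off like $(1+|\tau|)^{-3/2}\,2^{m(d-N)}$ once one uses the bound $|\Den_\phi|\lesssim(1+|\tau|)^{1/2}$ from \eqref{eq:det_2step} and the support of the symbol. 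Combining the $L^2$-boundedness of $\cos(s\sqrt{\opL})$ with Cauchy--Schwarz applied on $\overline{B}(0,2^{2+\ell})$ (whose volume is $\lesssim 2^{\ell Q}\lesssim \lambda^Q$ by \eqref{eq:cond_lm}), the remainder is majorised by a quantity of the form $C_N\,\lambda^{C} 2^{-mN}$; recalling that $2^{-m}\le\lambda^{\epsilon_3-\epsilon_4}$ with $\epsilon_4>\epsilon_3$, this can be made $\lesssim_K \lambda^{-K}$ for any $K\in \NN$ by taking $N=N(\vec\epsilon,K)$ sufficiently large, so the remainder is absorbed into the desired bounds. Summing the estimates for the main terms and the remainder then yields the proposition.
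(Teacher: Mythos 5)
Your proposal is correct and follows essentially the same route as the paper's own proof: identify $H_{m,h}=I_0[q_m]$, expand via Proposition~\ref{prp:FIO_repn_wave}, bound the finitely many main terms using Lemma~\ref{lem:symbols_opLambda} and Proposition~\ref{prp:sss_sum} (with $a=0$ for the cosine and $a=1$ for the sine), and control the remainder by combining the $(1+|\tau|)^{-3/2}$ time decay, the $2^{m(d-N)}$ frequency gain, finite propagation speed, Cauchy--Schwarz and $L^2$-boundedness, finally taking $N$ large so that $2^{-m}\lesssim\lambda^{\epsilon_3-\epsilon_4}$ kills the $\lambda^Q$-type loss. (The "geometric series" framing for the terms $j\ge1$ is harmless but unnecessary since the sum over $j$ has only $N+1$ terms; the paper simply estimates each of them uniformly.)
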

\begin{proof}
By comparing \eqref{eq:initial_amplitude_complex} and \eqref{eq:fio_Iq}, we see that
\[
H_{m,h} = I_0 [ q_{m,h} ],
\]
where the functions $q_{m,h}(\bxi) \defeq \eta_{h} (2^{-m}|\bxi|)$ are time-independent symbols, which are uniformly bounded in $S^0(\RR^d)$ and satisfy the support condition
\[
\supp q_{m,h} \subseteq \{ \bxi \tc 1/2 \leq |\xi|/2^m \leq 2, \ 1/2 \leq |\mu|/2^m \leq 2 \}.
\]
By applying Proposition \ref{prp:FIO_repn_wave} we then obtain, for all $N \in \NN$,
\begin{equation}\label{eq:transport_dec}
\begin{split}
&\cos(t\sqrt{\opL}) H_{m,h}(\bx) \\
&= \frac{1}{2} \sum_{j=0}^N ( I[\Lambda_I^{j} q_{m,h}](t,\bx) + I[\Lambda_I^{j} q_{m,h}](-t,\bx)) \\
&- \frac{1}{2} \int_0^t \int_0^{t-\tau} \cos(s \sqrt{\opL}) \, (I[\Lambda \Lambda_I^{N} q_{m,h}](\tau,\bx) + I[\Lambda \Lambda_I^{N} q_{m,h}](-\tau,\bx)) \,ds\,d\tau,
\end{split}
\end{equation}
and
\begin{equation}\label{eq:transport_dec_der}
\begin{split}
&t\sqrt{\opL} \sin(t\sqrt{\opL}) H_{m,h}(\bx) \\
& = -\frac{t}{2i} \sum_{j=0}^N ( I[ \mhoI \Lambda_I^j q_{m,h}](t,\bx) - I[\mhoI \Lambda_I^j q_{m,h}](-t,\bx)) \\
&+ \frac{t}{2} \int_0^t \cos((t-\tau) \sqrt{\opL}) \, ( I[\Lambda \Lambda_I^N q_{m,h}](\tau,\bx) + I[\Lambda \Lambda_I^N q_{m,h}](-\tau,\bx) ) \,d\tau.
\end{split}
\end{equation}
Moreover, by Lemma \ref{lem:symbols_opLambda}, for any $j,h \in \NN$, the families
\[
\cQ_{j,h} = \{ \Lambda_I^{j} q_{m,h} \tc m \in \NN\}, \qquad \dot\cQ_{j,h} = \{ \mhoI \Lambda_I^{j} q_{m,h} \tc m \in \NN\}
\]
are $(0,-j,\kappa)$-bounded and $(0,1-j,\kappa)$-bounded respectively, while
\[
\tilde\cQ_{j,h} = \{ \Lambda \Lambda_I^{j} q_{m,h} \tc m \in \NN\}
\]
is $(-2,-j,\kappa)$-bounded, with $\kappa=4$; in addition,
\begin{multline*}
\supp (\Lambda_I^{j} q_{m,h})(t,\cdot) \cup \supp (\mho \Lambda_I^{j} q_{m,h})(t,\cdot) \cup \supp (\Lambda \Lambda_I^{j} q_{m,h})(t,\cdot) \\
 \subseteq \{ \bxi \tc 1/2 \leq |\xi|/2^m \leq 2, \ 1/2 \leq |\mu|/2^m \leq 2 \}.
\end{multline*}
By applying Proposition \ref{prp:sss_sum} with $a=0$ and $a=1$ to the families $\cQ_{j,h}$ and $\dot\cQ_{j,h}$ respectively, we then deduce that
\begin{equation}\label{eq:est_td_main}
\begin{aligned}
\|\chr_{\overline{B}(0,2^{2+\ell})} I[\Lambda_I^{j} q_{m,h}](\pm 2^\ell,\cdot) \|_1 
&\lesssim_{\vec\epsilon,\delta_0,h,j} \lambda^{(d-1)/2  + \delta_0 + \epsilon_2 (d-1)/2} \\
\|2^\ell \chr_{\overline{B}(0,2^{2+\ell})} I[\mhoI \Lambda_I^{j} q_{m,h}](\pm 2^\ell,\cdot) \|_1 
&\lesssim_{\vec\epsilon,\delta_0,h,j} \lambda^{1+(d-1)/2  + \delta_0 + \epsilon_2 (1+(d-1)/2)} 
\end{aligned}
\end{equation}
(for $j>0$, Proposition \ref{prp:sss_sum} would give an even better estimate in terms of powers of $\lambda$, but we shall not use it).

\smallskip

On the other hand, for all $\tau \in \RR$, by \eqref{eq:det_2step},
\[\begin{split}
|I[\Lambda \Lambda_I^{N} q_{m,h}](\tau,\bx)| 
&\leq (2\pi)^{-d} \int_{\RR^d} |\Lambda \Lambda_I^{N} q_{m,h}(\tau,\bxi)| \,|\Den_\phi(\tau,\bxi)| \,d\bxi \\
&\lesssim_{N,h} (1+|\tau|)^{-3/2} 2^{m(d-N)},
\end{split}\]
and, by the finite propagation speed property \eqref{eq:fps}, if $|s| \leq 2^{\ell}$, then
\[\begin{split}
&\|\chr_{\overline{B}(0,2^{2+\ell})} \cos(s\sqrt{\opL}) I[\Lambda \Lambda_I^{N} H_{m,h}](\tau,\cdot) \|_1 \\
&\lesssim 2^{Q\ell/2} \|\chr_{\overline{B}(0,2^{2+\ell})} \cos(s\sqrt{\opL}) I[\Lambda \Lambda_I^{N} H_{m,h}](\tau,\cdot) \|_2\\
&\leq 2^{Q\ell/2} \|\chr_{\overline{B}(0,2^{3+\ell})} I[\Lambda \Lambda_I^{N} H_{m,h}](\tau,\cdot) \|_2 \\
&\lesssim_{N,h} 2^{Q\ell}(1+|\tau|)^{-3/2} 2^{m(d-N)},
\end{split}\]
thus also, if $|t| \leq 2^\ell$,
\begin{multline}\label{eq:est_td_rem}
\left\|t \chr_{\overline{B}(0,2^{2+\ell})} \int_0^{t} \cos((t-\tau) \sqrt{\opL}) I[\Lambda \Lambda_I^{N} H_{m,h}](\pm\tau,\bx) \,d\tau \right\|_1 \\
+ \left\|\chr_{\overline{B}(0,2^{2+\ell})} \int_0^{t} \int_0^{t-\tau} \cos(s \sqrt{\opL}) I[\Lambda \Lambda_I^{N} H_{m,h}](\pm\tau,\bx) \,ds\,d\tau \right\|_1 \\
\lesssim_{N,h} 2^{(Q+1)\ell+m(d-N)} \leq \lambda^{(Q+1) - (\epsilon_4-\epsilon_3)(N-d)}
\end{multline}
by \eqref{eq:cond_lm}, whenever $N \geq d$. As $\epsilon_4 > \epsilon_3$, by choosing a sufficiently large $N$, we can ensure that $(Q+1) - (\epsilon_4-\epsilon_3)(N-d) \leq (d-1)/2$. With this choice of $N$, in light of the decompositions \eqref{eq:transport_dec} and \eqref{eq:transport_dec_der}, we can combine the bounds \eqref{eq:est_td_main} and \eqref{eq:est_td_rem} to obtain the desired estimate. 
\end{proof}

We can finally obtain the spectrally localised $L^1$-estimate \eqref{eq:target_first} for the wave propagator.

\begin{prp}\label{prp:main_est}
Assume that $G$ is a M\'etivier group. For all $s > (d-1)/2$ and $\lambda \geq 1$,
\begin{equation}\label{eq:sploc_wave_est_bis}
\|\exp(\pm i \sqrt{\opL}) \, \chi_1(\sqrt{\opL}/\lambda)\|_{1 \to 1} \lesssim_s \lambda^s.
\end{equation}
\end{prp}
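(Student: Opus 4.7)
The plan is to assemble all the reductions of Sections 3--9 with the FIO estimates of Proposition \ref{prp:non-elliptic_est}. By left-invariance, the operator norm in \eqref{eq:sploc_wave_est_bis} equals the $L^1$-norm of the convolution kernel $\exp(\pm i\sqrt{\opL}) k_{\chi_1(\sqrt{\opL}/\lambda)}$. Since $\lessapprox \lambda^{(d-1)/2}$ is equivalent to $\lesssim_\varepsilon \lambda^{(d-1)/2+\varepsilon}$ for all $\varepsilon>0$, it suffices to prove \eqref{eq:target_first}, i.e., the kernel bound with the implicit $\varepsilon$-losses; the conclusion for any $s > (d-1)/2$ follows by taking $\varepsilon = s - (d-1)/2$. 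By Proposition \ref{prp:spatial_loc}, up to an error whose $L^1$-norm is $O(\lambda^{-N})$, we may insert the spatial cutoff $\chr_{\overline{B}(0,2)}$, so we are reduced to proving \eqref{sploc_scloc_wave_est}.

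First I would apply Proposition \ref{prp:spectrum_frequency} with any fixed $\vec\epsilon = (\epsilon_1,\epsilon_2,\epsilon_3,\epsilon_4) \in (\Rpos)^4$ satisfying $\epsilon_4>\epsilon_3$ (to be sent to $0$ at the end), decomposing $k_{\chi_1(\sqrt{\opL}/\lambda)} = A_{\vec\epsilon,\lambda} + B_{\vec\epsilon,\lambda} + R_{\vec\epsilon,\lambda}$. The remainder $R_{\vec\epsilon,\lambda}$ contributes $O(\lambda^{-N})$ by \eqref{eq:frequency_remainder}. The anti-FIO contribution is controlled directly by Proposition \ref{prp:fbi}, which yields a bound of order $\lambda^{d_2 + d_1(\epsilon_3+\epsilon_4)}$; since $d_2 \leq (d-1)/2$ on a M\'etivier group, this is $\lambda^{(d-1)/2 + \smallo(\vec\epsilon)}$, as required.

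Next I would treat $B_{\vec\epsilon,\lambda}$ via the elliptic/non-elliptic splitting \eqref{eq:ell_nonell}, choosing $\gamma \in \NN$ sufficiently large so that Proposition \ref{prp:elliptic_region} is applicable. Both $B^{(0)}_{\vec\epsilon,\lambda}$ and $B^{(1)}_{\vec\epsilon,\lambda}$ are sums of at most $O((\log \lambda)^2)$ dyadic pieces indexed by $j$ (and $k$), so up to the allowed $\varepsilon$-losses it suffices to bound each summand. For each $j$ in the elliptic sum, Proposition \ref{prp:sp_loc_removal} removes the spectral localisation at the cost of a slightly larger spatial cutoff, and then Proposition \ref{prp:elliptic_region} gives the desired $\lesssim \lambda^{(d-1)/2 + \smallo(\vec\epsilon)}$ bound for both the $\cos$ and the $\sqrt{\opL}\sin$ pieces, which combine via \eqref{eq:exp_cos_sin} to control $\exp(\pm i\sqrt{\opL})$.

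For each $(j,k)$ in the non-elliptic sum, I would again invoke Proposition \ref{prp:sp_loc_removal}, then perform the automorphic dilation of Section \ref{ss:scaling} with parameters $\ell = k-j$ and $m = 2j-k$ satisfying \eqref{eq:cond_lm}, which converts the problem into the estimate \eqref{eq:non-elliptic_region2} on the spatially localised wave propagators $\cos(2^\ell\sqrt{\opL}) H_m$ and $2^\ell\sqrt{\opL}\sin(2^\ell\sqrt{\opL}) H_m$. Since $G$ is M\'etivier, Proposition \ref{prp:modJmu_analytic} ensures that $\Omega = \dot\RR^{d_2}$, so Proposition \ref{prp:introd_complexphase} applies, replacing $H_m$ (up to a negligible remainder controlled by \eqref{eq:remainder_real_complex}) by finitely many $H_{m,h}$ with the complex Gaussian factor built in. At this point the task is exactly the target estimate \eqref{eq:target_est}, which is provided by Proposition \ref{prp:non-elliptic_est}: combining the bounds there via \eqref{eq:exp_cos_sin} (where the $\sqrt{\opL}\sin(2^\ell\sqrt{\opL})$ contribution comes with a factor $(2^\ell \lambda)^{-1}$, cancelling the extra power of $\lambda$) yields $\|\chr_{\overline{B}(0,2^{2+\ell})}\exp(\pm i\, 2^\ell\sqrt{\opL}) H_{m,h}\|_1 \lesssim \lambda^{(d-1)/2 + \smallo(\vec\epsilon)}$ for every fixed $h$. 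The hard work is already done; the only remaining check is that all losses are of the form $\lambda^{\smallo(\vec\epsilon)}$ and that the number of dyadic summands stays polylogarithmic in $\lambda$, so sending $\vec\epsilon \to 0$ gives \eqref{eq:sploc_wave_est_bis}.
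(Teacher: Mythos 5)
Your proposal is correct and follows essentially the same chain of reductions as the paper's proof: kernel reduction, Proposition~\ref{prp:spatial_loc}, Proposition~\ref{prp:spectrum_frequency}, Proposition~\ref{prp:fbi} with $d_2\le(d-1)/2$, the elliptic/non-elliptic splitting with Propositions~\ref{prp:sp_loc_removal}, \ref{prp:elliptic_region}, \ref{prp:introd_complexphase} and \ref{prp:non-elliptic_est}, sending $\vec\epsilon\to 0$ at the end. One minor slip: in your parenthetical, the $\sqrt{\opL}\sin$ contribution is weighted by $\lambda^{-1}$ (as in \eqref{eq:sp_loc_removal_non-elliptic}), not $(2^\ell\lambda)^{-1}$ --- the extra factor $2^\ell$ is already absorbed into the operator $2^\ell\sqrt{\opL}\sin(2^\ell\sqrt{\opL})$ in \eqref{eq:non-elliptic_region2} --- but this does not affect the cancellation or the final bound.
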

\begin{proof}
The proof summarizes the various reduction steps discussed throughout.

Since the operator $\exp(\pm i \sqrt{\opL}) \, \chi_1(\sqrt{\opL}/\lambda)$ is given by right convolution with the kernel $\exp(\pm i \sqrt{\opL})  k_{\chi_1(\sqrt{\opL}/\lambda)}$, the estimate \eqref{eq:sploc_wave_est_bis} is  equivalent to 
\[
\|\exp(\pm i \sqrt{\opL}) k_{\chi_1(\sqrt{\opL}/\lambda)}\|_1 \lesssim_s \lambda^s \qquad \text{for } s>(d-1)/2.
\]
By Proposition \ref{prp:spatial_loc}, it even suffices to estimate the $L^1$-norm over the ball $\overline{B}(0,2)$, i.e., to show that
\begin{equation}\label{eq:sploc_wave_est2}
\|\chr_{\overline{B}(0,2)}\exp(\pm i \sqrt{\opL}) k_{\chi_1(\sqrt{\opL}/\lambda)}\|_1 \lesssim_s \lambda^s \qquad \text{for } s>(d-1)/2,
\end{equation}
as the remainder is negligible.
To this end, let $\vec\epsilon \in (\Rpos)^4$ and $\delta_0 > 0$ with $\epsilon_4 > \epsilon_3.$  We shall show how the estimate \eqref{eq:sploc_wave_est2} follows from what we have already proved, assuming implicitly that all constants $\epsilon_1,\epsilon_2,\epsilon_3,\epsilon_4,\delta_0 > 0$ which come up in our estimates are chosen sufficiently small.

First recall that in Proposition \ref{prp:spectrum_frequency} we decomposed
\[
k_{\chi_1(\sqrt{\opL}/\lambda)} = A_{\vec\epsilon,\lambda} + B_{\vec\epsilon,\lambda} + R_{\vec\epsilon,\lambda},
\]
and showed in \eqref{eq:frequency_remainder} that the contribution by $R_{\vec\epsilon,\lambda}$ to $\exp(\pm i \sqrt{\opL})k_{\chi_1(\sqrt{\opL}/\lambda)}$ adds only a negligible term to \eqref{eq:sploc_wave_est2}.

Next, in Proposition \ref{prp:fbi}, we saw that the contribution by $A_{\vec\epsilon,\lambda} $ satisfies the estimate 
\[
\|\chr_{\overline{B}(0,2)} \exp(\pm i \sqrt{\opL}) A_{\vec\epsilon,\lambda} \|_1 \lesssim_{\vec\epsilon,s} \lambda^s
\]
for all $s > d_2+d_1(\epsilon_3+\epsilon_4),$ which is sufficient for the purpose of proving \eqref{eq:sploc_wave_est2}, since $d_2 \leq (d-1)/2$ for M\'etivier groups, see \eqref{eq:metivier_dim_ineq}.

Finally, we decomposed $B_{\vec\epsilon,\lambda} = B^{(0)}_{\vec\epsilon,\lambda} + B^{(1)}_{\vec\epsilon,\lambda}$ in \eqref{eq:ell_nonell} into the contribution $B^{(0)}_{\vec\epsilon,\lambda}$ by the elliptic region, and the contribution $B^{(1)}_{\vec\epsilon,\lambda}$ by the non-elliptic region. 

For the elliptic region, the desired $L^1$-estimate for $\exp(\pm i\sqrt{\opL}) B^{(0)}_{\vec\epsilon,\lambda}$ were reduced to proving the estimates \eqref{eq:elliptic_region} under the assumption \eqref{eq:cond_ell_j}, i.e., $\lambda^{1-\epsilon_3} \leq 2^j < \lambda^{1+\epsilon_2}.$ These estimates, in turn, were reduced by Proposition \ref{prp:sp_loc_removal} to the estimates \eqref{eq:elliptic_region2}, which are indeed a consequence of Proposition \ref{prp:elliptic_region} because of \eqref{eq:cond_ell_j}.

As for the contribution by the non-elliptic region, the desired $L^1$-estimate for $\exp(\pm i \sqrt{\opL}) B^{(1)}_{\vec\epsilon,\lambda}$ was reduced in a similar way through \eqref{eq:non-elliptic_region} and  \eqref{eq:non-elliptic_region_nosploc} after automorphic scaling eventually to the estimates \eqref{eq:non-elliptic_region2}, under the conditions \eqref{eq:cond_lm}. By means of Proposition \ref{prp:introd_complexphase}, these estimates finally were reduced to the estimates \eqref{eq:target_est}, which are proved in Proposition \ref{prp:non-elliptic_est}.
\end{proof}

We are now in the position of proving the main result.

\begin{proof}[Proof of Theorem \ref{thm:main}]
We shall only prove the claimed estimates for $t=\pm 1$; the general case follows by automorphic scaling.

\smallskip

In the case where $\chi$ is the cutoff $\chi_1$ from Section \ref{ss:dyadicpartition}, the estimate in part \ref{en:main_splocest} follows from Proposition \ref{prp:main_est} (which proves the case $p=1$), combined with interpolation and duality (the case $p=2$ is trivially true since $\opL$ is selfadjoint). In other words, for all $p \in [1,\infty]$, $s > (d-1)|1/p-1/2|$ and $\lambda \geq 1$,
\begin{equation}\label{eq:sploc_chi1_p}
\|\exp(\pm i \sqrt{\opL}) \, \chi_1(\sqrt{\opL}/\lambda)\|_{p \to p} \lesssim_s \lambda^s,
\end{equation}
and, as a consequence, also
\begin{equation}\label{eq:sploc_cossin_chi1_p}
\|\cos(\sqrt{\opL}) \, \chi_1(\sqrt{\opL}/\lambda)\|_{p \to p} \lesssim_s \lambda^s, \qquad
\|\sin(\sqrt{\opL}) \, \chi_1(\sqrt{\opL}/\lambda)\|_{p \to p} \lesssim_s \lambda^s.
\end{equation}

We now prove part \ref{en:main_symbol}. If $\cA \subseteq S^{-s}(\RR)$ is bounded for some $s > (d-1)|1/p-1/2|$, then, for all $a \in \cA$, we can write, by \eqref{eq:dyadicpartition},
\[\begin{split}
a(\sqrt{\opL}) \exp(\pm i \sqrt{\opL}) 
&= a(\sqrt{\opL}) \chi_0(\sqrt{\opL}) \exp(\pm i \sqrt{\opL}) \\
&\qquad+ \sum_{k=0}^\infty a(\sqrt{\opL}) \tilde\chi_1(\sqrt{\opL}/2^k) \chi_1(\sqrt{\opL}/2^k) \exp(\pm i \sqrt{\opL})
\end{split}\]
and observe that the family of functions
\[
\{ a \chi_0 \exp(\pm i \cdot) \tc a \in \cA \} \cup \{ 2^{ks} a \, \tilde\chi_1(\cdot/2^k) \tc k \in \NN, \ a \in \cA \}
\]
is bounded in $\Sz(\RR)$. Thus, by Proposition \ref{prp:hulanicki},
\[
\|a(\sqrt{\opL}) \exp(\pm i \sqrt{\opL})\|_{p \to p} \lesssim_{\cA} 1, \qquad \| a(\sqrt{\opL}) \tilde\chi_1(\sqrt{\opL}/2^k) \|_{p \to p} \lesssim_{\cA} 2^{-ks} 
\]
for all $k \in \NN$, and therefore
\[
\|a(\sqrt{\opL}) \exp(\pm i \sqrt{\opL})\|_{p \to p} \lesssim_{\cA} 1 + \sum_{k=0}^\infty 2^{-ks} \|\chi_1(\sqrt{\opL}/2^k) \exp(\pm i \sqrt{\opL})\|_{p \to p}.
\]
By choosing $s_*$ between $s$ and $(d-1)|1/p-1/2|$ and applying \eqref{eq:sploc_chi1_p} with $\lambda=2^k$ and $s_*$ in place of $s$, we see that the series in the right-hand side converges, and we obtain the desired uniform $L^p$-boundedness of $a(\sqrt{\opL}) \exp(\pm i \sqrt{\opL})$, where $a \in \cA$. A completely analogous argument, using \eqref{eq:sploc_cossin_chi1_p} in place of \eqref{eq:sploc_chi1_p}, proves the uniform $L^p$-boundedness of $a(\sqrt{\opL}) \cos(\sqrt{\opL})$ and $b(\sqrt{\opL}) \sin(\sqrt{\opL})/\sqrt{\opL}$ with $a \in \cA$ and $b \in \cB$.

\smallskip

Now, for any $\chi \in C^\infty_c(\Rpos)$, the set $\{ \lambda^{-s} \chi(\cdot/\lambda) \tc \lambda \geq 1\}$ is a bounded subset of $S^{-s}(\RR)$. Thus, by applying part \ref{en:main_symbol} to this family of symbols, we obtain the estimate in part \ref{en:main_splocest} for an arbitrary cutoff $\chi$.

\smallskip

As for part \ref{en:main_cauchy}, it is sufficient to observe that the solution of the given Cauchy problem for the wave equation is given by
\[
u(t,\cdot) = \cos(t\sqrt{\opL}) f + \frac{\sin(t\sqrt{\opL})}{\sqrt{\opL}} g,
\]
whence
\[\begin{split}
\|u(\pm 1,\cdot)\|_{p} 
&\leq \|(1+\opL)^{-s/2} \cos(\sqrt{\opL})\|_{p \to p} \|(1+\opL)^{s/2} f\|_{p} \\
&\qquad+  \left\| (1+\opL)^{(1-s)/2} \frac{\sin(\sqrt{\opL})}{\sqrt{\opL}} \right\|_{p \to p} \|(1+\opL)^{(s-1)/2} g\|_{p},
\end{split}\]
and the desired estimate follows by applying part \ref{en:main_symbol} with $a(\zeta) = (1+\zeta^2)^{-s/2}$ and $b(\zeta) = (1+\zeta^2)^{(1-s)/2}$.
\end{proof}

\section{Solution of the transport equation}\label{s:opLambda_2step}
The purpose of this section is proving Proposition \ref{prp:Lambdaop_2step}.
Recall the operator 
\[
R p 
\defeq \Den_\phi^{-1} \Div_{\bxi} ( \Den_\phi \widetilde{\Phi}^{-1} \widetilde{\nabla_{\bx} p} ) 
= \frac{1}{2} \frac{\partial_{\bxi} \det \Phi}{\det \Phi} \widetilde{\Phi}^{-1} \widetilde{\nabla_{\bx} p} + \Div_{\bxi} (\widetilde{\Phi}^{-1} \widetilde{\nabla_{\bx} p})
\]
from \eqref{eq:A2Sop}, and the operators
\[
\Lambda q \defeq \sum_{\substack{j,k \geq 0 \\ j+k \leq 2}} \underline{R^{k}(F_{kj} \partial_t^j q)}, \qquad \mho q \defeq \sum_{\substack{j,k \geq 0 \\ j+k \leq 1}} 2^{j-1} \underline{R^{k}(F_{k(j+1)} \partial_t^j q)},
\]
from \eqref{eq:opLambda}. We shall also write $\ldbrack v\rdbrack_\ell$ for the $\ell$th component of a vector $v \in \CC^n$.

\begin{lem}
Recall the coefficients $F_{kj}$ from \eqref{eq:Fkj}, and set 
\begin{equation}\label{eq:coeff_K}
K \defeq F_{10} + R F_{20}.
\end{equation}
Then the operators $\Lambda$ and $\mho$ are given by the expressions \eqref{eq:opLambda_2step}-\eqref{eq:opMho_2step}, i.e.,
\begin{align*}
\Lambda q 
&= \sum_{j=0}^2 \Lambda_{j0} \partial_t^j q + \sum_{j=0}^1 (\partial_t^j \partial_{\xi} q) \Lambda_{j1} + \tr(\Lambda_{02} \partial_\xi \nabla_\xi q),\\
\mho q 
&= \sum_{j=0}^1 2^{j-1} \Lambda_{(j+1)0} \partial_t^j q + 2^{-1} (\partial_{\xi} q) \Lambda_{11},
\end{align*}
where the coefficients $\Lambda_{jr}$ (which are scalar-, vector- or matrix-valued according to whether $r=0,1,2$) are given by
\begin{equation}\label{eq:opLambdacoeff_prelim}
\begin{aligned}
\Lambda_{00} &= F_{00} + RK,\\
\Lambda_{01} &= \Phi_0^{-1} \nabla_x K + (R\ldbrack\Phi_0^{-1} \widetilde{\nabla_x F_{20}}\rdbrack_\ell)_{\ell=1}^{d_1}, \\
\Lambda_{02} &= (\ldbrack\Phi_0^{-1} \nabla_x\ldbrack\Phi_0^{-1} \widetilde{\nabla_x F_{20}}\rdbrack_\ell\rdbrack_m)_{\ell,m=1}^{d_1},\\
\Lambda_{10} &= F_{01} + RF_{11},\\
\Lambda_{11} &= \Phi_0^{-1} \nabla_x F_{11},\\
\Lambda_{20} &= F_{02} = 1.
\end{aligned}
\end{equation}
\end{lem}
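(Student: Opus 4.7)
The plan is to expand the defining sum for $\Lambda q$,
\[
\Lambda q = \underline{F_{00} q} + \underline{R(F_{10} q)} + \underline{R^2(F_{20} q)} + \underline{F_{01} \partial_t q} + \underline{R(F_{11} \partial_t q)} + \underline{F_{02} \partial_t^2 q},
\]
and to iteratively apply the Leibniz-type identity $R(qp) = q(Rp) + (\partial_\xi q) \Phi_0^{-1} \widetilde{\nabla_x p}$ from Lemma \ref{lem:a2s}\ref{en:a2s_uind} (valid for $\bx$-independent $q$ and $u$-independent $p$) so as to push all derivatives onto $q$, then regroup the outcome according to the type of derivative of $q$ and read off the coefficients $\Lambda_{jr}$ of \eqref{eq:opLambdacoeff_prelim}. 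The operator $\mho$ is treated identically, but with only the terms satisfying $j+k \leq 1$ surviving, and the computation is correspondingly simpler.

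Three preparatory observations drive the bookkeeping. By Lemma \ref{lem:phder}\ref{en:phder_hess} the density $\Den_\phi$ is $\bx$-independent, hence so are $F_{00}$, $F_{01}$, and $F_{02}=1$ by \eqref{eq:Fkj}, and the underlinings of the corresponding three summands act trivially, yielding the contributions $F_{00} q$, $F_{01}\partial_t q$ and $\partial_t^2 q$. By Lemma \ref{lem:dal}\ref{en:dal_Fkj} each remaining $F_{kj}$ is a $u$-independent polynomial of degree $k$ in $x$, and by Lemma \ref{lem:a2s}\ref{en:a2s_uindxpol} each iterated $R^\ell F_{kj}$ is then a polynomial of degree $k-\ell$ in $x$; in particular $RF_{10}$, $RF_{11}$ and $R^2F_{20}$ are all $\bx$-independent, while $RF_{20}$ is a polynomial of degree $1$ in $x$. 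Finally, whenever $p$ is a polynomial of degree $\leq 1$ in $x$ the averaged gradient $\widetilde{\nabla_x p}$ reduces to $\nabla_x p$ and is constant in $x$, so the tilde factors in Lemma \ref{lem:a2s}\ref{en:a2s_uind} may be dropped on all but one of the arising terms.

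With these observations, one application of Lemma \ref{lem:a2s}\ref{en:a2s_uind} immediately gives
\[
\underline{R(F_{10} q)} = q(RF_{10}) + (\partial_\xi q)\Phi_0^{-1}\nabla_x F_{10}, \qquad \underline{R(F_{11}\partial_t q)} = (\partial_t q)(RF_{11}) + (\partial_t\partial_\xi q)\Phi_0^{-1}\nabla_x F_{11}.
\]
The only substantial computation is $\underline{R^2(F_{20}q)}$: the intermediate $R(F_{20}q) = q(RF_{20}) + (\partial_\xi q)\Phi_0^{-1}\widetilde{\nabla_x F_{20}}$ is $u$-independent and polynomial of degree $1$ in $x$, and applying Lemma \ref{lem:a2s}\ref{en:a2s_uind} componentwise to each summand produces four terms---proportional to $q$, to $\partial_\xi q$ (from two distinct origins), and to the Hessian $\partial_\xi\nabla_\xi q$. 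Collecting these together with the $F_{00}q$ and $\underline{R(F_{10}q)}$ contributions and telescoping via $F_{10}+RF_{20}=K$ gives the $q$-coefficient $F_{00}+RK$ and the $\partial_\xi q$-coefficient $\Phi_0^{-1}\nabla_x K + (R\ldbrack\Phi_0^{-1}\widetilde{\nabla_x F_{20}}\rdbrack_\ell)_\ell$, while the Hessian $(\ell,m)$-coefficient reads $\ldbrack\Phi_0^{-1}\nabla_x\ldbrack\Phi_0^{-1}\widetilde{\nabla_x F_{20}}\rdbrack_\ell\rdbrack_m$; by symmetry of $\partial_\xi\nabla_\xi q$ this matches $\tr(\Lambda_{02}\partial_\xi\nabla_\xi q)$ for the $\Lambda_{02}$ stated in \eqref{eq:opLambdacoeff_prelim}.

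The only delicate point is the second application of $R$ to the vector-valued symbol $\Phi_0^{-1}\widetilde{\nabla_x F_{20}}$, which must be carried out componentwise rather than as a single symbol; the telescoping identity $F_{10}+RF_{20}=K$ is then what packages the coefficients $\Lambda_{00}$ and $\Lambda_{01}$ into the compact form of \eqref{eq:opLambdacoeff_prelim}. No analytical difficulty arises: the whole proof is a finite regrouping of terms resting on Lemmas \ref{lem:phder}, \ref{lem:dal}, and \ref{lem:a2s}.
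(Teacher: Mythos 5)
Your proposal is correct and follows essentially the same route as the paper's proof: expand the defining sum, drop the underlining by invoking Lemma \ref{lem:a2s}\ref{en:a2s_uindxpol}, apply the Leibniz rule from Lemma \ref{lem:a2s}\ref{en:a2s_uind} iteratively (once for the $k=1$ terms, twice for $R^2(F_{20}q)$, with the second application done componentwise), and regroup by order of $t$- and $\xi$-differentiation of $q$, using $K=F_{10}+RF_{20}$ to package $\Lambda_{00}$ and $\Lambda_{01}$. Your observation that the tilde factors can be dropped on the degree-$\leq 1$ polynomials is exactly the simplification used in the paper as well.
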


\begin{proof}
We only prove the formula for $\Lambda$; the proof for $\mho$ is fully analogous.

\smallskip

Notice first that the underlining in the right-hand side of \eqref{eq:opLambda} can be omitted, i.e.,
\begin{equation}\label{eq:opLambda_nound}
\Lambda q = \sum_{\substack{j,k \geq 0 \\ j+k \leq 2}} R^{k}(F_{kj} \partial_t^j q),
\end{equation}
as the coefficients $F_{kj}$ are $u$-independent and $\bx$-polynomials of degree $k$ (see Lemma \ref{lem:dal}\ref{en:dal_Fkj}), thus the terms $R^{k}(F_{kj} \partial_t^j q)$ are $\bx$-independent by Lemma \ref{lem:a2s}\ref{en:a2s_uindxpol}.

Moreover, by Lemma \ref{lem:a2s}\ref{en:a2s_uind}, the summands with $k=1$ can be rewritten as
\begin{align*}
R (F_{10}q) &= q (R F_{10}) + (\partial_{\xi} q) \Phi_0^{-1} (\nabla_x F_{10}), \\
R (F_{11} \partial_t q) &= (\partial_t q) (R F_{11}) + (\partial_{\xi} \partial_t q) \Phi_0^{-1} (\nabla_x F_{11}),
\end{align*}
where again we used that the $F_{1j}$ are $u$-independent and polynomials in $x$ of degree $1$, so the $\nabla_x F_{1j}$ are $\bx$-independent.
As for the only summand with $k=2$, by repeatedly applying Lemma \ref{lem:a2s}\ref{en:a2s_uind}, we see that
\[
R (F_{20}q) = q (R F_{20}) + (\partial_{\xi} q) \Phi_0^{-1} \widetilde{\nabla_x F_{20}},
\]
and
\[\begin{split}
R^{2} (F_{20}q) &= q (R^{2} F_{20}) + (\partial_{\xi} q) \Phi_0^{-1} (\nabla_x R F_{20}) \\
&\quad+ \sum_\ell (\partial_{\xi_\ell} q) R \ldbrack\Phi_0^{-1} \widetilde{\nabla_x F_{20}}\rdbrack_\ell \\
&\quad+ \sum_{\ell,m} (\partial_{\xi_m}\partial_{\xi_\ell} q)  \ldbrack\Phi_0^{-1} \nabla_x\ldbrack\Phi_0^{-1} \widetilde{\nabla_x F_{20}}\rdbrack_\ell\rdbrack_m.
\end{split}\]
By plugging these formulas into \eqref{eq:opLambda_nound} and rearranging the summands according to the orders of $t$- and $\xi$-differentiation of $q$, we obtain the desired expressions for the coefficients $\Lambda_{jr}$.
\end{proof}

The proof of Proposition \ref{prp:Lambdaop_2step} is thus reduced to obtaining more explicit expressions for the terms \eqref{eq:opLambdacoeff_prelim}.
To this purpose, we first compute the coefficients $F_{kj}$.

\begin{lem}\label{lem:Fkj_2step}
With the notation of Corollary \ref{cor:flow_alt},
the coefficients $F_{kj}$ of \eqref{eq:Fkj} are given by
\begin{equation}\label{eq:Fkj_2step}
\begin{aligned}
F_{20} &= -\frac{|\mu|^2}{4|\xi|^2} \langle \exp(-2i\theta |J_{\bar\mu}|) \left(|J_{\bar\mu}|+i J_{\bar\mu}\right) \xi, x-x^t \rangle^2,\\
F_{11} &= 2|\xi| + \frac{i |\mu|}{|\xi|} \langle \exp(-2i\theta |J_{\bar\mu}|) \left(|J_{\bar\mu}|+iJ_{\bar\mu}\right) \xi, x-x^t \rangle, \\
F_{10} &= \frac{|\mu|}{2} \frac{\theta \langle |J_{\bar\mu}| \bar\xi,\bar\xi \rangle^2}{1+i\theta\langle |J_{\bar\mu}| \bar\xi,\bar\xi \rangle} \\
&\qquad+ \frac{|\mu|^2}{4|\xi|^2} \Biggl[ \left( \tr|J_{\bar\mu}| - \frac{\langle |J_{\bar\mu}| \bar\xi,\bar\xi \rangle}{1+i\theta\langle |J_{\bar\mu}| \bar\xi,\bar\xi \rangle} \right) \langle \exp(-2i\theta|J_{\bar\mu}|) \left(|J_{\bar\mu}|+iJ_{\bar\mu}\right)\xi,x-x^t\rangle \\
&\qquad\qquad\qquad+2 \langle |J_{\bar\mu}| \exp(-2i\theta|J_{\bar\mu}|) \left(|J_{\bar\mu}|+iJ_{\bar\mu}\right)\xi,x-x^t\rangle \Biggr],\\ 
F_{02} &= 1,\\
F_{01} &= -\frac{i|\mu|}{2|\xi|} \left(\tr|J_{\bar\mu}| - \frac{\langle |J_{\bar\mu}| \bar\xi,\bar\xi \rangle}{1+i\theta\langle |J_{\bar\mu}| \bar\xi,\bar\xi \rangle} \right),\\
F_{00} &= -\frac{|\mu|^2}{16|\xi|^2} \left(\tr^2|J_{\bar\mu}| - 2\tr|J_{\bar\mu}| \frac{\langle |J_{\bar\mu}| \bar\xi,\bar\xi \rangle}{1+i\theta\langle |J_{\bar\mu}| \bar\xi,\bar\xi \rangle} -\left(\frac{\langle |J_{\bar\mu}| \bar\xi,\bar\xi \rangle}{1+i\theta\langle |J_{\bar\mu}| \bar\xi,\bar\xi \rangle}\right)^2 \right).
\end{aligned}
\end{equation}
\end{lem}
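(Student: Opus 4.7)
The proof is a direct (but algebraically heavy) computation of each of the six coefficients $F_{kj}$ from the definitions in \eqref{eq:Fkj}, using the explicit flow formulas from Corollary \ref{cor:flow_alt} and the density formulas from Lemma \ref{lem:phder}\ref{en:phder_hess_nondeg}. The plan splits naturally into two independent blocks plus a connecting calculation for $F_{10}$.

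\emph{Density block} ($F_{02}, F_{01}, F_{00}$): Since $\Den_\phi$ is $\bx$-independent (Lemma \ref{lem:phder}\ref{en:phder_hess}), one has $\nabla^\opL \Den_\phi = 0$ and $(\partial_t^2 + \opL)\Den_\phi = \partial_t^2 \Den_\phi$. Writing $\log \Den_\phi = -\tfrac{i\theta}{2}\tr|J_{\bar\mu}| + \tfrac{1}{2}\log(1 + i\theta\langle|J_{\bar\mu}|\bar\xi,\bar\xi\rangle)$ from \eqref{eq:det_2step} and using $\partial_t \theta = |\mu|/(2|\xi|)$, the quantity $\Den_\phi^{-1}\partial_t \Den_\phi$ is obtained by a one-line computation, yielding $F_{01}$; one further $t$-differentiation gives $F_{00}$, and $F_{02}=1$ is immediate.

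\emph{Phase block} ($F_{20}, F_{11}$ and $\opL \phi$): Direct differentiation of \eqref{eq:phase}, combined with the Hamilton equations and conservation $\Ham(\bx^t,\bxi^t) = |\xi|$, expresses $\partial_t \phi$ and $\nabla^\opL \phi$ (the latter via the identity $\nabla^\opL \phi = \nabla_x \phi + \tfrac{1}{2}J_{\nabla_u\phi} x$ with $\nabla_u \phi = \mu$, as in the proof of Lemma \ref{lem:crucial_coeff}) as $\bx^t$-based expressions plus terms linear in $x - x^t$. Substituting the flow formulas \eqref{eq:ximux}, the algebraic engine is the identity \eqref{eq:fctnJ_2step},
\begin{equation*}
F(J_{\bar\mu})\bigl(|J_{\bar\mu}| + iJ_{\bar\mu}\bigr) = F(-i|J_{\bar\mu}|)\bigl(|J_{\bar\mu}| + iJ_{\bar\mu}\bigr),
\end{equation*}
which converts $\exp(2\theta J_{\bar\mu})(|J_{\bar\mu}| + iJ_{\bar\mu})\xi$ into $\exp(-2i\theta|J_{\bar\mu}|)(|J_{\bar\mu}| + iJ_{\bar\mu})\xi$. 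This is the single vector in terms of which the $x$-linear parts of $F_{11}$, $F_{20}$ and (below) $F_{10}$ are expressed. Then $F_{11}=-2\partial_t\phi$ is read off directly, while $F_{20} = (\partial_t\phi)^2 - \Ham(\bx,\nabla_\bx\phi)^2$ (using $\nabla^\opL\phi\cdot\nabla^\opL\phi = \Ham(\bx,\nabla_\bx\phi)^2$ even for complex $\nabla_\bx\phi$) reduces to a pure quadratic in $x-x^t$, consistently with $\underline{F_{20}} = 0$ and $\underline{\partial_\bx F_{20}} = 0$ from Lemma \ref{lem:crucial_coeff}.

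\emph{Connecting computation for $F_{10}$}: Since $\nabla^\opL \Den_\phi = 0$, the last term in the definition of $F_{10}$ vanishes, so $-F_{10} = (\partial_t^2 + \opL)\phi + 2(\partial_t \phi)(\Den_\phi^{-1}\partial_t \Den_\phi)$. The horizontal Laplacian piece simplifies drastically: using $X_j = \partial_{x_j} + \tfrac{1}{2}[x,e_j]\cdot\nabla_u$, the skew-symmetry of $J_\mu$ killing the diagonal terms of the $(J_\mu x)_j$ contribution, and the fact that $\nabla_u\phi=\mu$ is $\bx$-independent, one obtains $\opL\phi = -\tfrac{i}{2}|\mu|\tr|J_{\bar\mu}|$ (consistent with Lemma \ref{lem:phder}\ref{en:phder_tx}). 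Combining with $\partial_t^2 \phi$ (differentiating $\partial_t \phi$ once more, again applying the collapsing identity above) and the product $2(\partial_t \phi)(\Den_\phi^{-1}\partial_t \Den_\phi)$ produces the claimed formula for $F_{10}$. The main obstacle is the sheer volume of careful bookkeeping: the imaginary part of $\phi$ generates cross terms that must be systematically absorbed using \eqref{eq:fctnJ_2step}. Treating $(|J_{\bar\mu}| + iJ_{\bar\mu})\xi$ as a ``generalized eigenvector'' of smooth functions of $J_{\bar\mu}$ (with eigenvalue $-i|J_{\bar\mu}|$) is what makes the formulas collapse to the compact form \eqref{eq:Fkj_2step}; as a consistency check, one should verify that the resulting expressions satisfy all four identities in Lemma \ref{lem:crucial_coeff}.
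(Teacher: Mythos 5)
Your proposal is correct and follows essentially the same route as the paper's proof: the density block is handled via $t$-differentiation of $\det\Phi_0$ using \eqref{eq:det_2step}, the phase block uses the collapsing identity \eqref{eq:fctnJ_2step} to express everything in terms of $\exp(-2i\theta|J_{\bar\mu}|)(|J_{\bar\mu}|+iJ_{\bar\mu})\xi$, and $\opL\phi = -i|\mu|\tr|J_{\bar\mu}|/2$ comes from $\tr J_{\bar\mu}=0$. The only cosmetic difference is that you compute $\nabla^\opL\phi\cdot\nabla^\opL\phi$ as $\Ham(\bx,\nabla_\bx\phi)^2$ rather than expanding the dot product directly as the paper does, but this is the same computation in different notation.
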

\begin{proof}
By \eqref{eq:ximux} and \eqref{eq:coshcos},
\begin{align*}
\beta^t &\defeq \xi^t-\frac{i}{2} |J_{\mu}| x^t = \exp(\theta J_{\bar\mu}) \exp(-i\theta|J_{\bar\mu}|) \xi,\\
\gamma^t &\defeq \xi^t + \frac{|\mu|}{2} J_{\bar\mu} x^t = \exp(2\theta J_{\bar\mu}) \xi.
\end{align*}
In particular, by \eqref{eq:theta_mu_xi} and \eqref{eq:change_exp},
\[\begin{split}
\partial_t \beta^t 
&= -i\frac{|\mu|}{2|\xi|} \exp(\theta J_{\bar\mu}) \exp(-i\theta|J_{\bar\mu}|) \left(|J_{\bar\mu}|+iJ_{\bar\mu}\right) \xi \\
&= -i\frac{|\mu|}{2|\xi|}  \exp(-2i\theta|J_{\bar\mu}|) \left(|J_{\bar\mu}|+iJ_{\bar\mu}\right) \xi.
\end{split}\]

Now, from \eqref{eq:phase} it follows that
\begin{equation}\label{eq:dt_phi_2step}
\begin{split}
\partial_t \phi 
&= -\dot\bx^t \cdot \bxi^t + \langle \partial_t \beta^t , x-x^t \rangle\\
&= -|\xi|-i\frac{|\mu|}{2|\xi|} \langle \exp(-2i\theta|J_{\bar\mu}|) \left(|J_{\bar\mu}|+iJ_{\bar\mu}\right) \xi, x-x^t\rangle.
\end{split}
\end{equation}
where we also used \eqref{eq:der_t_phi_s}. In light of \eqref{eq:Fkj}, this proves the formula for $F_{11}$ in \eqref{eq:Fkj_2step}.

\smallskip

Notice also that from \eqref{eq:phase} and \eqref{eq:Xfields} it follows that
\begin{equation}\label{eq:nablaphi_2step}
\nabla^\opL \phi = \xi^t + \frac{|\mu|}{2} J_{\bar\mu} x + \frac{i}{2} |J_{\mu}| (x-x^t) = \gamma^t + i \frac{|\mu|}{2} (|J_{\bar\mu}|-iJ_{\bar\mu})(x-x^t),
\end{equation}
thus
\begin{equation}\label{eq:nablaphi2_2step}
\begin{split}
(\nabla^\opL \phi) \cdot (\nabla^\opL \phi) 
&= |\gamma^t|^2 + i|\mu| \langle (|J_{\bar\mu}|-iJ_{\bar\mu})(x-x^t),\gamma^t \rangle \\
&\qquad- \frac{|\mu|^2}{4} \langle (|J_{\bar\mu}|-iJ_{\bar\mu})(x-x^t), (|J_{\bar\mu}|+iJ_{\bar\mu})(x-x^t) \rangle \\
&= |\xi|^2 + i|\mu| \langle (|J_{\bar\mu}|+iJ_{\bar\mu})\gamma^t,x-x^t \rangle \\
&\qquad- \frac{|\mu|^2}{4} \langle (|J_{\bar\mu}|+iJ_{\bar\mu})(|J_{\bar\mu}|-iJ_{\bar\mu})(x-x^t), x-x^t \rangle \\
&= |\xi|^2 + i|\mu| \langle \exp(-2i\theta|J_{\bar\mu}|) \left(|J_{\bar\mu}|+iJ_{\bar\mu}\right)\xi,x-x^t \rangle ,
\end{split}
\end{equation}
where we used that $v \cdot w = \langle v,\overline{w} \rangle$ (see Section \ref{ss:notation}), $|J_{\bar\mu}| \pm i J_{\bar\mu}$ is selfadjoint, $(|J_{\bar\mu}| + iJ_{\bar\mu})(|J_{\bar\mu}|-iJ_{\bar\mu}) = 0$ and
\[
(|J_{\bar\mu}|+iJ_{\bar\mu}) \gamma^t = \exp(-2i\theta|J_{\bar\mu}|) (|J_{\bar\mu}|+iJ_{\bar\mu}) \xi
\]
by \eqref{eq:fctnJ_2step}.

\smallskip

From \eqref{eq:dt_phi_2step} and \eqref{eq:nablaphi2_2step} it is clear that $(\partial_t \phi)^2$ and $\nabla^\opL \phi \cdot \nabla^\opL \phi$ have the same constant and linear parts in $x-x^t$, whence we obtain the formula for $F_{20} = (\partial_t \phi)^2 - \nabla^\opL \phi \cdot \nabla^\opL \phi$ in \eqref{eq:Fkj_2step}.

\smallskip

Recall now from \eqref{eq:det_2step} that
\[
\det\Phi_0 = \exp(-i\theta\tr|J_{\bar\mu}|) (1+i\theta \langle |J_{\bar\mu}| \bar\xi, \bar\xi \rangle ).
\]
So
\begin{align*}
\partial_t \det\Phi_0 &= -i \frac{|\mu|}{2|\xi|} \left[\tr|J_{\bar\mu}| (1+i\theta\langle|J_{\bar\mu}|\bar\xi,\bar\xi\rangle)-\langle|J_{\bar\mu}|\bar\xi,\bar\xi\rangle\right] \exp(-i\theta \tr|J_{\bar\mu}|),\\
\partial_t^2 \det\Phi_0 &= - \tr|J_{\bar\mu}| \frac{|\mu|^2}{4|\xi|^2} \left[\tr|J_{\bar\mu}| (1+i\theta\langle|J_{\bar\mu}|\bar\xi,\bar\xi\rangle)-2\langle|J_{\bar\mu}|\bar\xi,\bar\xi\rangle\right] \exp(-i\theta\tr|J_{\bar\mu}|).
\end{align*}
Consequently, by \eqref{eq:Fkj}, as $\Den_\phi = \sqrt{\det\Phi_0}$,
\begin{equation}\label{eq:F01_2step}
F_{01} = \frac{\partial_t \det\Phi_0}{\det\Phi_0} = -i \frac{|\mu|}{2|\xi|} \left(\tr|J_{\bar\mu}|-\frac{\langle|J_{\bar\mu}|\bar\xi,\bar\xi\rangle}{1+i\theta\langle|J_{\bar\mu}|\bar\xi,\bar\xi\rangle}\right) 
\end{equation}
and
\[
\frac{\partial_t^2 \det\Phi_0}{\det\Phi_0} = - \tr|J_{\bar\mu}| \frac{|\mu|^2}{4|\xi|^2} \left(\tr|J_{\bar\mu}| -\frac{2\langle|J_{\bar\mu}|\bar\xi,\bar\xi\rangle}{1+i\theta\langle|J_{\bar\mu}|\bar\xi,\bar\xi\rangle}\right).
\]
Moreover, as $\Den_\phi = \sqrt{\det\Phi_0}$ is $\bx$-independent, 
\[\begin{split}
F_{00} 
&= \frac{\partial_t^2 \Den_\phi}{\Den_\phi} = \frac{1}{2} \frac{\partial_t^2 \det\Phi_0}{\det \Phi_0} - \frac{1}{4} \left( \frac{\partial_t \det\Phi_0}{\det\Phi_0} \right)^2 \\
&= -\frac{|\mu|^2}{16|\xi|^2} \left(\tr^2 |J_{\bar\mu}| - 2\tr|J_{\bar\mu}| \frac{\langle|J_{\bar\mu}|\bar\xi,\bar\xi\rangle}{1+i\theta\langle|J_{\bar\mu}|\bar\xi,\bar\xi\rangle} - \left(\frac{\langle|J_{\bar\mu}|\bar\xi,\bar\xi\rangle}{(1+i\theta\langle|J_{\bar\mu}|\bar\xi,\bar\xi\rangle}\right)^2\right).
\end{split}\]
This proves the formulas for $F_{01}$ and $F_{00}$ in \eqref{eq:Fkj_2step}.

Now, from \eqref{eq:dt_phi_2step} we also deduce
\[\begin{split}
\partial_t^2 \phi 
&= -\frac{|\mu|^2}{2|\xi|^2} \langle |J_{\bar\mu}| \exp(-2i\theta|J_{\bar\mu}|) \left(|J_{\bar\mu}|+iJ_{\bar\mu}\right) \xi, x-x^t \rangle \\
&\qquad + i \frac{|\mu|}{2|\xi|} \langle \exp(-2i\theta|J_{\bar\mu}|) \left(|J_{\bar\mu}|+i J_{\bar\mu}\right) \xi, \exp(2\theta J_{\bar\mu}) \bar\xi \rangle \\
&= i\frac{|\mu|}{2} \langle |J_{\bar\mu}| \bar\xi,\bar\xi \rangle - \frac{|\mu|^2}{2|\xi|^2}  \langle |J_{\bar\mu}| \exp(-2i\theta|J_{\bar\mu}|) \left(|J_{\bar\mu}|+iJ_{\bar\mu}\right) \xi, x-x^t \rangle ,
\end{split}\]
where we used \eqref{eq:fctnJ_2step} and the fact that, by \eqref{eq:flow_2step},
\[
\dot x^t = \exp(2\theta J_{\bar\mu}) \bar \xi.
\]
Moreover,
\begin{equation}\label{eq:div_modJbarmu}
\Div_x \left[\left( |J_{\bar\mu}| + i J_{\bar\mu} \right) x\right] = \tr |J_{\bar\mu}|,
\end{equation}
because $\tr J_{\bar\mu} = 0$ by skewadjointness; so, by \eqref{eq:nablaphi_2step} and \eqref{eq:Xfields},
\[
\opL \phi = -\Div_x \nabla^\opL \phi = -i \frac{ |\mu|}{2} \tr|J_{\bar\mu}|.
\]
Thus,
\begin{equation}\label{eq:dalphi_2step}
\begin{split}
(\partial_t^2 + \opL) \phi 
&= -i \frac{|\mu|}{2} (\tr|J_{\bar\mu}| - \langle|J_{\bar\mu}| \bar\xi,\bar\xi \rangle) \\
&\qquad -\frac{|\mu|^2}{2|\xi|^2} \langle |J_{\bar\mu}| \exp(-2i\theta|J_{\bar\mu}|) \left(|J_{\bar\mu}|+iJ_{\bar\mu}\right) \xi, x-x^t \rangle.
\end{split}
\end{equation}
Furthermore, by \eqref{eq:dt_phi_2step} and \eqref{eq:F01_2step},
\begin{equation}\label{eq:dtphi_dtDetphi_2step}
\begin{split}
&\partial_t \phi \frac{\partial_t \det\Phi_0}{\det \Phi_0}  \\
&= \left(-|\xi|-i\frac{|\mu|}{2|\xi|} \langle \exp(-2i\theta|J_{\bar\mu}|) \left(|J_{\bar\mu}|+iJ_{\bar\mu}\right) \xi, x-x^t\rangle \right)\\
&\qquad\times \left(-i \frac{|\mu|}{2|\xi|} \left(\tr|J_{\bar\mu}|-\frac{\langle|J_{\bar\mu}|\bar\xi,\bar\xi\rangle}{1+i\theta\langle|J_{\bar\mu}|\bar\xi,\bar\xi\rangle}\right) \right) \\
&= i \frac{|\mu|}{2} \left(\tr|J_{\bar\mu}|-\frac{\langle|J_{\bar\mu}|\bar\xi,\bar\xi\rangle}{1+i\theta\langle|J_{\bar\mu}|\bar\xi,\bar\xi\rangle}\right) \\
&\qquad- \frac{|\mu|^2}{4|\xi|^2} \left(\tr|J_{\bar\mu}|-\frac{\langle|J_{\bar\mu}|\bar\xi,\bar\xi\rangle}{1+i\theta\langle|J_{\bar\mu}|\bar\xi,\bar\xi\rangle}\right) \langle \exp(-2i\theta|J_{\bar\mu}|) \left(|J_{\bar\mu}|+iJ_{\bar\mu}\right) \xi, x-x^t\rangle
\end{split}
\end{equation}

Now, as $\Den_\phi = \sqrt{\det\Phi_0}$ is $\bx$-independent, from \eqref{eq:Fkj} we deduce that
\[\begin{split}
-F_{10} 
&= (\partial_t^2 + \opL) \phi + \partial_t \phi \frac{\partial_t \det\Phi_0}{\det \Phi_0} \\
&= -\frac{|\mu|}{2} \frac{\theta \langle |J_{\bar\mu}|\bar\xi,\bar\xi\rangle^2}{1+i\theta\langle|J_{\bar\mu}|\bar\xi,\bar\xi\rangle}\\
&\quad- \frac{|\mu|^2}{4|\xi|^2} \Biggl[ \left(\tr|J_{\bar\mu}|-\frac{\langle|J_{\bar\mu}|\bar\xi,\bar\xi\rangle}{1+i\theta\langle|J_{\bar\mu}|\bar\xi,\bar\xi\rangle}\right) \langle \exp(-2i\theta|J_{\bar\mu}|) \left(|J_{\bar\mu}|+iJ_{\bar\mu}\right) \xi, x-x^t\rangle \\
&\qquad\qquad\qquad+2 \langle |J_{\bar\mu}| \exp(-2i\theta|J_{\bar\mu}|) \left(|J_{\bar\mu}|+iJ_{\bar\mu}\right) \xi, x-x^t \rangle \Biggr], \\
\end{split}\]
where we used \eqref{eq:dalphi_2step} and \eqref{eq:dtphi_dtDetphi_2step}. As we already know that $F_{02} = 1$, this completes the proof of the formulas \eqref{eq:Fkj_2step}.
\end{proof}

\begin{rem}
The formulas \eqref{eq:Fkj_2step} are consistent with the vanishing of second order of $F_{20}$ at $\bx = \bx^t$, as well as the value $2|\xi|$ for $F_{10}$ at $\bx=\bx^t$, already obtained in Lemma \ref{lem:crucial_coeff}. The same is true for the vanishing of $K = F_{10}+RF_{20}$ at $\bx=\bx^t$ in \eqref{eq:K_2step} below.
\end{rem}

\begin{lem}
With the notation of Lemma \ref{lem:Fkj_2step},
the coefficient $K$ from \eqref{eq:coeff_K} is given by
\begin{equation}\label{eq:K_2step}
\begin{split}
K &= 
\frac{|\mu|^2}{8|\xi|^2} \langle \exp(-2i\theta|J_{\bar\mu}|) \left(|J_{\bar\mu}|+iJ_{\bar\mu}\right)\xi,x-x^t\rangle\\
&\quad\times \Biggl[ \tr|J_{\bar\mu}| - \frac{\tr|J_{\bar\mu}|-2\langle |J_{\bar\mu}| \bar\xi,\bar\xi \rangle}{1+i\theta\langle |J_{\bar\mu}| \bar\xi,\bar\xi \rangle}   
 + i\theta  \frac{ 2\langle |J_{\bar\mu}|^2 \bar\xi,\bar\xi\rangle  - 3\langle |J_{\bar\mu}| \bar\xi, \bar\xi \rangle^2 }{(1+i\theta \langle |J_{\bar\mu}| \bar\xi, \bar\xi \rangle)^2} 
 \Biggr].
\end{split}
\end{equation}
\end{lem}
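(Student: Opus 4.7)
The plan is to compute $K = F_{10} + RF_{20}$ by directly combining the explicit formulas from Lemma \ref{lem:Fkj_2step} with the formula \eqref{eq:A2Sop} for $R$, specialised to the $u$-independent case via Lemma \ref{lem:a2s}\ref{en:a2s_uind}. The key structural observation is that $F_{20}$ is a rank-one quadratic form in $x - x^t$: setting $w(t,\bxi) \defeq \exp(-2i\theta|J_{\bar\mu}|)(|J_{\bar\mu}|+iJ_{\bar\mu})\xi$, Lemma \ref{lem:Fkj_2step} gives
\[
F_{20} = -\frac{|\mu|^2}{4|\xi|^2}\langle w, x-x^t\rangle^2, \qquad \widetilde{\nabla_x F_{20}} = -\frac{|\mu|^2}{4|\xi|^2}\langle w, x-x^t\rangle\, w
\]
(the factor $1/2$ from $\widetilde{\cdot}$ absorbing into the rescaling of $\nabla_x$). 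To handle the $\Phi_0^{-1}$ appearing in $R$, I would first compute $\Phi_0^{-1}w$ in closed form: using the factorisation of $\Phi_0$ from Lemma \ref{lem:phder}\ref{en:phder_hess_nondeg}, the identity \eqref{eq:change_exp}, and the Sherman--Morrison formula for the rank-one update $I + i\theta(|J_{\bar\mu}|+iJ_{\bar\mu})\bar\xi\bar\xi^T$ (which, since $\bar\xi^T J_{\bar\mu}\bar\xi = 0$ by skew-symmetry, is invertible with a scalar Sherman--Morrison denominator $1+i\theta\langle|J_{\bar\mu}|\bar\xi,\bar\xi\rangle$), I obtain the clean expression
\[
\Phi_0^{-1} w = \frac{(|J_{\bar\mu}|+iJ_{\bar\mu})\xi}{1+i\theta\langle|J_{\bar\mu}|\bar\xi,\bar\xi\rangle}.
\]

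Next I would feed this into \eqref{eq:A2Sop}, writing
\[
RF_{20} = \frac{1}{2}\frac{\partial_\xi \det\Phi_0}{\det\Phi_0}\,\Phi_0^{-1}\widetilde{\nabla_x F_{20}} + \Div_\xi\bigl(\Phi_0^{-1}\widetilde{\nabla_x F_{20}}\bigr).
\]
The factor $\partial_\xi \det\Phi_0/\det\Phi_0 = \partial_\xi \log\det\Phi_0$ is computed from \eqref{eq:det_2step} using only $\nabla_\xi\theta = -\theta|\xi|^{-1}\bar\xi$ from \eqref{eq:derxitheta} and the obvious $\xi$-derivatives of $\bar\xi$ and $\langle|J_{\bar\mu}|\bar\xi,\bar\xi\rangle$. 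For the divergence, the Leibniz rule produces three types of contributions: the derivatives of the scalar prefactor and of $w$ acting on $\langle w, x-x^t\rangle$ give terms linear in $x-x^t$, while the derivatives of $x^t$ produce terms constant in $x-x^t$. By Lemma \ref{lem:crucial_coeff}, the constant-in-$(x-x^t)$ pieces must cancel with $F_{10}|_{x=x^t} = \frac{|\mu|}{2}\cdot\theta\langle|J_{\bar\mu}|\bar\xi,\bar\xi\rangle^2/(1+i\theta\langle|J_{\bar\mu}|\bar\xi,\bar\xi\rangle)$, so I can actually avoid computing them and focus only on the linear-in-$(x-x^t)$ contributions.

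The remaining computation will use a handful of elementary but crucial identities: $\exp(\theta J_{\bar\mu})(|J_{\bar\mu}|+iJ_{\bar\mu}) = \exp(-i\theta|J_{\bar\mu}|)(|J_{\bar\mu}|+iJ_{\bar\mu})$ from \eqref{eq:fctnJ_2step}, and its consequence $(|J_{\bar\mu}|+iJ_{\bar\mu})^2 = 2|J_{\bar\mu}|(|J_{\bar\mu}|+iJ_{\bar\mu})$, which together repeatedly convert products like $|J_{\bar\mu}|w$ and $J_{\bar\mu}w$ into scalar multiples of $w$ or of $(|J_{\bar\mu}|+iJ_{\bar\mu})\xi$. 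Applying these together with $\bar\xi^T J_{\bar\mu}\bar\xi = 0$ collapses the scalar coefficients appearing in front of $\langle w, x-x^t\rangle$ into the precise bracket displayed in \eqref{eq:K_2step}.

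The main obstacle is the bookkeeping in the divergence calculation: each $\partial_{\xi_i}$ generates several terms (from differentiating the prefactor $-|\mu|^2/(4|\xi|^2)$, the scalar denominator $1+i\theta\langle|J_{\bar\mu}|\bar\xi,\bar\xi\rangle$, the exponential $\exp(-2i\theta|J_{\bar\mu}|)$ inside $w$, and the linear factor $(|J_{\bar\mu}|+iJ_{\bar\mu})\xi$), each of which must be paired against $\Phi_0^{-1}w$ and then resummed. The crucial simplification that makes the final expression so compact is the algebraic annihilation $(|J_{\bar\mu}|+iJ_{\bar\mu})(|J_{\bar\mu}|-iJ_{\bar\mu}) = 0$, which kills many cross-terms and is what ultimately produces the clean rational structure in $\theta$ of the formula \eqref{eq:K_2step}.
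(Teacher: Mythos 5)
Your plan matches the paper's calculation: same rank-one representation of $F_{20}$ via $w = \exp(-2i\theta|J_{\bar\mu}|)(|J_{\bar\mu}|+iJ_{\bar\mu})\xi$, same Sherman--Morrison-type inversion of $\Phi_0$ (the paper records it as $(I-vw^T)^{-1} = I + (1-v\cdot w)^{-1}vw^T$, leading to exactly $\Phi_0^{-1}w = (1+i\theta\langle|J_{\bar\mu}|\bar\xi,\bar\xi\rangle)^{-1}(|J_{\bar\mu}|+iJ_{\bar\mu})\xi$), same use of \eqref{eq:fctnJ_2step}, $(|J_{\bar\mu}|+iJ_{\bar\mu})^2 = 2|J_{\bar\mu}|(|J_{\bar\mu}|+iJ_{\bar\mu})$, and $(|J_{\bar\mu}|+iJ_{\bar\mu})(|J_{\bar\mu}|-iJ_{\bar\mu})=0$ to collapse operator products, and the same split of $R$ into the logarithmic-derivative-of-$\det\Phi_0$ part plus the divergence.

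The one place you genuinely diverge from the paper is the shortcut via Lemma \ref{lem:crucial_coeff}: since that lemma gives $\underline{F_{10}+RF_{20}} = 0$ by pure Hamiltonian mechanics (independently of the explicit 2-step formulas), and since $K$ is an affine polynomial in $x-x^t$, you can indeed drop all constant-in-$(x-x^t)$ terms and only chase the linear ones. The paper instead computes the full $RF_{20}$, including its $(x-x^t)$-free part $-\tfrac{|\mu|}{2}\tfrac{\theta\langle|J_{\bar\mu}|\bar\xi,\bar\xi\rangle^2}{1+i\theta\langle|J_{\bar\mu}|\bar\xi,\bar\xi\rangle}$ (which emerges from the term $\partial_\xi(-x^t)$ in the divergence), and then verifies explicitly that it cancels $\underline{F_{10}}$. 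Your shortcut saves that step and trades it for a cross-check: whereas the paper gets the cancellation as an internal consistency confirmation of the whole calculation, you take it as an input, so you should at least note that recovering $\underline{K}=0$ from the end formula is automatic. Either way the argument closes; your route is a bit leaner but sacrifices a sanity check the paper gets for free.
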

\begin{proof}
From \eqref{eq:Fkj_2step} we deduce
\[
\nabla_x F_{20} =
-\frac{|\mu|^2}{2|\xi|^2} \langle \exp(-2i\theta |J_{\bar\mu}|) \left(|J_{\bar\mu}|+i J_{\bar\mu}\right) \xi, x-x^t \rangle \exp(-2i\theta |J_{\bar\mu}|) \left(|J_{\bar\mu}|+i J_{\bar\mu}\right) \xi;
\]
thus, as the last expression is linear in $x-x^t$,
\begin{equation}\label{eq:dxF20_2step}
\begin{split}
\widetilde{\nabla_x F_{20}} &= \frac{1}{2} \nabla_x F_{20} \\
&= -\frac{|\mu|^2}{4|\xi|^2} \langle \exp(-2i\theta |J_{\bar\mu}|) \left(|J_{\bar\mu}|+i J_{\bar\mu}\right) \xi, x-x^t \rangle \exp(-2i\theta |J_{\bar\mu}|) \left(|J_{\bar\mu}|+i J_{\bar\mu}\right) \xi.
\end{split}
\end{equation}
Recall now that, by Lemma \ref{lem:phder}\ref{en:phder_hess_nondeg},
\[
\Phi_0 = \exp(\theta J_{\bar\mu}) \exp(-i\theta|J_{\bar\mu}|) \left( I + i \theta 	\left(|J_{\bar\mu}|+i J_{\bar\mu} \right)\bar\xi \, \bar\xi^T \right),
\]
thus
\[
\Phi_0^{-1} =  \left(I-i\frac{\theta}{1+i\theta \langle|J_{\bar\mu}| \bar\xi,\bar\xi\rangle} \left(|J_{\bar\mu}| +iJ_{\bar\mu}\right) \bar\xi \, \bar\xi^T \right) \exp(i\theta|J_{\bar\mu}|) \exp(-\theta J_{\bar\mu}),
\]
where we used that
\[
(I- vw^T)^{-1} = I + (1- v \cdot w)^{-1} vw^T \qquad\text{whenever } v \cdot w \neq 1.
\]
So, by \eqref{eq:fctnJ_2step},
\begin{equation}\label{eq:Phiinvxi_2step}
\begin{split}
&\Phi_0^{-1} \exp(-2i\theta |J_{\bar\mu}|) \left(|J_{\bar\mu}|+i J_{\bar\mu}\right) \xi \\
&= \left(I-i\frac{\theta}{1+i\theta \langle|J_{\bar\mu}| \bar\xi,\bar\xi\rangle} \left(|J_{\bar\mu}| +iJ_{\bar\mu}\right) \bar\xi \, \bar\xi^T \right) \left(|J_{\bar\mu}|+i J_{\bar\mu}\right) \xi  \\
&= \frac{1}{1+i\theta\langle|J_{\bar\mu}| \bar\xi,\bar\xi\rangle}  \left(|J_{\bar\mu}| +iJ_{\bar\mu}\right) \xi,
\end{split}
\end{equation}
where we also used that $\bar\xi^T \left(|J_{\bar\mu}| +iJ_{\bar\mu}\right) \xi = |\xi| \langle |J_{\bar\mu}| \bar\xi, \bar\xi\rangle$. Combining \eqref{eq:dxF20_2step} and \eqref{eq:Phiinvxi_2step}, we obtain
\begin{equation}\label{eq:PhiinvdxF20_2step}
\begin{split}
&\Phi_0^{-1} \widetilde{\nabla_x F_{20}} \\
&= - \frac{|\mu|^2}{4|\xi|^2} \frac{1}{1+i\theta\langle|J_{\bar\mu}| \bar\xi,\bar\xi\rangle} \langle \exp(-2i\theta |J_{\bar\mu}|) \left(|J_{\bar\mu}|+i J_{\bar\mu}\right) \xi, x-x^t \rangle \left(|J_{\bar\mu}|+iJ_{\bar\mu}\right) \xi.
\end{split}
\end{equation}

Now, from \eqref{eq:derxitheta} we deduce that, for all $N \in \NN$,
\begin{equation}\label{eq:nablaxi_langlerangle}
\begin{aligned}
\nabla_\xi \langle |J_{\bar\mu}|^N \bar\xi, \bar\xi \rangle &= \frac{2}{|\xi|} \left( |J_{\bar\mu}|^N \bar\xi  - \langle |J_{\bar\mu}|^N \bar\xi, \bar\xi \rangle \bar\xi \right), \\
\nabla_\xi (i\theta \langle |J_{\bar\mu}|^N \bar\xi, \bar\xi \rangle) &= \frac{i\theta}{|\xi|} \left( 2|J_{\bar\mu}|^N \bar\xi  - 3\langle |J_{\bar\mu}|^N \bar\xi, \bar\xi \rangle \bar\xi \right);
\end{aligned}
\end{equation}
this, together with \eqref{eq:det_2step} and \eqref{eq:derxitheta}, gives that
\[
\begin{split}
&\nabla_\xi \det\Phi_0 \\
&= \frac{i\theta}{|\xi|} \exp(-i\theta\tr|J_{\bar\mu}|) \left[ (\tr|J_{\bar\mu}|) (1+i\theta \langle |J_{\bar\mu}| \bar\xi, \bar\xi \rangle ) \bar\xi + 2 |J_{\bar\mu}| \bar\xi - 3 \langle |J_{\bar\mu}| \bar\xi,\bar\xi\rangle \bar\xi \, \right] 
\end{split}
\]
and
\[
\frac{\nabla_\xi \det\Phi_0}{\det\Phi_0} 
= \frac{i\theta}{|\xi|} \left((\tr|J_{\bar\mu}|) \bar\xi + \frac{2|J_{\bar\mu}|\bar\xi - 3\langle|J_{\bar\mu}|\bar\xi,\bar\xi\rangle \bar\xi}{1+i\theta \langle|J_{\bar\mu}|\bar\xi,\bar\xi\rangle}\right).
\]
Thus
\begin{equation}\label{eq:dxi_detPhi_detPhi_2step}
\frac{\partial_\xi \det\Phi_0}{\det\Phi_0} \left(|J_{\bar\mu}| + iJ_{\bar\mu}\right) \xi 
= i\theta \left((\tr|J_{\bar\mu}|) \langle |J_{\bar\mu}| \bar\xi, \bar\xi\rangle + \frac{2\langle|J_{\bar\mu}|^2 \bar\xi,\bar\xi\rangle - 3\langle|J_{\bar\mu}|\bar\xi,\bar\xi\rangle^2}{1+i\theta \langle|J_{\bar\mu}|\bar\xi,\bar\xi\rangle}\right),
\end{equation}
where we used that $J_{\bar\mu} |J_{\bar\mu}|$ is skew-symmetric; so, by \eqref{eq:PhiinvdxF20_2step},
\begin{equation}\label{eq:dxiPhiPhiinvdxF20_2step}
\begin{split}
&\frac{1}{2}\frac{\partial_\xi \det\Phi_0}{\det\Phi_0} \Phi_0^{-1} \widetilde{\nabla_x F_{20}} \\
&= - \frac{|\mu|^2}{8|\xi|^2} \frac{i\theta}{1+i\theta\langle|J_{\bar\mu}| \bar\xi,\bar\xi\rangle} \langle \exp(-2i\theta |J_{\bar\mu}|) \left(|J_{\bar\mu}|+i J_{\bar\mu}\right) \xi, x-x^t \rangle\\
&\quad\times \left((\tr|J_{\bar\mu}|) \langle |J_{\bar\mu}| \bar\xi, \bar\xi\rangle + \frac{2\langle|J_{\bar\mu}|^2 \bar\xi,\bar\xi\rangle - 3\langle|J_{\bar\mu}|\bar\xi,\bar\xi\rangle^2}{1+i\theta \langle|J_{\bar\mu}|\bar\xi,\bar\xi\rangle}\right).
\end{split}
\end{equation}

Now, arguing as in \eqref{eq:div_modJbarmu}, for all $N \in \NN$,
\begin{equation}\label{eq:divxixiNJmuJmuxi}
\Div_\xi \left[|\xi|^{-N} \left(|J_{\bar\mu}| + i J_\mu\right) \xi\right] = |\xi|^{-N} \left[ \tr(|J_{\bar\mu}|) -N \langle |J_{\bar\mu}| \bar\xi, \bar\xi \rangle \right]
\end{equation}
and, by \eqref{eq:nablaxi_langlerangle} (for $N=1$),
\[
\nabla_\xi [(1+i\theta \langle |J_{\bar\mu}| \bar\xi, \bar\xi \rangle)^{-N}]
= - N\frac{i\theta}{|\xi|} \frac{ 2|J_{\bar\mu}| \bar\xi  - 3\langle |J_{\bar\mu}| \bar\xi, \bar\xi \rangle \bar\xi }{(1+i\theta \langle |J_{\bar\mu}| \bar\xi, \bar\xi \rangle)^{1+N}},
\]
thus
\begin{equation}\label{eq:nablaxi1ithetaJmuxixiJmuJmuxi}
[\nabla_\xi[ (1+i\theta \langle |J_{\bar\mu}| \bar\xi, \bar\xi \rangle)^{-N}]] \cdot \left(|J_{\bar\mu}| + i J_{\bar\mu}\right) \xi
 = -Ni\theta \frac{ 2\langle |J_{\bar\mu}|^2 \bar\xi,\bar\xi\rangle  - 3\langle |J_{\bar\mu}| \bar\xi, \bar\xi \rangle^2 }{(1+i\theta \langle |J_{\bar\mu}| \bar\xi, \bar\xi \rangle)^{1+N}}.
\end{equation}
In addition, by \eqref{eq:flow_2step} and \eqref{eq:derxitheta},
\[
\partial_\xi x^t = \frac{1}{|\mu|} \frac{\exp(2\theta J_{\bar\mu})-I}{J_{\bar\mu}} - \frac{2\theta}{|\mu|} \exp(2\theta J_{\bar\mu}) \bar\xi \, \bar\xi^T,
\]
so, by \eqref{eq:fctnJ_2step},
\[\begin{split}
&(\partial_\xi x^t)^T \exp(-2i\theta |J_{\bar\mu}|) \left(|J_{\bar\mu}|+i J_{\bar\mu}\right) \xi \\
&= -\frac{i}{|\mu|} \frac{I-\exp(-2i\theta |J_{\bar\mu}|)}{|J_{\bar\mu}|} \left(|J_{\bar\mu}|+i J_{\bar\mu}\right) \xi 
- \frac{2\theta}{|\mu|} \langle |J_{\bar\mu}| \bar\xi,\bar\xi \rangle \xi
\end{split}\]
and, again by \eqref{eq:derxitheta},
\[\begin{split}
&\nabla_\xi \langle \exp(-2i\theta |J_{\bar\mu}|) \left(|J_{\bar\mu}|+i J_{\bar\mu}\right) \xi, x-x^t \rangle \\
&= \frac{2i\theta}{|\xi|} \langle |J_{\bar\mu}| \exp(-2i\theta |J_{\bar\mu}|) \left(|J_{\bar\mu}|+i J_{\bar\mu}\right) \xi, x-x^t \rangle \bar\xi \\
&\quad+ \exp(-2i\theta |J_{\bar\mu}|) \left(|J_{\bar\mu}|-i J_{\bar\mu}\right) (x-x^t) \\
&\quad +\frac{i}{|\mu|} \frac{I-\exp(-2i\theta |J_{\bar\mu}|)}{|J_{\bar\mu}|} \left(|J_{\bar\mu}|+i J_{\bar\mu}\right) \xi 
+ \frac{2\theta}{|\mu|} \langle |J_{\bar\mu}| \bar\xi,\bar\xi \rangle \xi.
\end{split}\]
Thus,
\begin{equation}\label{eq:nablaxiexp2ithetaJmuJmuJmuxixxtJmuJmuxi}
\begin{split}
&(\nabla_\xi \langle \exp(-2i\theta |J_{\bar\mu}|) \left(|J_{\bar\mu}|+i J_{\bar\mu}\right) \xi, x-x^t \rangle) \cdot \left( |J_{\bar\mu}| + iJ_{\bar\mu}\right) \xi \\ 
&= 2\langle |J_{\bar\mu}| \exp(-2i\theta |J_{\bar\mu}|) \left(|J_{\bar\mu}|+i J_{\bar\mu}\right) \xi, x-x^t \rangle (1+i\theta \langle |J_{\bar\mu}| \bar\xi,\bar\xi \rangle) 
 + 2\theta \frac{|\xi|^2}{|\mu|}  \langle |J_{\bar\mu}| \bar\xi,\bar\xi \rangle^2,
\end{split}
\end{equation}
where we used that
\begin{equation}\label{eq:square_JmuJmu}
(|J_{\bar\mu}|+iJ_{\bar\mu})^2=2|J_{\bar\mu}| (|J_{\bar\mu}|+iJ_{\bar\mu}), \qquad (|J_{\bar\mu}|+iJ_{\bar\mu})(|J_{\bar\mu}|-iJ_{\bar\mu}) = 0.
\end{equation}

So, from \eqref{eq:PhiinvdxF20_2step}, \eqref{eq:divxixiNJmuJmuxi}, \eqref{eq:nablaxi1ithetaJmuxixiJmuJmuxi} and \eqref{eq:nablaxiexp2ithetaJmuJmuJmuxixxtJmuJmuxi} we deduce that
\begin{equation}\label{eq:divxiPhiinvdxF20_2step}
\begin{split}
&\Div_{\xi} (\Phi_0^{-1} \widetilde{\nabla_x F_{20}}) \\
&= - \frac{|\mu|^2}{4} \frac{1}{1+i\theta\langle|J_{\bar\mu}| \bar\xi,\bar\xi\rangle} \\
&\qquad\qquad \times \langle \exp(-2i\theta |J_{\bar\mu}|) \left(|J_{\bar\mu}|+i J_{\bar\mu}\right) \xi, x-x^t \rangle \Div_\xi [|\xi|^{-2} \left(|J_{\bar\mu}|+iJ_{\bar\mu}\right) \xi] \\
&\quad - \frac{|\mu|^2}{4|\xi|^2}  \langle \exp(-2i\theta |J_{\bar\mu}|) \left(|J_{\bar\mu}|+i J_{\bar\mu}\right) \xi, x-x^t \rangle \\
&\qquad\qquad\times [\nabla_\xi [(1+i\theta\langle|J_{\bar\mu}| \bar\xi,\bar\xi\rangle)^{-1}]] \cdot \left(|J_{\bar\mu}|+iJ_{\bar\mu}\right) \xi \\
&\quad- \frac{|\mu|^2}{4|\xi|^2} \frac{1}{1+i\theta\langle|J_{\bar\mu}| \bar\xi,\bar\xi\rangle} [\nabla_\xi \langle \exp(-2i\theta |J_{\bar\mu}|) \left(|J_{\bar\mu}|+i J_{\bar\mu}\right) \xi, x-x^t \rangle] \cdot \left(|J_{\bar\mu}|+iJ_{\bar\mu}\right) \xi \\
&= - \frac{|\mu|^2}{4|\xi|^2} \frac{1}{1+i\theta\langle|J_{\bar\mu}| \bar\xi,\bar\xi\rangle} \\
&\quad\times \Biggl[ \langle \exp(-2i\theta |J_{\bar\mu}|) \left(|J_{\bar\mu}|+i J_{\bar\mu}\right) \xi, x-x^t \rangle \left( \tr|J_{\bar\mu}| -2 \langle |J_{\bar\mu}| \bar\xi, \bar\xi \rangle \right) \\
&\qquad -i\theta  \langle \exp(-2i\theta |J_{\bar\mu}|) \left(|J_{\bar\mu}|+i J_{\bar\mu}\right) \xi, x-x^t \rangle \frac{ 2\langle |J_{\bar\mu}|^2 \bar\xi,\bar\xi\rangle  - 3\langle |J_{\bar\mu}| \bar\xi, \bar\xi \rangle^2 }{1+i\theta \langle |J_{\bar\mu}| \bar\xi, \bar\xi \rangle} \\
&\qquad+ 2\langle |J_{\bar\mu}| \exp(-2i\theta |J_{\bar\mu}|) \left(|J_{\bar\mu}|+i J_{\bar\mu}\right) \xi, x-x^t \rangle (1+i\theta \langle |J_{\bar\mu}| \bar\xi,\bar\xi \rangle) \\
&\qquad+ 2\theta \frac{|\xi|^2}{|\mu|}  \langle |J_{\bar\mu}| \bar\xi,\bar\xi \rangle^2\Biggr].
\end{split}
\end{equation}

Hence, by Lemma \ref{lem:a2s}\ref{en:a2s_uind},
\[\begin{split}
&RF_{20} \\
&= \frac{1}{2}\frac{\partial_\xi \det\Phi_0}{\det\Phi_0} \Phi_0^{-1} \widetilde{\nabla_x F_{20}} + \Div_\xi (\Phi_0^{-1} {\widetilde{\nabla_{x} F_{20}}} ) \\
&=- \frac{|\mu|^2}{8|\xi|^2} \frac{1}{1+i\theta\langle|J_{\bar\mu}| \bar\xi,\bar\xi\rangle} 
 \langle \exp(-2i\theta |J_{\bar\mu}|) \left(|J_{\bar\mu}|+i J_{\bar\mu}\right) \xi, x-x^t \rangle \\
&\qquad\times \left( (\tr|J_{\bar\mu}|) (2+i\theta\langle |J_{\bar\mu}| \bar\xi, \bar\xi\rangle) -4 \langle |J_{\bar\mu}| \bar\xi, \bar\xi \rangle  
 - i\theta  \frac{ 2\langle |J_{\bar\mu}|^2 \bar\xi,\bar\xi\rangle  - 3\langle |J_{\bar\mu}| \bar\xi, \bar\xi \rangle^2 }{1+i\theta \langle |J_{\bar\mu}| \bar\xi, \bar\xi \rangle} \right) \\
&- \frac{|\mu|^2}{2|\xi|^2} \langle |J_{\bar\mu}| \exp(-2i\theta |J_{\bar\mu}|) \left(|J_{\bar\mu}|+i J_{\bar\mu}\right) \xi, x-x^t \rangle 
- \frac{|\mu|}{2} \frac{\theta \langle |J_{\bar\mu}| \bar\xi,\bar\xi \rangle^2}{1+i\theta\langle|J_{\bar\mu}| \bar\xi,\bar\xi\rangle}  
\end{split}\]
where we used \eqref{eq:dxiPhiPhiinvdxF20_2step} and \eqref{eq:divxiPhiinvdxF20_2step}. As $K=F_{10}+RF_{20}$, by combining this with the formula for $F_{10}$ in \eqref{eq:Fkj_2step} we obtain \eqref{eq:K_2step}.
\end{proof}

We now proceed to the proof of the formulas \eqref{eq:opLambda_coeff_2step}.

\begin{proof}[Proof of Proposition \ref{prp:Lambdaop_2step}]
We start by proving the formula for the coefficient $\Lambda_{00}$.
By \eqref{eq:K_2step},
\[\begin{split}
\widetilde{\nabla_x K} = \nabla_x K &=
\frac{|\mu|^2}{8|\xi|^2} \Biggl[ \tr|J_{\bar\mu}| - \frac{\tr|J_{\bar\mu}|-2\langle |J_{\bar\mu}| \bar\xi,\bar\xi \rangle}{1+i\theta\langle |J_{\bar\mu}| \bar\xi,\bar\xi \rangle}   
 + i\theta  \frac{ 2\langle |J_{\bar\mu}|^2 \bar\xi,\bar\xi\rangle  - 3\langle |J_{\bar\mu}| \bar\xi, \bar\xi \rangle^2 }{(1+i\theta \langle |J_{\bar\mu}| \bar\xi, \bar\xi \rangle)^2} 
 \Biggr] \\
&\quad\times \exp(-2i\theta|J_{\bar\mu}|) \left(|J_{\bar\mu}|+iJ_{\bar\mu}\right)\xi.
\end{split}\]
So, by \eqref{eq:Phiinvxi_2step},
\begin{equation}\label{eq:PhiinvdxK_2step}
\begin{split}
\Phi_0^{-1} \widetilde{\nabla_x K} 
&= \frac{|\mu|^2}{8|\xi|^2} \Biggl[ \tr|J_{\bar\mu}| - \frac{\tr|J_{\bar\mu}|-2\langle |J_{\bar\mu}| \bar\xi,\bar\xi \rangle}{1+i\theta\langle |J_{\bar\mu}| \bar\xi,\bar\xi \rangle}   
 + i\theta  \frac{ 2\langle |J_{\bar\mu}|^2 \bar\xi,\bar\xi\rangle  - 3\langle |J_{\bar\mu}| \bar\xi, \bar\xi \rangle^2 }{(1+i\theta \langle |J_{\bar\mu}| \bar\xi, \bar\xi \rangle)^2} 
 \Biggr] \\
&\quad\times \frac{1}{1+i\theta\langle|J_{\bar\mu}| \bar\xi,\bar\xi\rangle}  \left(|J_{\bar\mu}| +iJ_{\bar\mu}\right) \xi \\
&= \frac{|\mu|^2}{8|\xi|^2} A \left(|J_{\bar\mu}| +iJ_{\bar\mu}\right) \xi,
\end{split}
\end{equation}
where
\begin{equation}\label{eq:A}
A \defeq \frac{\tr|J_{\bar\mu}|}{1+i\theta\langle|J_{\bar\mu}| \bar\xi,\bar\xi\rangle} - \frac{\tr|J_{\bar\mu}|-2\langle |J_{\bar\mu}| \bar\xi,\bar\xi \rangle}{(1+i\theta\langle |J_{\bar\mu}| \bar\xi,\bar\xi \rangle)^2}   
 + i\theta  \frac{ 2\langle |J_{\bar\mu}|^2 \bar\xi,\bar\xi\rangle  - 3\langle |J_{\bar\mu}| \bar\xi, \bar\xi \rangle^2 }{(1+i\theta \langle |J_{\bar\mu}| \bar\xi, \bar\xi \rangle)^3} .
\end{equation}

Now, by \eqref{eq:dxi_detPhi_detPhi_2step} and \eqref{eq:PhiinvdxK_2step},
\[
\begin{split}
&\frac{1}{2} \frac{\partial_\xi \det\Phi_0}{\det\Phi_0} \Phi_0^{-1} \widetilde{\nabla_x K} \\
&= \frac{|\mu|^2}{16|\xi|^2} \Biggl[ \tr|J_{\bar\mu}| - \frac{\tr|J_{\bar\mu}|-2\langle |J_{\bar\mu}| \bar\xi,\bar\xi \rangle}{1+i\theta\langle |J_{\bar\mu}| \bar\xi,\bar\xi \rangle}   
 + i\theta  \frac{ 2\langle |J_{\bar\mu}|^2 \bar\xi,\bar\xi\rangle  - 3\langle |J_{\bar\mu}| \bar\xi, \bar\xi \rangle^2 }{(1+i\theta \langle |J_{\bar\mu}| \bar\xi, \bar\xi \rangle)^2} 
 \Biggr] \\
&\qquad\times \frac{i\theta}{1+i\theta\langle|J_{\bar\mu}|\bar\xi,\bar\xi\rangle} \left((\tr|J_{\bar\mu}|) \langle |J_{\bar\mu}| \bar\xi, \bar\xi\rangle + \frac{2\langle|J_{\bar\mu}|^2 \bar\xi,\bar\xi\rangle - 3\langle|J_{\bar\mu}|\bar\xi,\bar\xi\rangle^2}{1+i\theta \langle|J_{\bar\mu}|\bar\xi,\bar\xi\rangle}\right)\\
&= \frac{|\mu|^2}{16|\xi|^2} \Biggl[ \tr|J_{\bar\mu}| - \frac{\tr|J_{\bar\mu}|-2\langle |J_{\bar\mu}| \bar\xi,\bar\xi \rangle}{1+i\theta\langle |J_{\bar\mu}| \bar\xi,\bar\xi \rangle}   
 + i\theta  \frac{ 2\langle |J_{\bar\mu}|^2 \bar\xi,\bar\xi\rangle  - 3\langle |J_{\bar\mu}| \bar\xi, \bar\xi \rangle^2 }{(1+i\theta \langle |J_{\bar\mu}| \bar\xi, \bar\xi \rangle)^2} 
 \Biggr] \\
&\qquad\times \left(\tr|J_{\bar\mu}| - \frac{\tr|J_{\bar\mu}|}{1+i\theta \langle |J_{\bar\mu}| \bar\xi, \bar\xi\rangle} + i\theta \frac{2\langle|J_{\bar\mu}|^2 \bar\xi,\bar\xi\rangle - 3\langle|J_{\bar\mu}|\bar\xi,\bar\xi\rangle^2}{(1+i\theta \langle|J_{\bar\mu}|\bar\xi,\bar\xi\rangle)^2}\right) \\
&= \frac{|\mu|^2}{16|\xi|^2} \Biggl[ 
\tr^2 |J_{\bar\mu}| 
- 2 \tr|J_{\bar\mu}| \frac{\tr|J_{\bar\mu}|-\langle |J_{\bar\mu}| \bar\xi,\bar\xi \rangle}{1+i\theta\langle |J_{\bar\mu}| \bar\xi,\bar\xi \rangle} \\
&\qquad+ \tr|J_{\bar\mu}| \frac{\tr|J_{\bar\mu}|
-2\langle |J_{\bar\mu}| \bar\xi,\bar\xi \rangle}{(1+i\theta\langle |J_{\bar\mu}| \bar\xi,\bar\xi \rangle)^2} 
+2i\theta \tr|J_{\bar\mu}| \frac{2\langle|J_{\bar\mu}|^2 \bar\xi,\bar\xi\rangle - 3\langle|J_{\bar\mu}|\bar\xi,\bar\xi\rangle^2}{(1+i\theta \langle|J_{\bar\mu}|\bar\xi,\bar\xi\rangle)^2}  \\
&\qquad-2 i\theta (\tr|J_{\bar\mu}|-\langle |J_{\bar\mu}| \bar\xi, \bar\xi \rangle) \frac{ 2\langle |J_{\bar\mu}|^2 \bar\xi,\bar\xi\rangle  - 3\langle |J_{\bar\mu}| \bar\xi, \bar\xi \rangle^2 }{(1+i\theta \langle |J_{\bar\mu}| \bar\xi, \bar\xi \rangle)^3} \\
&\qquad+ (i\theta)^2 \frac{ (2\langle |J_{\bar\mu}|^2 \bar\xi,\bar\xi\rangle  - 3\langle |J_{\bar\mu}| \bar\xi, \bar\xi \rangle^2)^2 }{(1+i\theta \langle |J_{\bar\mu}| \bar\xi, \bar\xi \rangle)^4} \Biggr], 
\end{split}
\]
and a few manipulations give
\begin{equation}\label{eq:dxidetPhiPhiinvdxK_2step}
\begin{split}
&\frac{1}{2} \frac{\partial_\xi \det\Phi_0}{\det\Phi_0} \Phi_0^{-1} \widetilde{\nabla_x K} \\
&= \frac{|\mu|^2}{16|\xi|^2} \Biggl[ 
\tr^2 |J_{\bar\mu}| 
- 2 \tr|J_{\bar\mu}| \frac{\tr|J_{\bar\mu}|-\langle |J_{\bar\mu}| \bar\xi,\bar\xi \rangle}{1+i\theta\langle |J_{\bar\mu}| \bar\xi,\bar\xi \rangle} \\
&\qquad+2i\theta \tr|J_{\bar\mu}| \frac{2\langle|J_{\bar\mu}|^2 \bar\xi,\bar\xi\rangle - 3\langle|J_{\bar\mu}|\bar\xi,\bar\xi\rangle^2}{(1+i\theta \langle|J_{\bar\mu}|\bar\xi,\bar\xi\rangle)^2}  \\
&\qquad+ \tr|J_{\bar\mu}| \frac{\tr|J_{\bar\mu}|-2\langle |J_{\bar\mu}| \bar\xi,\bar\xi \rangle}{(1+i\theta\langle |J_{\bar\mu}| \bar\xi,\bar\xi \rangle)^2} 
+2\frac{ 2\langle |J_{\bar\mu}|^2 \bar\xi,\bar\xi\rangle - 3\langle |J_{\bar\mu}| \bar\xi, \bar\xi \rangle^2 }{(1+i\theta \langle |J_{\bar\mu}| \bar\xi, \bar\xi \rangle)^2}  \\
&\qquad-2 i\theta \tr|J_{\bar\mu}| \frac{ 2\langle |J_{\bar\mu}|^2 \bar\xi,\bar\xi\rangle - 3\langle |J_{\bar\mu}| \bar\xi, \bar\xi \rangle^2 }{(1+i\theta \langle |J_{\bar\mu}| \bar\xi, \bar\xi \rangle)^3} \\
&\qquad- 2 \frac{ 2\langle |J_{\bar\mu}|^2 \bar\xi,\bar\xi\rangle - 3\langle |J_{\bar\mu}| \bar\xi, \bar\xi \rangle^2 }{(1+i\theta \langle |J_{\bar\mu}| \bar\xi, \bar\xi \rangle)^3} 
+ (i\theta)^2 \frac{ (2\langle |J_{\bar\mu}|^2 \bar\xi,\bar\xi\rangle - 3\langle |J_{\bar\mu}| \bar\xi, \bar\xi \rangle^2)^2 }{(1+i\theta \langle |J_{\bar\mu}| \bar\xi, \bar\xi \rangle)^4} \Biggr] .
\end{split}
\end{equation}

Furthermore,
from \eqref{eq:A} it follows that
\begin{equation}\label{eq:dxiA}
\begin{split}
&\nabla_\xi A \cdot (|J_{\bar\mu}| + iJ_{\bar\mu}) \xi \\
&= \tr|J_{\bar\mu}| \nabla_\xi (1+i\theta\langle|J_{\bar\mu}| \bar\xi,\bar\xi\rangle)^{-1} \cdot (|J_{\bar\mu}| + iJ_{\bar\mu}) \xi\\
&\quad- (\tr|J_{\bar\mu}|-2\langle |J_{\bar\mu}| \bar\xi,\bar\xi \rangle) \nabla_\xi (1+i\theta\langle |J_{\bar\mu}| \bar\xi,\bar\xi \rangle)^{-2} \cdot (|J_{\bar\mu}| + iJ_{\bar\mu}) \xi\\
&\quad+ i\theta  (2\langle |J_{\bar\mu}|^2 \bar\xi,\bar\xi\rangle  - 3\langle |J_{\bar\mu}| \bar\xi, \bar\xi \rangle^2) \nabla_\xi(1+i\theta \langle |J_{\bar\mu}| \bar\xi, \bar\xi \rangle)^{-3} \cdot (|J_{\bar\mu}| + iJ_{\bar\mu}) \xi\\
&\quad+ 2(1+i\theta\langle |J_{\bar\mu}| \bar\xi,\bar\xi \rangle)^{-2}  \nabla_\xi \langle |J_{\bar\mu}| \bar\xi,\bar\xi \rangle \cdot (|J_{\bar\mu}| + iJ_{\bar\mu}) \xi \\
&\quad+ (1+i\theta \langle |J_{\bar\mu}| \bar\xi, \bar\xi \rangle)^{-3}  \nabla_\xi (2i\theta\langle |J_{\bar\mu}|^2 \bar\xi,\bar\xi\rangle  - 3i\theta\langle |J_{\bar\mu}| \bar\xi, \bar\xi \rangle^2) \cdot (|J_{\bar\mu}| + iJ_{\bar\mu}) \xi .
\end{split}
\end{equation}
Now, from \eqref{eq:nablaxi_langlerangle} we deduce that
\begin{equation}\label{eq:dxi2m3}
\begin{split}
&\nabla_\xi \big(2i\theta \langle |J_{\bar\mu}|^2 \bar\xi,\bar\xi\rangle  - 3i\theta \langle |J_{\bar\mu}| \bar\xi, \bar\xi \rangle^2\big) \cdot (|J_{\bar\mu}| + iJ_{\bar\mu}) \xi\\
&= \frac{i\theta}{|\xi|} \left( 4|J_{\bar\mu}|^2 \bar\xi  - 6\langle |J_{\bar\mu}|^2 \bar\xi, \bar\xi \rangle \bar\xi 
-12 \langle |J_{\bar\mu}| \bar\xi, \bar\xi \rangle |J_{\bar\mu}| \bar\xi  +15 \langle |J_{\bar\mu}| \bar\xi, \bar\xi \rangle^2 \bar\xi \right) \cdot (|J_{\bar\mu}| + iJ_{\bar\mu}) \xi\\
&= i\theta \left( 4\langle|J_{\bar\mu}|^3 \bar\xi,\bar\xi\rangle  - 18\langle |J_{\bar\mu}|^2 \bar\xi, \bar\xi \rangle \langle |J_{\bar\mu}| \bar\xi,\bar\xi \rangle 
 +15 \langle |J_{\bar\mu}| \bar\xi, \bar\xi \rangle^3  \right)
\end{split}
\end{equation}
and
\begin{equation}\label{eq:dxi0}
\nabla_\xi \langle |J_{\bar\mu}| \bar\xi, \bar\xi \rangle \cdot (|J_{\bar\mu}| + iJ_{\bar\mu}) \xi
= 2\left( \langle |J_{\bar\mu}|^2 \bar\xi,\bar\xi\rangle  - \langle |J_{\bar\mu}| \bar\xi, \bar\xi \rangle^2 \right).
\end{equation}
So, starting from \eqref{eq:dxiA}, by applying \eqref{eq:nablaxi1ithetaJmuxixiJmuJmuxi} with $N=1,2,3$ as well as \eqref{eq:dxi2m3} and \eqref{eq:dxi0}, we see that
\[
\begin{split}
&\nabla_\xi A \cdot (|J_{\bar\mu}| + iJ_{\bar\mu}) \xi \\
&= -i\theta \tr|J_{\bar\mu}| \frac{ 2\langle |J_{\bar\mu}|^2 \bar\xi,\bar\xi\rangle  - 3\langle |J_{\bar\mu}| \bar\xi, \bar\xi \rangle^2 }{(1+i\theta \langle |J_{\bar\mu}| \bar\xi, \bar\xi \rangle)^{2}}\\
&\quad+2i\theta \tr|J_{\bar\mu}| \frac{ 2\langle |J_{\bar\mu}|^2 \bar\xi,\bar\xi\rangle  - 3\langle |J_{\bar\mu}| \bar\xi, \bar\xi \rangle^2 }{(1+i\theta \langle |J_{\bar\mu}| \bar\xi, \bar\xi \rangle)^{3}} 
- 4i\theta \langle |J_{\bar\mu}|\bar\xi,\bar\xi\rangle \frac{ 2\langle |J_{\bar\mu}|^2 \bar\xi,\bar\xi\rangle  - 3\langle |J_{\bar\mu}| \bar\xi, \bar\xi \rangle^2 }{(1+i\theta \langle |J_{\bar\mu}| \bar\xi, \bar\xi \rangle)^{3}} \\
&\quad-3 (i\theta)^2 \frac{  (2\langle |J_{\bar\mu}|^2 \bar\xi,\bar\xi\rangle  - 3\langle |J_{\bar\mu}| \bar\xi, \bar\xi \rangle^2)^2  }{(1+i\theta \langle |J_{\bar\mu}| \bar\xi, \bar\xi \rangle)^{4}} \\
&\quad+ 4 \frac{ \langle |J_{\bar\mu}|^2 \bar\xi,\bar\xi\rangle  - \langle |J_{\bar\mu}| \bar\xi, \bar\xi \rangle^2 }{(1+i\theta\langle |J_{\bar\mu}| \bar\xi,\bar\xi \rangle)^{2}} \\
&\quad+ i\theta \frac{ 4\langle|J_{\bar\mu}|^3 \bar\xi,\bar\xi\rangle  - 18\langle |J_{\bar\mu}|^2 \bar\xi, \bar\xi \rangle \langle |J_{\bar\mu}| \bar\xi,\bar\xi \rangle  +15 \langle |J_{\bar\mu}| \bar\xi, \bar\xi \rangle^3 }{(1+i\theta \langle |J_{\bar\mu}| \bar\xi, \bar\xi \rangle)^{3}} ,
\end{split}
\]
and the terms in the above expression can be rearranged to give
\begin{equation}\label{eq:dxAfinal}
\begin{split}
&\nabla_\xi A \cdot (|J_{\bar\mu}| + iJ_{\bar\mu}) \xi \\
&= -i\theta \tr|J_{\bar\mu}| \frac{ 2\langle |J_{\bar\mu}|^2 \bar\xi,\bar\xi\rangle  - 3\langle |J_{\bar\mu}| \bar\xi, \bar\xi \rangle^2 }{(1+i\theta \langle |J_{\bar\mu}| \bar\xi, \bar\xi \rangle)^{2}}\\
&\quad+ 4 \frac{ \langle |J_{\bar\mu}|^2 \bar\xi,\bar\xi\rangle  - \langle |J_{\bar\mu}| \bar\xi, \bar\xi \rangle^2 }{(1+i\theta\langle |J_{\bar\mu}| \bar\xi,\bar\xi \rangle)^{2}} \\
&\quad+2i\theta \tr|J_{\bar\mu}| \frac{ 2\langle |J_{\bar\mu}|^2 \bar\xi,\bar\xi\rangle  - 3\langle |J_{\bar\mu}| \bar\xi, \bar\xi \rangle^2 }{(1+i\theta \langle |J_{\bar\mu}| \bar\xi, \bar\xi \rangle)^{3}} \\
&\quad+ i\theta \frac{ 4\langle|J_{\bar\mu}|^3 \bar\xi,\bar\xi\rangle  - 26\langle |J_{\bar\mu}|^2 \bar\xi, \bar\xi \rangle \langle |J_{\bar\mu}| \bar\xi,\bar\xi \rangle  +27 \langle |J_{\bar\mu}| \bar\xi, \bar\xi \rangle^3 }{(1+i\theta \langle |J_{\bar\mu}| \bar\xi, \bar\xi \rangle)^{3}} \\
&\quad-3 (i\theta)^2 \frac{  (2\langle |J_{\bar\mu}|^2 \bar\xi,\bar\xi\rangle  - 3\langle |J_{\bar\mu}| \bar\xi, \bar\xi \rangle^2)^2  }{(1+i\theta \langle |J_{\bar\mu}| \bar\xi, \bar\xi \rangle)^{4}}.
\end{split}
\end{equation}
On the other hand, by \eqref{eq:A} and \eqref{eq:divxixiNJmuJmuxi},
\begin{equation}\label{eq:Adxi}
\begin{split}
&A \Div_{\xi} (|\xi|^{-2} \left(|J_{\bar\mu}| +iJ_{\bar\mu}\right) \xi) = \frac{1}{|\xi|^2} \left( \tr|J_{\bar\mu}| - 2 \langle |J_{\bar\mu}|\bar\xi,\bar\xi\rangle \right) \\
&\times \left[ \frac{\tr|J_{\bar\mu}|}{1+i\theta\langle|J_{\bar\mu}| \bar\xi,\bar\xi\rangle} - \frac{\tr|J_{\bar\mu}|-2\langle |J_{\bar\mu}| \bar\xi,\bar\xi \rangle}{(1+i\theta\langle |J_{\bar\mu}| \bar\xi,\bar\xi \rangle)^2}   
 + i\theta  \frac{ 2\langle |J_{\bar\mu}|^2 \bar\xi,\bar\xi\rangle  - 3\langle |J_{\bar\mu}| \bar\xi, \bar\xi \rangle^2 }{(1+i\theta \langle |J_{\bar\mu}| \bar\xi, \bar\xi \rangle)^3}\right] .
\end{split}
\end{equation}

Now, by \eqref{eq:PhiinvdxK_2step} and the Leibniz rule,
\[
\Div_\xi(\Phi_0^{-1} \widetilde{\nabla_x K} ) = \frac{|\mu|^2}{8|\xi|^2} (\nabla_\xi A) \cdot \left(|J_{\bar\mu}| +iJ_{\bar\mu}\right) \xi
+ \frac{|\mu|^2}{8} A \Div_{\xi} (|\xi|^{-2} \left(|J_{\bar\mu}| +iJ_{\bar\mu}\right) \xi);
\]
therefore, by combining \eqref{eq:dxAfinal} and \eqref{eq:Adxi}, we deduce that
\begin{equation}\label{eq:divxiPhiinvdxK_2step}
\begin{split}
&\Div_\xi(\Phi_0^{-1} \widetilde{\nabla_x K} ) \\
&= \frac{|\mu|^2}{8|\xi|^2} \Biggl[
\tr|J_{\bar\mu}| \frac{\tr|J_{\bar\mu}| -2 \langle |J_{\bar\mu}| \bar\xi, \bar\xi \rangle}{1+i\theta\langle|J_{\bar\mu}| \bar\xi,\bar\xi\rangle} 
-i\theta \tr|J_{\bar\mu}| \frac{ 2\langle |J_{\bar\mu}|^2 \bar\xi,\bar\xi\rangle  - 3\langle |J_{\bar\mu}| \bar\xi, \bar\xi \rangle^2 }{(1+i\theta \langle |J_{\bar\mu}| \bar\xi, \bar\xi \rangle)^{2}}\\
&\quad- \frac{(\tr|J_{\bar\mu}|-2\langle |J_{\bar\mu}| \bar\xi,\bar\xi \rangle)^2}{(1+i\theta\langle |J_{\bar\mu}| \bar\xi,\bar\xi \rangle)^2}  
+ 4 \frac{ \langle |J_{\bar\mu}|^2 \bar\xi,\bar\xi\rangle  - \langle |J_{\bar\mu}| \bar\xi, \bar\xi \rangle^2 }{(1+i\theta\langle |J_{\bar\mu}| \bar\xi,\bar\xi \rangle)^{2}} \\
&\quad+3i\theta \tr|J_{\bar\mu}| \frac{ 2\langle |J_{\bar\mu}|^2 \bar\xi,\bar\xi\rangle  - 3\langle |J_{\bar\mu}| \bar\xi, \bar\xi \rangle^2 }{(1+i\theta \langle |J_{\bar\mu}| \bar\xi, \bar\xi \rangle)^{3}} \\
&\quad+ i\theta \frac{ 4\langle|J_{\bar\mu}|^3 \bar\xi,\bar\xi\rangle  - 30\langle |J_{\bar\mu}|^2 \bar\xi, \bar\xi \rangle \langle |J_{\bar\mu}| \bar\xi,\bar\xi \rangle  +33 \langle |J_{\bar\mu}| \bar\xi, \bar\xi \rangle^3 }{(1+i\theta \langle |J_{\bar\mu}| \bar\xi, \bar\xi \rangle)^{3}} \\
&\quad-3 (i\theta)^2 \frac{  (2\langle |J_{\bar\mu}|^2 \bar\xi,\bar\xi\rangle  - 3\langle |J_{\bar\mu}| \bar\xi, \bar\xi \rangle^2)^2  }{(1+i\theta \langle |J_{\bar\mu}| \bar\xi, \bar\xi \rangle)^{4}} \Biggr] .
\end{split}
\end{equation}

Finally, since
\[
RK = \frac{1}{2} \frac{\partial_\xi \det\Phi_0}{\det\Phi_0} \Phi_0^{-1} \widetilde{\nabla_x K} + \Div_\xi(\Phi_0^{-1} \widetilde{\nabla_x K})
\]
by Lemma \ref{lem:a2s}\ref{en:a2s_uind},
we can use \eqref{eq:dxidetPhiPhiinvdxK_2step} and \eqref{eq:divxiPhiinvdxK_2step} to deduce that
\begin{equation}\label{eq:RK_2step}
\begin{split}
RK 
&= \frac{|\mu|^2}{16|\xi|^2} \Biggl[ \tr^2|J_{\bar\mu}|-2 \tr|J_{\bar\mu}| \frac{\langle |J_{\bar\mu}| \bar\xi,\bar\xi \rangle}{1+i\theta\langle|J_{\bar\mu}|\bar\xi,\bar\xi\rangle} \\
&\quad+ \frac{12\langle|J_{\bar\mu}|^2 \bar\xi,\bar\xi\rangle - 22\langle|J_{\bar\mu}|\bar\xi,\bar\xi\rangle^2 - \tr^2|J_{\bar\mu}| + 6 \langle|J_{\bar\mu}|\bar\xi,\bar\xi\rangle \tr|J_{\bar\mu}|}{(1+i\theta\langle|J_{\bar\mu}|\bar\xi,\bar\xi\rangle)^2}\\
&\quad+ 4i\theta \tr|J_{\bar\mu}| \frac{2\langle|J_{\bar\mu}|^2\bar\xi,\bar\xi\rangle-3\langle|J_{\bar\mu}|\bar\xi,\bar\xi\rangle^2}{(1+i\theta\langle|J_{\bar\mu}|\bar\xi,\bar\xi\rangle)^3}\\
&\quad+ 2i\theta \frac{4\langle|J_{\bar\mu}|^3\bar\xi,\bar\xi\rangle-30\langle|J_{\bar\mu}|^2\bar\xi,\bar\xi\rangle \langle|J_{\bar\mu}|\bar\xi,\bar\xi\rangle+33\langle|J_{\bar\mu}|\bar\xi,\bar\xi\rangle^3}{(1+i\theta\langle|J_{\bar\mu}|\bar\xi,\bar\xi\rangle)^3}\\
&\quad- 5(i\theta)^2 \frac{(2\langle|J_{\bar\mu}|^2\bar\xi,\bar\xi\rangle-3\langle|J_{\bar\mu}|\bar\xi,\bar\xi\rangle^2)^2}{(1+i\theta\langle|J_{\bar\mu}|\bar\xi,\bar\xi\rangle)^4} 
- 2 \frac{2\langle|J_{\bar\mu}|^2\bar\xi,\bar\xi\rangle-3\langle|J_{\bar\mu}|\bar\xi,\bar\xi\rangle^2}{(1+i\theta\langle|J_{\bar\mu}|\bar\xi,\bar\xi\rangle)^3}
\Biggr].
\end{split}
\end{equation}

As $\Lambda_{00} = F_{00} + RK$ by \eqref{eq:opLambdacoeff_prelim}, by combining \eqref{eq:RK_2step} with the expression for $F_{00}$ in \eqref{eq:Fkj_2step}, we obtain 
the formula for $\Lambda_{00}$ in \eqref{eq:opLambda_coeff_2step}.

\smallskip

We proceed now with the formulas for the other coefficients.
From \eqref{eq:Fkj_2step} we deduce that
\[
\widetilde{\nabla_x F_{11}} = \nabla_x F_{11} 
= \frac{i |\mu|}{|\xi|} \exp(-2i\theta |J_{\bar\mu}|) \left(|J_{\bar\mu}|+iJ_{\bar\mu}\right) \xi,
\]
so, by \eqref{eq:opLambdacoeff_prelim} and \eqref{eq:Phiinvxi_2step},
\begin{equation}\label{eq:Lambda11_2step}
\Lambda_{11} = \Phi_0^{-1} \nabla_x F_{11} = \frac{i|\mu|}{|\xi|} \frac{1}{1+i\theta\langle|J_{\bar\mu}| \bar\xi,\bar\xi\rangle}  \left(|J_{\bar\mu}| +iJ_{\bar\mu}\right) \xi,
\end{equation}
which proves the formula for $\Lambda_{11}$ in \eqref{eq:opLambda_coeff_2step}.

\smallskip

Now, 
by \eqref{eq:dxi_detPhi_detPhi_2step} and \eqref{eq:Lambda11_2step},
\[\begin{split}
&\frac{1}{2} \frac{\partial_\xi \det\Phi_0}{\det\Phi_0} \Phi_0^{-1} \widetilde{\nabla_x F_{11}} \\
&=
i\frac{|\mu|}{2|\xi|} 
\frac{i\theta}{1+i\theta\langle|J_{\bar\mu}| \bar\xi,\bar\xi\rangle} 
 \left((\tr|J_{\bar\mu}|) \langle |J_{\bar\mu}| \bar\xi, \bar\xi\rangle + \frac{2\langle|J_{\bar\mu}|^2 \bar\xi,\bar\xi\rangle - 3\langle|J_{\bar\mu}|\bar\xi,\bar\xi\rangle^2}{1+i\theta \langle|J_{\bar\mu}|\bar\xi,\bar\xi\rangle}\right).
\end{split}\]
Moreover, 
from \eqref{eq:Lambda11_2step}, 
\eqref{eq:divxixiNJmuJmuxi}, \eqref{eq:nablaxi1ithetaJmuxixiJmuJmuxi}
 and \eqref{eq:derxitheta} we also deduce that
\[\begin{split}
\Div_{\xi} (\Phi_0^{-1} \widetilde{\nabla_x F_{11}})
&= \frac{i|\mu|}{|\xi|} [\nabla_{\xi} [(1+i\theta\langle|J_{\bar\mu}|\bar\xi,\bar\xi\rangle)^{-1}]] \cdot \left(|J_{\bar\mu}| +iJ_{\bar\mu}\right) \xi\\
&\qquad+ i|\mu| (1+i\theta\langle|J_{\bar\mu}|\bar\xi,\bar\xi\rangle)^{-1} \Div_\xi [|\xi|^{-1} \left(|J_{\bar\mu}| +iJ_{\bar\mu}\right) \xi] \\
&= \frac{i|\mu|}{|\xi|} \left[\frac{ \tr|J_{\bar\mu}| - \langle |J_{\bar\mu}| \bar\xi, \bar\xi \rangle }{1+i\theta \langle |J_{\bar\mu}| \bar\xi, \bar\xi \rangle} -i\theta \frac{ 2\langle |J_{\bar\mu}|^2 \bar\xi,\bar\xi\rangle  - 3\langle |J_{\bar\mu}| \bar\xi, \bar\xi \rangle^2 }{(1+i\theta \langle |J_{\bar\mu}| \bar\xi, \bar\xi \rangle)^{2}} \right].
\end{split}\]
Thus, by Lemma \ref{lem:a2s}\ref{en:a2s_uind},
\begin{equation}\label{eq:RF11_2step}
\begin{split}
RF_{11} &= \frac{1}{2} \frac{\partial_\xi \det\Phi_0}{\det\Phi_0} \Phi_0^{-1} \widetilde{\nabla_x F_{11}} + \Div_\xi(\Phi_0^{-1} \widetilde{\nabla_x F_{11}}) \\
&=  \frac{i|\mu|}{2|\xi|} \left[ \tr|J_{\bar\mu}| + \frac{ \tr|J_{\bar\mu}| - 2\langle |J_{\bar\mu}| \bar\xi, \bar\xi \rangle }{1+i\theta \langle |J_{\bar\mu}| \bar\xi, \bar\xi \rangle} -i\theta \frac{ 2\langle |J_{\bar\mu}|^2 \bar\xi,\bar\xi\rangle  - 3\langle |J_{\bar\mu}| \bar\xi, \bar\xi \rangle^2 }{(1+i\theta \langle |J_{\bar\mu}| \bar\xi, \bar\xi \rangle)^{2}} \right].
\end{split}
\end{equation}
As $\Lambda_{10} = F_{01}+RF_{11}$ by \eqref{eq:opLambdacoeff_prelim}, by combining \eqref{eq:RF11_2step} with the formula for $F_{01}$ in \eqref{eq:Fkj_2step}, we obtain the formula for $\Lambda_{10}$ in \eqref{eq:opLambda_coeff_2step}.

Recall the notation $\ldbrack v \rdbrack_j$ for the $j$th component of a vector $v$. Now, from \eqref{eq:PhiinvdxF20_2step} we deduce that, for all $j=1,\dots,d_1$,
\[\begin{split}
&\nabla_x \ldbrack\Phi_0^{-1} \widetilde{\nabla_x F_{20}}\rdbrack_j  \\
&= - \frac{|\mu|^2}{4|\xi|^2} \frac{1}{1+i\theta\langle|J_{\bar\mu}| \bar\xi,\bar\xi\rangle} \ldbrack(|J_{\bar\mu}|+iJ_{\bar\mu}) \xi\rdbrack_j \exp(-2i\theta |J_{\bar\mu}|) \left(|J_{\bar\mu}|+i J_{\bar\mu}\right) \xi;
\end{split}\]
thus, by \eqref{eq:Phiinvxi_2step},
\begin{equation}\label{eq:PhiinvdxPhiinvdxF20_2step}
\Phi_0^{-1} \nabla_x \ldbrack\Phi_0^{-1} \widetilde{\nabla_x F_{20}}\rdbrack_j 
 = - \frac{|\mu|^2}{4|\xi|^2} \frac{1}{(1+i\theta\langle|J_{\bar\mu}| \bar\xi,\bar\xi\rangle)^2} \ldbrack(|J_{\bar\mu}|+iJ_{\bar\mu}) \xi\rdbrack_j 
 \left(|J_{\bar\mu}| +iJ_{\bar\mu}\right) \xi.
\end{equation}
As $\Lambda_{02} = (\ldbrack\Phi_0^{-1} \nabla_x \ldbrack\Phi_0^{-1} \widetilde{\nabla_x F_{20}}\rdbrack_j\rdbrack_k)_{j,k=1}^{d_1}$ by \eqref{eq:opLambdacoeff_prelim}, from the expression \eqref{eq:PhiinvdxPhiinvdxF20_2step} we deduce the formula for $\Lambda_{02}$ in \eqref{eq:opLambda_coeff_2step}.

\smallskip

Notice now that, by \eqref{eq:dxi_detPhi_detPhi_2step} and \eqref{eq:PhiinvdxPhiinvdxF20_2step}, for all $j=1,\dots,d_1$,
\begin{equation}\label{eq:dxidetPhidetPhiPhiinvnablaxPhiinvnablaxF20j}
\begin{split}
&\frac{1}{2} \frac{\partial_\xi \det\Phi_0}{\det\Phi_0} \Phi_0^{-1} \nabla_x \ldbrack\Phi_0^{-1} \widetilde{\nabla_x F_{20}}\rdbrack_j  \\
&= - \frac{|\mu|^2}{8|\xi|^2} 
\frac{i\theta}{(1+i\theta\langle|J_{\bar\mu}| \bar\xi,\bar\xi\rangle)^2} \ldbrack(|J_{\bar\mu}|+iJ_{\bar\mu}) \xi\rdbrack_j \\
&\quad\times \left((\tr|J_{\bar\mu}|) \langle |J_{\bar\mu}| \bar\xi, \bar\xi\rangle + \frac{2\langle|J_{\bar\mu}|^2 \bar\xi,\bar\xi\rangle - 3\langle|J_{\bar\mu}|\bar\xi,\bar\xi\rangle^2}{1+i\theta \langle|J_{\bar\mu}|\bar\xi,\bar\xi\rangle}\right) .
\end{split}
\end{equation}
Furthermore, if $e_1,\dots,e_{d_1}$ is the standard basis of $\RR^{d_1}$, then
\[
\nabla_{\xi} \ldbrack(|J_{\bar\mu}|+iJ_{\bar\mu}) \xi\rdbrack_j = \nabla_{\xi} \langle (|J_{\bar\mu}|+iJ_{\bar\mu}) \xi, e_j \rangle = (|J_{\bar\mu}|-iJ_{\bar\mu}) e_j,
\]
thus, by \eqref{eq:square_JmuJmu},
\begin{equation}\label{eq:dxIJxiIJxi_2step}
\begin{split}
(\nabla_{\xi} \ldbrack\left(|J_{\bar\mu}|+iJ_{\bar\mu}\right) \xi\rdbrack_j) \cdot \left(|J_{\bar\mu}|+i J_{\bar\mu}\right) \xi 
&= \langle \left(|J_{\bar\mu}|+iJ_{\bar\mu}\right) \xi, \left(|J_{\bar\mu}|+iJ_{\bar\mu}\right) e_j \rangle \\
&= 2 \ldbrack|J_{\bar\mu}| \left(|J_{\bar\mu}|+iJ_{\bar\mu}\right) \xi\rdbrack_j.
\end{split}
\end{equation}
So, by \eqref{eq:PhiinvdxPhiinvdxF20_2step},
\eqref{eq:dxIJxiIJxi_2step},
 \eqref{eq:divxixiNJmuJmuxi}
and \eqref{eq:nablaxi1ithetaJmuxixiJmuJmuxi},
\begin{equation}\label{eq:divxiPhiinvnablaxPhiinvnablaxF20j}
\begin{split}
&\Div_\xi(\Phi_0^{-1} \nabla_x \ldbrack\Phi_0^{-1} \widetilde{\nabla_x F_{20}}\rdbrack_j) \\
&= - \frac{|\mu|^2}{4|\xi|^2} \ldbrack\left(|J_{\bar\mu}|+iJ_{\bar\mu}\right) \xi\rdbrack_j 
 \, (\nabla_\xi (1+i\theta\langle|J_{\bar\mu}| \bar\xi,\bar\xi\rangle)^{-2}) \cdot \left(|J_{\bar\mu}| +iJ_{\bar\mu}\right) \xi \\
&\quad - \frac{|\mu|^2}{4|\xi|^2} \frac{1}{(1+i\theta\langle|J_{\bar\mu}| \bar\xi,\bar\xi\rangle)^2} (\nabla_\xi \ldbrack(|J_{\bar\mu}|+iJ_{\bar\mu}) \xi\rdbrack_j )\cdot
 \left(|J_{\bar\mu}| +iJ_{\bar\mu}\right) \xi \\
&\quad - \frac{|\mu|^2}{4} \frac{1}{(1+i\theta\langle|J_{\bar\mu}| \bar\xi,\bar\xi\rangle)^2} \ldbrack(|J_{\bar\mu}|+iJ_{\bar\mu}) \xi\rdbrack_j 
 \Div_{\xi} [|\xi|^{-2} \left(|J_{\bar\mu}| +iJ_{\bar\mu}\right) \xi] \\
&= - \frac{|\mu|^2}{4|\xi|^2} \frac{1}{(1+i\theta\langle|J_{\bar\mu}| \bar\xi,\bar\xi\rangle)^2} \\
&\quad\times \Biggl[ \left[ \tr|J_{\bar\mu}| -2 \langle |J_{\bar\mu}| \bar\xi, \bar\xi \rangle  -2i\theta \frac{ 2\langle |J_{\bar\mu}|^2 \bar\xi,\bar\xi\rangle  - 3\langle |J_{\bar\mu}| \bar\xi, \bar\xi \rangle^2 }{1+i\theta \langle |J_{\bar\mu}| \bar\xi, \bar\xi \rangle} \right] \ldbrack(|J_{\bar\mu}|+iJ_{\bar\mu}) \xi\rdbrack_j \\
&\quad\quad+ 2 \ldbrack|J_{\bar\mu}| (|J_{\bar\mu}|+iJ_{\bar\mu}) \xi\rdbrack_j 
\Biggr].
\end{split}
\end{equation}
Thus, by \eqref{eq:dxidetPhidetPhiPhiinvnablaxPhiinvnablaxF20j} and \eqref{eq:divxiPhiinvnablaxPhiinvnablaxF20j},
\begin{equation}\label{eq:RPhiinvdxF20_2step}
\begin{split}
&R\ldbrack\Phi_0^{-1} \widetilde{\nabla_x F_{20}}\rdbrack_j \\
&= \frac{1}{2} \frac{\partial_\xi \det\Phi_0}{\det\Phi_0} \Phi_0^{-1} \nabla_x \ldbrack\Phi_0^{-1} \widetilde{\nabla_x F_{20}}\rdbrack_j + \Div_\xi(\Phi_0^{-1} \nabla_x \ldbrack\Phi_0^{-1} \widetilde{\nabla_x F_{20}}\rdbrack_j) \\
&= - \frac{|\mu|^2}{8|\xi|^2} 
\frac{1}{1+i\theta\langle|J_{\bar\mu}| \bar\xi,\bar\xi\rangle}\\
&\quad\times \Biggl[ \left[ \tr|J_{\bar\mu}| + \frac{\tr|J_{\bar\mu}| -4 \langle |J_{\bar\mu}| \bar\xi, \bar\xi \rangle}{1+i\theta \langle |J_{\bar\mu}| \bar\xi, \bar\xi \rangle}  -3i\theta \frac{ 2\langle |J_{\bar\mu}|^2 \bar\xi,\bar\xi\rangle  - 3\langle |J_{\bar\mu}| \bar\xi, \bar\xi \rangle^2 }{(1+i\theta \langle |J_{\bar\mu}| \bar\xi, \bar\xi \rangle)^2} \right] \\
&\qquad\qquad\times \ldbrack(|J_{\bar\mu}|+iJ_{\bar\mu}) \xi\rdbrack_j + \frac{4}{1+i\theta \langle |J_{\bar\mu}| \bar\xi, \bar\xi \rangle} \ldbrack|J_{\bar\mu}| (|J_{\bar\mu}|+iJ_{\bar\mu}) \xi\rdbrack_j 
\Biggr].
\end{split}
\end{equation}
As $\Lambda_{01} = \Phi_0^{-1} \nabla_x K + (R\ldbrack\Phi_0^{-1} \widetilde{\nabla_x F_{20}}\rdbrack_j)_{j=1}^{d_1}$ by \eqref{eq:opLambdacoeff_prelim}, from \eqref{eq:PhiinvdxK_2step} and \eqref{eq:RPhiinvdxF20_2step} we deduce the formula for $\Lambda_{01}$ in \eqref{eq:opLambda_coeff_2step}.

\smallskip

Since we already know from \eqref{eq:opLambdacoeff_prelim} that $\Lambda_{20} = 1$, this concludes the proof of the formulas \eqref{eq:opLambda_coeff_2step}.
\end{proof}

\end{document}